\newtheorem{thm}{Theorem}
\newtheorem{theorem}{Theorem}[section]
\newtheorem{lemma}[theorem]{Lemma}
\newtheorem{example}[theorem]{Example}
\newtheorem{prop}[theorem]{Proposition}
\newtheorem{corollary}[theorem]{Corollary}
\newtheorem{definition}[theorem]{Definition}
\newcommand{\leftrarrows}{\mathrel{\raise.75ex\hbox{\oalign{%
  $\scriptstyle\leftarrow$\cr
  \vrule width0pt height.5ex$\hfil\scriptstyle\relbar$\cr}}}}
\newcommand{\lrightarrows}{\mathrel{\raise.75ex\hbox{\oalign{%
  $\scriptstyle\relbar$\hfil\cr
  $\scriptstyle\vrule width0pt height.5ex\smash\rightarrow$\cr}}}}
\newcommand{\Rrelbar}{\mathrel{\raise.75ex\hbox{\oalign{%
  $\scriptstyle\relbar$\cr
  \vrule width0pt height.5ex$\scriptstyle\relbar$}}}}
\def\leftrightarrowsfill@{\arrowfill@\leftrarrows\Rrelbar\lrightarrows}
\newcommand{\xleftrightarrows}[2][]{\ext@arrow 3399\leftrightarrowsfill@{#1}{#2}}
\numberwithin{equation}{section}
\title{Entwined modules over linear categories and Galois extensions}
\author{Mamta Balodi\footnote{Email: mamta.balodi@gmail.com} $\quad$ Abhishek Banerjee\footnote{Email: abhishekbanerjee1313@gmail.com} $\quad$ Samarpita Ray\footnote{Email: ray.samarpita31@gmail.com}\\ \small \emph{Department of Mathematics, Indian Institute of Science, Bangalore - 560012, India.} }
\date{}
\begin{document}
\maketitle

\begin{abstract}
In this paper, we study modules over quotient spaces of certain categorified fiber bundles.  These are understood as modules over entwining structures involving a small $K$-linear category 
$\mathcal D$ and a $K$-coalgebra $C$. 
We obtain  Frobenius and separability conditions for functors on entwined modules. We also introduce the notion of a $C$-Galois extension $\mathcal E\subseteq \mathcal D$ of categories. Under suitable conditions, we show that entwined modules over a $C$-Galois extension may be described as modules over the subcategory $\mathcal E$ of 
$C$-coinvariants of $\mathcal D$.

\end{abstract}

\smallskip

\smallskip

\smallskip
{\emph{\bf MSC(2010) Subject Classification:} 16W30, 18E05}

\smallskip
{\emph{\bf Keywords:} Entwining structures, entwined modules, rings with several objects, Frobenius conditions, separability conditions, coalgebra-Galois extensions}

\smallskip

\section{Introduction}

The purpose of this paper is to study a theory of  modules over quotient spaces of certain categorified fiber bundles. Suppose that $X$ is an affine scheme over a field $K$ and let $G$ be an affine algebraic group scheme with a free action $\sigma: X\times G\longrightarrow X$ on $X$. Let $Y$ be the quotient given by the coequalizer
\begin{equation}\label{intro1}
 \xymatrix{X\times G\ar@<-.5ex>[r]_(0.6){pr} \ar@<.5ex>[r]^(0.6){\sigma}& X}\overset{p}{\longrightarrow} Y
\end{equation} If $X\longrightarrow Y$ is faithfully flat and the canonical map $can: X\times G\longrightarrow X\times_YX$ is an isomorphism, then $X$ is said to be (see, for instance, \cite{Mum}, \cite{HJS})
a principal fiber bundle over $Y$ with group $G$. 

\smallskip
The algebraic counterpart of \eqref{intro1} consists of an algebra $A$, a  Hopf algebra $H$ and a coaction $\rho: A\longrightarrow A\otimes H$ that makes $A$ into a right $H$-comodule algebra. Let $B:=A^{coH}=\{\mbox{$a\in A$ $\vert$ $\rho(a)=a\otimes 1_H$}\}$ be the algebra of coinvariants of $A$, i.e., $B$ is given by the equalizer
\begin{equation}\label{intro2}
 B\longrightarrow \xymatrix{A\ar@<-.5ex>[r]_(0.4){in} \ar@<.5ex>[r]^(0.4){\rho}& A\otimes H} 
\end{equation} In this case, there is a canonical map $can: A\otimes_BA\longrightarrow A\otimes H$ determined by setting $can(x\otimes y)=x\cdot \rho(y)$. If the Hopf algebra $H$
has bijective antipode, $B\longrightarrow A$ is a faithfully flat extension and $can: A\otimes_BA\longrightarrow A\otimes H$  is an isomorphism, it was shown by Schneider \cite{HJS} that
modules over $B$ may be recovered as the category of ``$(A,H)$-Hopf modules.'' 

\smallskip
We work with a small $K$-linear category $\mathcal D$, a $K$-coalgebra $C$ and an ``entwining structure'' $\psi$ consisting of a collection of morphisms
\begin{equation*}\psi=\{\psi_{XY}:C \otimes Hom_\mathcal{D}(X,Y) \longrightarrow Hom_\mathcal{D}(X,Y) \otimes C\}_{(X,Y)\in Ob(\mathcal D)^2}
\end{equation*}
satisfying   conditions that we lay out in Section 2. We consider the category ${\mathscr{M}(\psi)}_\mathcal{D}^C$ of modules over the entwining structure
$(\mathcal D,C,\psi)$ (see Definition \ref{ddde}). These may be seen as modules over a ``categorical quotient space'' of $\mathcal D$ with respect to the coalgebra $C$ and the entwining
$\psi$. 

\smallskip The notion of a $C$-Galois extension $\mathcal E\subseteq \mathcal D$ of categories is introduced in Section 4. Additionally, a $C$-Galois
extension gives rise to a canonical entwining structure on $\mathcal D$. Under certain conditions, we show that modules over the category $\mathcal E$ of $C$-coinvariants of $\mathcal D$ may be described as modules 
over the canonical entwining structure.

\smallskip Entwining structures for algebras were introduced by Brzezi\'{n}ski and Majid in \cite{BrMj} and it was realized in Brzezi\'{n}ski \cite{Brz00} that
entwined modules provide a unifying formalism for studying diverse concepts such as relative Hopf modules, Doi-Hopf and Yetter-Drinfeld modules as well as coalgebra Galois extensions. In fact, the study of entwining structures for algebras and entwined modules over them is  well developed in the literature and we refer the reader, for instance, to \cite{Abu}, \cite{Brz00}  \cite{Br02},  \cite{BCT1}, \cite{BCT}, \cite{CG},  \cite{Jia}, \cite{Sch} for more on this subject.

\smallskip
Our notion of modules over an entwining structure $(\mathcal D,C,\psi)$ builds on the analogy of Mitchell \cite{Mit1} which says that a small $K$-linear category
should be seen as a ``$K$-algebra with several objects.'' In particular, the category ${\mathscr{M}(\psi)}_\mathcal{D}^C$ also generalizes the ``relative $(\mathcal D,H)$-Hopf modules'' studied
in our previous work in \cite{BBR}, where $H$ is a Hopf algebra and $\mathcal D$ is an $H$-comodule category in the sense of Cibils and Solotar \cite{CiSo}. In other words, 
$\mathcal D$ is a small $K$-linear category whose morphism spaces are equipped with a coaction of $H$ that is compatible with composition. When $\mathcal D$ has a single object, it reduces
to an ordinary $H$-comodule algebra and  the relative $(\mathcal D,H)$-Hopf modules
reduce to the usual notion of relative Hopf modules (see Takeuchi \cite{Take}).

\smallskip
For Doi-Hopf modules, Frobenius and separability conditions were studied extensively in a series of papers \cite{CMIZ}, \cite{CMZ1}, \cite{CMZ}. Later, 
 Brzezi\'{n}ski   studied Frobenius and Maschke type theorems for entwined modules in \cite{Brz}. In this paper, we proceed in a manner analogous
 to the unified approach of Brzezi\'{n}ski, Caenepeel, Militaru and Zhu \cite{uni} for  studying Frobenius and separability conditions for entwined modules over $(\mathcal D,C,\psi)$.  
 
 \smallskip
 The idea is as follows: the ``categorical quotient space'' of $\mathcal D$ with respect to $C$ and $\psi$ may be thought of as a subcategory of $\mathcal D$ and ${\mathscr{M}(\psi)}_\mathcal{D}^C$ plays the role of modules over this subcategory. Although this ``subcategory'' of $\mathcal D$ need not exist in an explicit sense, we would like to study the properties of this extension
 of categories. In particular, we would like to know if it behaves like a separable, split or Frobenius extension of small $K$-linear categories. For this, we turn to a pair of functors
 \begin{equation*}
 \begin{array}{c}
 \mathscr F: {\mathscr{M}(\psi)}_\mathcal{D}^C\longrightarrow Mod\text{-}\mathcal D\qquad
 \mathscr G:Mod\text{-}\mathcal D\longrightarrow {\mathscr{M}(\psi)}_\mathcal{D}^C\\
 \end{array}
 \end{equation*} Here $\mathscr F$ is the left adjoint and behaves like an ``extension of scalars'' whereas its right adjoint $\mathscr G$ behaves like a ``restriction of scalars.'' We recall here (see \cite[Theorem 1.2]{uni}) that in the classical case of an extension $R\longrightarrow S$ of rings inducing the pair of adjoint functors $Mod\text{-}R  
\xleftrightarrows[\text{$\qquad F\qquad $}]{\text{$\qquad G\qquad$}} Mod\text{-}S
$ given
 by extension and restriction of scalars, we have:
 \begin{equation*}
 \begin{array}{ccc}
 \mbox{$R\longrightarrow S$ is split extension} &\qquad \Leftrightarrow \qquad & \mbox{Left adjoint $F:Mod\text{-}R\longrightarrow Mod\text{-}S$ is separable}\\ && \\
  \mbox{$R\longrightarrow S$ is separable extension} &\qquad \Leftrightarrow \qquad & \mbox{Right adjoint $G:Mod\text{-}S\longrightarrow Mod\text{-}R$ is separable}\\ && \\
   \mbox{$R\longrightarrow S$ is Frobenius extension} &\qquad \Leftrightarrow \qquad & \mbox{$(F,G)$ is Frobenius pair of functors}\\
 \end{array}
 \end{equation*} It is therefore natural to study criteria for the separability of the functors $\mathscr F$ and $\mathscr G$ as well as conditions for $(\mathscr F,\mathscr G)$
 to be a Frobenius pair of functors.
 
 \smallskip 
 In this paper, we will always use the following convention: for $f\in Hom_{\mathcal D}(Y,X)$ and $c\in C$, we write $\psi_{YX}(c\otimes f)=f_\psi\otimes c^\psi\in Hom_{\mathcal D}(Y,X)\otimes C$ with the summation omitted. We write $h:\mathcal D^{op}\otimes \mathcal D\longrightarrow Vect_K$ for the canonical $\mathcal D\text{-}\mathcal D$-bimodule $h(Y,X)=Hom_{\mathcal D}(Y,X)$. The entwining structure makes $h\otimes C$ into a $\mathcal D\text{-}\mathcal D$-bimodule by setting 
 \begin{equation*}
(h \otimes C)(Y,X):=Hom_\mathcal{D}(Y,X) \otimes C \qquad \big((h \otimes C)(\phi)\big)(f \otimes c):=\phi''f\phi'_\psi \otimes c^\psi
\end{equation*}
for any $(Y,X) \in Ob(\mathcal{D}^{op} \otimes \mathcal{D})$, $\phi:=(\phi',\phi'') \in Hom_{\mathcal{D}^{op} \otimes \mathcal{D}}\big((Y,X),(Y',X')\big)$, $f \in Hom_\mathcal{D}(Y,X)$ and $c \in C$.  We consider a collection $\theta:=\{\theta_{X}:  C \otimes C \longrightarrow End_\mathcal{D}(X)\}_{X \in Ob(\mathcal{D})}$ of $K$-linear maps  satisfying the following conditions:
\begin{equation*}
\left( \theta_{X}(c \otimes d)\right)\circ f= {f_\psi}_\psi\circ \theta_{Y}\left(c^\psi \otimes d^\psi \right) \qquad 
\theta_{X}(c \otimes d_1) \otimes d_2 = {\left(\theta_{X}(c_2 \otimes d)\right)}_\psi \otimes {c_1}^\psi  
\end{equation*}
for any $f \in Hom_{\mathcal{D}}(Y,X)$. Let $V_1$ be the $K$-space consisting of all such $\theta$.  
 Our first   result gives conditions for the functors $\mathscr F$ and $\mathscr G$ to be separable.
 
 \begin{thm}\label{ThmI} (see \ref{V=V1}, \ref{ThmIx}, \ref{W=W1} and \ref{ThmIIx}) Let $\mathcal D$ be a small $K$-linear category, $(C,\Delta_C,\varepsilon_C)$ be a $K$-coalgebra and let $(\mathcal D,C,\psi)$ be a right-right entwining
 structure.  
 
 \smallskip
 \begin{itemize}
\item[(a)]  Let $V=Nat(\mathscr G\mathscr F,1_{{\mathscr{M}(\psi)}_\mathcal{D}^C})$ be the space of natural transformations from $\mathscr G\mathscr F$ to $1_{{\mathscr{M}(\psi)}_\mathcal{D}^C}$. Then:
 
 \begin{itemize}
 \item[(1)] There is an isomorphism $V\cong V_1$ of $K$-vector spaces.
 
 \item[(2)]  The functor $\mathscr{F}$ is separable if and only if there exists $\theta \in V_1$ such that
\begin{equation*}
\theta_{X} \circ \Delta_C =  \varepsilon_C \cdot id_X \qquad \forall X \in Ob(\mathcal{D})
\end{equation*}
 \end{itemize} 
 
 \end{itemize}
 
 \begin{itemize}\item[(b)] Let $W=Nat(1_{Mod\text{-}\mathcal D},\mathscr F\mathscr G)$ be the space of natural transformations from  $1_{Mod\text{-}\mathcal D}$ to $\mathscr F\mathscr G$. Then:
 
 \begin{itemize} 
 
 \item[(1)] There is an isomorphism of $K$-vector spaces from $W$ to $W_1=Nat(h,h\otimes C)$. 
 
 \item[(2)] The functor  $\mathscr{G}$ is separable if and only if there exists $\eta \in W_1=Nat(h, h \otimes C)$ such that
$
(id_h \otimes \varepsilon_C)\eta=id_h
$.
 
 \end{itemize}
 
 \end{itemize}
 
 \end{thm}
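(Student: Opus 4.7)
The plan is to invoke Rafael's separability criterion for adjoint pairs: given the adjunction $\mathscr{F} \dashv \mathscr{G}$ with unit $\eta$ and counit $\varepsilon$, the left adjoint $\mathscr{F}$ is separable if and only if $\eta$ admits a natural retraction, i.e., some $\nu \in V = Nat(\mathscr{G}\mathscr{F}, 1)$ with $\nu\eta = 1$, and the right adjoint $\mathscr{G}$ is separable if and only if $\varepsilon$ admits a natural section, i.e., some $\zeta \in W = Nat(1, \mathscr{F}\mathscr{G})$ with $\varepsilon\zeta = 1$. Hence both (a)(2) and (b)(2) will follow once the spaces $V$ and $W$ are described concretely and the unit and counit are computed explicitly.

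For (a)(1), I would first identify $\mathscr{G}\mathscr{F}(M) \cong M \otimes C$ as an entwined module, with both action and coaction twisted by $\psi$, and then exploit the fact that $\mathscr{M}(\psi)_\mathcal{D}^C$ admits a system of ``free'' entwined modules $\{h(\blank, X) \otimes C\}_{X \in Ob(\mathcal{D})}$ through which every $K$-linear natural transformation factors. Evaluating $\nu \in V$ at the free object $h(\blank, X) \otimes C$, and then further at $1_X \in Hom_\mathcal{D}(X,X)$, extracts a $K$-linear map $\theta_X : C \otimes C \to End_\mathcal{D}(X)$. The $\mathcal{D}$-linearity of $\nu$ translates into the first relation defining $V_1$ (the one involving the iterated entwining of $f$), while $C$-colinearity translates into the second. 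Conversely, any $\theta \in V_1$ reassembles to a natural transformation $\nu^\theta$, establishing $V \cong V_1$. For the separability condition, the unit $\eta_M : M \to M \otimes C$ is the coaction of $M$; restricted to the free entwined module and evaluated at $1_X$, it acts as $\Delta_C$ on the $C$-factor, so $\nu\eta = 1$ becomes precisely $\theta_X \circ \Delta_C = \varepsilon_C \cdot id_X$.

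For (b)(1), the composite $\mathscr{F}\mathscr{G}$ sends a right $\mathcal{D}$-module $N$ to $N \otimes C$ with $\mathcal{D}$-action twisted by $\psi$, and a natural transformation $\zeta : 1_{Mod\text{-}\mathcal{D}} \to \mathscr{F}\mathscr{G}$ is determined by its restriction to the representable modules $h(\blank, X)$, producing a bijection $W \cong Nat(h, h \otimes C) = W_1$. The counit $\varepsilon_N : N \otimes C \to N$ is $id_N \otimes \varepsilon_C$, so the equality $\varepsilon\zeta = 1$ unpacks directly into $(id_h \otimes \varepsilon_C)\eta = id_h$. The main obstacle will be the classification in (a)(1): verifying that the two relations defining $V_1$ are exactly the $\mathcal{D}$-linearity and $C$-colinearity conditions requires careful bookkeeping with the Sweedler-like convention $\psi(c \otimes f) = f_\psi \otimes c^\psi$ and its iterates, because both the $\mathcal{D}$-action and the $C$-coaction on $\mathscr{G}\mathscr{F}(M)$ are twisted simultaneously. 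Part (b) is considerably smoother, as $\mathscr{F}\mathscr{G}$ involves only a single twist of the action.
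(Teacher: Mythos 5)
Your overall strategy coincides with the paper's: invoke Rafael's criterion (Theorem \ref{separability}), compute the unit and counit explicitly, identify $V$ with $V_1$ by evaluating a natural transformation on the ``free'' entwined modules ${\bf h}_X\otimes C$ at $id_X$, and identify $W$ with $W_1$ by restricting to representables and extending by colimits. Parts (b)(1), (b)(2) and (a)(2) as you outline them are essentially the paper's Propositions \ref{W=W1}, \ref{ThmIIx} and Theorem \ref{ThmIx}. However, there is a genuine gap in your mechanism for the forward direction of (a)(1). You assert that $C$-colinearity of $\nu$ ``translates into the second relation'' \eqref{theta2}. It does not, at least not by itself: writing $\theta_X(c\otimes d)=\bigl((id_{{\bf h}_X}\otimes\varepsilon_C)\,\nu({\bf h}_X\otimes C)\bigr)(X)(id_X\otimes c\otimes d)$ (note the application of $id\otimes\varepsilon_C$, which your sketch omits, is needed to land in $End_{\mathcal D}(X)$), the $C$-colinearity of $\nu({\bf h}_X\otimes C)(X)$ only yields
\begin{equation*}
\nu({\bf h}_X\otimes C)(X)(id_X\otimes c\otimes d)=\theta_X(c\otimes d_1)\otimes d_2 ,
\end{equation*}
i.e.\ one of the two expressions occurring in \eqref{theta2}. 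The relation \eqref{theta2} is an equality of this with $\bigl(\theta_X(c_2\otimes d)\bigr)_\psi\otimes {c_1}^\psi$, and the second expression cannot be extracted from the data you propose to use (naturality in $\mathcal D$ plus colinearity at the single object ${\bf h}_X\otimes C$).

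The missing idea in the paper is to evaluate $\nu$ also on the left-sided objects $C\otimes{\bf h}_X$ and $C\otimes C\otimes{\bf h}_X$, where the extra $C$-factor sits inertly on the left: there the left comultiplication $\rho^l_X=\Delta_C\otimes id$ is a morphism in ${\mathscr{M}(\psi)}_\mathcal{D}^C$, one has $\nu(C\otimes C\otimes{\bf h}_X)=id_C\otimes\nu(C\otimes{\bf h}_X)$ (Lemma \ref{2.4m}, proved via the maps $\eta_d$), and the entwining map itself gives a morphism $\Psi_X:C\otimes{\bf h}_X\longrightarrow{\bf h}_X\otimes C$ (Lemma \ref{6.6}) whose naturality square lets one compare the two evaluations (Lemma \ref{reversing}). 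Combining the left-coaction naturality with the right $C$-colinearity of $\nu(C\otimes{\bf h}_X)(X)$ then produces the two computations of the same element that constitute \eqref{theta2} (Proposition \ref{alpha}). So what you dismiss as ``careful bookkeeping'' in fact requires introducing these auxiliary objects and comparison morphisms; without them the forward half of $V\cong V_1$ does not go through, although the converse reconstruction of $\nu^\theta$ from $\theta\in V_1$ and the remaining parts of the theorem proceed exactly as you describe.
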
 
 
 The next result gives conditions for $(\mathscr F,\mathscr G)$ to be a Frobenius pair. 
 
 \begin{thm}\label{iFrobcondition} (see \ref{Frobcondition}) Let $\mathcal D$ be a small $K$-linear category, $(C,\Delta_C,\varepsilon_C)$ be a $K$-coalgebra and let $(\mathcal D,C,\psi)$ be a right-right entwining
 structure.  
 Then, $(\mathscr{F},\mathscr{G})$ is a Frobenius pair if and only if there exist $\theta \in V_1$ and $\eta \in W_1$ such that the following conditions hold:
\begin{equation*}
\varepsilon_C(d)f=\sum \hat{f} \circ \theta_{X}(c_f \otimes d) \qquad 
\varepsilon_C(d)f = \sum \hat{f}_\psi \circ \theta_{X}(d^\psi \otimes c_f) 
\end{equation*}
for any $f \in Hom_\mathcal{D}(X,Y)$, $d \in C$ and $\eta(X,Y)(f)=\sum \hat{f} \otimes c_f$.
\end{thm}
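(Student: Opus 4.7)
The plan is to apply the standard characterization: $(\mathscr{F},\mathscr{G})$ is a Frobenius pair if and only if $\mathscr{G}$ is also a left adjoint of $\mathscr{F}$, equivalently, there exist natural transformations $\eta \in W = Nat(1_{Mod\text{-}\mathcal{D}},\mathscr{F}\mathscr{G})$ and $\theta \in V = Nat(\mathscr{G}\mathscr{F},1_{\mathscr{M}(\psi)_{\mathcal{D}}^C})$ satisfying the two triangle identities
\[
(\mathscr{F}\theta)\circ(\eta\mathscr{F}) = id_{\mathscr{F}}, \qquad (\theta\mathscr{G})\circ(\mathscr{G}\eta) = id_{\mathscr{G}}.
\]
By Theorem \ref{ThmI}, such $\theta$'s and $\eta$'s are parametrized, respectively, by elements of $V_1$ and of $W_1 = Nat(h,h\otimes C)$. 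The proof thus reduces to translating the two triangle identities into explicit equations on the corresponding elements of $V_1$ and $W_1$.

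I would first recall, from the constructions realizing the isomorphisms $V \cong V_1$ and $W \cong W_1$ in Theorem \ref{ThmI}, the explicit description of the component $\theta_M: \mathscr{G}\mathscr{F}(M) \to M$ in terms of the family $\theta_X : C \otimes C \to End_\mathcal{D}(X)$, and of the component $\eta_N: N \to \mathscr{F}\mathscr{G}(N)$ in terms of the underlying natural transformation $h \to h \otimes C$. By naturality of $\eta$ and $\theta$, it suffices to verify the triangle identities on representable $\mathcal{D}$-modules $h(-,Y)$, i.e., on arbitrary morphisms $f \in Hom_\mathcal{D}(X,Y)$ together with test elements $d \in C$. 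A direct computation then chases $f$ through $\eta\mathscr{F}$ to produce $\sum \hat{f} \otimes c_f$ and subsequently through $\mathscr{F}\theta$ (paired against $d$) to produce $\sum \hat{f} \circ \theta_X(c_f \otimes d)$; equating this with $id_\mathscr{F}$ evaluated with weight $\varepsilon_C(d)$ yields exactly the first displayed equation. For the second triangle $(\theta\mathscr{G}) \circ (\mathscr{G}\eta) = id_{\mathscr{G}}$, the computation takes place in $\mathscr{M}(\psi)_{\mathcal{D}}^C$: here $\mathscr{G}\eta$ forces the coalgebra factor to traverse the $\mathcal{D}$-action on $\mathscr{G}(N)$, so the entwining $\psi$ intervenes, introducing the terms $\hat{f}_\psi$ and $d^\psi$ and producing the second displayed identity.

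The main obstacle is the careful bookkeeping in the second triangle, namely correctly tracking the entwining $\psi$ as it emerges from $\mathscr{G}$ and verifying that the defining conditions of $V_1$ are indeed needed to guarantee that the resulting map is well defined in $\mathscr{M}(\psi)_{\mathcal{D}}^C$. Once these two equivalences are in hand, the theorem follows: a pair $(\theta,\eta) \in V_1 \times W_1$ satisfying both displayed conditions yields, via the bijections of Theorem \ref{ThmI}, natural transformations fulfilling the triangle identities, so that $\mathscr{G}$ is left adjoint to $\mathscr{F}$ and $(\mathscr{F},\mathscr{G})$ is Frobenius; conversely, any Frobenius structure on $(\mathscr{F},\mathscr{G})$ provides such $\theta$ and $\eta$, which must then satisfy the two displayed equations by the same computation read in reverse.
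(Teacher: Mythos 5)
Your overall strategy is the same as the paper's: use the characterization of a Frobenius pair by a pair of natural transformations $\upsilon\in V$, $\omega\in W$ satisfying the two composite (triangle-type) identities \eqref{fc1}--\eqref{fc2}, transport them through the isomorphisms $V\cong V_1$ and $W\cong W_1$, and translate. The gap is in how you propose to verify and exploit the identity on the $\mathscr F$-side, $\mathscr F(\upsilon(\mathcal M))\circ\omega(\mathscr F(\mathcal M))=id$. This identity quantifies over objects $\mathcal M$ of ${\mathscr{M}(\psi)}_\mathcal{D}^C$, and a representable ${\bf h}_Y$ is in general \emph{not} an entwined module, so ``by naturality it suffices to check on representables'' does not apply here. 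The paper instead checks \eqref{fc1} on the generators $N\otimes{\bf h}_Y$ (equivalently $V_m\otimes{\bf h}_{|m|}$) of the Grothendieck category ${\mathscr{M}(\psi)}_\mathcal{D}^C$, which requires the auxiliary Lemma \ref{2.13.} that $\omega(N\otimes{\bf h}_Y)=id_N\otimes\omega({\bf h}_Y)$, and then passes to an arbitrary $\mathcal M$ via an epimorphism from a direct sum of generators together with the fact that $\mathscr F$, being a left adjoint, preserves epimorphisms; none of this is covered by naturality alone, and your proposal omits it.

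Relatedly, your bookkeeping of which identity yields which displayed equation is reversed, and this matters for the converse direction. In the paper, the $\mathscr G$-side identity \eqref{fc2}, evaluated at $\mathcal N={\bf h}_Y$, yields the $\psi$-free equation \eqref{fro1}: the $\psi$'s created by the $\mathcal D$-action on ${\bf h}_Y\otimes C$ are absorbed using \eqref{theta2}. The $\psi$-twisted equation \eqref{fro2} comes from evaluating the $\mathscr F$-side identity \eqref{fc1} at the specific entwined module $C\otimes{\bf h}_Y$, whose $C$-coaction \eqref{rightcomod2} is where the entwining actually enters. Since your scheme only ever tests the identities against representable $\mathcal D$-modules, it would never produce the second displayed equation, so the ``same computation read in reverse'' does not establish the necessity of \eqref{fro2}. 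To repair the argument you need to (a) replace the representables by the generators $N\otimes{\bf h}_Y$ on the $\mathscr F$-side, prove the analogue of Lemma \ref{2.13.}, and add the epimorphism argument, and (b) in the converse direction specialize \eqref{fc1} at $C\otimes{\bf h}_Y$ and \eqref{fc2} at ${\bf h}_Y$ as in Theorem \ref{Frobcondition}.
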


More generally, the $\mathcal D\text{-}\mathcal D$-bimodule $h\otimes C$ may be treated as a functor  $h\otimes C:\mathcal D\longrightarrow {\mathscr{M}(\psi)}_{\mathcal{D}}^C$  by setting (see Lemma \ref{lem 6.2})
\begin{equation*}
\begin{array}{l}
(h \otimes C)(Y):=Hom_{\mathcal D}(-,Y)\otimes C\qquad 
(h\otimes C)(f)(Z)(g\otimes c):= fg\otimes c
\end{array}
\end{equation*}
for $f\in Hom_\mathcal{D}(Y,X)$ and $g \otimes c \in Hom_{\mathcal D}(Z,Y) \otimes C$. Additionally, let $C$ be a finite dimensional coalgebra and let $C^*=Hom(C,K)$ be the linear dual of $C$. Then, we show that there is a functor
$C^*\otimes h: \mathcal D\longrightarrow {\mathscr{M}(\psi)}_{\mathcal{D}}^C$. 

\begin{thm}(see \ref{ThmIIIx})
Let $(\mathcal{D},C,\psi)$ be an entwining structure and let $C$ be a finite dimensional coalgebra.  Then, the following statements are equivalent:

\smallskip
(i) $(\mathscr{F},\mathscr{G})$ is a Frobenius pair.

\smallskip
(ii) $C^*\otimes h$ and $h\otimes C$ are isomorphic as   functors from $\mathcal{D}$ to ${\mathscr{M}(\psi)}_\mathcal{D}^C$.
\end{thm}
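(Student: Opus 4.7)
My plan is to reduce to Theorem \ref{iFrobcondition}, which characterizes Frobenius-ness of $(\mathscr{F},\mathscr{G})$ in terms of the existence of $(\theta,\eta) \in V_1 \times W_1$ satisfying two compatibility equations, and then to re-encode this data as a natural isomorphism $\alpha \colon C^* \otimes h \to h \otimes C$ in the functor category $[\mathcal{D}, {\mathscr{M}(\psi)}_\mathcal{D}^C]$, using crucially that $\dim_K C < \infty$.

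First I would unwind what such a natural transformation amounts to: for each $Y \in Ob(\mathcal{D})$, a morphism $\alpha(Y) \colon (C^* \otimes h)(Y) \to (h \otimes C)(Y)$ of entwined modules, consisting of $K$-linear maps $\alpha(Y)(Z) \colon C^* \otimes Hom_\mathcal{D}(Z,Y) \to Hom_\mathcal{D}(Z,Y) \otimes C$ that respect the twisted $\mathcal{D}$-action and the $C$-coaction on both sides, and that are natural in $Y$ (giving a $\mathcal{D}$-bimodule compatibility). Fixing a basis $\{c_i\}$ of $C$ with dual basis $\{c_i^*\}$ of $C^*$, such an $\alpha(Y)(Z)$ is equivalent data to a linear map $Hom_\mathcal{D}(Z,Y) \to C \otimes Hom_\mathcal{D}(Z,Y) \otimes C$ sending $f \mapsto \sum_i c_i \otimes \alpha(Y)(Z)(c_i^* \otimes f)$, with compatibilities imposed by the entwined structure.

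Given such an $\alpha$, I extract $\eta$ by post-composing with $id_h \otimes \varepsilon_C$ and evaluating at the distinguished element $\varepsilon_C \in C^*$, yielding a natural transformation $h \to h \otimes C$, i.e.\ an element of $W_1$. Dually, I extract $\theta$ from the putative inverse $\alpha^{-1} \colon h \otimes C \to C^* \otimes h$ by evaluating at $Z = X = Y$ on the identity endomorphism, obtaining $End_\mathcal{D}(X) \otimes C \to C^* \otimes End_\mathcal{D}(X)$, which under the finite-dimensionality of $C$ dualizes to $\theta_X \colon C \otimes C \to End_\mathcal{D}(X)$. The two defining equations of $V_1$ for $\theta$ will follow from the compatibility of $\alpha$ with the $\mathcal{D}$-action (twisted by $\psi$) and the $C$-coaction. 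Finally, the identities $\alpha^{-1} \circ \alpha = id$ and $\alpha \circ \alpha^{-1} = id$, evaluated on dual basis elements and an element of $C$, translate directly into the two Frobenius compatibility equations
\begin{equation*}
\varepsilon_C(d)f = \sum \hat f \circ \theta_X(c_f \otimes d), \qquad \varepsilon_C(d)f = \sum \hat f_\psi \circ \theta_X(d^\psi \otimes c_f)
\end{equation*}
of Theorem \ref{iFrobcondition}. The converse is obtained by reversing this construction: given $(\theta,\eta)$ satisfying the Frobenius conditions, define $\alpha$ using $\eta$ and its inverse using $\theta$ (both via the duality afforded by $\dim_K C < \infty$), and use the two equations to verify that they are mutually inverse.

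The main obstacle is tracking the twisted entwined module structures on $C^* \otimes h$, which involve both the algebra structure on $C^*$ arising from $\Delta_C$ and the entwining $\psi$, and ensuring that the naturality of $\alpha$ in $Y$ correctly captures the $\mathcal{D}$-bimodule aspect encoded by the second $V_1$ equation. Once the correspondence between $\alpha$-data and $(\theta,\eta)$-data is set up carefully, the verification that the invertibility of $\alpha$ is exactly equivalent to the two Frobenius compatibility conditions is a direct but notationally delicate computation in the $\psi$-summation convention, which I would organise around the dual-basis expansion above.
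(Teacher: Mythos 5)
Your proposal is correct and takes essentially the same route as the paper: reduce to Theorem \ref{Frobcondition} and, using $\dim_K C<\infty$, re-encode $\theta\in V_1$ and $\eta\in W_1$ as natural transformations $h\otimes C\to C^*\otimes h$ and $C^*\otimes h\to h\otimes C$ (the paper's Propositions \ref{V1isoV2} and \ref{W1isoW2}), with the two Frobenius equations being precisely the statement that these are mutually inverse, and conversely extracting $\eta$ from an arbitrary isomorphism via $\varepsilon_C\in C^*$ and $\theta$ from its inverse by evaluation at identities. One minor correction: to obtain $\eta\in W_1$ you should only precompose $\alpha$ with $f\mapsto\varepsilon_C\otimes f$ (the paper's $\Lambda$); additionally post-composing with $id_h\otimes\varepsilon_C$ would instead produce an element of $Nat(h,h)$.
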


In the final part of this paper, we study coalgebra Galois extensions of categories in a manner analogous to Brzezi\'{n}ski \cite{Brz00}, Brzezi\'{n}ski and Hajac \cite{BH}  and Caenepeel \cite{Caeni}. For this, we suppose that
every morphism space $Hom_{\mathcal D}(X,Y)$ carries the structure of a $C$-comodule $\rho_{XY}:Hom_\mathcal{D}(X,Y)\longrightarrow Hom_\mathcal{D}(X,Y)\otimes C$, $f\mapsto \sum f_0\otimes f_1$. This allows us to define a category $\mathcal E$ of $C$-coinvariants of $\mathcal D$ (see 
Definition \ref{coinv*}). Further, we say that $\mathcal D$ is a $C$-Galois extension of $\mathcal E$ if the canonical map
\begin{equation*} can_X:h\otimes_{\mathcal E}{Hom_{\mathcal D}(X,-)}\longrightarrow {Hom_{\mathcal D}(X,-)}\otimes C
\end{equation*} is an isomorphism for each $X\in Ob(\mathcal D)$ (see Definition \ref{4.5tr}). We show that a $C$-Galois extension leads to a canonical entwining structure.

\begin{thm}\label{iGal-ent} (see \ref{Gal-ent})
Let $\mathcal{D}$ be a $C$-Galois extension of $\mathcal{E}$. 
Then, 
there exists a unique right-right entwining structure $(\mathcal{D},C,\psi)$ which makes $Hom_{\mathcal D}(-,Y)$ an object in ${\mathscr{M}(\psi)}_\mathcal{D}^C$ for every $Y\in Ob(\mathcal{D})$ with its canonical $\mathcal{D}$-module structure and right $C$-coactions $\{\rho_{XY}\}_{X\in Ob(\mathcal{D})}$.
\end{thm}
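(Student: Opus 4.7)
The plan is to adapt the Brzezi\'nski--Hajac construction from the algebra case, building the entwining from a translation map that arises from the Galois isomorphism. For each $X \in Ob(\mathcal{D})$, set
\begin{equation*}
\tau_X(c) \,:=\, can_X(X)^{-1}(id_X \otimes c) \,\in\, (h \otimes_\mathcal{E} Hom_\mathcal{D}(X,-))(X),
\end{equation*}
and write $\tau_X(c) = \sum c^{[1]} \otimes c^{[2]}$ with $c^{[1]} \in Hom_\mathcal{D}(Z,X)$ and $c^{[2]} \in Hom_\mathcal{D}(X,Z)$ for some $Z \in Ob(\mathcal{E})$. The relation $can_X(X) \circ \tau_X = id$ yields the key identity $\sum c^{[1]}(c^{[2]})_0 \otimes (c^{[2]})_1 = id_X \otimes c$; meanwhile, $C$-colinearity of $can_X$ (for the natural coaction coming from $\rho_{X,-}$ on the source and $id \otimes \Delta_C$ on the target) delivers the companion identity $\sum c^{[1]} \otimes (c^{[2]})_0 \otimes (c^{[2]})_1 = \sum {c_{(1)}}^{[1]} \otimes {c_{(1)}}^{[2]} \otimes c_{(2)}$.

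Next, define the candidate entwining by
\begin{equation*}
\psi_{YX}(c \otimes \phi) \,:=\, can_Y(X)\Bigl(\textstyle\sum c^{[1]} \otimes c^{[2]}\phi\Bigr) \,=\, \sum c^{[1]}(c^{[2]}\phi)_0 \otimes (c^{[2]}\phi)_1
\end{equation*}
for $\phi \in Hom_\mathcal{D}(Y,X)$ and $c \in C$; here $\sum c^{[1]} \otimes c^{[2]} \phi$ denotes the image of $\tau_X(c)$ under the map induced by the natural transformation $Hom_\mathcal{D}(X,-) \to Hom_\mathcal{D}(Y,-)$, $g \mapsto g\phi$. Well-definedness modulo the $\mathcal{E}$-balanced relations follows from the left $\mathcal{E}$-linearity of $\rho$ encoded in the definition of $\mathcal{E}$. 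The right-right entwining axioms from Section~2 then follow by combining the two identities above with the counit and coassociativity of $\rho$: for instance, the counit axiom reduces to $\sum c^{[1]} c^{[2]} = \varepsilon_C(c)\, id_X$, while the coassociativity axiom is a direct consequence of the companion identity. The entwined module compatibility $\rho_{X'Y}(f\phi) = \sum f_0 \phi_\psi \otimes f_1^\psi$ for $f \in Hom_\mathcal{D}(X,Y)$, $\phi \in Hom_\mathcal{D}(X',X)$ follows by substituting $\sum f_1^{[1]}(f_1^{[2]})_0 \otimes (f_1^{[2]})_1 = id_X \otimes f_1$ into $\rho_{XY}(f) = f_0 \otimes f_1$, composing with $\phi$, and using left $\mathcal{E}$-linearity of $\rho$ to collapse the resulting triple sum.

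For uniqueness, let $\psi'$ be any right-right entwining for which every $Hom_\mathcal{D}(-,Y)$ is entwined. Applying the entwined module axiom for $Hom_\mathcal{D}(-,X)$ to the composition $c^{[1]} \circ (c^{[2]} \phi)$ and summing over $\tau_X(c)$ produces a formula for $\rho_{Y,X}(c^{[1]} c^{[2]} \phi)$ expressed via $\psi'$; evaluating the same left-hand side directly by left $\mathcal{E}$-linearity of $\rho$ and the key identity of the first paragraph recovers $\psi_{YX}(c \otimes \phi)$, and the Galois iso coupled with colinearity then pins down $\psi'_{YX}(c \otimes \phi) = \psi_{YX}(c \otimes \phi)$. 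The main obstacle lies in the entwining-axiom verification --- especially the composition compatibility and coassociativity axiom --- which requires delicate bookkeeping of the coend relations $\otimes_\mathcal{E}$ in tandem with the translation-map identities and the $\mathcal{E}$-linearity properties of $\rho$.
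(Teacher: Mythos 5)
Your construction is exactly the paper's: the same translation maps $\tau_X(c)=can_{XX}^{-1}(id_X\otimes c)$, the same identities (your key identity and colinearity, together with $c^{(1)}c^{(2)}=\varepsilon_C(c)\,id_X$, are precisely Lemma \ref{trans}), and the same definition $\psi_{XY}=can_{XY}\circ(\text{composition with } f)\circ(\tau_Y\otimes id)$, with the axiom checks and uniqueness carried out Brzezi\'nski--Hajac style. So the proposal is correct and takes essentially the same route as the paper, which in fact gives less detail, deferring the verification to \cite[Theorem 2.7]{BH}.
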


Conversely, under suitable conditions, an entwining structure $(\mathcal D,C,\psi)$ may be used to express $\mathcal D$ as a $C$-Galois extension. In that case, the category
${\mathscr{M}(\psi)}_{\mathcal{D}}^C$ reduces to the category of modules over the $C$-coinvariants of $\mathcal D$. 

\begin{thm}\label{ith4.11} (see \ref{th4.11} and \ref{entlast})
Let $C$ be a $K$-coalgebra and $\mathcal{D}$ be a small $K$-linear category such that $Hom_\mathcal{D}(X,Y)$ has a right $C$-comodule structure $\rho_{XY}$ for every $X,Y \in Ob(\mathcal{D})$.   Let $\mathcal{E}$ be the subcategory of $C$-coinvariants of $\mathcal{D}$. If there exists a convolution invertible collection $\Phi=\{\Phi_{XY}:C\longrightarrow Hom_\mathcal{D}(X,Y)\}_{X,Y\in Ob(\mathcal{D})}$ of right
$C$-comodule maps, then the following are equivalent:

(i) $\mathcal{D}$ is a $C$-Galois extension of $\mathcal{E}$.

(ii) There exists a right-right entwining structure $(\mathcal{D},C,\psi)$ such that $Hom_{\mathcal D}(-,Y)$ is an object in ${\mathscr{M}(\psi)}_\mathcal{D}^C$ for every $Y\in Ob(\mathcal{D})$ with its canonical $\mathcal{D}$-module structure and right $C$-coactions $\{\rho_{XY}\}_{X \in Ob(\mathcal{D})}$.

(iii) For any $f\in Hom_\mathcal{D}(X,Y)$, the morphism $\sum f_0\circ\Phi'_{ZX}(f_1) \in Hom_\mathcal{E}(Z, Y)$ for every $Z \in Ob(\mathcal{D})$, where $\Phi'$ is the convolution  inverse of $\Phi$. 

\smallskip
In this case,  the categories ${\mathscr{M}(\psi)}_\mathcal{D}^C $ and Mod-$\mathcal{E}$ are equivalent.
\end{thm}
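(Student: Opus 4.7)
The plan is to prove $(i)\Rightarrow(ii)\Rightarrow(iii)\Rightarrow(i)$ and then establish the category equivalence. The implication $(i)\Rightarrow(ii)$ is immediate from Theorem \ref{iGal-ent}, which produces a canonical entwining whose induced structure on each $Hom_\mathcal{D}(-,Y)$ is precisely the one postulated in (ii). For $(ii)\Rightarrow(iii)$, I would take $f\in Hom_\mathcal{D}(X,Y)$ and compute $\rho_{ZY}\bigl(\sum f_0\circ\Phi'_{ZX}(f_1)\bigr)$ using three ingredients: the entwined-module rule expressing $\rho$ of a composition in terms of $\psi$; the right $C$-colinearity of $\Phi_{ZX}$; and the formula for $\rho\circ\Phi'$ forced by $\Phi\ast\Phi'=\varepsilon_C\cdot\mathrm{id}$ together with the entwining. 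After unraveling the $\psi$-twists and collapsing $\Delta_C$ via counitality, the coaction reduces to the coinvariant form, placing $\sum f_0\Phi'(f_1)$ in $Hom_\mathcal{E}(Z,Y)$.

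The essential implication is $(iii)\Rightarrow(i)$. I would construct the candidate inverse to $can_X$ at each object $Z$ by the formula $f\otimes c\mapsto\sum f\circ\Phi'_{WX}(c_1)\otimes_\mathcal{E}\Phi_{XW}(c_2)$, where the intermediate object $W$ is dictated by the coend presentation of $h\otimes_\mathcal{E} Hom_\mathcal{D}(X,-)$. Condition (iii) is exactly what guarantees well-definedness over $\mathcal{E}$: the factor $f\circ\Phi'(c_1)$ must respect the $\mathcal{E}$-balancing, and (iii) supplies this. The two compositions $\tau_X\circ can_X$ and $can_X\circ\tau_X$ then collapse via $\Phi\ast\Phi'=\Phi'\ast\Phi=\varepsilon_C\cdot\mathrm{id}$ together with coassociativity and counitality, exhibiting $can_X$ as an isomorphism. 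For the concluding equivalence ${\mathscr{M}(\psi)}_\mathcal{D}^C\simeq Mod\text{-}\mathcal{E}$, I would pair the coinvariants functor $(-)^{coC}\colon{\mathscr{M}(\psi)}_\mathcal{D}^C\to Mod\text{-}\mathcal{E}$ with an induction functor $N\mapsto N\otimes_\mathcal{E} h$ carrying the $\mathcal{D}$-action on the $h$-factor and the $C$-coaction inherited from $\rho$. The Galois property applied to representables makes the unit of this adjunction an isomorphism, while the cleaving $\Phi$ yields a trivialization $M\cong M^{coC}\otimes_\mathcal{E} h$ of entwined modules (an analogue of the fundamental theorem for Hopf modules over a cleft extension), making the counit an isomorphism.

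The main obstacle will be the bookkeeping of $\psi$-twists throughout, most especially in $(ii)\Rightarrow(iii)$: every occurrence of $\rho$ on a composition or on $\Phi'$ produces a twist, and since $\Phi'$ is only convolution-inverse to a colinear map (and not itself colinear in an obvious way), the identity for $\rho\circ\Phi'$ must be extracted by a careful detour through $\psi$ using $\Phi\ast\Phi'=\varepsilon_C\cdot\mathrm{id}$. A secondary subtlety is that for a general coalgebra $C$ the notion of coinvariants depends on the cleaving $\Phi$, so verifying naturality of the adjunction underlying the category equivalence requires care to ensure compatibility with the comodule structures $\rho_{XY}$.
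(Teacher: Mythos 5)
Your cycle of implications matches the paper's own: (i)$\Rightarrow$(ii) is Theorem \ref{Gal-ent}; the ``careful detour through $\psi$'' you anticipate for (ii)$\Rightarrow$(iii) is precisely the identity $({_X{\bf h}}\otimes C)\big(\Phi'_{XY}(c)\big)(\rho_{XX}(id_X))=\psi_{XY}(c_1\otimes \Phi'_{XY}(c_2))$ that the paper establishes in \eqref{4.9jg} before checking objectwise colinearity of $f_0\circ\Phi'_{ZX}(f_1)$; and your candidate inverse to $can$ is the paper's \eqref{lsteq}, with the intermediate object being the target of $f$ (so $f\otimes c\mapsto f\circ\Phi'_{YX}(c_1)\otimes_{\mathcal E}\Phi_{XY}(c_2)$ for $f\in Hom_{\mathcal D}(X,Y)$). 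One correction of emphasis: condition (iii) is not what makes this formula well defined --- its domain $Hom_{\mathcal D}(X,Y)\otimes C$ is not a quotient, so there is nothing to check there. Where (iii) enters is in proving $can^{-1}\circ can=id$: one must slide the factor $g'_0\circ\Phi'_{YX}(g'_{1\,1})$ across $\otimes_{\mathcal E}$, and (iii) is exactly the statement that this factor lies in $\mathcal E$. (Also, under hypothesis (ii) it does suffice to show that $\rho$ of $f_0\circ\Phi'_{ZX}(f_1)$ has the ``coinvariant form'', since the entwined structure on each ${\bf h}_Y$ then propagates colinearity to composites with arbitrary $g$.)

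The genuine gap is in the final equivalence ${\mathscr{M}(\psi)}_\mathcal{D}^C\simeq Mod\text{-}\mathcal E$. Verifying the unit on representables and extending to all of $Mod\text{-}\mathcal E$ does not work as stated: the coinvariants functor $(-)^{co(h\otimes C)}$ is defined by an equalizer, so the composite with induction has no reason to preserve the colimits presenting a general $\mathcal E$-module, and an isomorphism on representables cannot simply be propagated. The paper's actual argument first proves the normal-basis isomorphism $Hom_{\mathcal D}(X,-)\cong Hom_{\mathcal E}(X,-)\otimes C$ in ${_{\mathcal E}}\mathscr{M}^C$ (Proposition \ref{DEC}, which uses the cleaving $\Phi$ together with (iii)), deduces that $h$ is flat and $\bigoplus_X Hom_{\mathcal D}(X,-)$ faithfully flat over $\mathcal E$ and that $\mathcal M\hookrightarrow\mathcal M\otimes_{\mathcal E}h$ is monic (Lemma \ref{Cor4.15j}), and only then obtains $(\mathcal M\otimes_{\mathcal E}h)^{co}\cong\mathcal M$ by a short-exact-sequence argument and $\mathcal N\cong\mathcal N^{co}\otimes_{\mathcal E}h$ by an equalizer/flatness (descent-type) argument using $h\otimes C\cong h\otimes_{\mathcal E}h$ as $\mathcal D$-corings. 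Your appeal, for the counit, to ``an analogue of the fundamental theorem for Hopf modules over a cleft extension'' is essentially the assertion of the statement to be proved; a cleaving-based proof is plausible (one would exhibit explicit maps of the shape $n\mapsto \mathcal N\big(\Phi'_{XX}(n_1)\big)(n_0)\otimes_{\mathcal E}\Phi_{XX}(n_2)$ and evaluation, and check coinvariance, balancing over $\mathcal E$ and colinearity, again via (iii) and the displayed identity), but as written this step is deferred rather than argued. Finally, the worry at the end is moot in this setting: the subcategory $\mathcal E$ and the coinvariants of an entwined module are defined from the group-like elements $\rho_{XX}(id_X)$ (Definition \ref{coinv*}), not from the cleaving $\Phi$.
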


\smallskip
\textbf{Notations:} Throughout the paper, $K$ is a field, $C$ is a $K$-coalgebra with comultiplication $\Delta_C$ and counit $\varepsilon_C$. We shall use Sweedler's notation for the coproduct $\Delta_C(c)=  c_1 \otimes c_2,$ and for a coaction $\rho_M:M \longrightarrow M \otimes C$, $\rho_M(m)= m_0 \otimes m_1$ with the summation omitted. We denote by $C^*$ the linear dual of $C$. Sometimes when the coaction is clear from context, we will omit the subscript. 

\section{Entwining structures}\label{entwstr}

 In this section, we introduce a categorical generalization of entwining structures and entwined modules. We prove that the category of entwined modules is a Grothendieck category.
We begin by recalling the definition of modules over a category (see, for instance, \cite{ Sten,Mit2 }). 

\begin{definition}
 A right module  over a small $K$-linear category $\mathcal{D}$ is  a  $K$-linear functor $\mathcal{D}^{op} \longrightarrow Vect_K$, where $Vect_K$ denotes the category of  $K$-vector spaces. Similarly, a left module over $\mathcal{D}$ is a  $K$-linear functor $\mathcal{D} \longrightarrow Vect_K$. The category of all right (resp. left) modules over $\mathcal{D}$ will be denoted by $Mod$-$\mathcal D$  (resp. $\mathcal D$-$Mod$). 
\end{definition}

For each  $X\in Ob(\mathcal D)$, the representable functors ${\bf h}_X:=Hom_{\mathcal D}(-,X)$ and $_X{\bf h}:=Hom_{\mathcal D}(X,-)$  are examples of right and left modules over $\mathcal{D}$ respectively. Unless otherwise mentioned,
by a $\mathcal D$-module we will always mean a right  $\mathcal D$-module. 

\smallskip
Let $C$ be a $K$-coalgebra and let $\mathcal{D}$ be a small $K$-linear category. Suppose that we have a collection of $K$-linear maps 
\begin{equation*}\psi=\{\psi_{XY}:C \otimes Hom_\mathcal{D}(X,Y) \longrightarrow Hom_\mathcal{D}(X,Y) \otimes C\}_{(X,Y)\in Ob(\mathcal D)^2}
\end{equation*} We use the notation $\psi_{XY}(c\otimes f)=  f_\psi\otimes c^\psi$ for $c\in C$ and $f \in Hom_\mathcal{D}(X,Y)$. We will say that the tuple $(\mathcal{D},C,\psi)$ is a  (right-right) entwining structure if the following conditions hold:
\begin{align}
  (gf)_\psi \otimes c^\psi &= g_\psi f_\psi \otimes {c^\psi}^\psi \label{eq 6.1}\\
 \varepsilon_C(c^\psi)(f_\psi) &= \varepsilon_C(c)f \label{eq 6.2}\\
 f_\psi \otimes \Delta_C(c^\psi) &=  {f_\psi}_\psi \otimes {c_1}^\psi \otimes {c_2}^\psi \label{eq 6.3}\\
\psi_{XX}(c \otimes id_X)&= id_X \otimes c \label{eq 6.4}
\end{align}
for each $f \in Hom_\mathcal{D}(X,Y)$, $g \in Hom_\mathcal{D}(Y,Z)$ and $c \in C$.
Throughout this  paper, $(\mathcal{D},C,\psi)$ will always be an entwining structure. A morphism between entwining structures $(\mathcal{D'},C',\psi')$ and $(\mathcal{D},C,\psi)$ is a pair $(\mathscr{F},\sigma)$ where $\mathscr{F}:\mathcal{D'}\longrightarrow \mathcal{D}$ is a functor and $\sigma: C'\longrightarrow C$ is a counital coalgebra map such that $\mathscr{F}({f'}_{\psi'})\otimes \sigma({c'}^{\psi'}) = \mathscr{F}(f')_\psi \otimes \sigma(c')^\psi$ for any $c'\otimes f' \in C'\otimes Hom_\mathcal{D'}(X',Y')$ where $X',Y' \in Ob(\mathcal{D'})$.

\begin{definition}\label{ddde}
Let $\mathcal{M}$ be a right $\mathcal{D}$-module with a given right $C$-comodule structure $\rho_{\mathcal M(Y)}:\mathcal M(Y)\longrightarrow \mathcal M(Y)\otimes C$ on $\mathcal{M}(Y)$ for each $Y\in Ob(\mathcal{D})$. Then, $\mathcal{M}$ is said to be an entwined module over  $(\mathcal{D},C,\psi)$ if the following compatibility condition holds:
\begin{equation}\label{comp 2}
 \rho_{\mathcal{M}(Y)}(\mathcal{M}(f)(m))= \big({\mathcal{M}(f)(m)}\big)_0 \otimes \big({\mathcal{M}(f)(m)}\big)_{1}= \mathcal{M}(f_\psi)(m_{0}) \otimes  {m_1}^\psi
\end{equation}
for every $f \in Hom_\mathcal{D}(Y,X)$  and $m \in \mathcal{M}(X).$ We denote by ${\mathscr{M}(\psi)}_\mathcal{D}^C$ the category whose objects are entwined modules over  $(\mathcal{D},C,\psi)$ and whose morphisms are given by
$$Hom_{\mathscr{M}(\psi)_\mathcal{D}^C}(\mathcal{M},\mathcal{N}):=\{\eta \in Hom_{Mod\text{-}\mathcal{D}}(\mathcal{M},\mathcal{N})\ |\ \eta(X):\mathcal{M}(X) \longrightarrow \mathcal{N}(X)~ \text{is $C$-colinear}~ \forall X \in Ob(\mathcal{D}) \}$$
\end{definition}

We now give an important example of entwining structures. 

\begin{example}\label{2.3fed}
Let $\mathcal{D}$ be a right co-$H$-category (see \cite{CiSo} or the description in \cite[Definition 2.4]{BBR} ) and $C$ be a right $H$-module coalgebra. Then, the triple $(\mathcal{D},C,\psi)$ is an entwining structure, where $\psi$ is given by:
$$\psi_{XY}:C \otimes Hom_\mathcal{D}(X,Y) \longrightarrow C \otimes Hom_\mathcal{D}(X,Y) \otimes H \overset{\cong}{\longrightarrow} Hom_\mathcal{D}(X,Y) \otimes C \otimes H \longrightarrow Hom_\mathcal{D}(X,Y) \otimes C$$
Explicitly, we have $\psi_{XY}(c \otimes f):= f_0 \otimes cf_1$ for any $f \in Hom_\mathcal{D}(X,Y)$ and $c \in C$. In this case, an entwined module is precisely a right $\mathcal{D}$-module with a given right $C$-comodule structure on  $\mathcal{M}(X)$ for each $X \in Ob(\mathcal{D})$ and satisfying the following compatibility condition
\begin{equation*}\label{comp}
 \big({\mathcal{M}(f)(m)}\big)_0 \otimes \big({\mathcal{M}(f)(m)}\big)_{1}= \mathcal{M}(f_0)(m_{0}) \otimes  m_1f_1
\end{equation*}
We will refer to these modules as (right-right) Doi-Hopf modules and their  category will be denoted by $\mathscr{M}_\mathcal{D}^C$.  If $\mathcal{D}$ is a right co-$H$-category with a single object, i.e., an $H$-comodule algebra, then  $\mathscr{M}_\mathcal{D}^C$  recovers the classical notion of Doi-Hopf modules (see \cite{Doi}). In the particular case where $C=H$, the right-right 
Doi-Hopf modules have been referred to as relative Hopf modules in \cite[$\S$ 5]{BBR}. 
\end{example}

\begin{lemma}\label{lem 6.2}
Let $(\mathcal{D},C,\psi)$ be an entwining structure and let $\mathcal{M}$ be a right $\mathcal{D}$-module. Then, we may obtain an object $\mathcal{M}\otimes C \in {\mathscr{M}(\psi)}_\mathcal{D}^C$ by setting
\begin{equation*}\label{rightaction1}
\begin{array}{c}
(\mathcal{M}\otimes C)(X):=\mathcal{M}(X)\otimes C\\
(\mathcal{M}\otimes C)(f)(m \otimes c) :=   \mathcal{M}(f_\psi)(m)\otimes c^\psi
\end{array}
\end{equation*} 
for $X \in \text{Ob}(\mathcal{D}), f \in Hom_{\mathcal{D}}(Y,X)$ and $m \otimes c \in \mathcal{M}(X) \otimes C$. In fact, this determines a functor
from $Mod\text{-}\mathcal D$ to $ {\mathscr{M}(\psi)}_\mathcal{D}^C$ . \end{lemma}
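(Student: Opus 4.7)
The plan is to verify, in order, that the formula defines a contravariant functor $(\mathcal{M}\otimes C):\mathcal{D}^{op}\longrightarrow Vect_K$, that each $(\mathcal{M}\otimes C)(X)$ carries a natural right $C$-comodule structure, that the entwined compatibility \eqref{comp 2} holds, and finally that the assignment $\mathcal{M}\mapsto \mathcal{M}\otimes C$ is functorial in $\mathcal{M}$.

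First I would verify that $\mathcal{M}\otimes C$ is a right $\mathcal{D}$-module. Preservation of identities is immediate from \eqref{eq 6.4}: since $\psi_{XX}(c\otimes id_X)=id_X\otimes c$, the formula gives $(\mathcal{M}\otimes C)(id_X)(m\otimes c)=\mathcal{M}(id_X)(m)\otimes c=m\otimes c$. For compositions, given $f\in Hom_\mathcal{D}(Y,X)$ and $g\in Hom_\mathcal{D}(Z,Y)$ so that $fg\in Hom_\mathcal{D}(Z,X)$, the definition yields $(\mathcal{M}\otimes C)(fg)(m\otimes c)=\mathcal{M}((fg)_\psi)(m)\otimes c^\psi$. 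Applying \eqref{eq 6.1} to rewrite $(fg)_\psi\otimes c^\psi=f_\psi g_\psi\otimes {c^\psi}^\psi$ and using contravariance of $\mathcal{M}$ to split $\mathcal{M}(f_\psi g_\psi)=\mathcal{M}(g_\psi)\circ \mathcal{M}(f_\psi)$, I obtain $\mathcal{M}(g_\psi)\bigl(\mathcal{M}(f_\psi)(m)\bigr)\otimes {c^\psi}^\psi$, which is exactly $(\mathcal{M}\otimes C)(g)\bigl((\mathcal{M}\otimes C)(f)(m\otimes c)\bigr)$.

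Next, equipping each $(\mathcal{M}\otimes C)(X)=\mathcal{M}(X)\otimes C$ with the coaction $id_{\mathcal{M}(X)}\otimes \Delta_C$, coassociativity and counitality reduce immediately to the coalgebra axioms for $C$. The heart of the proof is the compatibility \eqref{comp 2}: its left-hand side, applied to $m\otimes c\in \mathcal{M}(X)\otimes C$ along $f\in Hom_\mathcal{D}(Y,X)$, unpacks as $\mathcal{M}(f_\psi)(m)\otimes \Delta_C(c^\psi)$, while its right-hand side, using the Sweedler expressions $(m\otimes c)_0=m\otimes c_1$ and $(m\otimes c)_1=c_2$, becomes $\mathcal{M}((f_\psi)_\psi)(m)\otimes (c_1)^\psi \otimes (c_2)^\psi$. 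Equality of the two is exactly \eqref{eq 6.3}, applied after $\mathcal{M}(-)(m)$ on the first tensor factor.

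Finally, for functoriality in $\mathcal{M}$ I would define, for any morphism $\alpha:\mathcal{M}\longrightarrow \mathcal{N}$ of right $\mathcal{D}$-modules, the transformation $(\alpha\otimes C)(X):=\alpha(X)\otimes id_C$. Naturality with respect to $\mathcal{D}$-morphisms $f$ reduces at once to the naturality of $\alpha$ applied to $f_\psi\in Hom_\mathcal{D}(Y,X)$, and each component is trivially $C$-colinear because the coaction only touches the second tensor factor, on which $(\alpha\otimes C)(X)$ acts as the identity. Preservation of identities and of compositions of module morphisms is automatic. I do not anticipate any serious obstacle: every step reduces a defining property of $\mathcal{M}\otimes C$ to one of the entwining axioms \eqref{eq 6.1}--\eqref{eq 6.4}, and the only real care needed lies in tracking the Sweedler-style sums through iterated applications of $\psi$.
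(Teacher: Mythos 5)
Your proposal is correct and follows essentially the same route as the paper: the $\mathcal{D}$-module structure via \eqref{eq 6.1} (with \eqref{eq 6.4} for identities), the coaction $id_{\mathcal{M}(X)}\otimes\Delta_C$, and the compatibility \eqref{comp 2} reduced to \eqref{eq 6.3}, exactly as in the paper's verification. Your added explicit check of functoriality in $\mathcal{M}$ (via $\alpha\otimes id_C$) is a detail the paper leaves implicit, and it is handled correctly.
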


\begin{proof}
The fact that $\mathcal{M}\otimes C$ is a right $\mathcal{D}$-module follows from \eqref{eq 6.1}. For each $X \in Ob(\mathcal{D})$, it may be verified that $\mathcal{M}(X)\otimes C$ has a right $C$-comodule structure given by
\begin{equation}\label{rightcomod1}
\pi^r_{\mathcal{M}(X)\otimes C}(m\otimes c):= (id_{\mathcal{M}(X)}\otimes \Delta_C)(m\otimes c)=   m\otimes c_1\otimes c_2
\end{equation}
It remains to check the compatibility condition in \eqref{comp 2}. By definition, we have
\begin{equation*}
\begin{array}{ll}
(\mathcal{M}(f_\psi)(m)\otimes c^\psi)_0 \otimes (\mathcal{M}(f_\psi)(m)\otimes c^\psi)_1 &= \mathcal{M}(f_\psi)(m)\otimes {(c^\psi)}_1\otimes {(c^\psi)}_2\\
&= \mathcal{M}({f_\psi}_\psi)(m)\otimes {c_1}^\psi \otimes {c_2}^\psi \quad (\text{using}~ \eqref{eq 6.3})\\
&= (\mathcal{M}\otimes C)(f_\psi)(m\otimes c_1)\otimes {c_2}^\psi
\end{array}
\end{equation*}
\end{proof}

\begin{lemma}\label{lem 6.3}
Let $(\mathcal{D},C,\psi)$ be an entwining structure and $N$ be a right $C$-comodule. Then, for each $X\in Ob(\mathcal{D})$ we may obtain an object $N\otimes {\bf h}_X \in {\mathscr{M}(\psi)}_\mathcal{D}^C$ by setting
\begin{align}
(N\otimes {\bf h}_X)(Y): = N\otimes {\bf h}_X(Y)\\
(N\otimes {\bf h}_X)(f)(n \otimes g) :=n\otimes gf \label{rightaction2}
\end{align}
for $Y\in Ob(\mathcal{D})$, $f \in Hom_{\mathcal{D}}(Z,Y)$, $n\otimes g\in N\otimes {\bf h}_X(Y)$.  In fact, this determines a functor
from $Comod\text{-}C$ to $ {\mathscr{M}(\psi)}_\mathcal{D}^C$ . 
\end{lemma}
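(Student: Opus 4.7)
My plan is to mimic Lemma \ref{lem 6.2} with the roles of the $\mathcal{D}$-module and $C$-comodule structures interchanged: here $h_X$ already provides the $\mathcal{D}$-module structure by precomposition, while $N$ provides the $C$-coaction, and the entwining $\psi$ intervenes in the coaction rather than in the action.

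The first step is immediate: the formula in \eqref{rightaction2} tensors $id_N$ with ordinary precomposition on $h_X$, so it inherits the right $\mathcal{D}$-module axioms from $h_X$ without invoking any entwining identity. Next, for each $Y\in Ob(\mathcal{D})$, I would equip $(N\otimes h_X)(Y) = N\otimes Hom_\mathcal{D}(Y,X)$ with the right $C$-coaction
\begin{equation*}
\rho_{(N\otimes h_X)(Y)}(n\otimes g) := n_0 \otimes g_\psi \otimes n_1^\psi,
\end{equation*}
threading $\rho_N$ past the morphism $g$ via $\psi$. Counitality follows at once from \eqref{eq 6.2}, and coassociativity from \eqref{eq 6.3} combined with coassociativity of $\rho_N$, both expansions collapsing to $n_0 \otimes g_{\psi\psi} \otimes (n_1)_1^\psi \otimes (n_1)_2^\psi$.

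The heart of the proof, and the step where the entwining axioms genuinely come into play, is the compatibility \eqref{comp 2}. For $f \in Hom_\mathcal{D}(Z,Y)$, the left-hand side $\rho(n\otimes gf) = n_0 \otimes (gf)_\psi \otimes n_1^\psi$ equals $n_0 \otimes g_\psi f_\psi \otimes n_1^{\psi\psi}$ by \eqref{eq 6.1}; the right-hand side $(N\otimes h_X)(f_\psi)(n_0 \otimes g_\psi) \otimes n_1^{\psi\psi}$ unfolds through \eqref{rightaction2} to the same expression. Functoriality in $N$ is then formal: a $C$-colinear map $\phi: N\longrightarrow N'$ induces $\phi\otimes id_{h_X}$ component-wise, which is visibly $\mathcal{D}$-linear and whose $C$-colinearity reduces to that of $\phi$. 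The only real obstacle, if one should call it that, is keeping straight the two independent applications of $\psi$ forced by \eqref{eq 6.1}; once the Sweedler indices for $N$ and $C$ are carefully unwound, the verification is mechanical.
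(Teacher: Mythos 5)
Your proposal is correct and follows essentially the same route as the paper's own proof: you equip $N\otimes {\bf h}_X(Y)$ with the identical coaction $n\otimes g\mapsto n_0\otimes g_\psi\otimes {n_1}^\psi$, verify counitality via \eqref{eq 6.2}, coassociativity via \eqref{eq 6.3} together with coassociativity of $\rho_N$, and the compatibility \eqref{comp 2} via \eqref{eq 6.1}. Your brief remark on functoriality in $N$ (tensoring a colinear map with $id_{{\bf h}_X}$) is also the intended, routine argument.
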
 
\begin{proof}
By definition, it follows that $N\otimes {\bf h}_X$ is a right $\mathcal{D}$-module. Further, for each $Y \in Ob(\mathcal{D})$, we define a $K$-linear map $\sigma^r_{N\otimes {\bf h}_X(Y)}: N\otimes {\bf h}_X(Y)\longrightarrow N\otimes {\bf h}_X(Y) \otimes C$ as follows
\begin{align}\label{rightcomod2}
\sigma^r_{N\otimes {\bf h}_X(Y)}(n\otimes g):=  n_0\otimes g_\psi \otimes {n_1}^\psi
\end{align}
 We now verify that the map defined in \eqref{rightcomod2} makes $N\otimes {\bf h}_X(Y)$ a right $C$-comodule. We have
\begin{equation*}
\begin{array}{ll}
(\sigma^r \otimes id_C)\sigma^r(n \otimes g) =  (\sigma^r \otimes id_C)(n_0\otimes g_\psi \otimes {n_1}^\psi)
&=  n_0 \otimes {g_\psi}_\psi \otimes {n_1}^\psi \otimes {n_2}^\psi\\
&=   n_0 \otimes {g_\psi} \otimes \Delta_C({n_1}^\psi) \quad~~~~~~~~~~~~~ (\text{by}~ \eqref{eq 6.3})\\
&= (id_{N\otimes {\bf h}_X(Y)} \otimes \Delta_C)\sigma^r(n \otimes g) 
\end{array}
\end{equation*}
Moreover, using \eqref{eq 6.2} we have
\begin{equation*}
\begin{array}{ll}
(id_{N\otimes {\bf h}_X(Y)} \otimes \varepsilon_C)\sigma^r(n \otimes g)&=  (id_{N\otimes {\bf h}_X(Z)} \otimes \varepsilon_C)(n_0\otimes g_\psi \otimes {n_1}^\psi)\\
&=   n_0\otimes \varepsilon_C({n_1}^\psi)g_\psi =  n_0\otimes \varepsilon_C(n_1)g=n \otimes g
\end{array}
\end{equation*}
It remains to verify the condition in \eqref{comp 2}. We have
\begin{equation*}
\begin{array}{ll}
  \big((N\otimes {\bf h}_X)(f)(n \otimes g)\big)_0 \otimes \big((N\otimes {\bf h}_X)(f)(n \otimes g)\big)_1 &=   n_0 \otimes (gf)_\psi \otimes {n_1}^\psi\\
&=  n_0 \otimes g_\psi f_\psi \otimes {{n_1}^\psi}^\psi \quad (\text{by}~ \eqref{eq 6.1})\\
&=  (N\otimes {\bf h}_X)(f_\psi)(n_0 \otimes g_\psi) \otimes {{n_1}^\psi}^\psi\\
&=  (N\otimes {\bf h}_X)(f_\psi)({(n \otimes g)}_0) \otimes {{(n \otimes g)}_1}^\psi 
\end{array}
\end{equation*}
\end{proof}

It follows from Lemma \ref{lem 6.2} and Lemma \ref{lem 6.3} that both  ${\bf h}_Y\otimes C$ and $C\otimes {\bf h}_Y$ are objects in ${\mathscr{M}(\psi)}_\mathcal{D}^C$ for every $Y\in Ob(\mathcal{D})$.

\begin{lemma}\label{6.6}
Let $(\mathcal{D},C,\psi)$ be an entwining structure. Then, for each $Y\in Ob(\mathcal{D})$, we get a morphism $\Psi_Y: C\otimes {\bf h}_Y \longrightarrow {\bf h}_Y\otimes C$ in ${\mathscr{M}(\psi)}_\mathcal{D}^C$ given by $\Psi_Y(X):=\psi_{XY}$.
\end{lemma}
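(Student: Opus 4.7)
The plan is to verify the two conditions that make $\Psi_Y$ a morphism in ${\mathscr{M}(\psi)}_\mathcal{D}^C$: first that $\Psi_Y$ is a natural transformation of right $\mathcal{D}$-modules, and second that each component $\Psi_Y(X)=\psi_{XY}$ is $C$-colinear with respect to the coactions furnished by Lemma \ref{lem 6.3} on $C\otimes{\bf h}_Y$ and by Lemma \ref{lem 6.2} on ${\bf h}_Y\otimes C$. Both are element-level computations, and the two axioms \eqref{eq 6.1} and \eqref{eq 6.3} are essentially tailor-made for them.

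For the right $\mathcal{D}$-linearity, I fix $f\in Hom_\mathcal{D}(Z,X)$ and chase a general element $c\otimes g\in(C\otimes{\bf h}_Y)(X)$ around the naturality square. Applying $\Psi_Y(X)=\psi_{XY}$ first and then $({\bf h}_Y\otimes C)(f)$ (whose explicit form from Lemma \ref{lem 6.2}, specialized to $\mathcal{M}={\bf h}_Y$, is $m\otimes c'\mapsto mf_\psi\otimes{c'}^\psi$) yields $g_\psi f_\psi\otimes {c^\psi}^\psi$; while applying $(C\otimes{\bf h}_Y)(f)$ first (which by Lemma \ref{lem 6.3} sends $c\otimes g$ to $c\otimes gf$) and then $\Psi_Y(Z)=\psi_{ZY}$ yields $(gf)_\psi\otimes c^\psi$. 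These agree exactly by axiom \eqref{eq 6.1}.

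For the $C$-colinearity, I chase $c\otimes g$ through both compositions. Applying $\psi_{XY}$ and then the coaction $\pi^r$ of \eqref{rightcomod1} produces $g_\psi\otimes(c^\psi)_1\otimes(c^\psi)_2$. Applying the coaction $\sigma^r$ of \eqref{rightcomod2}, with $N=C$ carrying the comodule structure given by $\Delta_C$, and then $\psi_{XY}\otimes id_C$ produces ${g_\psi}_\psi\otimes c_1^\psi\otimes c_2^\psi$, where one $\psi$ is applied inside $\sigma^r$ on $c_2\otimes g$ and the second when $\psi_{XY}$ is subsequently applied to $c_1\otimes g_\psi$. These two outputs coincide precisely by axiom \eqref{eq 6.3}.

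The only real obstacle is notational bookkeeping: the summation-free convention $\psi_{XY}(c\otimes f)=f_\psi\otimes c^\psi$ reuses the symbol $\psi$ at every application, so one must carefully keep track of which iterated $\psi$'s are paired up when matching the two sides to \eqref{eq 6.1} and \eqref{eq 6.3}. No hypotheses beyond the defining properties of an entwining structure are needed.
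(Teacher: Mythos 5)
Your proposal is correct and follows essentially the same route as the paper: right $\mathcal{D}$-linearity is checked by the element chase matching $(gf)_\psi\otimes c^\psi$ with $g_\psi f_\psi\otimes{c^\psi}^\psi$ via \eqref{eq 6.1}, and objectwise $C$-colinearity by matching $g_\psi\otimes\Delta_C(c^\psi)$ with ${g_\psi}_\psi\otimes{c_1}^\psi\otimes{c_2}^\psi$ via \eqref{eq 6.3}, using the coactions of \eqref{rightcomod1} and \eqref{rightcomod2} exactly as in the paper's proof.
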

\begin{proof}
First we verify that $\Psi_Y$ is a morphism of right $\mathcal{D}$-modules. For any $f \in Hom_\mathcal{D}(X',X)$, $ g \in Hom_\mathcal{D}(X,Y)$ and $c\in C$, we have
\begin{align*}
\left(({\bf h}_Y\otimes C)(f)\right)\psi_{XY}(c \otimes g)&= ({\bf h}_Y\otimes C)(f)(g_\psi \otimes c^\psi)=  g_\psi f_\psi \otimes {c^\psi}^\psi=   {(gf)}_\psi \otimes c^\psi\\
&=\psi_{X'Y}(c \otimes gf)=\psi_{X'Y}(C\otimes {\bf h}_Y)(f)(c \otimes g)
\end{align*}
Next, we will show that $\Psi_Y(X)$ is $C$-colinear for every $X \in Ob(\mathcal{D})$. We have
\begin{equation*}
\begin{array}{lll}
  {\big(\psi_{XY}(c \otimes g)\big)}_0 \otimes {\big(\psi_{XY}(c \otimes g)\big)}_1 &=  g_\psi \otimes \Delta_C(c^\psi)&\\
&=  {g_\psi}_\psi \otimes {c_1}^\psi \otimes {c_2}^\psi& (\text{by}~ \eqref{eq 6.3})\\
&=  (\psi_{XY} \otimes id){(c \otimes g)}_0 \otimes {(c \otimes g)}_1 &(\text{by}~ \eqref{rightcomod2}) 
\end{array}
\end{equation*}
\end{proof}

We now recall from \cite[$\S$ 3]{Mit1} and \cite{Mit2} the notion of a finitely generated module over a category. Given $\mathcal M\in Mod$-$\mathcal D$, we set $el(\mathcal M):=\underset{X\in Ob(\mathcal D)}{\coprod}
\mathcal M(X)$ to be the collection of all elements of $\mathcal M$. Since $\mathcal D$ is small, we note that 
$el(\mathcal M)$ is a set. If $m\in el(\mathcal M)$ is such that $m\in \mathcal M(X)$, we will write
$|m|=X$. 
 
\begin{definition}
Let $\mathcal{D}$ be a small preadditive category and let $\mathcal{M}$ be a right $\mathcal D$-module. For each 
$m\in el(\mathcal M)$, we consider the corresponding morphism $\eta_m:{\bf h}_{|m|}\longrightarrow \mathcal M$.  A family of elements $\{m_i \in el(\mathcal M)\}_{i\in I}$ is said to be a generating set for $\mathcal{M}$ if the induced morphism 
$$ \eta: \bigoplus_{i\in I} {\bf h}_{|m_i|} \longrightarrow \mathcal{M}~~~~~~~~~~(0,...,0,id_{|m_i|},0,...,0)\mapsto m_i$$ is an epimorphism in $Mod\text{-}\mathcal D$.  In other words, every element $m\in el(\mathcal M)$  may be expressed as a sum $m=\sum_{i\in I} \mathcal{M}(f_i)(m_i)$, where each $f_i \in Hom_\mathcal{D}(|m|,|m_i|)$ and 
all but finitely many $\{f_i\}_{i\in I}$ are zero. 
\end{definition}

\begin{lemma}\label{gen2}
Let $(\mathcal{D},C,\psi)$ be an entwining structure and let $\mathcal{M}$ be an entwined module. We consider an element
$m\in el(\mathcal M)$.  Then, there exists a finite dimensional $C$-subcomodule $V_m$ of $\mathcal{M}(|m|)$ containing $m$ and a morphism $\eta_m: V_m \otimes {\bf h}_{|m|} \longrightarrow \mathcal{M}$ in  ${\mathscr{M}(\psi)}_\mathcal{D}^C$ such that $\eta_m(|m|)(m\otimes id_{|m|})=m$.
\end{lemma}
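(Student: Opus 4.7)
The plan is to combine two classical facts: the fundamental theorem on comodules (every element of a $C$-comodule lies in a finite dimensional subcomodule), applied to $\mathcal{M}(|m|)$, together with the free-module-like behavior of $V_m \otimes {\bf h}_{|m|}$ that was established in Lemma \ref{lem 6.3}. Concretely, I would first invoke the finiteness theorem to produce a finite dimensional $C$-subcomodule $V_m \subseteq \mathcal{M}(|m|)$ containing $m$, with coaction inherited from $\rho_{\mathcal{M}(|m|)}$; by Lemma \ref{lem 6.3}, $V_m \otimes {\bf h}_{|m|}$ is then an object of ${\mathscr{M}(\psi)}_\mathcal{D}^C$.

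Next I would define $\eta_m: V_m \otimes {\bf h}_{|m|} \longrightarrow \mathcal{M}$ by the obvious formula
\[
\eta_m(Y)(v \otimes f) := \mathcal{M}(f)(v)
\qquad\text{for } v \in V_m,\ f \in Hom_\mathcal{D}(Y,|m|).
\]
Verifying that $\eta_m$ is a morphism of right $\mathcal{D}$-modules is immediate from the functoriality of $\mathcal{M}$: for $g \in Hom_\mathcal{D}(Z,Y)$ one uses the definition \eqref{rightaction2} of the $\mathcal{D}$-action on $V_m \otimes {\bf h}_{|m|}$ together with $\mathcal{M}(fg) = \mathcal{M}(g)\mathcal{M}(f)$. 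The non-trivial point is colinearity of each $\eta_m(Y)$, and this is where the entwining axioms come in.

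For colinearity, I would compute both sides directly. Using the compatibility \eqref{comp 2} for $\mathcal{M}$ and the fact that $V_m$ is a subcomodule, the left-hand side is
\[
\rho_{\mathcal{M}(Y)}(\eta_m(Y)(v \otimes f)) = \rho_{\mathcal{M}(Y)}(\mathcal{M}(f)(v)) = \mathcal{M}(f_\psi)(v_0) \otimes v_1^\psi,
\]
while the right-hand side, using the comodule structure \eqref{rightcomod2} on $V_m \otimes {\bf h}_{|m|}$, is
\[
(\eta_m(Y) \otimes id_C)(v_0 \otimes f_\psi \otimes v_1^\psi) = \mathcal{M}(f_\psi)(v_0) \otimes v_1^\psi,
\]
and the two agree. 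Finally $\eta_m(|m|)(m \otimes id_{|m|}) = \mathcal{M}(id_{|m|})(m) = m$.

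The only real subtlety is ensuring that the colinearity computation is well-posed, i.e.\ that $v_0$ still lies in $V_m$ so that one remains inside $V_m \otimes {\bf h}_{|m|}$; this is precisely why one must enlarge $m$ to a finite dimensional \emph{subcomodule} $V_m$ rather than take the one-dimensional span of $m$. I do not expect any further obstacle, since the remaining verifications are formal consequences of the entwining axioms already exploited in Lemmas \ref{lem 6.2} and \ref{lem 6.3}.
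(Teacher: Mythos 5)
Your proposal is correct and follows essentially the same route as the paper: invoke the finiteness theorem for comodules to obtain $V_m$, define $\eta_m(Y)(v\otimes f):=\mathcal{M}(f)(v)$, and verify $C$-colinearity by comparing \eqref{comp 2} with the coaction \eqref{rightcomod2} on $V_m\otimes {\bf h}_{|m|}$, exactly as in the paper's argument. The subtlety you flag (that $v_0$ stays in $V_m$ because $V_m$ is a subcomodule, not merely the span of $m$) is precisely the point of the finiteness theorem, so there is nothing to add.
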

\begin{proof}
By \cite[Theorem 2.1.7]{SCS}, we know that there exists a finite dimensional $C$-subcomodule $V_m \subseteq \mathcal{M}(|m|)$ such that $m\in V_m$. 
Now, we consider the $\mathcal{D}$-module morphism $\eta_m: V_m \otimes {\bf h}_{|m|} \longrightarrow \mathcal{M}$ defined by setting $\eta_m(Y)(v\otimes f):= \mathcal{M}(f)(v)$ for any $Y\in Ob(\mathcal{D}),~ f\in Hom_\mathcal{D}(Y,|m|)$ and $v\in V_m$. We also have 
\begin{equation*}
\begin{array}{ll}
\rho_{\mathcal{M}(Y)}\big(\eta_m(Y)(v\otimes f)\big)=\rho_{\mathcal{M}(Y)}\big(\mathcal{M}(f)(v)\big)
&= \mathcal{M}(f_\psi)(v_0)\otimes v_1^\psi=  \eta_m(Y)( v_0 \otimes f_\psi)\otimes v_1^\psi\\
&=(\eta_m(Y) \otimes id_C)\big(\rho_{V_m \otimes {\bf h}_{|m|}(Y)}(v\otimes f)\big) \qquad (\text{by}~ \eqref{rightcomod2})
\end{array}
\end{equation*}
This shows that $\eta_m(Y)$ is $C$-colinear for each $Y\in Ob(\mathcal{D})$. Hence, $\eta_m$ is a morphism in  ${\mathscr{M}(\psi)}_\mathcal{D}^C$ such that $\eta_m(|m|)(m\otimes id_{|m|})=m$.
\end{proof}

\begin{prop}\label{entgroth}
Let $(\mathcal{D},C, \psi)$ be an entwining structure. Then, the category ${\mathscr{M}(\psi)}_\mathcal{D}^C$ of entwined modules  is a Grothendieck category.
\end{prop}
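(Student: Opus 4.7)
The plan is to verify the three axioms in the definition of a Grothendieck category: $\mathscr{M}(\psi)_\mathcal{D}^C$ is abelian, it admits a generator, and filtered colimits in it are exact (AB5). The strategy throughout is to reduce to the corresponding properties of $Mod\text{-}\mathcal{D}$ and $Comod\text{-}C$, using the fact that both are Grothendieck categories, and then check that the extra structure passes through each construction by means of the compatibility condition \eqref{comp 2}.

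First I would show that $\mathscr{M}(\psi)_\mathcal{D}^C$ is abelian. Given a morphism $\eta:\mathcal{M}\longrightarrow \mathcal{N}$ in $\mathscr{M}(\psi)_\mathcal{D}^C$, its kernel $\mathcal{K}$ and cokernel $\mathcal{Q}$ in $Mod\text{-}\mathcal{D}$ are computed pointwise, and since each $\eta(X)$ is $C$-colinear, $\mathcal{K}(X)\subseteq \mathcal{M}(X)$ and $\mathcal{Q}(X)=\mathcal{N}(X)/\eta(X)(\mathcal{M}(X))$ inherit $C$-coactions. The compatibility \eqref{comp 2} then transfers to $\mathcal{K}$ and $\mathcal{Q}$ because it is preserved under restriction to $\mathcal{D}$-submodules and under passage to $\mathcal{D}$-quotient modules. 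Hence $\mathscr{M}(\psi)_\mathcal{D}^C$ has kernels and cokernels, and the canonical map from the coimage to the image is already an isomorphism at the level of $Mod\text{-}\mathcal{D}$.

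Next I would produce all small colimits and verify AB5. Given a diagram $\{\mathcal{M}_i\}$ in $\mathscr{M}(\psi)_\mathcal{D}^C$, form the colimit $\mathcal{M}=\varinjlim \mathcal{M}_i$ in $Mod\text{-}\mathcal{D}$, which is again computed pointwise in $Vect_K$. Since $-\otimes C$ preserves colimits, each $\mathcal{M}(X)$ inherits a right $C$-coaction from the $\mathcal{M}_i(X)$'s, and the compatibility \eqref{comp 2} survives since it is an equation between two natural transformations of colimit-preserving functors. Exactness of filtered colimits (AB5) is then inherited from $Vect_K$, because both kernels/cokernels and filtered colimits are computed pointwise in $Mod\text{-}\mathcal{D}$.

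The main step, and the one requiring actual work, is the construction of a generator. Here the key input is Lemma \ref{gen2}: every element $m\in el(\mathcal{M})$ is the image of $m\otimes id_{|m|}$ under a morphism $\eta_m:V_m\otimes \mathbf{h}_{|m|}\longrightarrow \mathcal{M}$ in $\mathscr{M}(\psi)_\mathcal{D}^C$, with $V_m$ a finite-dimensional $C$-subcomodule. Let $\mathcal{S}$ be a set of isomorphism class representatives of finite-dimensional right $C$-comodules (such a set exists by the fundamental theorem of coalgebras), and set
\begin{equation*}
\mathcal{G}:=\bigoplus_{V\in \mathcal{S},\ X\in Ob(\mathcal{D})}V\otimes \mathbf{h}_X,
\end{equation*}
which is a coproduct of objects in $\mathscr{M}(\psi)_\mathcal{D}^C$ by Lemma \ref{lem 6.3}, and hence lies in $\mathscr{M}(\psi)_\mathcal{D}^C$ by the preceding step. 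For any $\mathcal{M}$ and any $m\in el(\mathcal{M})$, Lemma \ref{gen2} supplies a morphism out of the summand of $\mathcal{G}$ indexed by $(V_m,|m|)$ (or rather, its representative) hitting $m$, so $\mathcal{G}$ is a generator. The subtle point to be careful about is that $\mathcal{S}$ is indeed a set rather than a proper class; this is where I would invoke \cite[Theorem 2.1.7]{SCS} to know that every finite dimensional $C$-comodule embeds into $C^n$ for some $n$, so up to isomorphism there are only set-many such. Combining abelianness, cocompleteness, AB5, and the existence of the generator $\mathcal{G}$ gives that $\mathscr{M}(\psi)_\mathcal{D}^C$ is Grothendieck.
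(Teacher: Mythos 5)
Your proposal is correct and follows essentially the same route as the paper: kernels, cokernels and colimits are computed in $Mod\text{-}\mathcal{D}$ with the $C$-coactions and the compatibility \eqref{comp 2} carried along, and the generator comes from Lemma \ref{gen2} via the objects $V\otimes {\bf h}_X$ with $V$ ranging over finite dimensional $C$-comodules. The only cosmetic difference is that you assemble these into a single coproduct generator $\mathcal{G}$, whereas the paper leaves them as a set of generators in the sense of \cite[Proposition 1.9.1]{Grothen}; the two formulations are equivalent in a cocomplete abelian category.
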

\begin{proof} Given a morphism $\eta:\mathcal M\longrightarrow \mathcal N$ in ${\mathscr{M}(\psi)}_\mathcal{D}^C$, let $Ker(\eta)$ and $Coker(\eta)$ be respectively the kernel and cokernel in $Mod\text{-}\mathcal D$. Since $Comod\text{-}C$
is an abelian category, we know that $Ker(\eta)(X)$, $Coker(\eta)(X)\in Comod\text{-}C$ for each $X\in Ob(\mathcal D)$. It is easily
seen that $Ker(\eta)$ and $Coker(\eta)$ satisfy the compatibility condition in \eqref{comp 2}, i.e., $Ker(\eta)$, $Coker(\eta)
\in {\mathscr{M}(\psi)}_\mathcal{D}^C$.  Since limits and colimits in ${\mathscr{M}(\psi)}_\mathcal{D}^C$ are obtained  from
those in $Mod\text{-}\mathcal D$ and $Comod\text{-}C$, it is clear that ${\mathscr{M}(\psi)}_\mathcal{D}^C$ is a cocomplete abelian category satisfying (AB5). 
 
\smallskip  By Lemma \ref{gen2}, there is an epimorphism  
$$\bigoplus_{m\in el(\mathcal{M})} \eta_m: \bigoplus_{m\in el(\mathcal{M})}  V_m \otimes {\bf h}_{|m|} \longrightarrow \mathcal{M}$$ for any $\mathcal M\in {\mathscr{M}(\psi)}_\mathcal{D}^C$.  As such, the collection $\{V\otimes {\bf h}_X\}$, where $X$ ranges over all objects in $\mathcal{D}$ and $V$ ranges over all (isomorphism classes of) finite dimensional $C$-comodules gives a set
of generators for ${\mathscr{M}(\psi)}_\mathcal{D}^C$ in the sense of \cite[Proposition 1.9.1]{Grothen}.
\end{proof}

\begin{corollary}
The category $\mathscr{M}_\mathcal{D}^C$  of Doi-Hopf modules  is a Grothendieck category.
\end{corollary}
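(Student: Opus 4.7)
The corollary is a direct specialization of Proposition \ref{entgroth}, so the plan is essentially to identify Doi-Hopf modules as entwined modules for a particular $\psi$ and then invoke the preceding result.

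First, I would recall Example \ref{2.3fed}, which asserts that whenever $\mathcal{D}$ is a right co-$H$-category and $C$ is a right $H$-module coalgebra, the triple $(\mathcal{D},C,\psi)$ with $\psi_{XY}(c\otimes f):=f_0\otimes cf_1$ forms a right-right entwining structure, and in this case ${\mathscr{M}(\psi)}_\mathcal{D}^C$ coincides with the category $\mathscr{M}_\mathcal{D}^C$ of Doi-Hopf modules by inspection of the compatibility condition. So the only substantive content one might need to double-check is that the four axioms \eqref{eq 6.1}--\eqref{eq 6.4} really do hold for this $\psi$; this is routine, using the coassociativity of the $H$-coaction on morphism spaces of $\mathcal{D}$, the compatibility of the coaction with composition in the co-$H$-category $\mathcal{D}$, and the fact that $C$ is an $H$-module coalgebra (so the comultiplication of $C$ is $H$-linear).

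With the identification $\mathscr{M}_\mathcal{D}^C = {\mathscr{M}(\psi)}_\mathcal{D}^C$ in place, the corollary now follows immediately by applying Proposition \ref{entgroth} to this entwining structure: the category of entwined modules for any $(\mathcal{D},C,\psi)$ is Grothendieck, hence so is the category of Doi-Hopf modules.

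There is essentially no obstacle here; the only thing that requires any care is making sure that Example \ref{2.3fed} has already supplied the verification of the entwining axioms, which it does. The proof will therefore be one or two sentences long: identify Doi-Hopf modules with entwined modules for the canonical $\psi$ of Example \ref{2.3fed}, then cite Proposition \ref{entgroth}.
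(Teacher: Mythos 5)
Your proposal is correct and matches the paper's intended argument exactly: the paper states the corollary right after Proposition \ref{entgroth} precisely because Example \ref{2.3fed} already identifies $\mathscr{M}_\mathcal{D}^C$ with ${\mathscr{M}(\psi)}_\mathcal{D}^C$ for the canonical entwining $\psi_{XY}(c\otimes f)=f_0\otimes cf_1$. Nothing further is needed.
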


\section{Separability and Frobenius conditions}\label{sepfrob}
Let $\mathscr{F}: {\mathscr{M}(\psi)}_\mathcal{D}^C \longrightarrow Mod$-$\mathcal{D}$ be the forgetful functor. The next result shows that the functor $\mathscr{F}$ has a right adjoint.

\begin{lemma}\label{rightadjoint}
The forgetful functor $\mathscr{F}: {\mathscr{M}(\psi)}_\mathcal{D}^C \longrightarrow Mod$-$\mathcal{D}$ has a right adjoint $\mathscr{G}: Mod\text{-}\mathcal{D} \longrightarrow {\mathscr{M}(\psi)}_\mathcal{D}^C$ given by $\mathscr{G}(\mathcal{N}):=  \mathcal{N} \otimes C$ for each $\mathcal{N}\in  Mod\text{-}\mathcal{D}$.
\end{lemma}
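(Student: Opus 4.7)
The plan is to exhibit an explicit unit-counit adjunction. By Lemma \ref{lem 6.2} the assignment $\mathscr{G}(\mathcal N):=\mathcal N\otimes C$ is already a well-defined functor from $Mod\text{-}\mathcal D$ to ${\mathscr{M}(\psi)}_\mathcal{D}^C$, with the $\mathcal D$-action twisted by $\psi$ and the cofree $C$-coaction $id\otimes\Delta_C$. So it suffices to produce natural transformations
\begin{equation*}
\varepsilon:\mathscr F\mathscr G\longrightarrow 1_{Mod\text{-}\mathcal D},\qquad \eta:1_{{\mathscr{M}(\psi)}_\mathcal{D}^C}\longrightarrow \mathscr G\mathscr F
\end{equation*}
satisfying the triangle identities. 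The obvious candidates are $\varepsilon_\mathcal N(X):=id_{\mathcal N(X)}\otimes\varepsilon_C$ and $\eta_\mathcal M(X):=\rho_{\mathcal M(X)}$.

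First I would verify that $\varepsilon_\mathcal N$ is a morphism of right $\mathcal D$-modules: for $f\in Hom_\mathcal D(Y,X)$ and $m\otimes c\in\mathcal N(X)\otimes C$, one computes
\begin{equation*}
\varepsilon_\mathcal N(Y)((\mathcal N\otimes C)(f)(m\otimes c))=\varepsilon_C(c^\psi)\,\mathcal N(f_\psi)(m)=\varepsilon_C(c)\,\mathcal N(f)(m)=\mathcal N(f)\,\varepsilon_\mathcal N(X)(m\otimes c),
\end{equation*}
where the middle equality is precisely axiom \eqref{eq 6.2}. Next I would check that $\eta_\mathcal M$ is a morphism in ${\mathscr{M}(\psi)}_\mathcal{D}^C$. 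The $\mathcal D$-linearity is literally the entwined compatibility condition \eqref{comp 2}, while the $C$-colinearity of each component $\rho_{\mathcal M(X)}$ is just the coassociativity of the $C$-comodule structure on $\mathcal M(X)$. Naturality of both $\varepsilon$ and $\eta$ is immediate from the definitions.

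The triangle identities reduce to standard computations: for $\mathcal N\in Mod\text{-}\mathcal D$, the composite $\mathscr G(\mathcal N)\xrightarrow{\eta_{\mathscr G(\mathcal N)}}\mathscr G\mathscr F\mathscr G(\mathcal N)\xrightarrow{\mathscr G(\varepsilon_\mathcal N)}\mathscr G(\mathcal N)$ evaluated on $X$ sends $m\otimes c$ to $(id\otimes\varepsilon_C\otimes id_C)(m\otimes c_1\otimes c_2)=m\otimes c$ by counitality of $\Delta_C$; dually, for $\mathcal M\in {\mathscr{M}(\psi)}_\mathcal{D}^C$ the composite $\mathscr F(\mathcal M)\xrightarrow{\mathscr F(\eta_\mathcal M)}\mathscr F\mathscr G\mathscr F(\mathcal M)\xrightarrow{\varepsilon_{\mathscr F(\mathcal M)}}\mathscr F(\mathcal M)$ evaluates to $(id\otimes\varepsilon_C)\circ\rho_{\mathcal M(X)}=id_{\mathcal M(X)}$ by counitality of the $C$-coaction on $\mathcal M(X)$.

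I expect no serious obstacle; the only point that uses the entwining axioms in a nontrivial way is the verification that $\eta_\mathcal M$ is $\mathcal D$-linear (which is exactly condition \eqref{comp 2}) and that $\varepsilon_\mathcal N$ is $\mathcal D$-linear under the twisted action on $\mathcal N\otimes C$ (which uses \eqref{eq 6.2}). Everything else is bookkeeping with the Sweedler notation, so the proof will be brief.
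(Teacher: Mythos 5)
Your proposal is correct, and it establishes the adjunction in the unit--counit formulation rather than via the hom-set bijection used in the paper. The paper defines mutually inverse maps $\alpha(\xi)=(id_{\mathcal N}\otimes\varepsilon_C)\circ\xi$ and $\beta(\eta')=(\eta'\otimes id_C)\circ\rho_{\mathcal M}$ between $Hom_{{\mathscr{M}(\psi)}_\mathcal{D}^C}\big(\mathcal M,\mathscr G(\mathcal N)\big)$ and $Hom_{Mod\text{-}\mathcal D}\big(\mathscr F(\mathcal M),\mathcal N\big)$, checks that both are well defined (using \eqref{comp 2} for $\beta$ and the $\mathcal D$-linearity of $id_{\mathcal N}\otimes\varepsilon_C$, i.e.\ \eqref{eq 6.2}, for $\alpha$), and verifies $\alpha\beta=id$, $\beta\alpha=id$; the unit \eqref{unit} and counit \eqref{counit} are only recorded afterwards. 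You instead take these unit and counit as the primary data and verify the two triangle identities, which amount to the two counitality computations $(id\otimes\varepsilon_C\otimes id_C)(m\otimes c_1\otimes c_2)=m\otimes c$ and $(id\otimes\varepsilon_C)\rho_{\mathcal M(X)}=id$. The computational content is essentially the same in both versions: your check that $\varepsilon_{\mathcal N}$ is $\mathcal D$-linear is the paper's check on $\alpha(\xi)$, and your check that $\eta_{\mathcal M}$ is a morphism in ${\mathscr{M}(\psi)}_\mathcal{D}^C$ (using \eqref{comp 2}, coassociativity, and the cofree coaction \eqref{rightcomod1}) is the paper's check on $\beta(\eta')$. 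What your packaging buys is that the two inverse-bijection verifications are replaced by the shorter triangle identities, and naturality in $\mathcal M$ and $\mathcal N$ is automatic once $\eta$ and $\varepsilon$ are natural transformations (for $\eta$ this uses exactly that morphisms of entwined modules are objectwise $C$-colinear, as you note); what the paper's packaging buys is that the explicit bijection $\alpha,\beta$ is the form used directly in later arguments. Your appeal to Lemma \ref{lem 6.2} for functoriality of $\mathscr G$, needed to form $\mathscr G(\varepsilon_{\mathcal N})=\varepsilon_{\mathcal N}\otimes id_C$, is legitimate, so there is no gap.
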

\begin{proof}
From Lemma \ref{lem 6.2}, we know that $\mathscr{G}(\mathcal{N})=  \mathcal{N} \otimes C\in {\mathscr{M}(\psi)}_\mathcal{D}^C$ 
for each $\mathcal N\in Mod\text{-}\mathcal D$. We define
$\alpha:Hom_{{\mathscr{M}(\psi)}_\mathcal{D}^C}\big(\mathcal{M},\mathscr{G}(\mathcal{N})\big) \longrightarrow Hom_{Mod\text{-}\mathcal{D}}(\mathscr{F}(\mathcal{M}),\mathcal{N})$ by setting 
$$\alpha(\xi)(X)(m):=(id_{\mathcal{N}(X)} \otimes \varepsilon_C)(\xi(X)(m)) \quad $$
for each  $\xi:\mathcal{M} \longrightarrow \mathcal{N} \otimes C$ in ${\mathscr{M}(\psi)}_\mathcal{D}^C$,  $X \in Ob(\mathcal{D})$ and $m \in \mathcal{M}(X)$.

\smallskip
 We also define
$\beta:Hom_{Mod\text{-}\mathcal{D}}(\mathscr{F}(\mathcal{M}),\mathcal{N}) \longrightarrow Hom_{{\mathscr{M}(\psi)}_\mathcal{D}^C}\big(\mathcal{M},\mathscr{G}(\mathcal{N})\big)$
by setting $$\beta(\eta)(X)(m):=  \eta(X)(m_0) \otimes m_1$$
for each $\eta:\mathcal{M} \longrightarrow \mathcal{N}$ in $Mod\text{-}\mathcal{D}$, 
$X \in Ob(\mathcal{D})$ and $m \in \mathcal{M}(X)$. First we check that $\alpha(\xi)$ and $\beta(\eta)$ are morphisms in $Mod\text{-}\mathcal{D}$ and ${\mathscr{M}(\psi)}_\mathcal{D}^C$ respectively. Using the fact that  $id_\mathcal{N} \otimes \varepsilon_C: \mathcal{N} \otimes C \longrightarrow \mathcal{N}$ and $\xi$ are right $\mathcal D$-module morphisms, for any $f \in Hom_\mathcal{D}(Y,X)$, we have
\begin{equation*}
\begin{array}{ll}
\mathcal{N}(f)\big(\alpha(\xi)(X)(m)\big)= \mathcal{N}(f)\Big((id_{\mathcal{N}(X)} \otimes \varepsilon_C)\big(\xi(X)(m)\big)\Big)
&=(id_{\mathcal{N}(Y)} \otimes \varepsilon_C)(\mathcal{N}(f) \otimes id_C)\big(\xi(X)(m)\big)\\
&=(id_{\mathcal{N}(Y)} \otimes \varepsilon_C)\big(\xi(Y)\mathcal{M}(f)(m)\big)\\
&= \alpha(\xi)(Y)(\mathcal{M}(f)(m))
\end{array}
\end{equation*}
We also have
\begin{equation*}
\begin{array}{lll}
(\mathcal{N} \otimes C)(f)\left(\beta(\eta)(X)(m)\right) &=  (\mathcal{N} \otimes C)(f)(\eta(X)(m_0) \otimes m_1) &\\
&= \mathcal{N}(f_\psi)\eta(X)(m_0) \otimes {m_1}^\psi& \mbox{(by Lemma \ref{lem 6.2})}\\
&= \eta(Y)\mathcal{M}(f_\psi)(m_0) \otimes {m_1}^\psi& \\
&=  \eta(Y)\left(\big({\mathcal{M}(f)(m)}\big)_0\right) \otimes \big({\mathcal{M}(f)(m)}\big)_1& \mbox{(by \eqref{comp 2})}\\
&= \beta(\eta)(Y)(\mathcal{M}(f)(m))&\\
\end{array}
\end{equation*}
Moreover, it is easy to see that $\beta(\eta)(X)$ is  $C$-colinear for each $X\in Ob(\mathcal D)$. We now verify that $\alpha$ and $\beta$ are inverses to each other.
\begin{equation*}
\begin{array}{lll}
\beta\big(\alpha(\xi)\big)(X)(m) &=  \alpha(\xi)(X)(m_0) \otimes m_1 & \\
&=  (id_{\mathcal{N}(X)} \otimes \varepsilon_C) \big(\xi(X)(m_0)\big) \otimes m_1&\\
&= (id_{\mathcal{N}(X)} \otimes \varepsilon_C \otimes id_C)(\xi(X) \otimes id_C)\rho_{\mathcal{M}(X)}(m)&\\
&= (id_{\mathcal{N}(X)} \otimes \varepsilon_C \otimes id_C) \pi^r_{\mathcal{N}(X) \otimes C}\big(\xi(X)(m)\big)& \mbox{($\xi(X)$ is $C$-colinear)}\\
&=\xi(X)(m)& \mbox{(by \eqref{rightcomod1})}\\
\end{array}
\end{equation*}
Further, we have $\alpha\big(\beta(\eta)\big)(X)(m) =  \eta(X)(m_0)\varepsilon_C(m_1)=\eta(X)(m)$. This 
proves the result.
\end{proof}

We now describe the unit $\mu:1_{{\mathscr{M}(\psi)}_\mathcal{D}^C}\longrightarrow\mathscr{GF}$ and the counit $\nu:\mathscr{FG}\longrightarrow 1_{Mod\text{-}\mathcal{D}}$ of the adjunction in Lemma \ref{rightadjoint}:

\begin{equation}\label{unit}
\mu(\mathcal{M}):\mathcal{M}\longrightarrow \mathcal{M}\otimes C \qquad \mu(\mathcal{M})(X)(m)= m_0\otimes m_1
\end{equation}
\begin{equation}\label{counit}
\nu(\mathcal{N})= id_\mathcal{N}\otimes \varepsilon_C:\mathcal{N}\otimes C\longrightarrow \mathcal{N}\qquad \nu(\mathcal{N})(X)(n\otimes c)=\varepsilon_C(c)n 
\end{equation}
for each $\mathcal{M}\in {\mathscr{M}(\psi)}_\mathcal{D}^C$, $\mathcal{N}\in Mod\text{-}\mathcal{D}$, $X \in Ob(\mathcal{D})$.

\smallskip
We recall that a functor $F:\mathcal{A}\longrightarrow \mathcal{B}$ between arbitrary categories is said to be separable if the natural transformation $$\mathcal{\eta}:Hom_\mathcal{A}({-},{-})\longrightarrow Hom_\mathcal{B}(F({-}),F({-}))$$
induced by $F$ is a split monomorphism (see \cite{NVV}, \cite[$\S$ 1]{Rf}). The following result provides a characterization of separable functors.

\begin{theorem}\cite[Theorem 1.2]{Rf}\label{separability}
Let $F:\mathcal{A}\longrightarrow \mathcal{B}$ be a functor which has a right adjoint $G:\mathcal{B}\longrightarrow \mathcal{A}$. Let $\mu$ and $\nu$ be the unit and counit of this adjunction respectively. Then, 
\begin{itemize}
\item[(i)] $F$ is separable if and only if there exists $\upsilon \in Nat(GF,1_\mathcal{A})$ such that $\upsilon\circ \mu= 1_\mathcal{A}$, the identity natural transformation on $\mathcal{A}$.
\item[(ii)] $G$ is separable if and only if there exists $\zeta \in Nat(1_\mathcal{B},FG)$ such that $\nu \circ \zeta=1_\mathcal{B}$, the identity natural transformation on $\mathcal{B}$.
\end{itemize}
\end{theorem}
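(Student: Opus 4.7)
The plan is to reduce both parts to the Yoneda lemma by transporting the separability data through the adjoint isomorphism $\Phi_{A,B}:Hom_\mathcal{B}(F(A),B)\xrightarrow{\sim} Hom_\mathcal{A}(A,G(B))$, $g\mapsto G(g)\circ\mu_A$, whose inverse is $f\mapsto \nu_B\circ F(f)$. This substitutes $Hom_\mathcal{A}(A,GF(A'))$ for $Hom_\mathcal{B}(F(A),F(A'))$ in part (i), and $Hom_\mathcal{B}(FG(B),B')$ for $Hom_\mathcal{A}(G(B),G(B'))$ in part (ii), placing the separability data inside a single category so that the Yoneda lemma becomes applicable.

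For part (i), $F$ is separable if and only if the canonical natural transformation $F_*:Hom_\mathcal{A}(-,-)\longrightarrow Hom_\mathcal{B}(F(-),F(-))$ admits a natural left inverse. Composing with $\Phi$ in the target and using naturality of $\mu$ identifies $F_*$ with the natural transformation $Hom_\mathcal{A}(-,-)\longrightarrow Hom_\mathcal{A}(-,GF(-))$ sending $f:A\to A'$ to $\mu_{A'}\circ f$. Hence $F$ is separable iff there exists a family $r_{A,A'}:Hom_\mathcal{A}(A,GF(A'))\longrightarrow Hom_\mathcal{A}(A,A')$, natural in both variables, with $r_{A,A'}(\mu_{A'}\circ f)=f$ for every $f:A\to A'$. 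Fixing $A'$, the contravariant Yoneda lemma converts naturality of $r_{-,A'}$ into the statement that $r_{A,A'}(\cdot)=\upsilon_{A'}\circ(\cdot)$ for a unique morphism $\upsilon_{A'}:GF(A')\to A'$; naturality of $r$ in $A'$ then translates into naturality of $\upsilon$, giving $\upsilon\in Nat(GF,1_\mathcal{A})$. Specialising the retraction identity to $A=A'$, $f=id_{A'}$ yields $\upsilon_{A'}\circ \mu_{A'}=id_{A'}$, i.e.\ $\upsilon\circ\mu=1_\mathcal{A}$. The reverse direction is immediate: given such $\upsilon$, the formula $r_{A,A'}(g):=\upsilon_{A'}\circ g$ defines a natural retraction of $F_*$.

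Part (ii) is dual. Compose $G_*:Hom_\mathcal{B}(B,B')\to Hom_\mathcal{A}(G(B),G(B'))$ with $\Phi^{-1}$ in the target; naturality of $\nu$ identifies the result with the map $g\mapsto g\circ \nu_B$ from $Hom_\mathcal{B}(B,B')$ to $Hom_\mathcal{B}(FG(B),B')$. This time the Yoneda lemma must be applied in the second variable: fixing $B$, a natural retraction $s_{B,-}:Hom_\mathcal{B}(FG(B),-)\to Hom_\mathcal{B}(B,-)$ is pre-composition by a unique morphism $\zeta_B:B\to FG(B)$, and naturality of $s$ in $B$ assembles these into $\zeta\in Nat(1_\mathcal{B},FG)$. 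The retraction identity at $B'=B$, $g=id_B$ becomes $\nu_B\circ\zeta_B=id_B$, i.e.\ $\nu\circ\zeta=1_\mathcal{B}$. No substantive obstacle arises; the only point requiring care is keeping track of which variable Yoneda is applied in, and verifying that naturality of the retraction in the remaining variable is what delivers naturality of $\upsilon$ (respectively $\zeta$) in the right direction.
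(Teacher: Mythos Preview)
Your argument is correct. Note, however, that the paper does not supply its own proof of this theorem: it is quoted verbatim from \cite[Theorem 1.2]{Rf} and used as a black box, so there is no in-paper proof to compare against. Your approach---transporting the separability retraction through the adjunction isomorphism and then invoking Yoneda to extract $\upsilon$ (respectively $\zeta$)---is the standard one and matches in spirit the proof given by Rafael in the cited reference. The bookkeeping you flag (which variable Yoneda is applied in, and that naturality in the remaining variable yields naturality of $\upsilon$ or $\zeta$) is handled correctly.
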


\subsection{Separability conditions}
Let $(\mathcal{D},C,\psi)$ be an entwining structure.
We now investigate the separability of the forgetful functor $\mathscr{F}: {\mathscr{M}(\psi)}_\mathcal{D}^C \longrightarrow Mod$-$\mathcal{D}$. Since $\mathscr{F}$ has a right adjoint $\mathscr{G}$, it follows from Theorem \ref{separability} that the functor $\mathscr{F}$ is separable if and only if there exists a natural transformation $\upsilon:\mathscr{G}\mathscr{F}\longrightarrow 1_{{\mathscr{M}(\psi)}_\mathcal{D}^C}$ such that $\upsilon \circ \mu=1_{{\mathscr{M}(\psi)}_\mathcal{D}^C}$, where  $\mu$ is the unit of the adjunction as explained in \eqref{unit}. Throughout Section \ref{sepfrob}, $V:=Nat(\mathscr{G}\mathscr{F},1_{{\mathscr{M}(\psi)}_\mathcal{D}^C})$ will denote the $K$-space of all natural transformations from $\mathscr{G}\mathscr{F}$ to $1_{{\mathscr{M}(\psi)}_\mathcal{D}^C}$. We will shortly give another useful interpretation of $V$. We start by proving few preparatory results required for this.

\smallskip
We recall from Lemma \ref{lem 6.2} and Lemma \ref{lem 6.3} that both  ${\bf h}_Y\otimes C$ and $C\otimes {\bf h}_Y$ are objects in ${\mathscr{M}(\psi)}_\mathcal{D}^C$ for every $Y\in Ob(\mathcal{D})$. We define a functor ${\bf h} \otimes C:\mathcal{D}\longrightarrow {\mathscr{M}(\psi)}_\mathcal{D}^C$ as
\begin{gather}
({\bf h}\otimes C)(Y):={\bf h}_Y\otimes C\label{left1}\\
({\bf h}\otimes C)(f)(Z)(g\otimes c):= fg\otimes c\label{left action}
\end{gather}
for $f\in Hom_\mathcal{D}(Y,X),~g\in {\bf h}_Y(Z)$ and $c\in C$. Similarly,  we may also obtain a  functor ${\bf h}\otimes C \otimes C:\mathcal{D}\longrightarrow {\mathscr{M}(\psi)}_\mathcal{D}^C$. 
\begin{lemma}\label{reversing}
Let $f\in Hom_{\mathcal D}(Y,X)$. For any $\upsilon \in V$ and $c,d\in C$, we have
\begin{equation}\label{Lf6.16}
\left((id_{{\bf h}_X} \otimes \varepsilon_C)\upsilon({{\bf h}_X \otimes C})\right)(Y)(f \otimes c \otimes d)=f\circ\left(( \varepsilon_C \otimes id_{{\bf h}_Y})\upsilon({ C\otimes {\bf h}_Y })\right)(Y)(c\otimes id_Y\otimes d)
\end{equation} In particular, we have
\begin{equation}\label{Lg6.16}
\left((id_{{\bf h}_X} \otimes \varepsilon_C)\upsilon({\bf h}_X \otimes C)\right)(X)(id_X \otimes c\otimes d)=\left((\varepsilon_C \otimes 
id_{{\bf h}_X})\upsilon(C \otimes {\bf h}_X)\right)(X)(c \otimes id_X \otimes d)
\end{equation}
\end{lemma}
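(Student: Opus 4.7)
The strategy is to produce a single morphism $\alpha : C \otimes {\bf h}_Y \longrightarrow {\bf h}_X \otimes C$ in ${\mathscr{M}(\psi)}_\mathcal{D}^C$ that, under naturality of $\upsilon$, turns the identity of $Y$ on one side into $f$ on the other. The obvious candidate is built from two pieces that are already at hand. First, by the Yoneda construction, post-composition with $f \in Hom_\mathcal{D}(Y,X)$ gives a right $\mathcal{D}$-module map $\phi_f : {\bf h}_Y \to {\bf h}_X$, $\phi_f(Z)(g) := f \circ g$. Tensoring with $id_C$ yields a morphism $\phi_f \otimes id_C : {\bf h}_Y \otimes C \to {\bf h}_X \otimes C$ in ${\mathscr{M}(\psi)}_\mathcal{D}^C$, which is immediate from the explicit formulas of Lemma \ref{lem 6.2} (the $\mathcal{D}$-action twists only the ${\bf h}$-factor, and the comodule structure lives entirely on the $C$-factor). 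Combined with the map $\Psi_Y$ of Lemma \ref{6.6}, we define
\[
\alpha := (\phi_f \otimes id_C) \circ \Psi_Y : C \otimes {\bf h}_Y \longrightarrow {\bf h}_Y \otimes C \longrightarrow {\bf h}_X \otimes C,
\]
which is a morphism in ${\mathscr{M}(\psi)}_\mathcal{D}^C$.

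Next I would record two simple computations about $\alpha$ that isolate the roles of the entwining axioms \eqref{eq 6.4} and \eqref{eq 6.2}. Evaluated at $Y$ on the generator $c \otimes id_Y$, the axiom \eqref{eq 6.4} gives $\Psi_Y(Y)(c \otimes id_Y) = id_Y \otimes c$, hence $\alpha(Y)(c \otimes id_Y) = f \otimes c$. Evaluated on a general element $c \otimes g \in C \otimes Hom_\mathcal{D}(Z,Y)$, one has $\alpha(Z)(c \otimes g) = (f \circ g_\psi) \otimes c^\psi$, so by \eqref{eq 6.2}
\[
(id_{{\bf h}_X} \otimes \varepsilon_C) \circ \alpha(Z) \;=\; f \circ (\varepsilon_C \otimes id_{{\bf h}_Y})
\]
as maps $(C \otimes {\bf h}_Y)(Z) \to Hom_\mathcal{D}(Z,X)$.

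Now I would invoke the naturality of $\upsilon \in Nat(\mathscr{GF}, 1_{{\mathscr{M}(\psi)}_\mathcal{D}^C})$ applied to $\alpha$, which gives the commutative square
\[
\upsilon({\bf h}_X \otimes C) \circ (\alpha \otimes id_C) \;=\; \alpha \circ \upsilon(C \otimes {\bf h}_Y).
\]
Evaluate both sides at the object $Y$ on the element $c \otimes id_Y \otimes d$ and then apply $id_{{\bf h}_X} \otimes \varepsilon_C$. On the left, the first computation above turns $\alpha(Y)(c \otimes id_Y) \otimes d$ into $f \otimes c \otimes d$, producing the left-hand side of \eqref{Lf6.16}. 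On the right, the second computation lets $id_{{\bf h}_X} \otimes \varepsilon_C$ pass through $\alpha(Y)$ as left-composition by $f$ combined with $\varepsilon_C \otimes id_{{\bf h}_Y}$, producing the right-hand side of \eqref{Lf6.16}. Equation \eqref{Lg6.16} is then just the special case $X = Y$, $f = id_X$.

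The only delicate point is checking that $\alpha$ truly is a morphism in ${\mathscr{M}(\psi)}_\mathcal{D}^C$, but this is immediate once we note that $\phi_f \otimes id_C$ is of the form $\mathscr{G}(\phi_f)$ in Lemma \ref{lem 6.2} and that $\Psi_Y$ is already verified in Lemma \ref{6.6}; no further entwining gymnastics are required. Everything else is a mechanical two-line evaluation using only \eqref{eq 6.2} and \eqref{eq 6.4}, so I expect no real obstacle beyond bookkeeping.
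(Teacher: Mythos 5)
Your proof is correct and takes essentially the same route as the paper: the paper applies the naturality of $\upsilon$ separately to the morphism ${\bf h}_Y\otimes C\longrightarrow {\bf h}_X\otimes C$ induced by $f$ and to $\Psi_Y$ (using \eqref{eq 6.2} and \eqref{eq 6.4}), and then combines the two resulting identities, which is exactly the pasting of the single naturality square you use for $\alpha=(\phi_f\otimes id_C)\circ\Psi_Y$. The ingredients, the evaluation at $c\otimes id_Y\otimes d$, and the final specialization $X=Y$, $f=id_X$ all coincide with the paper's argument, so there is no gap.
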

\begin{proof}
A morphism $f:Y\longrightarrow X$ in $\mathcal{D}$ induces morphisms ${\bf h}_Y\otimes C\longrightarrow {\bf h}_X\otimes C$ and ${\bf h}_Y\otimes C\otimes C\longrightarrow {\bf h}_X\otimes C\otimes C$ in ${\mathscr{M}(\psi)}_\mathcal{D}^C$ as explained in \eqref{left action}. Since $\upsilon:\mathscr{G}\mathscr{F}\longrightarrow 1_{{\mathscr{M}(\psi)}_\mathcal{D}^C}$ is a natural transformation, it follows that the following diagram commutes:
\[
\xymatrix{
{\bf h}_Y(Y)\otimes C\otimes C \ar[d]_{\upsilon({{\bf h}_Y \otimes C})(Y)} \ar[r]^{f} &{\bf h}_X(Y)\otimes C\otimes C\ar[d]^{\upsilon({{\bf h}_X \otimes C})(Y)}\\
{\bf h}_Y(Y)\otimes C \ar[d]_{(id_{{\bf h}_Y} \otimes \varepsilon_C)(Y)}\ar[r]^{f} &{\bf h}_X(Y)\otimes C\ar[d]^{(id_{{\bf h}_X} \otimes \varepsilon_C)(Y)}\\
{\bf h}_Y(Y)\ar[r]^{f}&{\bf h}_X(Y)}
\]
Thus, we have 
\begin{equation}\label{eq 6.12}
f\circ\left((id_{{\bf h}_Y} \otimes \varepsilon_C)\upsilon({{\bf h}_Y \otimes C})\right)(Y)(id_Y \otimes c \otimes d)=\left((id_{{\bf h}_X} \otimes \varepsilon_C)\upsilon({{\bf h}_X \otimes C})\right)(Y)(f \otimes c \otimes d)
\end{equation}

We now consider the morphism $\Psi_Y: C\otimes {\bf h}_Y \longrightarrow {\bf h}_Y\otimes C$ in ${\mathscr{M}(\psi)}_\mathcal{D}^C$ given by $\Psi_Y(X):=\psi_{XY}$ as in Lemma \ref{6.6}. Then, using the naturality of $\upsilon:\mathscr{G}\mathscr{F}\longrightarrow 1_{{\mathscr{M}(\psi)}_\mathcal{D}^C}$ and \eqref{eq 6.2} we have the following commutative diagram
$$\begin{CD}
C\otimes {\bf h}_Y(Y)\otimes C  @>\psi_{YY}\otimes id_C>>  {\bf h}_Y(Y)\otimes C\otimes C\\
@V\upsilon({C \otimes {\bf h}_Y})(Y)VV        @VV \upsilon({{\bf h}_Y \otimes C})(Y)V\\
C \otimes {\bf h}_Y(Y)     @>\psi_{YY} >> {\bf h}_Y(Y) \otimes C\\
@V(\varepsilon_C \otimes id_{{\bf h}_Y})(Y)VV  @VV(id_{{\bf h}_Y} \otimes \varepsilon_C)(Y)V\\
{\bf h}_Y(Y) @> id_{{\bf h}_Y}(Y)>> {\bf h}_Y(Y)
\end{CD}$$
Using the fact that $\psi_{YY}(c \otimes id_Y)= id_Y \otimes c$, we now have 
\begin{equation}\label{eq 6.13}
\left((id_{{\bf h}_Y}\otimes \varepsilon_C)\upsilon({{\bf h}_Y \otimes C})\right)(Y)(id_Y \otimes c \otimes d)= \left(( \varepsilon_C \otimes id_{{\bf h}_Y})\upsilon({ C\otimes {\bf h}_Y })\right)(Y)(c\otimes id_Y\otimes d)
\end{equation}
Combining \eqref{eq 6.12} and \eqref{eq 6.13}, we have 
\begin{equation}\label{6.16}
\left((id_{{\bf h}_X} \otimes \varepsilon_C)\upsilon({{\bf h}_X \otimes C})\right)(Y)(f \otimes c \otimes d)=f\circ\left(( \varepsilon_C \otimes id_{{\bf h}_Y})\upsilon({ C\otimes {\bf h}_Y })\right)(Y)(c\otimes id_Y\otimes d)
\end{equation}
By putting $Y=X$ and taking $f=id_X$, the result of \eqref{Lg6.16} is clear from \eqref{6.16}.
\end{proof}

\begin{lemma}\label{2.4m}
For any $\upsilon \in V$ and $Y \in Ob(\mathcal{D})$, we have $\upsilon(C \otimes C \otimes {\bf h}_Y)=id_C \otimes \upsilon(C \otimes {\bf h}_Y)$ as a morphism 
of $\mathcal D$-modules.
\end{lemma}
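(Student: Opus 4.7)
The plan is to exhibit, for each element $c\in C$, an explicit morphism $\phi_c\colon C\otimes \mathbf h_Y \longrightarrow C\otimes C\otimes \mathbf h_Y$ in ${\mathscr{M}(\psi)}_\mathcal{D}^C$ given componentwise by $\phi_c(X)(c'\otimes g):=c\otimes c'\otimes g$, and then to apply the naturality of $\upsilon$ to $\phi_c$. The target $C\otimes C\otimes \mathbf h_Y$ is interpreted as the object of ${\mathscr{M}(\psi)}_\mathcal{D}^C$ obtained from Lemma \ref{lem 6.3} by taking the $C$-comodule $C\otimes C$ with coaction on the second factor, i.e., $(c\otimes c')\mapsto (c\otimes c'_1)\otimes c'_2$.

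First I would verify that $\phi_c$ is a morphism in ${\mathscr{M}(\psi)}_\mathcal{D}^C$. Right $\mathcal{D}$-linearity is immediate, since the $\mathcal{D}$-module structures on both sides act only on the $\mathbf h_Y$-factor and $\phi_c$ merely inserts the constant tensor $c$ on the far left. For $C$-colinearity, using the formula \eqref{rightcomod2} one computes on one hand $\sigma^r_{C\otimes C\otimes \mathbf h_Y(X)}\phi_c(X)(c'\otimes g)=(c\otimes c'_1)\otimes g_\psi\otimes (c'_2)^\psi$ and on the other hand $(\phi_c(X)\otimes id_C)\sigma^r_{C\otimes \mathbf h_Y(X)}(c'\otimes g)=(c\otimes c'_1)\otimes g_\psi\otimes (c'_2)^\psi$. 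These agree, so $\phi_c$ is a morphism in ${\mathscr{M}(\psi)}_\mathcal{D}^C$.

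Next I would invoke naturality of $\upsilon\in V=Nat(\mathscr{G}\mathscr{F},1_{{\mathscr{M}(\psi)}_\mathcal{D}^C})$ with respect to $\phi_c$. Since $\mathscr{G}\mathscr{F}(\mathcal{M})=\mathcal{M}\otimes C$, the naturality square reads
\[
\begin{CD}
C\otimes \mathbf h_Y\otimes C @>\phi_c\otimes id_C>> C\otimes C\otimes \mathbf h_Y\otimes C\\
@V\upsilon(C\otimes \mathbf h_Y)VV @VV\upsilon(C\otimes C\otimes \mathbf h_Y)V\\
C\otimes \mathbf h_Y @>\phi_c>> C\otimes C\otimes \mathbf h_Y
\end{CD}
\]
Evaluating on a pure tensor $c'\otimes g\otimes d$ and using the defining formula for $\phi_c$, the commutativity of this square yields
\[
\upsilon(C\otimes C\otimes \mathbf h_Y)(X)(c\otimes c'\otimes g\otimes d)=c\otimes \upsilon(C\otimes \mathbf h_Y)(X)(c'\otimes g\otimes d).
\]
Since $c\in C$ and $c'\otimes g\otimes d$ were arbitrary, this is precisely the identity $\upsilon(C\otimes C\otimes \mathbf h_Y)=id_C\otimes \upsilon(C\otimes \mathbf h_Y)$ of morphisms of $\mathcal{D}$-modules.

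The only nontrivial step is the verification that $\phi_c$ lands in ${\mathscr{M}(\psi)}_\mathcal{D}^C$ (as opposed to being merely $K$-linear or $\mathcal{D}$-linear); but with the coaction on $C\otimes C$ placed on the second tensor factor this check reduces to an immediate comparison of Sweedler-type expressions, so there is no real obstacle. The rest of the argument is a formal consequence of the naturality of $\upsilon$.
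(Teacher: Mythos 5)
Your proposal is correct and follows essentially the same route as the paper: the paper's proof also fixes an element of $C$, defines the insertion morphism $\eta_d(X)(c'\otimes g)=d\otimes c'\otimes g$ (your $\phi_c$), checks it is a morphism in ${\mathscr{M}(\psi)}_\mathcal{D}^C$ with the same coaction on $C\otimes C$ concentrated in the second factor, and then concludes by the naturality of $\upsilon$. There is nothing to add.
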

\begin{proof}
For each $d \in C$, we define $\eta_d:C \otimes {\bf h}_Y \longrightarrow C \otimes C \otimes {\bf h}_Y$ by
\begin{equation*}
\eta_d(X)(c \otimes g):= d \otimes c \otimes g
\end{equation*}
for each $X \in Ob(\mathcal{D})$, $g \in {\bf h}_Y(X)$ and $c \in C$. It may be easily verified that $\eta_d$ is a morphism of right $\mathcal{D}$-modules. We now verify that $\eta_d(X):C \otimes {\bf h}_Y(X) \longrightarrow C \otimes C \otimes {\bf h}_Y(X)$ is right $C$-colinear. We have
\begin{equation*}
\begin{array}{ll}
\sigma^r_{C \otimes C \otimes {\bf h}_Y(X)}\left(\eta_d(X)(c \otimes g)\right)&=\sigma^r_{C \otimes C \otimes {\bf h}_Y(X)}(d \otimes c \otimes g)= (d \otimes c)_0 \otimes g_\psi \otimes {(d \otimes c)_1}^\psi\\
&= d \otimes c_1 \otimes g_\psi \otimes {c_2}^\psi=(\eta_d(X) \otimes id_C)(c_1 \otimes g_\psi \otimes {c_2}^\psi)\\
&=(\eta_d(X) \otimes id_C)\sigma^r_{C \otimes {\bf h}_Y(X)}(c \otimes g)
\end{array}
\end{equation*} 
Thus, $\eta_d:C \otimes {\bf h}_Y \longrightarrow C \otimes C \otimes {\bf h}_Y$ is a morphism in ${\mathscr{M}(\psi)}_\mathcal{D}^C$. Therefore, using the naturality of $\upsilon$, we have the following commutative diagram:
$$\begin{CD}
C \otimes {\bf h}_Y(X) \otimes C    @>\upsilon(C \otimes {\bf h}_Y)(X)>> C \otimes {\bf h}_Y(X)\\
@V\eta_d(X) \otimes id_CVV        @VV\eta_d(X)V\\
C \otimes C \otimes {\bf h}_Y(X) \otimes C    @> \upsilon(C \otimes C \otimes {\bf h}_Y)(X)>>  C\otimes C \otimes {\bf h}_Y(X)
\end{CD}$$
Thus, for any $g \in Hom_\mathcal{D}(X,Y)$ and $c,c' \in C$, we get
\begin{equation}
\begin{array}{ll}
\upsilon(C \otimes C \otimes {\bf h}_Y)(X)(d \otimes c \otimes g \otimes c')&=\left(\upsilon(C \otimes C \otimes {\bf h}_Y)(\eta_d \otimes id_C)\right)(X)(c \otimes g \otimes c')\\
&=\left(\eta_d \circ \upsilon(C \otimes {\bf h}_Y)\right)(X)(c \otimes g \otimes c')\\
&= d \otimes \upsilon(C \otimes {\bf h}_Y)(X)(c \otimes g \otimes c')\\
&= \left(id_C \otimes \upsilon(C \otimes {\bf h}_Y)\right)(X)(d \otimes c \otimes g \otimes c')
\end{array}
\end{equation}
The result follows.
\end{proof}

\smallskip
We now proceed to give another interpretation of $V=Nat(\mathscr{G}\mathscr{F},1_{{\mathscr{M}(\psi)}_\mathcal{D}^C})$. We consider a collection $\theta:=\{\theta_{X}:  C \otimes C \longrightarrow  End_\mathcal{D}(X)\}_{X \in Ob(\mathcal{D})}$ of $K$-linear maps  satisfying the following conditions:
\begin{gather}
\left( \theta_{X}(c \otimes d)\right)\circ f= {f_\psi}_\psi\circ \theta_{Y}\left(c^\psi \otimes d^\psi \right) \label{theta1}\\ 
\theta_{X}(c \otimes d_1) \otimes d_2 = {\left(\theta_{X}(c_2 \otimes d)\right)}_\psi \otimes {c_1}^\psi \label{theta2}
\end{gather}
for any $f \in Hom_{\mathcal{D}}(Y,X)$. Let $V_1$ be the $K$-space consisting of all such $\theta$.  

\begin{prop}\label{alpha}
Let $\upsilon \in V=Nat(\mathscr{G}\mathscr{F},1_{{\mathscr{M}(\psi)}_\mathcal{D}^C})$. For each $X\in Ob(\mathcal{D})$, we define a $K$-linear map
$$\theta_{X}:C\otimes C\longrightarrow End_{\mathcal D}(X)\qquad c\otimes d\mapsto \big((id_{{\bf h}_X} \otimes \varepsilon_C)\upsilon({{\bf h}_X \otimes C})\big)(X)(id_X\otimes c\otimes d)$$
Then, $\theta:=\{\theta_{X}\}_{X\in Ob(\mathcal{D})}$ is an element in $V_1.$ 
\end{prop}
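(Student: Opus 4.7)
The plan is to verify that the collection $\theta = \{\theta_X\}_{X \in Ob(\mathcal{D})}$ defined in the statement satisfies the two compatibility conditions \eqref{theta1} and \eqref{theta2}; the $K$-linearity of each $\theta_X$ is immediate from the definition. A preliminary observation used throughout is that, by the $C$-colinearity of $\upsilon({\bf h}_X \otimes C)(X)$ combined with the counit axiom, one obtains
\[
\upsilon({\bf h}_X \otimes C)(X)(id_X \otimes c \otimes d) = \theta_X(c \otimes d_1) \otimes d_2.
\]
Here the source coaction on ${\bf h}_X(X) \otimes C \otimes C$ is $(g \otimes c \otimes d) \mapsto g \otimes c \otimes d_1 \otimes d_2$ and the target coaction on ${\bf h}_X(X) \otimes C$ is $(g \otimes c) \mapsto g \otimes c_1 \otimes c_2$, both coming from Lemma \ref{lem 6.2}; applying $id \otimes \varepsilon_C \otimes id_C$ to the colinearity identity recovers the displayed formula.

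For \eqref{theta1}, I will apply the naturality of $\upsilon({\bf h}_X \otimes C)$ as a right $\mathcal{D}$-module morphism with respect to a given $f \in Hom_\mathcal{D}(Y, X)$. Evaluated on $id_X \otimes c \otimes d$, the left-hand side of the naturality identity yields $\theta_X(c \otimes d_1) \circ f_\psi \otimes d_2^\psi$ by the preliminary observation and Lemma \ref{lem 6.2}, while the right-hand side uses the description $({\bf h}_X \otimes C \otimes C)(f)(id_X \otimes c \otimes d) = {f_\psi}_\psi \otimes c^\psi \otimes d^\psi$ obtained from Lemma \ref{lem 6.2}. Postcomposing with $id \otimes \varepsilon_C$, invoking \eqref{eq 6.2} on the left and equation \eqref{Lf6.16} of Lemma \ref{reversing} on the right, produces the identity \eqref{theta1}.

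For \eqref{theta2}, the strategy is first to compute $\upsilon(C \otimes {\bf h}_X)(X)(c \otimes id_X \otimes d)$ in closed form and then to transport the result via the braiding $\Psi_X$ from Lemma \ref{6.6}. For the closed-form computation, I will verify that $\Delta_C \otimes id_{{\bf h}_X} : C \otimes {\bf h}_X \to C \otimes C \otimes {\bf h}_X$ is a morphism in ${\mathscr{M}(\psi)}_\mathcal{D}^C$, where the $C$-colinearity reduces to the coassociativity of $\Delta_C$. Applying the naturality of $\upsilon$ with respect to this morphism and combining with Lemma \ref{2.4m}, then postcomposing with $id_C \otimes \varepsilon_C \otimes id$ and invoking \eqref{Lg6.16}, should yield
\[
\upsilon(C \otimes {\bf h}_X)(X)(c \otimes id_X \otimes d) = c_1 \otimes \theta_X(c_2 \otimes d).
\]
Finally, the naturality of $\upsilon$ with respect to $\Psi_X$ (whose image under $\mathscr{G}\mathscr{F}$ is $\Psi_X \otimes id_C$), evaluated at $X$ on $c \otimes id_X \otimes d$ together with the identity $\psi_{XX}(c \otimes id_X) = id_X \otimes c$ from \eqref{eq 6.4}, gives
\[
\theta_X(c \otimes d_1) \otimes d_2 = \psi_{XX}\bigl(c_1 \otimes \theta_X(c_2 \otimes d)\bigr) = (\theta_X(c_2 \otimes d))_\psi \otimes c_1^\psi,
\]
which is precisely \eqref{theta2}.

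The main subtlety will be in the verification of \eqref{theta2}: the direct $C$-colinearity of $\upsilon(C \otimes {\bf h}_X)(X)$ alone does not pin down its value on $c \otimes id_X \otimes d$, so one genuinely needs the combination of Lemma \ref{2.4m} with the auxiliary morphism $\Delta_C \otimes id_{{\bf h}_X}$ to extract the Sweedler-style expression $c_1 \otimes \theta_X(c_2 \otimes d)$.
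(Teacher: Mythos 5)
Your proposal is correct, and for most of the argument it coincides with the paper's proof: the verification of \eqref{theta1} uses exactly the same ingredients (the component $\upsilon({\bf h}_X\otimes C)$ being a morphism of right $\mathcal D$-modules, the computation $({\bf h}_X\otimes C\otimes C)(f)(id_X\otimes c\otimes d)={f_\psi}_\psi\otimes c^\psi\otimes d^\psi$, and Lemma \ref{reversing}), merely reorganized so that your preliminary colinearity observation $\upsilon({\bf h}_X\otimes C)(X)(id_X\otimes c\otimes d)=\theta_X(c\otimes d_1)\otimes d_2$ is applied before rather than after acting by $f$; and the first half of your \eqref{theta2} argument (the morphism $\Delta_C\otimes id_{{\bf h}_X}$, Lemma \ref{2.4m}, and \eqref{Lg6.16} giving $\upsilon(C\otimes{\bf h}_X)(X)(c\otimes id_X\otimes d)=c_1\otimes\theta_X(c_2\otimes d)$) is precisely the paper's. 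Where you genuinely deviate is the finishing step of \eqref{theta2}: the paper extracts $\psi\big(\upsilon(C\otimes{\bf h}_X)(X)(c\otimes id_X\otimes d)\big)=\theta_X(c\otimes d_1)\otimes d_2$ directly from the right $C$-colinearity of the component $\upsilon(C\otimes{\bf h}_X)(X)$, whereas you transport the closed form through the morphism $\Psi_X$ of Lemma \ref{6.6} via naturality of $\upsilon$, use \eqref{eq 6.4}, and then reuse your preliminary observation; both are valid, and your version has the small advantage of recycling the preliminary observation and making the role of the entwining morphism $\Psi_X$ (which is also the engine behind Lemma \ref{reversing}) explicit, at the cost of being a less direct use of colinearity. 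One tiny bookkeeping point: in the \eqref{theta1} step, citing \eqref{Lf6.16} alone leaves the right-hand side expressed through $(\varepsilon_C\otimes id_{{\bf h}_Y})\upsilon(C\otimes{\bf h}_Y)$; you need \eqref{Lg6.16} (applied at $Y$), or equivalently the identity \eqref{eq 6.12} from the proof of Lemma \ref{reversing}, to rewrite it as ${f_\psi}_\psi\circ\theta_Y(c^\psi\otimes d^\psi)$ — since you invoke \eqref{Lg6.16} later anyway, this is a citation imprecision rather than a gap.
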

\begin{proof}
Since $id_{{\bf h}_X} \otimes \varepsilon_C: {\bf h}_X\otimes C\longrightarrow {\bf h}_X$ is a morphism of right $\mathcal{D}$-modules, we have 
\begin{equation}\label{6.12}
\big( \theta_{X}(c \otimes d)\big)\circ f =(id_{{\bf h}_X} \otimes \varepsilon_C)(Y)\Big(\upsilon({{\bf h}_X \otimes C})(X)(id_X \otimes c\otimes d)\cdot f\Big)
\end{equation}
for $f\in Hom_\mathcal{D}(Y,X)$ and $c,d \in C$. Since $\upsilon({{\bf h}_X \otimes C}): {\bf h}_X\otimes C\otimes C\longrightarrow {\bf h}_X\otimes C$  is a morphism of right $\mathcal{D}$-modules, we also have 
\begin{equation}\label{6.13}
\begin{array}{ll}
\left(\upsilon({{\bf h}_X \otimes C})(X)(id_X\otimes c\otimes d)\right)\cdot f &=\upsilon({{\bf h}_X \otimes C})(Y)\big((id_X\otimes c\otimes d)\cdot f\big)\\
&=\upsilon({{\bf h}_X \otimes C})(Y)\big(({\bf h}_X \otimes C \otimes C)(f)(id_X\otimes c\otimes d)\big)\\
&=\upsilon({{\bf h}_X \otimes C})(Y)\big(({\bf h}_X \otimes C)(f_\psi)(id_X\otimes c) \otimes d^\psi\big)\\
&=\upsilon({{\bf h}_X \otimes C})(Y)\big({\bf h}_X({f_\psi}_\psi)(id_X) \otimes c^\psi \otimes d^\psi\big)\\
&=\upsilon({{\bf h}_X \otimes C})(Y)({f_\psi}_\psi\otimes c^\psi\otimes d^\psi)
\end{array}
\end{equation}
The morphism ${f_\psi}_\psi:Y \longrightarrow X$ in $\mathcal{D}$ induces morphisms ${\bf h}_Y\otimes C\longrightarrow {\bf h}_X\otimes C$ and ${\bf h}_Y\otimes C\otimes C\longrightarrow {\bf h}_X\otimes C\otimes C$ in ${\mathscr{M}(\psi)}_\mathcal{D}^C$. Therefore, we have
\begin{equation*}
\begin{array}{lll}
\left(\theta_{X}(c \otimes d)\right)\circ f&=(id_{{\bf h}_X} \otimes \varepsilon_C)(Y)\Big(\upsilon({{\bf h}_X \otimes C})(X)(id_X \otimes c\otimes d)\cdot f\Big)&(\text{by }\eqref{6.12})\\
&=\left((id_{{\bf h}_X} \otimes \varepsilon_C)\upsilon({{\bf h}_X \otimes C})\right)(Y)({f_\psi}_\psi \otimes c^\psi \otimes d^\psi)&(\text{by }\eqref{6.13})\\
&={f_\psi}_\psi\circ\left((id_{{\bf h}_Y} \otimes \varepsilon_C)\upsilon({{\bf h}_Y \otimes C})\right)(Y)(id_Y \otimes c^\psi \otimes d^\psi)&(\text{by }  \eqref{eq 6.12})\\
&={f_\psi}_\psi \circ \theta_{Y}(c^\psi\otimes d^\psi)&
\end{array}
\end{equation*}

\smallskip
This proves \eqref{theta1}. We now verify  that $\theta$ satisfies \eqref{theta2}. Using Lemma \ref{lem 6.3}, we know that $C \otimes {\bf h}_Y$ and $C \otimes C \otimes {\bf h}_Y$ belong to ${\mathscr{M}(\psi)}_\mathcal{D}^C$ for each $Y \in Ob(\mathcal{D})$. For each $X \in Ob(\mathcal{D})$, it may be easily seen that $C \otimes {\bf h}_Y(X)$ is also a left $C$-comodule with coaction given by $\rho_Y^l(X):=\Delta_C \otimes id_{{\bf h}_Y(X)}$. Moreover, it may be easily verified that the following diagram commutes:
$$\begin{CD}
C \otimes {\bf h}_Y(X)   @>\rho_Y^l(X)>> C\otimes C \otimes {\bf h}_Y(X) \\
@V\sigma_{C \otimes {\bf h}_Y(X)}^rVV        @VV\sigma_{C \otimes C \otimes {\bf h}_Y(X)}^rV\\
C \otimes {\bf h}_Y(X) \otimes C  @>\rho_Y^l(X) \otimes id>>  C\otimes C \otimes {\bf h}_Y(X) \otimes C
\end{CD}$$
This shows that $\rho_Y^l(X)$ is a morphism of right $C$-comodules. Further, for any $g \in Hom_\mathcal{D}(X,X')$, we have the following commutative diagram:
$$\begin{CD}
C \otimes {\bf h}_Y(X')   @>\rho_Y^l(X')>> C\otimes C \otimes {\bf h}_Y(X') \\
@V(C \otimes {\bf h}_Y)(g)VV        @VV(C\otimes C \otimes {\bf h}_Y)(g)V\\
C \otimes {\bf h}_Y(X)    @>\rho_Y^l(X)>>  C\otimes C \otimes {\bf h}_Y(X)
\end{CD}$$

Thus, $\rho_Y^l:C\otimes {\bf h}_Y\longrightarrow C\otimes C\otimes {\bf h}_Y $ is a morphism of right $\mathcal{D}$-modules. This shows that $\rho_Y^l$ is a morphism in the category  ${\mathscr{M}(\psi)}_\mathcal{D}^C$. Therefore, using the naturality of $\upsilon$ and Lemma \ref{2.4m}, we have the following commutative diagram:
$$\begin{CD}
C \otimes {\bf h}_Y(X) \otimes C    @>\upsilon(C \otimes {\bf h}_Y)(X)>> C \otimes {\bf h}_Y(X)\\
@V\rho_Y^l(X) \otimes id_CVV        @VV\rho_Y^l(X)V\\
C \otimes C \otimes {\bf h}_Y(X) \otimes C    @>\upsilon(C \otimes C \otimes {\bf h}_Y)(X)=id_C \otimes \upsilon(C \otimes {\bf h}_Y)(X)>>  C\otimes C \otimes {\bf h}_Y(X)
\end{CD}$$

For any $c \otimes id_X \otimes d \in C \otimes {\bf h}_X(X) \otimes C$, we set $ a_i \otimes f_i:=\upsilon\big(C \otimes {\bf h}_X\big)(X)(c \otimes id_X \otimes d)$. Then, we have
\begin{align*}
\rho_X^l(X)\left(a_i \otimes f_i\right)= {a_i}_1 \otimes {a_i}_2 \otimes f_i=  \left(id_C \otimes \upsilon (C \otimes {\bf h}_X)(X)\right)(c_1 \otimes c_2 \otimes id_X \otimes d)
\end{align*}
Now applying the map $id_C \otimes \varepsilon_C \otimes id_{{\bf h}_X}$ to both sides, we get
\begin{equation*}
\begin{array}{ll}
a_i \otimes f_i&= c_1 \otimes \big((\varepsilon_C \otimes id_{{\bf h}_X}) \upsilon(C \otimes {\bf h}_X)\big)(X)(c_2 \otimes id_X \otimes d)\\
&=c_1 \otimes \big((id_{{\bf h}_X} \otimes \varepsilon_C) \upsilon ({\bf h}_X \otimes C)\big)(X)(id_X \otimes c_2 \otimes d)~~~~~~(\text{by Lemma}~\ref{reversing})\\
&= c_1 \otimes \theta_X(c_2 \otimes d)
\end{array}
\end{equation*}
Therefore, we have
\begin{equation}\label{eq 6.14}
\psi\big( a_i \otimes f_i\big)= {\big(\theta_X(c_2 \otimes d)\big)}_\psi \otimes {c_1}^\psi
\end{equation}
Since $\upsilon(C \otimes {\bf h}_Y)(X)$ is a morphism of right $C$-comodules, we also have the following commutative diagram:
$$\begin{CD}
C \otimes {\bf h}_Y(X) \otimes C     @>\upsilon(C \otimes {\bf h}_Y)(X)>>  C \otimes {\bf h}_Y(X) \\
@V\pi^r_{C \otimes {\bf h}_Y(X) \otimes C}VV       @VV\sigma^r_{C \otimes {\bf h}_Y(X)}V\\
C \otimes {\bf h}_Y(X) \otimes C \otimes C     @>\upsilon(C \otimes {\bf h}_Y)(X) \otimes id_C>>  C\otimes {\bf h}_Y(X) \otimes C
\end{CD}$$
Thus, we have
\begin{align*}
\sigma^r_{C \otimes {\bf h}_X(X)}\big(a_i \otimes f_i\big)={a_i}_1 \otimes {f_i}_\psi \otimes {{a_i}_2}^\psi =\big(\upsilon(C \otimes {\bf h}_X)(X) \otimes id_C\big)\big( c \otimes id_X \otimes d_1 \otimes d_2\big)
\end{align*}
Now, applying the map $\varepsilon_C \otimes id_{{\bf h}_X} \otimes id_C$ to both sides, we get
\begin{equation*}
\begin{array}{ll}
  \varepsilon_C({a_i}_1)({f_i}_\psi \otimes {a_i}_2^\psi) &=  \big((\varepsilon_C \otimes id_{{\bf h}_X})\upsilon(C \otimes {\bf h}_X)\big)(X)\ (c \otimes id_X \otimes d_1) \otimes d_2\\
&=  \big((id_{{\bf h}_X} \otimes \varepsilon_C)\upsilon({\bf h}_X \otimes C)\big)(X)\ (id_X \otimes c \otimes d_1) \otimes d_2~~~~~~(\text{by Lemma}~\ref{reversing})
\end{array}
\end{equation*}
Therefore, 
\begin{equation}\label{eq 6.15}
\psi\big( a_i \otimes f_i\big)=\theta_X (c \otimes d_1) \otimes d_2
\end{equation}
It now follows from \eqref{eq 6.14} and \eqref{eq 6.15} that $\theta$ satisfies \eqref{theta2}.
\end{proof}

\begin{prop}\label{beta}
Let $\theta \in V_1$. Then, we have an element $\upsilon \in Nat(\mathscr{G}\mathscr{F},1_{{\mathscr{M}(\psi)}_\mathcal{D}^C})$ defined by
\begin{equation*}
\upsilon(\mathcal{M}): \mathcal{M}\otimes C\longrightarrow \mathcal{M}, \qquad m \otimes c \mapsto \mathcal{M}\big(\theta_{X}( m_1 \otimes c)\big)(m_0)
\end{equation*}
for $\mathcal{M} \in Ob\big({\mathscr{M}(\psi)}_\mathcal{D}^C \big),~ X\in Ob(\mathcal{D}),~ m\in \mathcal{M}(X)$ and $c\in C$. 
\end{prop}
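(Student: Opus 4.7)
The goal is to verify three things: for each $\mathcal{M}\in {\mathscr{M}(\psi)}_\mathcal{D}^C$, the map $\upsilon(\mathcal{M})$ is (a) a morphism of right $\mathcal{D}$-modules, (b) $C$-colinear on each component, and (c) the assignment $\mathcal{M}\mapsto \upsilon(\mathcal{M})$ is natural. In each case the ingredients come directly from the two axioms \eqref{theta1} and \eqref{theta2} defining $V_1$, combined with the compatibility condition \eqref{comp 2} for an entwined module.

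For the right $\mathcal{D}$-module compatibility, I would take $f\in Hom_\mathcal{D}(Y,X)$ and $m\otimes c\in \mathcal{M}(X)\otimes C$ and compute $\upsilon(\mathcal{M})(Y)((\mathcal{M}\otimes C)(f)(m\otimes c))=\upsilon(\mathcal{M})(Y)(\mathcal{M}(f_\psi)(m)\otimes c^\psi)$. Applying \eqref{comp 2} to compute the coaction $\rho_{\mathcal{M}(Y)}(\mathcal{M}(f_\psi)(m))$ and then the definition of $\upsilon$, this becomes
\begin{equation*}
\mathcal{M}\bigl(\theta_Y({m_1}^\psi\otimes c^\psi)\bigr)\circ \mathcal{M}(f_{\psi\psi})(m_0)=\mathcal{M}\bigl(f_{\psi\psi}\circ \theta_Y({m_1}^\psi\otimes c^\psi)\bigr)(m_0),
\end{equation*}
and then \eqref{theta1} applied with $c\rightsquigarrow m_1$, $d\rightsquigarrow c$ rewrites the inner morphism as $\theta_X(m_1\otimes c)\circ f$, producing exactly $\mathcal{M}(f)\upsilon(\mathcal{M})(X)(m\otimes c)$.

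For $C$-colinearity of $\upsilon(\mathcal{M})(X)$, I would apply $\rho_{\mathcal{M}(X)}$ to $\mathcal{M}(\theta_X(m_1\otimes c))(m_0)$. Using \eqref{comp 2} with the morphism $\theta_X(m_1\otimes c):X\to X$ and using coassociativity of $\rho_{\mathcal{M}(X)}$, the left-hand side is $\mathcal{M}((\theta_X(m_2\otimes c))_\psi)(m_0)\otimes (m_1)^\psi$. On the other side, $(\upsilon(\mathcal{M})(X)\otimes id_C)\pi^r_{\mathcal{M}(X)\otimes C}(m\otimes c)=\mathcal{M}(\theta_X(m_1\otimes c_1))(m_0)\otimes c_2$. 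The axiom \eqref{theta2}, applied with $c\rightsquigarrow m_1$ and $d\rightsquigarrow c$, gives exactly the equality $\theta_X(m_1\otimes c_1)\otimes c_2=(\theta_X(m_2\otimes c))_\psi\otimes (m_1)^\psi$, which after applying $\mathcal{M}(-)(m_0)$ to both sides yields the required identity.

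Finally, naturality in $\mathcal{M}$ is the easiest step: given a morphism $\eta:\mathcal{M}\to \mathcal{N}$ in ${\mathscr{M}(\psi)}_\mathcal{D}^C$, the $C$-colinearity of $\eta(X)$ replaces $(\eta(X)(m))_0\otimes (\eta(X)(m))_1$ by $\eta(X)(m_0)\otimes m_1$, and the $\mathcal{D}$-module naturality of $\eta$ applied to the endomorphism $\theta_X(m_1\otimes c)$ moves $\eta(X)$ across $\mathcal{M}(\theta_X(m_1\otimes c))$. The main technical obstacle is bookkeeping, since the Sweedler indices $m_1,m_2$ and the iterated entwinings $f_\psi$, $f_{\psi\psi}$ must be matched carefully against the indices appearing in \eqref{theta1} and \eqref{theta2}; once the correct substitutions are identified, each of the three verifications reduces to a one-line application of the relevant axiom.
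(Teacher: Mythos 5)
Your proposal is correct and follows essentially the same route as the paper: the $\mathcal{D}$-module compatibility via \eqref{comp 2} and \eqref{theta1}, the colinearity via coassociativity, \eqref{theta2} and \eqref{comp 2}, and naturality via the colinearity and $\mathcal{D}$-linearity of $\eta$, with only the direction of the first chain of equalities reversed. Your substitutions $c\rightsquigarrow m_1$, $d\rightsquigarrow c$ and the iterated Sweedler indexing match the paper's computation exactly.
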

\begin{proof}
We need to verify that $\upsilon(\mathcal{M}):\mathcal{M}\otimes C\longrightarrow \mathcal{M}$ is a morphism in ${\mathscr{M}(\psi)}_\mathcal{D}^C$ and that $\upsilon$ is indeed a natural transformation. We first verify that $\upsilon(\mathcal{M})$ is a morphism of right $\mathcal{D}$-modules. Let $f \in Hom_\mathcal{D}(Y,X)$. Then, we have
\begin{equation*}
\begin{array}{lll}
\mathcal{M}(f)\big(\upsilon(\mathcal{M})(X)\big)(m \otimes c)&= \mathcal{M}(f) \mathcal{M}\big(\theta_{X}(m_1 \otimes c)\big)(m_0)&\\
&=  \mathcal{M}\left(\big(\theta_{X}(m_1 \otimes c)\big) \circ f\right)(m_0)&\\
&= \mathcal{M}\big({f_\psi}_\psi\circ \theta_{Y}(m_1^\psi \otimes c^\psi) \big)(m_0)& (\text{by}~ \eqref{theta1})\\
&=  \mathcal{M}\big(\theta_{Y}(m_1^\psi \otimes c^\psi)\big) \mathcal{M}({f_\psi}_\psi)(m_0)&\\
&=  \mathcal{M}\Big(\theta_{Y}\big((\mathcal{M}(f_\psi)(m))_1 \otimes c^\psi\big)\Big)\big(\mathcal{M}(f_\psi)(m)\big)_0 &(\text{by}~ \eqref{comp 2})\\
&= \upsilon(\mathcal{M})(Y)\big( \mathcal{M}(f_\psi)(m) \otimes c^\psi \big)&\\
&= \upsilon(\mathcal{M})(Y) (\mathcal{M} \otimes C)(f)(m \otimes c)&
\end{array}
\end{equation*}
We now verify that $\upsilon(\mathcal{M})(X):\mathcal{M}(X) \otimes C\longrightarrow \mathcal{M}(X)$ is a morphism of right $C$-comodules for every $X \in Ob(\mathcal{D})$. For each $m \otimes c \in \mathcal{M}(X) \otimes C$, we have
\begin{equation*}
\begin{array}{lll}
\big(\upsilon(\mathcal{M})(X) \otimes id_C \big)\pi^r(m \otimes c)&= \upsilon(\mathcal{M})(X)(m \otimes c_1) \otimes c_2&\\
&= \mathcal{M}\big(\theta_{X}(m_1 \otimes c_1)\big)(m_0) \otimes c_2&\\
&=\mathcal{M}\left(\left(\theta_{X}((m_{1})_2 \otimes c)\right)_\psi \right)(m_0) \otimes {(m_{1})_1}^\psi&(\text{by}~ \eqref{theta2})\\
&= \mathcal{M}\left(\left(\theta_{X}(m_{1} \otimes c)\right)_\psi\right)(m_0)_0 \otimes {(m_{0})_1}^\psi&\\
&= \rho_{\mathcal M(X)}\left(\mathcal{M}\left(\theta_{X}(m_{1} \otimes c)\right)(m_0)\right)&(\text{by}~ \eqref{comp 2})\\
&= \rho_{\mathcal M(X)}\big( \upsilon(\mathcal{M})(X) (m \otimes c)\big)&
\end{array}
\end{equation*}
It remains to show that $\upsilon:\mathscr{G}\mathscr{F} \longrightarrow 1_{{\mathscr{M}(\psi)}_\mathcal{D}^C}$ is a natural transformation. Let $\eta:\mathcal{M} \longrightarrow \mathcal{N}$ be a morphism in ${\mathscr{M}(\psi)}_\mathcal{D}^C$. Then, for every $X \in Ob(\mathcal{D})$ and $m \otimes c \in \mathcal{M}(X) \otimes C$, we have
\begin{equation*}
\begin{array}{lll}
\big(\upsilon(\mathcal{N})(\eta \otimes id_C)\big)(X)(m \otimes c)&=\upsilon(\mathcal{N})(X)\big(\eta(X)(m) \otimes c\big)&\\
&=  \mathcal{N}\left(\theta_{X}\left((\eta(X)(m))_1 \otimes c \right)\right)\left(\eta(X)(m)\right)_0&\\
&=  \mathcal{N}\left(\theta_{X}\left(m_1 \otimes c \right)\right)\eta(X)(m_0)&\text{(since $\eta(X)$ is $C$-colinear)}\\
&= \eta(X)\mathcal{M}\left(\theta_{X}\left(m_1 \otimes c \right)\right)(m_0)&\\
&= \eta(X)\upsilon(\mathcal{M})(X)(m \otimes c)&
\end{array}
\end{equation*}
This proves the result.
\end{proof}

\begin{prop}\label{V=V1}
The $K$-spaces $V=Nat(\mathscr{G}\mathscr{F},1_{{\mathscr{M}(\psi)}_\mathcal{D}^C})$ and $V_1$ are isomorphic.
\end{prop}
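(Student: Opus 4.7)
The plan is to check that the $K$-linear maps $\alpha:V\longrightarrow V_1$ of Proposition \ref{alpha} and $\beta:V_1\longrightarrow V$ of Proposition \ref{beta} are mutually inverse. Recall that $\alpha$ sends $\upsilon$ to $\theta$ with $\theta_X(c\otimes d)=((id_{{\bf h}_X}\otimes\varepsilon_C)\upsilon({\bf h}_X\otimes C))(X)(id_X\otimes c\otimes d)$, while $\beta$ sends $\theta$ to $\upsilon$ given by $\upsilon(\mathcal{M})(X)(m\otimes c)=\mathcal{M}(\theta_X(m_1\otimes c))(m_0)$; both assignments are visibly $K$-linear.

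For the direction $\alpha\circ\beta=id_{V_1}$, I would fix $\theta\in V_1$, set $\upsilon=\beta(\theta)$, and evaluate $\alpha(\upsilon)_X(c\otimes d)$ directly. The coaction on ${\bf h}_X\otimes C$ from \eqref{rightcomod1} gives $(id_X\otimes c)_0\otimes(id_X\otimes c)_1=id_X\otimes c_1\otimes c_2$, and the $\mathcal{D}$-action from Lemma \ref{lem 6.2} yields $\upsilon({\bf h}_X\otimes C)(X)(id_X\otimes c\otimes d)=(\theta_X(c_2\otimes d))_\psi\otimes c_1^\psi$. Condition \eqref{theta2} rewrites the right-hand side as $\theta_X(c\otimes d_1)\otimes d_2$, and applying $id_{{\bf h}_X}\otimes\varepsilon_C$ recovers $\theta_X(c\otimes d)$ by counitality.

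The opposite composition $\beta\circ\alpha=id_V$ is the main obstacle. Fix $\upsilon\in V$, let $\theta=\alpha(\upsilon)$ and $\upsilon'=\beta(\theta)$; the goal is $\upsilon(\mathcal{M})(X)(m\otimes c)=\mathcal{M}(\theta_X(m_1\otimes c))(m_0)$ for every $\mathcal{M}\in {\mathscr{M}(\psi)}_\mathcal{D}^C$, $m\in\mathcal{M}(X)$ and $c\in C$. I would apply Lemma \ref{gen2} to pick a finite-dimensional $C$-subcomodule $V_m\subseteq\mathcal{M}(X)$ containing $m$, together with the morphism $\eta_m:V_m\otimes{\bf h}_X\longrightarrow\mathcal{M}$ in ${\mathscr{M}(\psi)}_\mathcal{D}^C$ sending $m\otimes id_X$ to $m$. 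Using naturality of $\upsilon$ along $\eta_m\otimes id_C$, the claim reduces to proving the identity $A=m_0\otimes\theta_X(m_1\otimes c)$ in $V_m\otimes End_\mathcal{D}(X)$, where $A:=\upsilon(V_m\otimes{\bf h}_X)(X)(m\otimes id_X\otimes c)$.

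To identify $A$, for each $\ell\in V_m^*$ I would use the $C$-colinear map $\pi_\ell:V_m\longrightarrow C$, $v\mapsto\ell(v_0)v_1$; a direct verification analogous to the proof of Lemma \ref{lem 6.3} shows that $\pi_\ell\otimes id_{{\bf h}_X}:V_m\otimes{\bf h}_X\longrightarrow C\otimes{\bf h}_X$ is a morphism in ${\mathscr{M}(\psi)}_\mathcal{D}^C$. Applying the naturality square for $\upsilon$ against $\pi_\ell\otimes id_{{\bf h}_X}$ at $m\otimes id_X\otimes c$ yields, for every $\ell$, the equation
\begin{equation*}
(\pi_\ell\otimes id_{End_\mathcal{D}(X)})(A)=\ell(m_0)\cdot\upsilon(C\otimes{\bf h}_X)(X)(m_1\otimes id_X\otimes c).
\end{equation*}
Since $V_m^*$ separates points of the finite-dimensional $V_m$, letting $\ell$ range over $V_m^*$ pins down $(\rho_{V_m}\otimes id_{End_\mathcal{D}(X)})(A)=m_0\otimes\upsilon(C\otimes{\bf h}_X)(X)(m_1\otimes id_X\otimes c)$ inside $V_m\otimes C\otimes End_\mathcal{D}(X)$. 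Applying $id_{V_m}\otimes\varepsilon_C\otimes id_{End_\mathcal{D}(X)}$ to both sides, using $(id_{V_m}\otimes\varepsilon_C)\rho_{V_m}=id_{V_m}$ on the left, and invoking the swap identity \eqref{Lg6.16} of Lemma \ref{reversing} to replace $(\varepsilon_C\otimes id_{{\bf h}_X})\upsilon(C\otimes{\bf h}_X)(X)(m_1\otimes id_X\otimes c)$ by $\theta_X(m_1\otimes c)$ on the right, produces $A=m_0\otimes\theta_X(m_1\otimes c)$. Finally $\eta_m(X)(A)=\mathcal{M}(\theta_X(m_1\otimes c))(m_0)$ delivers the desired equality $\upsilon(\mathcal{M})(X)(m\otimes c)=\upsilon'(\mathcal{M})(X)(m\otimes c)$.
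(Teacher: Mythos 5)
Your proposal is correct. The direction $\alpha\circ\beta=\mathrm{id}_{V_1}$ is carried out exactly as in the paper (coaction \eqref{rightcomod1}, the $\mathcal{D}$-action of Lemma \ref{lem 6.2}, then \eqref{theta2} and counitality), but your argument for $\beta\circ\alpha=\mathrm{id}_V$ takes a genuinely different route. The paper stays with an arbitrary entwined module $\mathcal{M}$ and uses two naturality squares: one along $\eta_{m'}\otimes id_C:{\bf h}_X\otimes C\longrightarrow \mathcal{M}\otimes C$ induced by elements $m'\in\mathcal{M}(X)$, and one along the coaction $\tilde{\rho}:\mathcal{M}\longrightarrow\mathcal{M}\otimes C$ viewed as a morphism in ${\mathscr{M}(\psi)}_\mathcal{D}^C$, finishing with the counit; it never needs Lemma \ref{gen2} or Lemma \ref{reversing} inside this proof. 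You instead localize the computation to the generators of Lemma \ref{gen2}: naturality along $\eta_m:V_m\otimes{\bf h}_X\longrightarrow\mathcal{M}$ reduces everything to identifying $A=\upsilon(V_m\otimes{\bf h}_X)(X)(m\otimes id_X\otimes c)$, which you do by pushing along the colinear functionals $\pi_\ell\otimes id_{{\bf h}_X}$ into $C\otimes{\bf h}_X$ (legitimate, since $-\otimes{\bf h}_X$ is functorial on $Comod\text{-}C$ by Lemma \ref{lem 6.3}), separating points with $V_m^*$, applying $\varepsilon_C$, and invoking the swap identity \eqref{Lg6.16} to recognize $\theta_X$. Both arguments are complete; the paper's is shorter and avoids finite-dimensional comodule theory and the dual-separation step, while yours has the merit of working only on the explicit generators $V_m\otimes{\bf h}_X$ and of reusing Lemma \ref{reversing} directly, much in the spirit of how the paper later checks the Frobenius conditions on generators in Theorem \ref{Frobcondition}.
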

\begin{proof}
We define $\alpha:V \longrightarrow V_1$  by setting $\alpha(\upsilon)=\theta$, where $\theta$ is the collection of  $K$-linear maps  $\{\theta_X:C \otimes C \longrightarrow End_\mathcal{D}(X)\}_{X\in Ob(\mathcal D)}$ defined by
$$\theta_{X}(c\otimes d):=\left((id_{{\bf h}_X} \otimes \varepsilon_C)\upsilon({{\bf h}_X \otimes C})\right)(X)(id_X\otimes c\otimes d)$$
for $c,d \in C$. Then, $\alpha$ is a well-defined map  by Proposition \ref{alpha}. We also define 
$\beta:V_1 \longrightarrow V$ by setting $ \beta(\theta)=\upsilon$, where $\upsilon:\mathscr{G}\mathscr{F} \longrightarrow 1_{{\mathscr{M}(\psi)}_\mathcal{D}^C}$ is defined by
\begin{equation}\label{defupsilon}
\upsilon(\mathcal{M}): \mathcal{M}\otimes C\longrightarrow \mathcal{M}, \quad m \otimes c \mapsto \mathcal{M}\big(\theta_{X}( m_1 \otimes c)\big)(m_0)
\end{equation}
for $\mathcal{M} \in Ob\big({\mathscr{M}(\psi)}_\mathcal{D}^C \big),~ X\in Ob(\mathcal{D}),~ m \otimes c \in \mathcal{M}(X) \otimes C$. By Proposition \ref{beta}, $\beta$ is well-defined. We will now verify that $\alpha$ and $\beta$ are inverses of each other. Let $\theta \in V_1$. Then, for any $X, Y \in Ob(\mathcal{D})$, we have
\begin{equation*}
\begin{array}{lll}
\left(\alpha\beta(\theta)\right)_{X}(c \otimes d)&=(id_{{\bf h}_X} \otimes \varepsilon_C)(X)\left(\beta(\theta)({{\bf h}_X \otimes C})(X)(id_X \otimes c \otimes d)\right)&\\
&=  (id_{{\bf h}_X} \otimes \varepsilon_C)(X) ({\bf h}_X \otimes C)\left(\theta_{X}\left( (id_X  \otimes c)_1 \otimes d\right)\right)(id_X \otimes c)_0&\\
&= (id_{{\bf h}_X} \otimes \varepsilon_C)(X) ({\bf h}_X \otimes C)\left(\theta_{X}(c_2 \otimes d)\right)(id_X \otimes c_1)&\\
&= (id_{{\bf h}_X} \otimes \varepsilon_C)(X)\left({\bf h}_X\left(\left(\theta_{X}(c_2 \otimes d)\right)_\psi\right)(id_X) \otimes {c_1}^\psi\right)&(\text{by}~\mbox{ Lemma \ref{rightaction1}})\\
&= (id_{{\bf h}_X} \otimes \varepsilon_C)(X)\left({\bf h}_X\left(\theta_{X}(c \otimes d_1)\right)(id_X) \otimes d_2\right)&(\text{by}~ \eqref{theta2})\\
&= \left(\theta_{X}(c \otimes d_1)\right) \varepsilon_C(d_2)= \theta_{X}(c \otimes d)&
\end{array}
\end{equation*} 
This proves that $\big(\alpha\beta(\theta)\big)_{X}=\theta_{X}$ for all $X \in  Ob(\mathcal{D})$. Therefore,
$(\alpha\beta)(\theta)=\theta$. For any $\upsilon \in V$, we now verify that $(\beta \alpha)(\upsilon)=\upsilon$. We set $\theta= \alpha(\upsilon)$. Then, by definition we have
\begin{equation}\label{6.24}
\begin{array}{ll}
(\beta\alpha)(\upsilon)(\mathcal{M})(X)(m\otimes c)&=\left(\left(\beta(\theta)\right)(\mathcal{M})\right)(X)(m \otimes c)\\
&=\mathcal{M}\big(\theta_{X}(m_1 \otimes c)\big)(m_0)\\
&=\mathcal{M}\big((id_{{\bf h}_X} \otimes \varepsilon_C)(X)\upsilon({{\bf h}_X \otimes C})(X)(id_X \otimes m_1 \otimes c)\big)(m_0)
\end{array}
\end{equation}
For any $m'\in \mathcal M(X)$, it may be easily verified that $\eta_{m'}:{\bf h}_X\longrightarrow\mathcal{M}$ defined by $\eta_{m'}(Y)(f):=\mathcal{M}(f)(m')$ for each $f\in Hom_\mathcal{D}(Y,X)$  is a morphism in $Mod\text{-}\mathcal{D}$. By Lemma \ref{rightaction1}, this induces the morphism $\eta_{m'}\otimes id_C:{\bf h}_X\otimes C\longrightarrow\mathcal{M}\otimes C$ in  $\mathscr{M}(\psi)_\mathcal{D}^C$ defined by $(\eta_{m'}\otimes id_C)(Y)(f\otimes c):=\mathcal{M}(f)(m')\otimes c$ for $f\in Hom_\mathcal{D}(Y,X)$ and $c\in C$. Since $\upsilon$ is a natural transformation, it follows easily that the following diagram commutes
$$\begin{CD}
{\bf h}_X(X)\otimes C\otimes C    @>\eta_{m'}(X)\otimes id_C\otimes id_C>>  \mathcal{M}(X)\otimes C\otimes C\\
@V\upsilon({{\bf h}_X \otimes C})(X)VV        @VV \upsilon({\mathcal{M} \otimes C})(X)V\\
{\bf h}_X(X)\otimes C     @>\eta_{m'}(X)\otimes id_C>> \mathcal{M}(X)\otimes C\\
@V (id_{{\bf h}_X} \otimes \varepsilon_C)(X)VV  @VV (id_{\mathcal{M}} \otimes \varepsilon_C)(X)V\\
{\bf h}_X(X) @> \eta_{m'}(X)>>\mathcal{M}(X)
\end{CD}$$
In particular, we have 
\begin{equation}\label{6.25}
\mbox{\small
$\mathcal{M}\left(\left((id_{{\bf h}_X} \otimes \varepsilon_C)\upsilon({{\bf h}_X \otimes C})\right)(X)( id_X\otimes m_1 \otimes c)\right)(m_0) =\left((id_{\mathcal{M}} \otimes \varepsilon_C)\upsilon(\mathcal{M}\otimes C)\right)(X)( m_0\otimes m_1 \otimes c)$}
\end{equation}

The comodule structure on entwined modules determines a morphism in ${\mathscr{M}(\psi)}_\mathcal{D}^C$ as follows. We define $\tilde{\rho}:\mathcal{M} \longrightarrow \mathcal{M} \otimes C$ given by 
\begin{equation*}
\tilde{\rho}(X):=\rho_{\mathcal{M}(X)}:\mathcal{M}(X) \longrightarrow \mathcal{M}(X) \otimes C
\end{equation*}
for any $\mathcal{M} \in Ob({\mathscr{M}(\psi)}_\mathcal{D}^C)$ and $X \in Ob(\mathcal{D})$.  We first verify that $\tilde{\rho}$ is a morphism of right $\mathcal{D}$-modules. For any $f \in Hom_\mathcal{D}(Y,X)$ and $m \in \mathcal{M}(X)$, we have
\begin{equation*}
\begin{array}{lll}
(\mathcal{M} \otimes C)(f)\left(\tilde{\rho}(X)(m)\right)&=(\mathcal{M} \otimes C)(f)(m_0 \otimes m_1)=\mathcal{M}(f_\psi)(m_0) \otimes {m_1}^\psi&\\
&=\rho_{\mathcal{M}(Y)}(\mathcal{M}(f)(m))=\tilde{\rho}(Y)(\mathcal{M}(f)(m))& (\text{by}~ \eqref{comp 2})
\end{array}
\end{equation*}
It may be verified easily that $\tilde{\rho}(X):\mathcal{M}(X) \longrightarrow \mathcal{M}(X) \otimes C$ is right $C$-colinear. Thus, $\tilde{\rho}:\mathcal{M} \longrightarrow \mathcal{M} \otimes C$ is a morphism in ${\mathscr{M}(\psi)}_\mathcal{D}^C$. Therefore, we have the following commutative diagram
$$\begin{CD}
\mathcal{M}(X) \otimes C    @>\upsilon(\mathcal{M})(X)>> \mathcal{M}(X)\\
@V\tilde{\rho}(X) \otimes id_CVV        @VV\tilde{\rho}(X)V\\
\mathcal{M}(X) \otimes C  \otimes C    @> \upsilon(\mathcal{M} \otimes C)(X)>>  \mathcal{M}(X) \otimes C 
\end{CD}$$
Thus, we get
\begin{equation*}
\begin{array}{ll}
\upsilon(\mathcal{M} \otimes C)(X)\left((\tilde{\rho}(X) \otimes id_C)(m \otimes c)\right)&=\upsilon(\mathcal{M} \otimes C)(X)(m_0 \otimes m_1 \otimes c)\\
&=\tilde{\rho}(X)\left(\upsilon(\mathcal{M})(X)(m \otimes c)\right)=\rho_{\mathcal{M}(X)}\left(\upsilon(\mathcal{M})(X)(m \otimes c)\right)
\end{array}
\end{equation*}
Now applying $id_{\mathcal{M}(X)} \otimes \varepsilon_C$ on both sides, we obtain
\begin{equation*}
(\beta\alpha)(\upsilon)(\mathcal{M})(X)(m\otimes c)=  (id_{\mathcal{M}} \otimes \varepsilon_C)(X)\upsilon(\mathcal{M}\otimes C)(X)( m_0\otimes m_1 \otimes c)=\upsilon(\mathcal{M})(X)(m \otimes c)
\end{equation*}
\end{proof}

\begin{theorem}\label{ThmIx}
Let $\mathscr{F}: {\mathscr{M}(\psi)}_\mathcal{D}^C \longrightarrow Mod$-$\mathcal{D}$ be the forgetful functor and $\mathscr{G}:Mod$-$\mathcal{D} \longrightarrow {\mathscr{M}(\psi)}_\mathcal{D}^C$, $\mathcal{N} \mapsto \mathcal{N} \otimes C$ be its right adjoint. Then, $\mathscr{F}$ is separable if and only if  there exists $\theta \in V_1$ such that
\begin{equation*}
\theta_{X} \circ \Delta_C =  \varepsilon_C \cdot id_X \qquad \forall X \in Ob(\mathcal{D})
\end{equation*}
\end{theorem}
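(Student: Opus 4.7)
By Theorem \ref{separability}(i), $\mathscr{F}$ is separable if and only if there exists $\upsilon \in V = Nat(\mathscr{G}\mathscr{F}, 1_{{\mathscr{M}(\psi)}_\mathcal{D}^C})$ with $\upsilon \circ \mu = 1$, where $\mu$ is the unit \eqref{unit}. By Proposition \ref{V=V1}, the $K$-spaces $V$ and $V_1$ are isomorphic via the mutually inverse maps $\alpha, \beta$. The plan is therefore to translate the equation $\upsilon \circ \mu = 1$ into an equivalent statement for $\theta = \alpha(\upsilon) \in V_1$, using the explicit formula $\upsilon(\mathcal{M})(X)(m \otimes c) = \mathcal{M}(\theta_X(m_1 \otimes c))(m_0)$ from Proposition \ref{beta}.

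Combining this formula with the definition of $\mu$, for $m \in \mathcal{M}(X)$ with $\rho_{\mathcal{M}(X)}(m) = m_0 \otimes m_1$, one computes $(\upsilon \circ \mu)(\mathcal{M})(X)(m) = \upsilon(\mathcal{M})(X)(m_0 \otimes m_1) = \mathcal{M}\big(\theta_X((m_0)_1 \otimes m_1)\big)\big((m_0)_0\big)$. Coassociativity of the coaction, $(\rho \otimes id_C)\rho = (id_{\mathcal{M}(X)} \otimes \Delta_C)\rho$, rewrites $(m_0)_0 \otimes (m_0)_1 \otimes m_1$ as $m_0 \otimes (m_1)_1 \otimes (m_1)_2$, so the expression simplifies to $(\upsilon \circ \mu)(\mathcal{M})(X)(m) = \mathcal{M}\big((\theta_X \circ \Delta_C)(m_1)\big)(m_0)$. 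The $\Leftarrow$ direction is then immediate: assuming $\theta_X \circ \Delta_C = \varepsilon_C \cdot id_X$ for every $X$, the right-hand side becomes $\varepsilon_C(m_1) \mathcal{M}(id_X)(m_0) = \varepsilon_C(m_1) m_0 = m$ by the counit axiom of the comodule structure.

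For the $\Rightarrow$ direction, I would test the identity $(\upsilon \circ \mu)(\mathcal{M})(X)(m) = m$ on the single entwined module $\mathcal{M} = C \otimes {\bf h}_X \in {\mathscr{M}(\psi)}_\mathcal{D}^C$ (Lemma \ref{lem 6.3}) and element $m = c \otimes id_X$, for arbitrary $c \in C$ and $X \in Ob(\mathcal{D})$. The coaction formula \eqref{rightcomod2} combined with \eqref{eq 6.4} collapses $\psi_{XX}(c_2 \otimes id_X)$ to $id_X \otimes c_2$ and gives $\rho(c \otimes id_X) = c_1 \otimes id_X \otimes c_2$, so $m_0 = c_1 \otimes id_X$ and $m_1 = c_2$. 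Using the right $\mathcal{D}$-action $(C \otimes {\bf h}_X)(f)(n \otimes g) = n \otimes g \circ f$ from \eqref{rightaction2}, the identity reduces to $c_1 \otimes (\theta_X \circ \Delta_C)(c_2) = c \otimes id_X$ in $C \otimes End_\mathcal{D}(X)$; applying $\varepsilon_C \otimes id$ and using the counit of $C$ then yields $(\theta_X \circ \Delta_C)(c) = \varepsilon_C(c) id_X$ for every $c \in C$ and every $X \in Ob(\mathcal{D})$. The only real subtlety is the careful bookkeeping of the iterated Sweedler indices and the use of \eqref{eq 6.4} to make the coaction on $c \otimes id_X$ entirely free of the entwining, which is precisely why the test module $C \otimes {\bf h}_X$ is well-chosen for isolating the desired condition.
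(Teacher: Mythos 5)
Your proposal is correct and follows essentially the paper's route: Rafael's criterion (Theorem \ref{separability}) combined with the isomorphism $V \cong V_1$ of Proposition \ref{V=V1}, with your backward direction identical to the paper's computation via the counit axiom. The only difference is mechanical: for the forward direction the paper evaluates $\upsilon\circ\mu=1$ on the test module ${\bf h}_X\otimes C$ at $id_X\otimes c$ using the explicit formula for $\theta=\alpha(\upsilon)$, whereas you invoke $\upsilon=\beta(\theta)$ (legitimate since $\beta\alpha=id_V$) and test on $C\otimes {\bf h}_X$ at $c\otimes id_X$, where \eqref{eq 6.4} makes the coaction entwining-free — both computations are valid and produce the same condition $\theta_X\circ\Delta_C=\varepsilon_C\cdot id_X$.
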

\begin{proof}
We first recall from \eqref{unit} that the unit of the adjunction is given by
$$\mu(\mathcal{M}):\mathcal{M}\longrightarrow \mathcal{M}\otimes C \qquad \mu(\mathcal{M})(X)(m)= m_0\otimes m_1$$ for $\mathcal{M}\in Ob({\mathscr{M}(\psi)}_\mathcal{D}^C)$ and $m \in \mathcal{M}(X)$. Suppose that $\mathscr{F}$ is separable. Then, by Theorem \ref{separability}, there exists $\upsilon \in V$ such that $\upsilon \circ \mu = 1_{{\mathscr{M}(\psi)}_\mathcal{D}^C}$. Therefore, using Proposition \ref{V=V1}, corresponding to $\upsilon \in V$ we can obtain an element  $\theta \in V_1$  given by
$\theta_{X}(c \otimes d)=\big((id_{{\bf h}_X} \otimes \varepsilon_C)\upsilon({\bf h}_X \otimes C)\big)(X)(id_X \otimes c \otimes d)$ for each $c,d \in C$.
Moreover, we have
\begin{equation*}
\begin{array}{ll}
(\theta_{X} \circ \Delta_C)(c)&=\big((id_{{\bf h}_X} \otimes \varepsilon_C)\upsilon({\bf h}_X \otimes C)\big)(X)(id_X \otimes c_1 \otimes c_2)\\
&= \big((id_{{\bf h}_X} \otimes \varepsilon_C)\upsilon({\bf h}_X \otimes C)\big)(X)\big(\mu({\bf h}_X \otimes C)(X)\big)(id_X \otimes c)\\
&= \big((id_{{\bf h}_X} \otimes \varepsilon_C)(id_{{{\bf h}_X} \otimes C})\big)(X)(id_X \otimes c)= \big(id_{{\bf h}_X}(X) \otimes \varepsilon_C\big)(id_X \otimes c)=\varepsilon_C(c) id_X
\end{array}
\end{equation*}
for any $c \in C$.
Conversely, suppose that $\theta\in V_1$ is such that $\theta_{X} \circ \Delta_C =  \varepsilon_C \cdot id_X$ for every
$X\in Ob(\mathcal D)$. Corresponding to $\theta \in V_1$ there exists $\upsilon \in V$ defined by
\begin{equation*}
\upsilon(\mathcal{M}): \mathcal{M}\otimes C\longrightarrow \mathcal{M}, \quad m \otimes c \mapsto \mathcal{M}\big(\theta_{X}( m_1 \otimes c)\big)(m_0)
\end{equation*}
for $\mathcal{M} \in Ob\big({\mathscr{M}(\psi)}_\mathcal{D}^C \big),~ X\in Ob(\mathcal{D}),~ m\in \mathcal{M}(X)$ and $c\in C$. Further, we have
\begin{equation*}
\begin{array}{ll}
(\upsilon \circ \mu)(\mathcal{M})(X)(m)=\upsilon(\mathcal{M})(X)(\mu(\mathcal{M})(X)(m))&=  \upsilon(\mathcal{M})(X)(m_0 \otimes m_1)\\
&=  \mathcal{M}\left(\theta_{X}\left((m_0)_1 \otimes m_1\right)\right)(m_0)_0\\
&=  \mathcal{M}\left(\theta_{X}\left( (m_1)_1 \otimes (m_1)_2\right)\right)(m_0)\\
&=  \mathcal{M}\left((\theta_{X}\circ \Delta_C)(m_1)\right)(m_0)\\
&=  \mathcal{M}\left((id_X) \varepsilon_C(m_1)\right)(m_0)= m
\end{array}
\end{equation*}
This shows that $\upsilon \circ \mu=1_{{\mathscr{M}(\psi)}_\mathcal{D}^C}$. Hence, $\mathscr{F}$ is separable by Theorem \ref{separability}.
\end{proof}

Next we investigate the separability of the functor $\mathscr{G}: Mod\text{-}\mathcal{D} \longrightarrow {\mathscr{M}(\psi)}_\mathcal{D}^C$ given by $\mathscr{G}(\mathcal{N})=  \mathcal{N} \otimes C$ for any $\mathcal{N}\in  Mod\text{-}\mathcal{D}$. Since $\mathscr{G}$ is a right adjoint of $\mathscr{F}$, it follows from Theorem \ref{separability} that the functor $\mathscr{G}$ is separable if and only if there exists a natural transformation $\omega:1_{Mod\text{-}\mathcal{D}} \longrightarrow \mathscr{F}\mathscr{G}$ such that $\nu\circ \omega =1_{Mod\text{-}\mathcal{D}}$, where  $\nu$ is the counit of the adjunction as explained in \eqref{counit}.
We set $W:=Nat(1_{Mod\text{-}\mathcal{D}},\mathscr{F}\mathscr{G})$ and proceed to give another interpretation of $W$.

\smallskip We define $h:\mathcal{D}^{op} \otimes \mathcal{D} \longrightarrow Vect_K$ as
\begin{align}\label{n1}
h(X,Y):&=Hom_\mathcal{D}(X,Y) \qquad \big(h(\phi)\big)(f):=\phi''f\phi'
\end{align}
for any $(X,Y) \in Ob(\mathcal{D}^{op} \otimes \mathcal{D})$, $\phi:=(\phi',\phi'') \in Hom_{\mathcal{D}^{op} \otimes \mathcal{D}}\big((X,Y),(X',Y')\big)$ and $f \in Hom_\mathcal{D}(X,Y)$. Similarly, we define the functor $h \otimes C:\mathcal{D}^{op} \otimes \mathcal{D} \longrightarrow Vect_K$ as
\begin{align}\label{n2}
(h \otimes C)(X,Y):&=Hom_\mathcal{D}(X,Y) \otimes C \qquad \big((h \otimes C)(\phi)\big)(f \otimes c):=\phi''f\phi'_\psi \otimes c^\psi
\end{align}
for any $(X,Y) \in Ob(\mathcal{D}^{op} \otimes \mathcal{D})$, $\phi:=(\phi',\phi'') \in Hom_{\mathcal{D}^{op} \otimes \mathcal{D}}\big((X,Y),(X',Y')\big)$, $f \in Hom_\mathcal{D}(X,Y)$ and $c \in C$. By slight abuse of notation, we will make no distinction between functors $\mathcal{D}^{op} \otimes \mathcal{D} \longrightarrow Vect_K$ and functors $\mathcal D\longrightarrow Mod\text{-}\mathcal D$. 
We observe that  $h \otimes C:\mathcal{D}^{op} \otimes \mathcal{D} \longrightarrow Vect_K$ corresponds to $\mathscr F\circ ({\bf h}\otimes C)
$ when viewed as a functor from $\mathcal D\longrightarrow Mod\text{-}\mathcal D$. 

\smallskip Given a natural transformation $\eta: h\longrightarrow h\otimes C$, it is easy to see that 
\begin{equation*}
  \eta({-},Y): {\bf h}_Y=Hom_\mathcal{D}({-},Y)\longrightarrow {\bf h}_Y\otimes C= Hom_\mathcal{D}({-},Y)\otimes C   
\end{equation*}
is a morphism of right $\mathcal{D}$-modules for each $Y \in Ob(\mathcal{D})$. Similarly, for each $X \in Ob(\mathcal{D})$,  
 \begin{equation*}
  \eta(X,{-}): {_X}{\bf h}=Hom_\mathcal{D}(X,{-})\longrightarrow {_X}{\bf h} \otimes C= Hom_\mathcal{D}(X,{-})\otimes C   
\end{equation*}
is a morphism of left $\mathcal{D}$-modules.

\smallskip
Throughout the rest of this section, we set $W_1:=Nat(h,h \otimes C)$, the $K$-space consisting of all natural transformations between the functors $h$ and $h \otimes C$.

\begin{lemma}\label{integral} 
Let $\eta \in W_1$. We set $\eta(X,X)(id_X)=\sum a_X \otimes c_{X}$ for each $X \in Ob(\mathcal{D})$ and $\eta(Y,Z)(g):=\sum \hat{g} \otimes c_g$ for any $g \in Hom_\mathcal{D}(Y,Z)$. Then, 
\begin{equation*}
\eta(Y,Z)(g)=\sum \hat{g} \otimes c_g=\sum a_Zg_\psi \otimes {c_Z}^\psi=\sum ga_Y \otimes c_Y
\end{equation*}
\end{lemma}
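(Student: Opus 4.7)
The plan is to exploit the naturality of $\eta$ with respect to two carefully chosen morphisms in $\mathcal{D}^{op} \otimes \mathcal{D}$ that both send a distinguished identity to $g$. Given $g \in Hom_\mathcal{D}(Y,Z)$, consider
\begin{equation*}
\phi_1 := (id_Y, g) \in Hom_{\mathcal{D}^{op} \otimes \mathcal{D}}\big((Y,Y),(Y,Z)\big), \qquad \phi_2 := (g, id_Z) \in Hom_{\mathcal{D}^{op} \otimes \mathcal{D}}\big((Z,Z),(Y,Z)\big),
\end{equation*}
where in the second pair the component $g$ lives in $Hom_{\mathcal{D}^{op}}(Z,Y) = Hom_\mathcal{D}(Y,Z)$. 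Using the definition \eqref{n1}, one immediately checks that $h(\phi_1)(id_Y) = g$ and $h(\phi_2)(id_Z) = g$, so naturality of $\eta$ yields the two identities
\begin{equation*}
\eta(Y,Z)(g) = \big((h\otimes C)(\phi_1)\big)\big(\eta(Y,Y)(id_Y)\big) = \big((h\otimes C)(\phi_2)\big)\big(\eta(Z,Z)(id_Z)\big).
\end{equation*}

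Next I would unpack the right-hand sides using \eqref{n2}. Applied to $\eta(Y,Y)(id_Y) = \sum a_Y \otimes c_Y$, the morphism $(h\otimes C)(\phi_1)$ gives $\sum g \cdot a_Y \cdot (id_Y)_\psi \otimes c_Y^\psi$. By the entwining axiom \eqref{eq 6.4}, namely $\psi_{YY}(c_Y \otimes id_Y) = id_Y \otimes c_Y$, this collapses to $\sum g a_Y \otimes c_Y$, which gives the second claimed equality. Applied to $\eta(Z,Z)(id_Z) = \sum a_Z \otimes c_Z$, the morphism $(h\otimes C)(\phi_2)$ gives directly $\sum id_Z \cdot a_Z \cdot g_\psi \otimes c_Z^\psi = \sum a_Z g_\psi \otimes c_Z^\psi$, which is the first claimed equality.

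There is no genuine obstacle here; the whole proof is a direct application of naturality followed by unfolding definitions. The only point that requires attention is keeping careful track of the opposite-category reversal when identifying morphisms in $\mathcal{D}^{op} \otimes \mathcal{D}$, and remembering to invoke \eqref{eq 6.4} to eliminate the spurious $\psi$ appearing on the identity morphism in the $\phi_1$ calculation.
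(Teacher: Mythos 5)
Your proof is correct and is essentially the paper's argument: the paper invokes the facts that $\eta(-,Z)$ is a right $\mathcal{D}$-module morphism and $\eta(Y,-)$ is a left $\mathcal{D}$-module morphism, which is exactly naturality of $\eta$ at your two morphisms $\phi_2=(g,id_Z)$ and $\phi_1=(id_Y,g)$, followed by unwinding the $(h\otimes C)$-action. Your explicit appeal to axiom \eqref{eq 6.4} in the $\phi_1$ computation is the same fact the paper uses implicitly when treating the left action on ${_Y}{\bf h}\otimes C$ as untwisted, so nothing further is needed.
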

\begin{proof} Since $\eta(-,Z):{\bf h}_Z \longrightarrow {\bf h}_Z \otimes C$ is a morphism of right $\mathcal{D}$-modules for each $Z \in Ob(\mathcal{D})$, we have the following commutative diagram:
$$\begin{CD}
{\bf h}_Z(Z)@>\eta(Z,Z)>> {\bf h}_Z(Z) \otimes C\\
@V{\bf h}_Z(g)VV        @VV ({\bf h}_Z \otimes C)(g)V\\
{\bf h}_Z(Y)    @>\eta(Y,Z)>> {\bf h}_Z(Y) \otimes C
\end{CD}$$
This diagram alongwith Lemma \ref{rightaction1} gives
\begin{equation}\label{2.9.1}
\eta(Y,Z)(g)=\sum \big(({\bf h}_Z \otimes C)(g)\big)(a_Z \otimes c_{Z})=\sum {\bf h}_Z(g_\psi)(a_Z) \otimes {c_Z}^\psi=\sum a_Zg_\psi \otimes {c_Z}^\psi
\end{equation}
Since $\eta(Y,{-}):{_Y}{\bf h} \longrightarrow {_Y}{\bf h} \otimes C$ is a morphism of left $\mathcal{D}$-modules, we also have the following commutative diagram:
$$\begin{CD}
{_Y}{\bf h}(Y)@>\eta(Y,Y)>> {_Y}{\bf h}(Y) \otimes C\\
@V{_Y}{\bf h}(g)VV        @VV ({_Y}{\bf h} \otimes C)(g)V\\
{_Y}{\bf h}(Z)    @>\eta(Y,Z)>> {_Y}{\bf h}(Z) \otimes C
\end{CD}$$ 
This gives
\begin{equation}\label{2.9.2}
\eta(Y,Z)(g)=({_Y}{\bf h} \otimes C)(g)\left(\sum a_Y \otimes c_Y\right)=\sum ga_Y \otimes c_Y
\end{equation}
The result now follows from \eqref{2.9.1} and \eqref{2.9.2}.
\end{proof}

\begin{prop}\label{W=W1}
The $K$-spaces $W=Nat(1_{Mod\text{-}\mathcal{D}},\mathscr{F}\mathscr{G})$ and $W_1=Nat(h,h \otimes C)$ are isomorphic.
\end{prop}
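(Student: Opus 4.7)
The plan is to implement a Yoneda-style correspondence between $W$ and $W_1$, analogous to the one used in Proposition \ref{V=V1}. Define $\alpha:W\longrightarrow W_1$ by
\[\alpha(\omega)(X,Y)(f):=\omega({\bf h}_Y)(X)(f)\]
for $f\in Hom_\mathcal{D}(X,Y)={\bf h}_Y(X)$. Conversely, for $\eta\in W_1$, use the data $\eta(X,X)(id_X)=\sum a_X\otimes c_X$ with $a_X\in End_\mathcal{D}(X)$ supplied by Lemma \ref{integral} to define $\beta:W_1\longrightarrow W$ via
\[\beta(\eta)(\mathcal{N})(X)(n):=\sum\mathcal{N}(a_X)(n)\otimes c_X\]
for $\mathcal{N}\in Mod\text{-}\mathcal{D}$, $X\in Ob(\mathcal{D})$ and $n\in\mathcal{N}(X)$.

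To check that $\alpha(\omega)$ is natural in $(X,Y)\in Ob(\mathcal{D}^{op}\otimes\mathcal{D})$, observe that a morphism $\phi=(\phi',\phi''):(X,Y)\longrightarrow (X',Y')$ acts on $f\in{\bf h}_Y(X)$ by $h(\phi)(f)=\phi''f\phi'$, which factors as postcomposition by $\phi''$ (yielding a morphism ${\bf h}_{\phi''}:{\bf h}_Y\longrightarrow{\bf h}_{Y'}$ in $Mod\text{-}\mathcal{D}$) followed by the right $\mathcal{D}$-action of $\phi'$ on ${\bf h}_{Y'}$. The first factor is handled by naturality of $\omega$ at ${\bf h}_{\phi''}$, while the second is handled by $\omega({\bf h}_{Y'})$ being a morphism of right $\mathcal{D}$-modules; this is precisely what introduces the $\psi$-twist $\phi'_\psi$ on the target side, matching the formula $(h\otimes C)(\phi)(f\otimes c)=\phi''f\phi'_\psi\otimes c^\psi$ from \eqref{n2}.

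The main obstacle is verifying that $\beta(\eta)(\mathcal{N})$ is itself a morphism of right $\mathcal{D}$-modules, since the action on $\mathscr{F}\mathscr{G}(\mathcal{N})=\mathcal{N}\otimes C$ is $\psi$-twisted via Lemma \ref{lem 6.2}. For $f\in Hom_\mathcal{D}(Y,X)$ and $n\in\mathcal{N}(X)$ one obtains $(\mathcal{N}\otimes C)(f)\big(\beta(\eta)(\mathcal{N})(X)(n)\big)=\sum\mathcal{N}(a_Xf_\psi)(n)\otimes c_X^\psi$ on one side and $\beta(\eta)(\mathcal{N})(Y)\big(\mathcal{N}(f)(n)\big)=\sum\mathcal{N}(fa_Y)(n)\otimes c_Y$ on the other. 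Their equality follows by applying the linear map $g\otimes c\mapsto\mathcal{N}(g)(n)\otimes c$ to the identity $\sum a_Xf_\psi\otimes c_X^\psi=\sum fa_Y\otimes c_Y$ in $Hom_\mathcal{D}(Y,X)\otimes C$ furnished by Lemma \ref{integral}. Naturality of $\beta(\eta)$ in $\mathcal{N}$ is straightforward since any $\xi:\mathcal{N}\longrightarrow\mathcal{N}'$ satisfies $\xi(X)\circ\mathcal{N}(a_X)=\mathcal{N}'(a_X)\circ\xi(X)$.

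Finally, $\alpha\circ\beta=id_{W_1}$ is checked by evaluating $\beta(\eta)({\bf h}_Y)(X)$ on $f\in{\bf h}_Y(X)$ to obtain $\sum fa_X\otimes c_X$, which equals $\eta(X,Y)(f)$ again by Lemma \ref{integral}. For $\beta\circ\alpha=id_W$, fix $\omega\in W$ with $\sum a_X\otimes c_X:=\omega({\bf h}_X)(X)(id_X)$, and for arbitrary $\mathcal{N}$, $X$, $n\in\mathcal{N}(X)$ consider the Yoneda morphism $\eta_n:{\bf h}_X\longrightarrow\mathcal{N}$ in $Mod\text{-}\mathcal{D}$ defined by $\eta_n(Z)(g):=\mathcal{N}(g)(n)$. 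Naturality of $\omega$ at $\eta_n$, evaluated at $id_X\in{\bf h}_X(X)$, yields
\[
\omega(\mathcal{N})(X)(n)=(\eta_n\otimes id_C)(X)\big(\omega({\bf h}_X)(X)(id_X)\big)=\sum\mathcal{N}(a_X)(n)\otimes c_X,
\]
which is precisely $\beta(\alpha(\omega))(\mathcal{N})(X)(n)$.
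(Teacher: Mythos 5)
Your proof is correct, and it coincides with the paper's argument only in the forward direction: the map $\omega\mapsto\big((X,Y)\mapsto\omega({\bf h}_Y)(X)\big)$ and its naturality check (splitting $h(\phi)(f)=\phi''f\phi'$ into the $\phi''$-part, handled by naturality of $\omega$ at ${\bf h}_{\phi''}$, and the $\phi'$-part, handled by $\omega({\bf h}_{Y'})$ being a morphism of right $\mathcal{D}$-modules into the $\psi$-twisted ${\bf h}_{Y'}\otimes C$) is exactly the paper's $\gamma$, with the two commutative squares taken in the opposite order, which is immaterial. Where you genuinely diverge is the inverse: the paper extends $\eta(-,Y):{\bf h}_Y\longrightarrow{\bf h}_Y\otimes C$ to an arbitrary module by writing $\mathcal{M}$ as $\underset{y\in el(\mathcal M)}{colim}\,{\bf h}_{|y|}$ and $\mathcal{M}\otimes C$ as the corresponding colimit, letting the squares \eqref{eex2.26} supply both the induced map $\omega(\mathcal M)$ and its naturality, and leaves the two composite identities as ``easily verified''; you instead give the closed formula $\omega(\mathcal N)(X)(n)=\sum\mathcal N(a_X)(n)\otimes c_X$ with $\sum a_X\otimes c_X=\eta(X,X)(id_X)$, and the heart of your verification is that the identity $\sum a_Xf_\psi\otimes {c_X}^\psi=\sum fa_Y\otimes c_Y$ of Lemma \ref{integral} is precisely what makes this a morphism into the twisted module $\mathcal N\otimes C$ of Lemma \ref{lem 6.2}, after which naturality in $\mathcal N$ and both composites ($\alpha\beta$ via Lemma \ref{integral}, $\beta\alpha$ via the Yoneda morphisms $\eta_n$) are direct computations. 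Your route buys an explicit description of $\delta(\eta)$ on all of $Mod\text{-}\mathcal D$ and a complete check of the inverse identities, without invoking the presentation of a module as a colimit of representables or the (tacitly used) fact that $-\otimes C$ preserves these colimits; it also makes explicit the evaluation of $\omega$ through the elements $\eta(X,X)(id_X)$ that the paper later exploits via Lemma \ref{integral} in Proposition \ref{W1isoW2} and Theorem \ref{Frobcondition}. The paper's colimit argument, in turn, is shorter and more uniform, since naturality on representables extends formally. Both arguments are valid.
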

\begin{proof}
We define a $K$-linear map $\gamma:W \longrightarrow W_1$ by setting
$$\eta=\gamma(\omega):h \longrightarrow h \otimes C \qquad \eta(X,Y):=\omega({\bf h}_Y)(X)$$
for any $(X,Y) \in Ob(\mathcal{D}^{op} \otimes \mathcal{D})$. We now verify that the map is well-defined. Let $\phi:=(\phi',\phi'') \in Hom_{\mathcal{D}^{op} \otimes \mathcal{D}}\big((X,Y),(X',Y')\big)$. Since $\omega({\bf h}_Y):{\bf h}_Y \longrightarrow {\bf h}_Y \otimes C$ is a morphism of right $\mathcal{D}$-modules, we have the following commutative diagram:
\begin{equation}\label{d1}
\begin{CD}
{\bf h}_Y(X)   @>\omega({\bf h}_Y)(X)>> {\bf h}_Y(X) \otimes C \\
@V{\bf h}_Y(\phi')VV        @VV({\bf h}_Y \otimes C)(\phi')V\\
{\bf h}_Y(X')    @>\omega({\bf h}_Y)(X')>>{\bf h}_Y(X') \otimes C
\end{CD}
\end{equation}
The morphism $\phi'':Y \longrightarrow Y'$ in $\mathcal{D}$ induces a morphism $\phi'':{\bf h}_Y \longrightarrow {\bf h}_{Y'}$ of right $\mathcal{D}$-modules. Therefore, using the naturality of $\omega$, we get the following commutative diagram:
\begin{equation}\label{d2}
\begin{CD}
{\bf h}_Y(X')   @>\omega({\bf h}_Y)(X')>> {\bf h}_Y(X') \otimes C \\
@V_{X'}{\bf h}(\phi'')={\bf h}_{\phi''}(X')VV        @VV(_{X'}{\bf h}(\phi'')\otimes id_C)=({\bf h}_{\phi''}(X') \otimes id_C)V\\
{\bf h}_{Y'}(X')    @>\omega({\bf h}_{Y'})(X')>>{\bf h}_{Y'}(X') \otimes C
\end{CD}
\end{equation}
We now observe that for $f\in Hom_{\mathcal D}(X,Y)$, we have
\begin{align*}
\big(h(\phi)\big)(f)&=\phi''f\phi'=_{X'}{\bf h}(\phi'')\big({\bf h}_Y(\phi')(f)\big)\\
\big((h\otimes C)(\phi)\big)(g \otimes c)&=\phi''g\phi'_\psi \otimes c^\psi=(_{X'}{\bf h}(\phi'')\otimes id_C)\big(({\bf h}_Y \otimes C)(\phi')(g \otimes c)\big)
\end{align*}
Thus, by combining the diagrams \eqref{d1} and \eqref{d2}, we obtain the following commutative diagram:
$$\begin{CD}
h(X,Y)   @>\eta(X,Y)>> h(X,Y) \otimes C \\
@Vh(\phi)VV        @VV(h \otimes C)(\phi)V\\
h(X',Y')    @>\eta(X',Y')>>h(X',Y') \otimes C
\end{CD}$$
This shows that $\eta \in W_1$.

\smallskip
Conversely, let $\eta \in W_1 = Nat(h,h \otimes C)$. For any $Y \in Ob(\mathcal{D})$,
\begin{equation}\label{etaX}
  \eta({-},Y): {\bf h}_Y \longrightarrow {\bf h}_Y\otimes C  
\end{equation}
is a morphism of right $\mathcal{D}$-modules. For any $f \in Hom_\mathcal{D}(X,Y)$, the naturality of $\eta$ gives us the following commutative diagram:
\begin{equation}\label{eex2.26}\begin{CD}
{\bf h}_X =Hom_\mathcal{D}({-},X)  @>\eta(-,X)>>Hom_\mathcal{D}({-},X)\otimes C= {\bf h}_X \otimes C \\
@Vh(-,f)VV        @VVh(-,f) \otimes id_CV\\
{\bf h}_Y  =Hom_\mathcal{D}({-},Y) @>\eta(-,Y)>>Hom_\mathcal{D}({-},Y)\otimes C={\bf h}_Y \otimes C
\end{CD}
\end{equation}
Now, for any $\mathcal{M}$ in $Mod$-$\mathcal{D}$, we know that $\mathcal{M}=\underset{y\in el(\mathcal M)}{colim}~ {\bf h}_{|y|}$. Similarly, $\mathcal{M}\otimes C=\underset{y\in el(\mathcal M)}{colim}({\bf h}_{|y|}\otimes C)$ where the colimit is taken in $Mod\text{-}\mathcal D$. Thus, the morphisms as in \eqref{etaX} induce a morphism $\omega(\mathcal{M}):\mathcal{M}\longrightarrow \mathcal{M}\otimes C$ of right $\mathcal{D}$-modules. Moreover, for any morphism $\mathcal{M} \overset{\zeta}{\longrightarrow} \mathcal{N}$ in $Mod$-$\mathcal{D}$, the commutative diagrams as in  \eqref{eex2.26}  induce the following equality: $$(\zeta\otimes id_C)\circ\omega(\mathcal{M})=\omega(\mathcal{N})\circ \zeta$$
Therefore, for $\eta \in W_1$ we have obtained a natural transformation $\omega:1_{Mod\text{-}\mathcal{D}} \longrightarrow \mathscr{FG}$ in $W$. We will denote this $K$-linear map by $\delta: W_1\longrightarrow W$, i.e., $\delta(\eta)= \omega$ determined by $\omega({\bf h}_Y):= \eta({-},Y)$ for each $Y\in Ob(\mathcal{D})$. It may be easily verified that the morphisms
$\gamma$ and $\delta$ are inverses of each other. 
\end{proof}

\begin{theorem}\label{ThmIIx}
Let $\mathscr{F}: {\mathscr{M}(\psi)}_\mathcal{D}^C \longrightarrow Mod$-$\mathcal{D}$ be the forgetful functor and $\mathscr{G}:Mod$-$\mathcal{D} \longrightarrow {\mathscr{M}(\psi)}_\mathcal{D}^C$, $\mathcal{N} \mapsto \mathcal{N} \otimes C$ be its right adjoint.  Then $\mathscr{G}$ is separable if and only if there exists $\eta \in W_1=Nat(h, h \otimes C)$ such that
\begin{equation}\label{Gsep}
(id_h \otimes \varepsilon_C)\eta=id_h
\end{equation}
\end{theorem}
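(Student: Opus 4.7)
The plan is to combine Theorem \ref{separability} with the isomorphism $W \cong W_1$ established in Proposition \ref{W=W1} and check that the condition $\nu \circ \omega = 1_{Mod\text{-}\mathcal{D}}$ translates, under the correspondence $\omega \leftrightarrow \eta$, to the stated identity $(id_h \otimes \varepsilon_C)\eta = id_h$.

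By Theorem \ref{separability}(ii) applied to the adjunction $(\mathscr{F},\mathscr{G})$, the functor $\mathscr{G}$ is separable if and only if there exists $\omega \in W = Nat(1_{Mod\text{-}\mathcal{D}}, \mathscr{F}\mathscr{G})$ such that $\nu \circ \omega = 1_{Mod\text{-}\mathcal{D}}$, where by \eqref{counit} the counit is $\nu(\mathcal{N}) = id_\mathcal{N} \otimes \varepsilon_C$. By Proposition \ref{W=W1}, the map $\gamma: W \longrightarrow W_1$ defined by $\gamma(\omega)(X,Y) := \omega({\bf h}_Y)(X)$ is a $K$-linear isomorphism with inverse $\delta$, so it suffices to check that under this correspondence, the condition $\nu \circ \omega = 1_{Mod\text{-}\mathcal{D}}$ is equivalent to $(id_h \otimes \varepsilon_C)\eta = id_h$ where $\eta = \gamma(\omega)$.

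For the forward direction, assume $\omega \in W$ satisfies $\nu \circ \omega = 1_{Mod\text{-}\mathcal{D}}$. Evaluating at $\mathcal{N} = {\bf h}_Y$ and then at a component $X \in Ob(\mathcal{D})$ gives
\begin{equation*}
(id_{{\bf h}_Y(X)} \otimes \varepsilon_C) \circ \omega({\bf h}_Y)(X) = id_{{\bf h}_Y(X)},
\end{equation*}
which is exactly $(id_h \otimes \varepsilon_C)(X,Y) \circ \eta(X,Y) = id_{h(X,Y)}$; letting $(X,Y)$ range over $Ob(\mathcal{D}^{op} \otimes \mathcal{D})$ yields the identity in $W_1$.

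For the converse, suppose $\eta \in W_1$ satisfies $(id_h \otimes \varepsilon_C)\eta = id_h$, and let $\omega = \delta(\eta) \in W$. On each representable module ${\bf h}_Y$ we have $\omega({\bf h}_Y)(X) = \eta(X,Y)$, so $\nu({\bf h}_Y) \circ \omega({\bf h}_Y) = id_{{\bf h}_Y}$ by assumption. For an arbitrary $\mathcal{M} \in Mod\text{-}\mathcal{D}$, recall from the proof of Proposition \ref{W=W1} that $\omega(\mathcal{M})$ is obtained by presenting $\mathcal{M} = \underset{y \in el(\mathcal{M})}{colim}~{\bf h}_{|y|}$ and using naturality of $\omega$ along the canonical morphisms ${\bf h}_{|y|} \longrightarrow \mathcal{M}$; since $\nu$ is also a natural transformation of $K$-linear functors and therefore commutes with these colimits, the identity $\nu({\bf h}_Y) \circ \omega({\bf h}_Y) = id_{{\bf h}_Y}$ propagates to $\nu(\mathcal{M}) \circ \omega(\mathcal{M}) = id_\mathcal{M}$. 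Equivalently, for any $m \in \mathcal{M}(X)$ one considers the morphism $\eta_m : {\bf h}_X \longrightarrow \mathcal{M}$ in $Mod\text{-}\mathcal{D}$ with $\eta_m(X)(id_X) = m$ and uses naturality of $\omega$ at $\eta_m$ together with the representable case. Hence $\nu \circ \omega = 1_{Mod\text{-}\mathcal{D}}$, and $\mathscr{G}$ is separable by Theorem \ref{separability}.

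The only mildly subtle point is the reduction from an arbitrary $\mathcal{M}$ to representables; but this is exactly the content already built into Proposition \ref{W=W1}, so no genuine obstacle remains beyond bookkeeping.
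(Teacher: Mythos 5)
Your proposal is correct and follows essentially the same route as the paper: Rafael's criterion (Theorem \ref{separability}(ii)) combined with the isomorphism $W \cong W_1$ of Proposition \ref{W=W1}, translating $\nu \circ \omega = 1_{Mod\text{-}\mathcal{D}}$ into $(id_h \otimes \varepsilon_C)\eta = id_h$ on representables and then extending to arbitrary $\mathcal{D}$-modules by writing them as colimits of representables and using naturality. Your element-wise argument via the morphisms $\eta_m:{\bf h}_X \longrightarrow \mathcal{M}$ is a clean way to make that last reduction precise, matching the paper's appeal to colimit preservation.
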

\begin{proof}
Suppose that $\mathscr{G}$ is separable. Then, by Theorem \ref{separability}, there exists $\omega \in W=Nat(1_{Mod\text{-}\mathcal{D}},\mathscr{F}\mathscr{G})$ such that $\nu \circ \omega = 1_{Mod\text{-}\mathcal{D}}$, where $\nu$ is the counit of the adjunction. Using Proposition \ref{W=W1}, corresponding to $\omega \in W$, there exists an element $\eta \in W_1$ given by $\eta(X,Y)=\omega({\bf h}_Y)(X)$ for every $(X,Y) \in Ob(\mathcal{D}^{op} \otimes \mathcal{D})$. The condition \eqref{Gsep} now follows from the definition of the counit in \eqref{counit}.

\smallskip
Conversely, let $\eta \in W_1$ be such that $(id_h \otimes \varepsilon_C)\eta=id_h$. We consider $\omega:1_{Mod\text{-}\mathcal{D}}\longrightarrow \mathscr{FG}$ given by $\omega({\bf h}_Y):=\eta({-},Y)$ for each $Y\in Ob(\mathcal{D})$. Then, $(id_{{\bf{h}}_Y}\otimes \varepsilon_C)\omega({{\bf{h}}_Y})=(id_{{\bf{h}}_Y}\otimes \varepsilon_C)\eta({-},Y)=id_{{{\bf{h}}_Y}}$. Since $\mathscr F$ is a left adjoint and it is clear from the definition that $\mathscr G$ preserves colimits, we obtain that  $(id_{\mathcal N}\otimes \varepsilon_C)\omega(\mathcal{N})=id_{\mathcal N}$ for any $\mathcal{N}\in Mod$-$\mathcal{D}$, i.e, $(id\otimes \varepsilon_C)\omega=1_{Mod{\text{-}}\mathcal{D}}$. Therefore, $\mathscr{G}$ is separable by Theorem \ref{separability}.
\end{proof}

\subsection{Frobenius conditions}
Let $F:\mathcal{A}\longrightarrow \mathcal{B}$ be a functor which has a right adjoint $G:\mathcal{B}\longrightarrow \mathcal{A}$. Then, the pair $(F, G)$ is called a Frobenius pair if $G$ is both a right and a  left adjoint of $F$. We recall the following characterization for Frobenius pairs (see \cite[$\S$ 1]{uni})
\begin{theorem}
Let $F:\mathcal{A}\longrightarrow \mathcal{B}$ be a functor which has a right adjoint $G$. Then, $(F, G)$ is a Frobenius pair if and only if there exist $\upsilon \in Nat(GF,1_\mathcal{A})$ and $\omega \in Nat(1_\mathcal{B},FG)$ such that
\begin{align}
F(\upsilon(M)) \circ \omega({F(M)})&=id_{F(M)}\label{fc1}\\
\upsilon({G(N)}) \circ G(\omega(N))&=id_{G(N)}\label{fc2}
\end{align}
for all $M \in \mathcal{A}$ and $N \in \mathcal{B}$.
\end{theorem}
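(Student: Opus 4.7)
The plan is to recognize that the two equations \eqref{fc1} and \eqref{fc2} are precisely the triangle identities for a second adjunction $G \dashv F$, with proposed unit $\omega \colon 1_\mathcal{B} \longrightarrow FG$ and proposed counit $\upsilon \colon GF \longrightarrow 1_\mathcal{A}$. Since a Frobenius pair is by definition an adjoint pair $F \dashv G$ for which $G$ is also a left adjoint of $F$, the theorem reduces to the classical fact that an adjunction is completely encoded by unit-counit data satisfying the two triangle identities.

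For the forward direction, I would start by assuming $(F,G)$ is a Frobenius pair. The existing adjunction $G \dashv F$ then supplies a unit $\omega \in Nat(1_\mathcal{B}, FG)$ and counit $\upsilon \in Nat(GF, 1_\mathcal{A})$ automatically, and the two triangle identities for this adjunction are exactly \eqref{fc1} and \eqref{fc2}.

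For the converse, given $\upsilon$ and $\omega$ satisfying the stated conditions, I would define candidate mutually inverse bijections
\begin{equation*}
\Phi_{N,M} \colon Hom_\mathcal{A}\big(G(N), M\big) \longrightarrow Hom_\mathcal{B}\big(N, F(M)\big), \qquad \phi \mapsto F(\phi) \circ \omega(N),
\end{equation*}
\begin{equation*}
\Psi_{N,M} \colon Hom_\mathcal{B}\big(N, F(M)\big) \longrightarrow Hom_\mathcal{A}\big(G(N), M\big), \qquad \psi \mapsto \upsilon(M) \circ G(\psi).
\end{equation*}
Computing $\Psi_{N,M}(\Phi_{N,M}(\phi)) = \upsilon(M) \circ GF(\phi) \circ G(\omega(N))$ and using the naturality of $\upsilon$ to slide $\phi$ across the middle factor yields $\phi \circ \upsilon(G(N)) \circ G(\omega(N)) = \phi$ by \eqref{fc2}. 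The other composite is handled symmetrically using the naturality of $\omega$ together with \eqref{fc1}. Naturality of $\Phi_{N,M}$ in both arguments follows from the naturality of $\omega$ and the functoriality of $F$ (and dually for $\Psi_{N,M}$), which establishes $G$ as a left adjoint of $F$ and hence that $(F,G)$ is a Frobenius pair.

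The entire argument is essentially bookkeeping of universal arrows; the only point to keep straight is the direction in which the naturality squares for $\omega$ and $\upsilon$ are invoked. There is no substantive obstacle once one has identified \eqref{fc1} and \eqref{fc2} as the two triangle identities for the adjunction $G \dashv F$.
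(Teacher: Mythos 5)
Your proposal is correct. The paper itself gives no proof of this statement — it is recalled from the reference of Brzezi\'{n}ski, Caenepeel, Militaru and Zhu cited alongside it — and your argument, identifying \eqref{fc1} and \eqref{fc2} as the two triangle identities for an adjunction $G\dashv F$ with unit $\omega$ and counit $\upsilon$ and then checking that $\phi\mapsto F(\phi)\circ\omega(N)$ and $\psi\mapsto\upsilon(M)\circ G(\psi)$ are mutually inverse natural bijections, is exactly the standard proof of that cited fact.
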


\begin{lemma}\label{2.13.}
For any $\omega \in W=Nat(1_{Mod\text{-}\mathcal{D}},\mathscr{F}\mathscr{G})$, $N \in Comod\text{-}C$ and $Y \in Ob(\mathcal{D})$, we have $\omega(N \otimes {\bf h}_Y)=id_N \otimes \omega({\bf h}_Y)$.
\end{lemma}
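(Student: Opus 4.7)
The plan is to mimic the argument used for Lemma \ref{2.4m}, but in the ambient category $Mod\text{-}\mathcal{D}$ rather than ${\mathscr{M}(\psi)}_\mathcal{D}^C$, so that the comodule compatibility step is unnecessary. The key observation is that the $\mathcal{D}$-action on $N\otimes {\bf h}_Y$ only touches the ${\bf h}_Y$-factor (see \eqref{rightaction2}), so that for each fixed $n\in N$ the map $\eta_n:{\bf h}_Y\longrightarrow N\otimes {\bf h}_Y$ sending $g\mapsto n\otimes g$ is a morphism in $Mod\text{-}\mathcal{D}$.

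First I would spell out that, for each $n\in N$, the collection of $K$-linear maps $\eta_n(X):{\bf h}_Y(X)\longrightarrow (N\otimes {\bf h}_Y)(X)$, $g\mapsto n\otimes g$, defines a morphism of right $\mathcal{D}$-modules. Indeed, for $f\in Hom_{\mathcal{D}}(Z,X)$ one has $\eta_n(Z)({\bf h}_Y(f)(g))=n\otimes gf=(N\otimes {\bf h}_Y)(f)(n\otimes g)$ by \eqref{rightaction2}. Note that we do not need to check any $C$-colinearity, since we only want a morphism of right $\mathcal{D}$-modules to invoke the naturality of $\omega:1_{Mod\text{-}\mathcal{D}}\longrightarrow \mathscr{F}\mathscr{G}$.

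Next I would apply the naturality of $\omega$ to $\eta_n:{\bf h}_Y\longrightarrow N\otimes {\bf h}_Y$, which yields the commutative diagram
\begin{equation*}
\begin{CD}
{\bf h}_Y(X) @>\omega({\bf h}_Y)(X)>> {\bf h}_Y(X)\otimes C\\
@V\eta_n(X)VV @VV\eta_n(X)\otimes id_CV\\
N\otimes{\bf h}_Y(X) @>\omega(N\otimes {\bf h}_Y)(X)>> N\otimes {\bf h}_Y(X)\otimes C
\end{CD}
\end{equation*}
Evaluating at $g\in {\bf h}_Y(X)$ gives $\omega(N\otimes {\bf h}_Y)(X)(n\otimes g)=n\otimes \omega({\bf h}_Y)(X)(g)$, which is exactly $(id_N\otimes \omega({\bf h}_Y))(X)(n\otimes g)$. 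Since this holds for every $n\in N$ and every $g\in {\bf h}_Y(X)$, and since the simple tensors span $N\otimes {\bf h}_Y(X)$, we conclude that $\omega(N\otimes {\bf h}_Y)=id_N\otimes \omega({\bf h}_Y)$.

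There is really no hard step here; the only thing to be careful about is to avoid confusing this statement with Lemma \ref{2.4m}, which established the analogous identity for natural transformations into $1_{{\mathscr{M}(\psi)}_\mathcal{D}^C}$ and therefore required verifying right $C$-colinearity of the auxiliary morphism $\eta_d$. In the present setting $\omega$ lands in $Mod\text{-}\mathcal{D}$, so the proof is strictly simpler.
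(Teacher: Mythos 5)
Your proposal is correct and follows essentially the same route as the paper: the paper likewise defines, for each $n\in N$, the right $\mathcal{D}$-module morphism $\zeta_n:{\bf h}_Y\longrightarrow N\otimes {\bf h}_Y$, $f\mapsto n\otimes f$, applies the naturality of $\omega$ to it, and evaluates on simple tensors to conclude $\omega(N\otimes {\bf h}_Y)=id_N\otimes\omega({\bf h}_Y)$. Your observation that no $C$-colinearity check is needed (in contrast with Lemma \ref{2.4m}) is exactly the simplification implicit in the paper's argument.
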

\begin{proof} For each $n \in N$, we define $\zeta_n:{\bf h}_Y \longrightarrow N \otimes {\bf h}_Y$ by
\begin{equation*}
\zeta_n(X)(f):=n \otimes f
\end{equation*}
for any $X \in Ob(\mathcal{D})$ and $f \in Hom_\mathcal{D}(X,Y)$. It may be easily verified that $\zeta_n$ is a morphism of right $\mathcal{D}$-modules. Therefore, using the naturality of $\omega$, we have the following commutative diagram:
$$\begin{CD}
{\bf h}_Y(X)@>\omega({\bf h}_Y)(X)>> {\bf h}_Y(X) \otimes C\\
@V\zeta_n(X)VV        @VV \zeta_n(X) \otimes id_CV\\
N \otimes {\bf h}_Y(X)    @>\omega(N \otimes {\bf h}_Y)(X)>> N \otimes {\bf h}_Y(X) \otimes C
\end{CD}$$
Let $f \in Hom_\mathcal{D}(X,Y)$. We set $\omega({\bf h}_Y)(X)(f)=\sum \hat{f} \otimes c_f$. Then, we have
\begin{equation*}
\begin{array}{ll}
\omega(N \otimes {\bf h}_Y)(X)(n \otimes f) &= (\zeta_n(X) \otimes id_C)\omega({\bf h}_Y)(X)(f) =\sum (\zeta_n(X) \otimes id_C)(\hat{f} \otimes c_f)\\
&= \sum n \otimes \hat{f} \otimes c_f= \left(id_N \otimes \omega({\bf h}_Y)\right)(X)(n \otimes f)
\end{array}
\end{equation*}
The result follows.
\end{proof}

\begin{theorem}\label{Frobcondition}
Let $\mathscr{F}: {\mathscr{M}(\psi)}_\mathcal{D}^C \longrightarrow Mod$-$\mathcal{D}$ be the forgetful functor and $\mathscr{G}:Mod$-$\mathcal{D} \longrightarrow {\mathscr{M}(\psi)}_\mathcal{D}^C$, $\mathcal{N} \mapsto \mathcal{N} \otimes C$ be its right adjoint. Then, $(\mathscr{F},\mathscr{G})$ is a Frobenius pair if and only if there exist $\theta \in V_1$ and $\eta \in W_1$ such that the following conditions hold:
\begin{align}
\varepsilon_C(d)f&=\sum \hat{f} \circ \theta_{X}(c_f \otimes d)\label{fro1}\\
\varepsilon_C(d)f &= \sum \hat{f}_\psi \circ \theta_{X}(d^\psi \otimes c_f)\label{fro2}
\end{align}
for any $f \in Hom_\mathcal{D}(X,Y)$, $d \in C$ and $\eta(X,Y)(f)=\sum \hat{f} \otimes c_f$.
\end{theorem}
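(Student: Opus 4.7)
The plan is to apply the Frobenius pair criterion recalled just before this theorem: $(\mathscr{F},\mathscr{G})$ is a Frobenius pair iff there exist $\upsilon \in V$ and $\omega \in W$ satisfying equations \eqref{fc1} and \eqref{fc2}. Under the isomorphisms $V \cong V_1$ (Proposition \ref{V=V1}) and $W \cong W_1$ (Proposition \ref{W=W1}), such $\upsilon, \omega$ correspond bijectively to $\theta \in V_1$ and $\eta \in W_1$; the task is to translate \eqref{fc1}--\eqref{fc2} into the concrete conditions \eqref{fro1}--\eqref{fro2} on $\theta$ and $\eta$.

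For the forward direction, I would first specialize \eqref{fc2} to $\mathcal{N} = {\bf h}_Y$. Then $\mathscr{G}(\omega({\bf h}_Y))(X)(f \otimes d) = \sum \hat{f} \otimes c_f \otimes d$, and Proposition \ref{beta} combined with the coaction \eqref{rightcomod1} on ${\bf h}_Y \otimes C$ yields
\[ \upsilon({\bf h}_Y \otimes C)(X)\bigl((\hat{f} \otimes c_f) \otimes d\bigr) = \hat{f} \circ \bigl(\theta_X((c_f)_2 \otimes d)\bigr)_\psi \otimes (c_f)_1^\psi. \]
Axiom \eqref{theta2} rewrites the right-hand side as $\hat{f} \circ \theta_X(c_f \otimes d_1) \otimes d_2$, and equating with $f \otimes d$ and applying $id \otimes \varepsilon_C$ produces \eqref{fro1}. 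Next I would specialize \eqref{fc1} to $\mathcal{M} = C \otimes {\bf h}_Y$, where $C$ carries the coaction $\Delta_C$. By Lemma \ref{2.13.}, $\omega(C \otimes {\bf h}_Y) = id_C \otimes \omega({\bf h}_Y)$, and the coaction \eqref{rightcomod2} gives $(d \otimes \hat{f})_0 \otimes (d \otimes \hat{f})_1 = d_1 \otimes \hat{f}_\psi \otimes d_2^\psi$. Proposition \ref{beta} then yields
\[ \upsilon(C \otimes {\bf h}_Y)(X)\bigl((d \otimes \hat{f}) \otimes c_f\bigr) = d_1 \otimes \hat{f}_\psi \circ \theta_X(d_2^\psi \otimes c_f), \]
and applying $\varepsilon_C \otimes id$, together with the identity $\varepsilon_C(d_1)\, \hat{f}_\psi \otimes d_2^\psi = \hat{f}_\psi \otimes d^\psi$ obtained from the $K$-linearity of $\psi$ and counitality in $C$, produces \eqref{fro2}.

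For the converse, given $\theta \in V_1$ and $\eta \in W_1$ satisfying \eqref{fro1} and \eqref{fro2}, I would lift these back to the full identities $\sum \hat{f} \circ \theta_X(c_f \otimes d_1) \otimes d_2 = f \otimes d$ and $\sum n_0 \otimes \hat{f}_\psi \circ \theta_X(n_1^\psi \otimes c_f) = n \otimes f$ by substituting $d \to d_1$ (resp.\ $d \to n_1$), tensoring with $d_2$ (resp.\ $n_0$), and using counitality in $C$ (resp.\ the comodule $N$). These are precisely \eqref{fc2} evaluated on the representables ${\bf h}_Y$ and \eqref{fc1} evaluated on the entwined modules $N \otimes {\bf h}_X$ with $N \in Comod\text{-}C$. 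The main obstacle is extending from these test cases to arbitrary $\mathcal{N} \in Mod\text{-}\mathcal{D}$ and $\mathcal{M} \in {\mathscr{M}(\psi)}_{\mathcal{D}}^C$: for \eqref{fc2}, both sides are natural transformations between colimit-preserving endofunctors of $Mod\text{-}\mathcal{D}$, so agreement on the generating representables suffices; for \eqref{fc1}, I would invoke Lemma \ref{gen2} --- for any $m \in \mathcal{M}(X)$ there is a morphism $\eta_m : V_m \otimes {\bf h}_X \longrightarrow \mathcal{M}$ in ${\mathscr{M}(\psi)}_{\mathcal{D}}^C$ with $\eta_m(X)(m \otimes id_X) = m$, and the naturality of $\upsilon$ and $\omega$ along $\eta_m$ transports the identity verified on $V_m \otimes {\bf h}_X$ to the required identity at $m$.
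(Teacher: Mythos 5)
Your proposal is correct and follows essentially the same route as the paper's proof: the same test objects ($\mathcal{N}={\bf h}_Y$ for \eqref{fc2}, $\mathcal{M}=C\otimes {\bf h}_Y$ and $N\otimes {\bf h}_Y$ for \eqref{fc1}), the same ingredients (Propositions \ref{V=V1}, \ref{W=W1}, \ref{beta}, Lemmas \ref{2.13.} and \ref{gen2}), and the same extension to arbitrary objects via generators and colimit-preservation. The only cosmetic deviations are that you deduce \eqref{fro2} from counitality and $K$-linearity of $\psi$ where the paper routes through \eqref{eq 6.2} and \eqref{eq 6.3}, and you evaluate naturality squares elementwise instead of invoking that $\mathscr{F}$ preserves the epimorphism $\bigoplus \eta_m$; both are equivalent to the paper's steps.
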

\begin{proof}
Suppose there exist $\theta \in V_1$ and $\eta \in W_1$ such that \eqref{fro1} and \eqref{fro2} hold. Then, using the isomorphisms $V\cong V_1$ and $W\cong W_1$ as in Propositions \ref{V=V1} and \ref{W=W1}, there exist $\upsilon \in V$ and $\omega \in W$ corresponding to $\theta \in V_1$ and $\eta \in W_1$ respectively.
We also know by Proposition \ref{entgroth} that the collection $\{N \otimes {\bf h}_Y\}$, where $N$ ranges over all (isomorphisms classes of) finite dimensional $C$-comodules and $Y$ ranges over all objects in $\mathcal{D}$, forms a generating set for ${\mathscr{M}(\psi)}_\mathcal{D}^C$. Therefore, we first verify the condition \eqref{fc1} for $\mathcal{M}=N \otimes {\bf h}_Y\in {\mathscr{M}(\psi)}_\mathcal{D}^C$, where $N \in Comod\text{-}C$ and $Y \in Ob(\mathcal{D})$. For any $n \otimes f \in N \otimes Hom_\mathcal{D}(X,Y)$, we have

\begin{equation}\label{eq2.32qa}
\begin{array}{lll}
&\big(\mathscr{F}(\upsilon({N \otimes {\bf h}_Y})) \circ \omega({\mathscr{F}(N \otimes {\bf h}_Y)})\big)(X)(n \otimes f)&\\
& \quad =\upsilon({N \otimes  {\bf h}_Y})(X)\big(id_N \otimes \omega({\bf h}_Y)\big)(X)(n \otimes f)& (\text{by Lemma}~ \ref{2.13.})\\
&\quad =\upsilon({N \otimes  {\bf h}_Y})(X)\left(id_N \otimes \eta(X,Y)\right)(n \otimes f)&\\
& \quad = \sum \upsilon({N \otimes  {\bf h}_Y})(X)(n \otimes \hat{f} \otimes c_f)&\\
& \quad = \sum (N \otimes h_Y)\left(\theta_{X}((n \otimes \hat{f})_1 \otimes c_f)\right)(n \otimes \hat{f})_0&(\text{by}~ \eqref{defupsilon})\\
& \quad = \sum (N \otimes h_Y)(\theta_{X}({n_1}^\psi \otimes c_f))(n_0 \otimes \hat{f}_\psi)& (\text{by}~ \eqref{rightcomod2})\\
& \quad = \sum n_0 \otimes \hat{f}_\psi \circ \theta_{X}\big({n_1}^\psi \otimes c_f\big)& (\text{by}~ \eqref{rightaction2})\\
& \quad =  n_0 \otimes \varepsilon_C(n_1)f&(\text{by}~ \eqref{fro2})\\
&  \quad = n \otimes f
\end{array}
\end{equation}
This proves \eqref{fc1} for the generators of ${\mathscr{M}(\psi)}_\mathcal{D}^C$. As explained in the proof of Proposition \ref{entgroth}, for any $\mathcal{M}$ in ${\mathscr{M}(\psi)}_\mathcal{D}^C$, there is an epimorphism  
$$\bigoplus_{m\in el(\mathcal{M})} \eta_m: \bigoplus_{m\in el(\mathcal{M})}  V_m \otimes {\bf h}_{|m|} \longrightarrow \mathcal{M}$$ in ${\mathscr{M}(\psi)}_\mathcal{D}^C$.
The morphism $\eta:=\underset{m\in el(\mathcal{M})}{\bigoplus} \eta_m$ induces the following commutative diagram:
\begin{equation}\label{2.14d1}
\begin{CD}
\bigoplus\mathscr{F}(V_m\otimes {\bf{ h}}_{|m|})   @>\bigoplus\mathscr{F}\left(\upsilon\left({V_m \otimes {\bf{ h}}_{|m|}}\right)\right)  \omega\left({\mathscr{F}\left(V_m \otimes {\bf{ h}}_{|m|}\right)}\right)>> \bigoplus\mathscr{F}(V_m\otimes {\bf{ h}}_{|m|}) \\
@V\mathscr{F}(\eta)VV        @VV\mathscr{F}(\eta) V\\
\mathscr{F}(\mathcal{M})    @>\mathscr{F}\left(\upsilon(\mathcal{M})\right) \omega\left({\mathscr{F}(\mathcal{M})}\right) >>\mathscr{F}(\mathcal{M})
\end{CD}
\end{equation}
From \eqref{eq2.32qa}, it follows that $\mathscr{F}\left(\upsilon\left({V_m \otimes {\bf{ h}}_{|m|}}\right)\right)  \omega\left({\mathscr{F}\left(V_m \otimes {\bf{ h}}_{|m|}\right)}\right)=id_{\mathscr{F}(V_m \otimes {\bf{ h}}_{|m|})}$ for each $m\in el(\mathcal M)$. Thus, by the commutative diagram \eqref{2.14d1}, we have 
\begin{equation}\label{epi}
\big(\mathscr{F}(\upsilon(\mathcal M)) \circ \omega\left({\mathscr{F}(\mathcal{M})}\right)\big)\circ \mathscr{F}(\eta)= \mathscr{F}(\eta)
\end{equation}
Since $\mathscr{F}$ is a left adjoint, it preserves epimorphisms. Since $\eta$ is an epimorphism, so is $\mathscr{F}(\eta)$. Therefore, \eqref{epi} implies that $\mathscr{F}(\upsilon(\mathcal M)) \circ \omega\left({\mathscr{F}(\mathcal{M})}\right)=id_{\mathscr{F}(\mathcal{M})}.$ This proves \eqref{fc1} for any $\mathcal{M} \in Ob({\mathscr{M}(\psi)}_\mathcal{D}^C)$. 

\smallskip
Next, we verify the condition \eqref{fc2}.  From the definition, it is clear that $\mathscr{G}$ preserves colimits. Since any $\mathcal D$-module may be expressed as the colimit of representable functors, it is enough to verify the condition \eqref{fc2} for representable functors. For any $f \otimes d \in {\bf h}_Y(X)\otimes C$, we have 
\begin{equation*}
\begin{array}{lll}
\big(\upsilon\left({\mathscr{G}({\bf{ h}}_Y)}\right)(X) \circ \mathscr{G}(\omega\left({{\bf{ h}}_Y})\right)(X)\big)(f \otimes d)&=\big(\upsilon\left({\mathscr{G}({\bf{ h}}_Y)}\right)(X) \circ (\omega\left({{\bf{ h}}_Y}\right) \otimes id_C)(X)\big)(f \otimes d)&\\
&=\big(\upsilon\left({\bf{ h}}_Y \otimes C \right)(X)\circ \left(\eta(X,Y) \otimes id_C\right)\big)(f \otimes d)&\\
&= \sum \upsilon\left({{\bf{ h}}_Y \otimes C}\right)(X)(\hat{f} \otimes c_f \otimes d)&\\
&= \sum ({\bf{ h}}_Y \otimes C)\left(\theta_{X}\left((\hat{f} \otimes c_f)_1 \otimes d\right)\right) (\hat{f} \otimes c_f)_0&(\text{by}~ \eqref{defupsilon})\\
&= \sum ({\bf{ h}}_Y \otimes C)\left(\theta_{X}\left({c_f}_2 \otimes d\right)\right) (\hat{f} \otimes {c_f}_1)& (\text{by}~ \eqref{rightcomod1})\\
&= \sum {\bf{ h}}_Y \Big(\big(\theta_{X}({c_f}_2 \otimes d)\big)_\psi \Big)(\hat{f})  \otimes {{c_f}_1}^\psi& (\text{by}~ \eqref{rightaction1})\\
&= \sum \hat{f} \circ \big(\theta_{X}({c_f}_2 \otimes d)\big)_\psi \otimes {c_f}^\psi_1\\
&= \sum \hat{f} \circ \big(\theta_{X}(c_f \otimes d_1)\big) \otimes d_2& (\text{by}~ \eqref{theta2})\\
&=  \varepsilon_C(d_1)f \otimes d_2& (\text{by}~ \eqref{fro1})\\
&= f \otimes d
\end{array}
\end{equation*}
This proves \eqref{fc2}. Therefore, $(\mathscr F,\mathscr G)$ is a Frobenius pair.

\smallskip
Conversely, suppose $(\mathscr F,\mathscr G)$ is a Frobenius pair. Then, there exist $\upsilon \in V$ and $\omega \in W$ satisfying \eqref{fc1} and \eqref{fc2}. Then, using the isomorphisms $V\cong V_1$ and $W\cong W_1$ as in Propositions \ref{V=V1} and \ref{W=W1}, there exist $\theta \in V_1$ and $\eta \in W_1$ corrresponding to $\upsilon \in V$ and $\omega \in W$ respectively. We will now verify the conditions \eqref{fro1} and \eqref{fro2}. Taking $\mathcal{M}=C\otimes {\bf h}_Y$ in \eqref{fc1}, for any $d\in  C$ and $f \in Hom_\mathcal{D}(X,Y)$ we have
\begin{equation*}
\begin{array}{lll}
d \otimes f &= \big(\upsilon\left({C \otimes {\bf h}_Y}\right)(X) \circ \omega\left({C \otimes {\bf{ h}}_Y}\right)(X)\big)(d \otimes f)&\\
&= \big(\upsilon\left({C \otimes {\bf{ h}}_Y}\right)(X) \circ (id_C \otimes \omega_{{\bf{ h}}_Y}(X))\big)(d \otimes f)& (\text{by Lemma}~\ref{2.13.})\\
&=\upsilon\left({C \otimes {\bf{ h}}_Y}\right)(X)\left(d \otimes \eta(X,Y)(f)\right)&\\
&=\sum \upsilon\left({C \otimes {\bf{ h}}_Y}\right)(X)\left(d \otimes \hat{f} \otimes c_f\right)&\\
&= \sum (C \otimes {\bf{ h}}_Y)\left(\theta_{X}\left((d \otimes \hat{f})_1 \otimes c_f\right)\right) (d \otimes \hat{f})_0&(\text{by}~ \eqref{defupsilon})\\
&= \sum (C \otimes {\bf{ h}}_Y)\left(\theta_{X}\left(d_2^\psi \otimes c_f\right)\right) (d_1 \otimes \hat{f}_\psi)&(\text{by}~ \eqref{rightcomod2})\\
&= \sum d_1 \otimes \hat{f}_\psi \circ \theta_{X}(d_2^\psi \otimes c_f)&(\text{by}~ \eqref{rightaction2})
\end{array}
\end{equation*} 
Applying $\varepsilon_C \otimes id_{{\bf h}_Y(X)}$ on both sides, we get 
\begin{equation*}
\begin{array}{lll}
\varepsilon_C(d)f & =\sum \varepsilon_C(d_1)\hat{f}_\psi \circ \theta_X({d_2}^\psi\otimes c_f) & \\
&=\sum \varepsilon_C({d_1}^\psi)\hat{f}_{\psi_\psi} \circ \theta_X({d_2}^\psi\otimes c_f)  & \mbox{(by \eqref{eq 6.2})}\\
&= \sum \varepsilon_C((d^\psi)_1)\hat{f}_\psi \circ \theta_X((d^\psi)_2\otimes c_f)& \mbox{(by \eqref{eq 6.3})}\\
&=\sum \hat{f}_\psi\circ \theta_X(d^\psi\otimes c_f))&\\
\end{array}
\end{equation*}
This proves \eqref{fro2}. 
Now, taking $\mathcal{N}={\bf{ h}}_Y$ in \eqref{fc2}, we have
\begin{equation*}
\begin{array}{lll}
f \otimes d&=\big(\upsilon\left({\mathscr G({\bf{ h}}_Y)}\right)(X) \circ \mathscr G\left(\omega\left({{\bf{ h}}_Y}\right)\right)(X)\big)(f \otimes d)&\\
&=\left( \upsilon\left({{\bf{ h}}_Y \otimes C}\right)(X) \circ (\omega\left({{\bf{ h}}_Y}) \otimes id_C\right)(X)\right)(f \otimes d)&\\
&= \left(\upsilon({{\bf{ h}}_Y \otimes C})(X) \circ \left(\eta(X,Y) \otimes id_C\right)\right)(f \otimes d)&\\
&= \sum \upsilon\left({{\bf{ h}}_Y \otimes C}\right)(X) (\hat{f} \otimes c_f \otimes d)&\\
&= \sum ({\bf{ h}}_Y \otimes C)\left(\theta_{X}\left((\hat{f} \otimes c_f)_1 \otimes d\right)\right)(\hat{f} \otimes c_f)_0&(\text{by}~ \eqref{defupsilon})\\
&= \sum ({\bf{ h}}_Y \otimes C)\big(\theta_{X}\left({c_f}_2 \otimes d\right)\big)(\hat{f} \otimes {c_f}_1)& (\text{by}~ \eqref{rightcomod1})\\
&=\sum {\bf{ h}}_Y\big(\left(\theta_{X}({c_f}_2 \otimes d)\right)_\psi\big)(\hat{f}) \otimes {{c_f}_1}^\psi& (\text{by}~ \eqref{rightaction1})\\
&=\sum \hat{f} \circ \left(\theta_{X}({c_f}_2 \otimes d)\right)_{\psi} \otimes {{c_f}_1}^\psi&\\
&=\sum \hat{f}\circ\theta_{X}(c_f \otimes d_1) \otimes d_2&(\text{by}~ \eqref{theta2})
\end{array}
\end{equation*}
Applying $id_{{\bf h}_Y(X)} \otimes \varepsilon_C$ on both sides, we get \eqref{fro1}. This proves the result.
\end{proof}

\subsection{Frobenius conditions in the case of a finite dimensional coalgebra }

We continue with $(\mathcal D,C,\psi)$ being an entwining structure. 
For each $Y \in Ob(\mathcal{D})$, we obtain an object $Hom(C,{\bf h}_Y)$ in $Mod\text{-}\mathcal D$ by setting
\begin{equation}\label{rightactionhom}
\begin{array}{c}
Hom(C,{\bf h}_Y)(X):=Hom_K(C,{\bf h}_Y(X))\\
Hom(C,{\bf h}_Y)(g): ~Hom_K\big(C,{\bf h}_Y(X)\big) \longrightarrow Hom_K\big(C,{\bf h}_Y(X')\big)~ \text{given by}\\
Hom(C,{\bf h}_Y)(g)(\phi)(x)=(\phi \cdot g)(x):=\phi(x)g 
\end{array}
\end{equation}
for any $X \in Ob(\mathcal{D})$, $g \in Hom_\mathcal{D}(X',X)$, $\phi \in Hom_K\big(C,{\bf h}_Y(X)\big)$ and $x \in C$. 
Using \eqref{rightactionhom}, we now define a functor $Hom(C,h):\mathcal{D} \longrightarrow Mod\text{-}\mathcal D$ as follows:
\begin{equation}\label{leftactionhom}
\begin{array}{c}
Hom(C,h)(Y):=Hom(C,{\bf h}_Y)\\
\big(Hom(C,h)(f)\big)(Z): ~\big(Hom(C,{\bf h}_Y)\big)(Z) \longrightarrow \big(Hom(C,{\bf h}_{X})\big)(Z)~ \text{given by}\\
\big(Hom(C,h)(f)\big)(Z)(\phi)(x)=(f \cdot \phi)(x):= f_\psi\circ \phi(x^\psi)
\end{array}
\end{equation}
for any $f \in Hom_\mathcal{D}(Y,X)$,  $\phi \in Hom_K\big(C,{\bf h}_Y(Z)\big)$ and $x \in C$. 

\smallskip For the rest of this section, we assume that $C$ is finite dimensional. Then, for each $Z\in Ob(\mathcal D)$, we have an isomorphism
\begin{equation}\label{rxeq2.37}
Hom_K\big(C,{\bf h}_Y(Z)\big)\cong C^* \otimes {\bf h}_Y(Z)
\end{equation}   Let $\{d_i\}_{1\leq i\leq k}$ be a  basis for $C$ and 
$\{d_i^*\}_{1\leq i\leq k}$ be its dual basis.

\begin{lemma}\label{Lrep2.15}  Let $C$ be a finite dimensional coalgebra. Then, we have a functor
\begin{equation}\label{rxeq2.38}
C^*\otimes h:\mathcal D\longrightarrow Mod\text{-}\mathcal D \qquad Y\mapsto C^*\otimes {\bf h}_Y
\end{equation}
\end{lemma}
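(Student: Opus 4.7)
The plan is to obtain $C^*\otimes h$ as the image of the functor $Hom(C,h)$ from \eqref{leftactionhom} under the natural isomorphism $\Theta_{Y,Z}:Hom_K(C,{\bf h}_Y(Z))\xrightarrow{\cong} C^*\otimes {\bf h}_Y(Z)$ available because $C$ is finite dimensional. Concretely, using the dual basis $\{d_i,d_i^*\}_{1\le i\le k}$, one has $\Theta_{Y,Z}(\phi)=\sum_i d_i^*\otimes \phi(d_i)$ with inverse $c^*\otimes g\mapsto (x\mapsto c^*(x)g)$, and this isomorphism is natural in the $K$-vector space ${\bf h}_Y(Z)$.

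First I would check that the formulas in \eqref{rightactionhom} actually make $Hom(C,{\bf h}_Y)$ a right $\mathcal{D}$-module; this reduces to the functoriality of ${\bf h}_Y$ since the action only involves post-composition $\phi(x)\mapsto \phi(x)g$ in the second slot. Next, I would verify that the assignment $Y\mapsto Hom(C,{\bf h}_Y)$, $f\mapsto Hom(C,h)(f)$, is a functor $\mathcal{D}\longrightarrow Mod\text{-}\mathcal{D}$. Functoriality of $f\mapsto Hom(C,h)(f)$ amounts to showing that $(gf)\cdot\phi=g\cdot(f\cdot \phi)$ for $f\in Hom_\mathcal{D}(Y,X)$, $g\in Hom_\mathcal{D}(X,W)$, which unwinds to the identity $(gf)_\psi\circ\phi(x^\psi)=g_\psi\circ f_\psi\circ \phi(x^{\psi\psi})$ via \eqref{eq 6.1}, and that $(id_Y)\cdot\phi=\phi$ by \eqref{eq 6.4}. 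I would also check naturality in $Z$, i.e.\ that $Hom(C,h)(f)$ is a right $\mathcal{D}$-module morphism, which is immediate since the left action on $\phi$ only modifies the image of $\phi$ in ${\bf h}_X(Z)$ while the right action only composes on the other side.

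Having established $Hom(C,h)$ as a functor, I would transport the structure through $\Theta$ to define
\begin{equation*}
(C^*\otimes h)(Y):=C^*\otimes {\bf h}_Y,\qquad (C^*\otimes h)(f):=\Theta_{X,-}\circ Hom(C,h)(f)\circ \Theta_{Y,-}^{-1}.
\end{equation*}
Explicitly, on elements this reads
\begin{equation*}
(C^*\otimes h)(f)(Z)(c^*\otimes g)=\sum_j d_j^*\otimes \bigl(f_\psi\circ g\bigr)\,c^*(d_j^\psi),
\end{equation*}
and the right $\mathcal D$-action on $C^*\otimes{\bf h}_Y$ obtained from \eqref{rightactionhom} is simply $(c^*\otimes g)\cdot h=c^*\otimes (gh)$, which makes $C^*\otimes{\bf h}_Y$ lie in $Mod\text{-}\mathcal D$ because ${\bf h}_Y$ already does. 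Functoriality of $C^*\otimes h$ then follows formally from functoriality of $Hom(C,h)$ and the naturality of $\Theta$.

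The only place any substantive content enters is the verification that $Hom(C,h)$ is functorial in the $\mathcal{D}$-variable, and I expect this to be the main (mild) obstacle: it hinges on the entwining axioms \eqref{eq 6.1} and \eqref{eq 6.4} being applied inside the argument of $\phi$, i.e.\ on the $C$-side of $\psi$. Everything else is a transport-of-structure argument that is automatic once the finite dimensionality of $C$ is invoked to identify $Hom_K(C,-)$ with $C^*\otimes -$.
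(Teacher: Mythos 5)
Your proposal is correct and follows essentially the same route as the paper: both transport the left and right $\mathcal{D}$-actions of $Hom(C,h)$ from \eqref{rightactionhom} and \eqref{leftactionhom} through the dual-basis isomorphism $Hom_K\big(C,{\bf h}_Y(Z)\big)\cong C^*\otimes {\bf h}_Y(Z)$ of \eqref{rxeq2.37}, arriving at the same explicit formula $\sum_{i=1}^k c^*(d_i^\psi)\,d_i^*\otimes f_\psi g$. The only difference is that you spell out the functoriality and bimodule-compatibility checks (via \eqref{eq 6.1} and \eqref{eq 6.4}) which the paper leaves as ``easily verified.''
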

\begin{proof}
For each $Y\in Ob(\mathcal D)$, it is clear that $C^*\otimes {\bf h}_Y\in Mod\text{-}\mathcal D$. We consider 
$f\in Hom_{\mathcal D}(Y,X)$ and an element $c^*\otimes g\in C^*\otimes {\bf h}_Y(Z)$. By the isomorphism in \eqref{rxeq2.37},
$c^*\otimes g$ corresponds to the element $\phi_{c^*\otimes g}\in Hom_K\big(C,{\bf h}_Y(Z)\big)$ given by $\phi_{c^*\otimes g}
(x)=c^*(x)g$ for each $x\in C$. From the action in \eqref{leftactionhom}, the element $f\cdot \phi_{c^*\otimes g}\in 
\big(Hom(C,{\bf h}_{X})\big)(Z)$ is given by
\begin{equation*}
(f\cdot \phi_{c^*\otimes g})(x)=f_\psi\circ \phi_{c^*\otimes g}(x^\psi)=c^*(x^\psi)(f_\psi\circ g)
\end{equation*} Again, using the isomorphism in \eqref{rxeq2.37}, the element in $C^*\otimes {\bf h}_X(Z)$ corresponding 
to  $f\cdot \phi_{c^*\otimes g}$ is given by $\sum_{i=1}^kc^*(d_i^\psi)d_i^*\otimes f_\psi g$. It may be easily verified that $(C^* \otimes h)(f): C^* \otimes {\bf h}_Y \longrightarrow C^* \otimes {\bf h}_X$ is a morphism of right $\mathcal{D}$-modules. The result now follows.
\end{proof}

Since $C$ is a coalgebra, its vector space dual $C^*$ is an algebra with the convolution product $(c^*\mbox{\tiny $\bullet$  }d^*)(x):=\sum c^*(x_1)d^*(x_2)$ for $c^*, d^* \in C^*$ and $x \in C$. Let $N$ be any left $C^*$-module. Then, we have a $K$-linear map $\rho:N \longrightarrow Hom(C^*,N)$ defined by $\rho(n)(c^*):=c^*n$ for $n \in N$ and $c^* \in C^*$. 

\smallskip In general, there is an embedding $N \otimes C \hookrightarrow Hom(C^*,N)$ given by $(n \otimes x)(c^*):=c^*(x)n$ for $x\in C$. Since $C$ is finite dimensional, this embedding is also a surjection. This gives us a $K$-linear map $\rho:N \longrightarrow N \otimes C$ which makes $N$ a right $C$-comodule (see, for instance, \cite[$\S$ 2.2]{SCS}).  Then, $\rho(n)=\sum_{i=1}^k d_i^*n \otimes d_i$. In particular, $C^*$ becomes a right $C$-comodule with 
\begin{equation}\label{C*comod}
\rho_{C^*}(c^*)=\sum_{i=1}^k d_i^*\mbox{\tiny $\bullet$  }c^*  \otimes d_i
\end{equation} Considering the element $\varepsilon_C\in C^*$,  the coassociativity of the coaction 
$\rho_{C^*}$ may be used to verify that
\begin{equation}\label{mamta}
\sum_{j=1}^k\sum_{i=1}^k(d_i^*\mbox{\tiny $\bullet$  }d_j^*)\otimes d_i\otimes d_j=\sum_{j=1}^kd_j^*\otimes \Delta(d_j)
\end{equation}

\begin{prop}\label{Prep2.15} Let $C$ be a finite dimensional coalgebra. Then, we have a functor:
\begin{equation*}
C^*\otimes h:\mathcal D\longrightarrow {\mathscr{M}(\psi)}_\mathcal{D}^C \qquad Y\mapsto C^*\otimes {\bf h}_Y
\end{equation*}
\end{prop}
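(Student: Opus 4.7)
The plan has two steps. First, I will establish that each $C^*\otimes {\bf h}_Y$ is an object of ${\mathscr{M}(\psi)}_\mathcal{D}^C$. Second, I will verify that the right $\mathcal{D}$-module morphism $(C^*\otimes h)(f):C^*\otimes {\bf h}_Y\longrightarrow C^*\otimes {\bf h}_X$ already constructed in Lemma \ref{Lrep2.15} is actually a morphism in ${\mathscr{M}(\psi)}_\mathcal{D}^C$.

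For the first step, there is nothing new to prove: since $C$ is finite dimensional, $C^*$ acquires a right $C$-comodule structure via the coaction $\rho_{C^*}(c^*)=\sum_{j}(d_j^*\mbox{\tiny $\bullet$  } c^*)\otimes d_j$ of \eqref{C*comod}, and Lemma \ref{lem 6.3}, applied to $N=C^*$, hands us $C^*\otimes {\bf h}_Y\in {\mathscr{M}(\psi)}_\mathcal{D}^C$ with coaction
\begin{equation*}
\sigma^r_{C^*\otimes {\bf h}_Y(Z)}(c^*\otimes g)=\sum_{j}(d_j^*\mbox{\tiny $\bullet$  } c^*)\otimes g_\psi\otimes d_j^{\psi}.
\end{equation*}
The underlying right $\mathcal{D}$-module structure is $(C^*\otimes {\bf h}_Y)(g')(c^*\otimes g)=c^*\otimes gg'$, which matches the one used in Lemma \ref{Lrep2.15}.

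For the second step, the only remaining thing to check is that at each $Z\in Ob(\mathcal{D})$ the $K$-linear map $(C^*\otimes h)(f)(Z):c^*\otimes g\mapsto \sum_i c^*(d_i^{\psi})d_i^*\otimes f_\psi g$ is right $C$-colinear for the coactions above. I will compute both compositions $\sigma^r_{C^*\otimes {\bf h}_X(Z)}\circ (C^*\otimes h)(f)(Z)$ and $\bigl((C^*\otimes h)(f)(Z)\otimes id_C\bigr)\circ \sigma^r_{C^*\otimes {\bf h}_Y(Z)}$ applied to an arbitrary $c^*\otimes g$. Both sides unfold into triple sums over indices from the dual basis, with nested applications of the entwining $\psi$ and of the convolution product in $C^*$. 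The task is to rewrite them into a common form using: the multiplicativity \eqref{eq 6.1} of $\psi$ with respect to composition (to handle the composite $f_\psi g$), the coassociativity axiom \eqref{eq 6.3} relating $\psi$ and $\Delta_C$ (to move coproducts through $\psi$), and the coassociativity identity \eqref{mamta} (to trade convolution products $d_j^*\mbox{\tiny $\bullet$  } d_i^*$ against comultiplications $\Delta_C(d_j)$). This combinatorial bookkeeping is the main obstacle; once the entwined components are aligned via \eqref{eq 6.3} and the dual-basis indices are regrouped via \eqref{mamta}, the two sides become manifestly equal.

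Finally, functoriality of $C^*\otimes h$ viewed into ${\mathscr{M}(\psi)}_\mathcal{D}^C$ (preservation of identities and composition) is immediate from functoriality of the same assignment into $Mod\text{-}\mathcal{D}$ established in Lemma \ref{Lrep2.15}, because the forgetful functor $\mathscr{F}:{\mathscr{M}(\psi)}_\mathcal{D}^C\longrightarrow Mod\text{-}\mathcal{D}$ is faithful and the composition in ${\mathscr{M}(\psi)}_\mathcal{D}^C$ is computed in $Mod\text{-}\mathcal{D}$.
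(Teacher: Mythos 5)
Your proposal follows the paper's proof exactly: apply Lemma \ref{lem 6.3} with $N=C^*$ equipped with the coaction \eqref{C*comod} to get each $C^*\otimes {\bf h}_Y$ into ${\mathscr{M}(\psi)}_\mathcal{D}^C$, and then reduce functoriality to checking that the right $\mathcal{D}$-module maps of Lemma \ref{Lrep2.15} are objectwise right $C$-colinear. The identities you single out --- \eqref{eq 6.1}, \eqref{eq 6.3} and \eqref{mamta} --- are precisely the ones the paper's colinearity computation uses, so the plan is sound; the only caveat is that you assert rather than carry out that computation, which is the sole substantive step of the proof.
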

\begin{proof} From \eqref{C*comod}, we know that $C^*$ is a right $C$-comodule. 
 Applying Lemma \ref{lem 6.3}, it follows that each $C^* \otimes {\bf h}_Y$ is an object in ${\mathscr{M}(\psi)}_\mathcal{D}^C$. Accordingly,  the right $C$-comodule structure on $C^* \otimes {\bf h}_Y(Z)$ for any $Z \in Ob(\mathcal{D})$  is given by the following composition:
$$\begin{CD}
\sigma^r_{ C^*\otimes {\bf h}_Y(Z)}:C^* \otimes {\bf h}_Y(Z)   @>\rho_{C^*} \otimes id>> C^* \otimes C \otimes {\bf h}_Y(Z) @>id \otimes \psi_{ZY}>>C^* \otimes {\bf h}_Y(Z) \otimes C \\
\end{CD}$$ Explicitly, we have
$\sigma^r_{ C^*\otimes {\bf h}_Y(Z)}(c^* \otimes g)=\sum\limits_{i=1}^k d_i^*\mbox{\tiny $\bullet$  }  c^* \otimes g_\psi \otimes  d_i^\psi$
for each $c^* \otimes g\in C^*\otimes {\bf h}_Y(Z)$. We consider $f\in Hom_{\mathcal D}(Y,X)$. By Lemma \ref{Lrep2.15}, this induces a morphism $C^*\otimes {\bf h}_Y\longrightarrow C^*\otimes {\bf h}_X$ in $Mod\text{-}\mathcal D$. In order to show that
$C^*\otimes h:\mathcal D\longrightarrow {\mathscr{M}(\psi)}_\mathcal{D}^C$ is a functor, it therefore suffices to show
that each morphism 
\begin{equation} C^*\otimes {\bf h}_Y(Z)\longrightarrow C^*\otimes {\bf h}_X(Z) \qquad (c^*\otimes g)\mapsto \sum_{j=1}^kc^*(d_j^\psi)d_j^*\otimes f_\psi g
\end{equation}  
is right $C$-colinear. For any $c^* \otimes g \in  C^* \otimes {\bf h}_Y(Z)$, we have
\begin{equation*}
\begin{array}{lll}
\sigma^r_{C^* \otimes {\bf h}_{X}(Z)}\left(f \cdot (c^* \otimes g)\right)&= \sum_{j=1}^k\sigma^r_{C^* \otimes {\bf h}_{X}(Z)}\left(c^*(d_j^\psi)d_j^*\otimes f_\psi g\right)&\\
&= \sum_{i=1}^k \sum_{j=1}^kc^*(d_j^\psi)d_i^*\mbox{\tiny $\bullet$  } d_j^*\otimes (f_\psi g)_\psi \otimes d_i^\psi&\\
&= \sum_{i=1}^k \sum_{j=1}^kc^*(d_j^\psi)d_i^* \mbox{\tiny $\bullet$  } d_j^*\otimes {f_\psi}_\psi g_\psi \otimes {d_i^\psi}^\psi& (\text{by}~ \eqref{eq 6.1})\\
&= \sum_{j=1}^k c^*({{d_j}_2}^\psi)  d_j^*\otimes {f_\psi}_\psi g_\psi \otimes {{{d_j}_1}^\psi}^\psi& (\text{by}~ \eqref{mamta})\\
&= \sum_{j=1}^k c^*({{d_j}_2}^\psi)  d_j^*\otimes {f_\psi}_\psi g_\psi \otimes  \left(\sum_{i=1}^k d_i^*({{d_j}_1}^\psi)d_i^\psi\right)&\\
&=  \sum_{j=1}^k \sum_{i=1}^k  d_i^*({{d_j}_1}^\psi)  c^*({{d_j}_2}^\psi)  d_j^*\otimes {f_\psi}_\psi g_\psi \otimes d_i^\psi&\\
&=  \sum_{j=1}^k \sum_{i=1}^k  d_i^*({({d_j}^\psi)}_1)  c^*({({d_j}^\psi)}_2)  d_j^*\otimes {f_\psi} g_\psi \otimes d_i^\psi& (\text{by}~ \eqref{eq 6.3})\\
&=  \sum_{j=1}^k \sum_{i=1}^k ( d_i^* \mbox{\tiny $\bullet$  } c^*)(d_j^\psi)    d_j^*\otimes {f_\psi} g_\psi \otimes d_i^\psi&\\
&= (f \otimes id_C) \cdot \big( \sum_{i=1}^k ( d_i^* \mbox{\tiny $\bullet$  } c^*) \otimes  g_\psi \otimes d_i^\psi\big)&\\
&= (f \otimes id_C) \cdot \left(\sigma^r_{C^* \otimes {\bf h}_{Y}(Z)}(c^* \otimes g)\right)&
\end{array}
\end{equation*}
\end{proof}

Since $C$ is finite dimensional, the right $C$-comodule structure on $C^* \otimes {\bf h}_Y(X)$ induces a  right $C$-comodule structure on $Hom(C,{\bf h}_Y(X))$ for each $X,Y \in Ob(\mathcal{D})$ which we now explain. Let $\phi \in Hom(C,{\bf h}_Y(X))$. Then, $\phi$ corresponds to the element $\sum_{1 \leq i \leq k} d_i^* \otimes \phi(d_i) \in C^* \otimes {\bf h}_Y(X)$. We know by Proposition \ref{Prep2.15} that $$\sigma^r_{C^* \otimes {\bf h}_Y(X)}\left(\sum_{i=1}^k d_i^* \otimes \phi(d_i)\right)=\sum_{j=1}^k \sum_{i=1}^k d_j^* \mbox{\tiny $\bullet$  } d_i^* \otimes (\phi(d_i))_\psi \otimes d_j^\psi$$
The element $\sum_{i=1}^k d_j^* \mbox{\tiny $\bullet$  } d_i^* \otimes (\phi(d_i))_\psi \otimes d_j^\psi \in C^* \otimes {\bf h}_Y(X) \otimes C$
corresponds to the element $\phi_0 \otimes \phi_1 \in Hom(C,{\bf h}_Y(X)) \otimes C$ given by
\begin{equation}\label{coactionhom}
\begin{array}{ll}
\phi_0(x) \otimes \phi_1 &= \sum_{j=1}^k \sum_{i=1}^k (d_j^* \mbox{\tiny $\bullet$  } d_i^*)(x)(\phi(d_i))_\psi \otimes d_j^\psi\\
&= \sum_{j=1}^k \sum_{i=1}^k d_j^*(x_1) d_i^*(x_2)(\phi(d_i))_\psi \otimes d_j^\psi\\
&=\psi(x_1 \otimes \phi(x_2))
\end{array}
\end{equation} for $x\in C$.
It now follows from \eqref{rightactionhom}, \eqref{leftactionhom}, \eqref{coactionhom} and Proposition \ref{Prep2.15} that we have a functor
\begin{equation}
Hom(C,h):\mathcal D\longrightarrow {\mathscr{M}(\psi)}_\mathcal{D}^C \qquad Y\mapsto Hom_K(C,{\bf h}_Y)
\end{equation}

We also recall from \eqref{left1} and \eqref{left action}, the functor $h \otimes C:\mathcal{D}\longrightarrow {\mathscr{M}(\psi)}_\mathcal{D}^C$, defined as follows:
\begin{equation*}
\begin{array}{l}
(h \otimes C)(Y):={\bf h}_Y\otimes C\\
(h\otimes C)(f)(Z)(g\otimes c):= fg\otimes c
\end{array}
\end{equation*}
for $f\in Hom_\mathcal{D}(Y,X)$ and $g \otimes c \in {\bf h}_Y(Z) \otimes C$. We now set $V_2:=Nat(h \otimes C, C^* \otimes h)$.

\begin{prop}\label{V1isoV2}
Let $C$ be a finite dimensional coalgebra. Then,
$$V=Nat(\mathscr{G}\mathscr{F},1_{{\mathscr{M}(\psi)}_\mathcal{D}^C}) \cong V_1 \cong V_2=Nat(h \otimes C, C^* \otimes h)$$
\end{prop}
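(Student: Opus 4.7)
Since the first isomorphism $V \cong V_1$ has already been established in Proposition \ref{V=V1}, the task reduces to constructing an isomorphism $V_1 \cong V_2$ directly, and it is this second half I will concentrate on.

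The plan is to define maps in both directions. For $V_1 \to V_2$: given $\theta = \{\theta_X\}_{X \in Ob(\mathcal{D})} \in V_1$, I will build a natural transformation $\eta_\theta : h \otimes C \longrightarrow C^* \otimes h$ of bifunctors $\mathcal{D}^{op} \otimes \mathcal{D} \longrightarrow Vect_K$. Using the finite-dimensional identification $C^* \otimes Hom_\mathcal{D}(X,Y) \cong Hom_K\big(C, Hom_\mathcal{D}(X,Y)\big)$, the natural candidate is specified on $g \otimes c \in Hom_\mathcal{D}(X,Y) \otimes C$ in the Hom-picture as the map $d \mapsto g \circ \theta_X(c \otimes d)$, or equivalently in the $C^*$-picture, using the dual basis $\{d_i^*\}$ of $C$, as
\[
\eta_\theta(X,Y)(g \otimes c) \;=\; \sum_{i=1}^{k} d_i^* \otimes g \circ \theta_X(c \otimes d_i).
\]
For the reverse map $V_2 \to V_1$: given $\eta \in V_2$, define $\theta_X : C \otimes C \longrightarrow End_\mathcal{D}(X)$ by evaluating the $C^*$-component of $\eta(X,X)(id_X \otimes c) \in C^* \otimes End_\mathcal{D}(X)$ against $d \in C$.

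The verifications would proceed in four steps. First, check that $\eta_\theta$ is, for each $Y$, a morphism ${\bf h}_Y \otimes C \longrightarrow C^* \otimes {\bf h}_Y$ in $\mathscr{M}(\psi)_\mathcal{D}^C$: the right $\mathcal{D}$-module property is immediate from postcomposition with $g$, while the $C$-colinearity, after expanding using the coaction \eqref{C*comod} and the coaction on $C^* \otimes {\bf h}_Y$ from Lemma \ref{lem 6.3}, reduces to precisely the identity \eqref{theta2}. Second, check naturality in both variables of $\mathcal{D}^{op} \otimes \mathcal{D}$: naturality in the $X$-slot (via the twisted bimodule action on $h \otimes C$ from \eqref{n2}) and naturality in the $Y$-slot (via the twisted $Y$-functoriality of $C^* \otimes h$ from Proposition \ref{Prep2.15}) both collapse onto condition \eqref{theta1} after applying the entwining axioms \eqref{eq 6.1}--\eqref{eq 6.3}. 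Third, run the dual argument: starting from $\eta \in V_2$, derive \eqref{theta1} by combining the naturality of $\eta$ with the right-$\mathcal{D}$-module property of each $\eta(X,Y)$, and derive \eqref{theta2} by tracking the $C$-colinearity of $\eta(X,X)$ at $id_X \otimes c$ through both coaction formulas. Fourth, check the two constructions are mutually inverse by evaluating at $(id_X, c, d)$ and using that $\sum_i d_i^*(d) d_i = d$ together with the fact that the Yoneda-type extension of $\eta$ from $(X,X)$ to general $(X,Y)$ is forced.

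The main obstacle will be step two: the $Y$-functoriality of $C^* \otimes h$ involves $\psi$ in a non-trivial way, so even checking that the naive formula above gives a natural transformation requires delicately unpacking the appearances of $\psi$ on both sides. In particular, one may need to absorb a $\psi$-twist into the defining formula of $\eta_\theta$ (so that, in $C^*$-notation, $d_i$ is replaced by a $\psi$-twisted variant against $g$) in order for the Sweedler calculations on the coaction side of $C^* \otimes h$ to match \eqref{theta1} and \eqref{theta2} exactly. Once the correct formula is pinned down, the rest is a routine Sweedler-notation computation using the entwining axioms.
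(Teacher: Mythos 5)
Your overall route is the same as the paper's: both halves of the statement are handled by writing down explicit mutually inverse maps between $V_1$ and $V_2$, using the finite-dimensionality identification $C^*\otimes {\bf h}_Y(X)\cong Hom_K\big(C,{\bf h}_Y(X)\big)$, and reducing the membership checks (right $\mathcal D$-linearity, objectwise $C$-colinearity, naturality in $Y$) to the defining conditions \eqref{theta1}, \eqref{theta2} together with the entwining axioms \eqref{eq 6.1}--\eqref{eq 6.3}. However, there is a genuine gap at the only point where the proof has real content: your candidate formula $\eta_\theta(X,Y)(g\otimes c)(d)=g\circ\theta_X(c\otimes d)$ does not define a morphism in ${\mathscr{M}(\psi)}_\mathcal{D}^C$, and the repair you anticipate (twisting $d_i$ by $\psi$ against $g$) is still not enough. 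Concretely: for right $\mathcal D$-linearity one must compare $\big(g'\cdot\big)$ acting by plain postcomposition on the $C^*\otimes{\bf h}_Y$ side with the $\psi$-twisted action $({\bf h}_Y\otimes C)(g')(f\otimes c)=fg'_\psi\otimes c^\psi$ on the source; applying \eqref{theta1} to your formula produces a twist on the evaluation variable $d$ that has nowhere to go. Worse, for $C$-colinearity the coaction on $Hom(C,{\bf h}_Y(X))$ is $\phi_0(x)\otimes\phi_1=\psi(x_1\otimes\phi(x_2))$ as in \eqref{coactionhom}, so the verification forces an identity of the shape ``twist paired with the coproduct of the \emph{first} argument of $\theta_X$,'' which is exactly \eqref{theta2}; with the element's $c$ sitting in the first slot and the evaluation variable in the second, you would instead need the mirror image of \eqref{theta2}, which is not part of the definition of $V_1$ and is not available.

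The correct formula, which the paper pins down in \eqref{R2.43Rr}/\eqref{phiform}, is $\Upsilon_{XY}(f\otimes c)(d)=f_\psi\circ\theta_X(d^\psi\otimes c)$: the $\psi$-twist couples the evaluation variable $d$ with $f$, \emph{and} the arguments of $\theta_X$ are interchanged relative to your proposal, with the evaluation variable in the first slot and the element's $C$-component in the second. Dually, the inverse must be $\theta_X(c\otimes d):=\big(\Upsilon_{XX}(id_X\otimes d)\big)(c)$; with your stated convention (evaluate $\eta(X,X)(id_X\otimes c)$ at $d$) the colinearity and module computations produce the slot-swapped versions of \eqref{theta1} and \eqref{theta2}, so the resulting collection does not lie in $V_1$ as defined, and your two assignments would not be inverse to the paper's structure maps. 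Once the formula is corrected, your steps do go through essentially as in the paper (the $Y$-naturality indeed uses the twisted functoriality of $C^*\otimes h$ from Lemma \ref{Lrep2.15}/Proposition \ref{Prep2.15}, and the mutual-inverse check uses the left action \eqref{leftactionhom} to propagate from the diagonal), but as written the plan leaves its central object undetermined and its stated candidate fails.
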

\begin{proof}
Since $C$ is finite dimensional, we know that $C^* \otimes {\bf h}_Y(X) \cong Hom_K\big(C,{\bf h}_Y(X)\big)$ for each $X,Y \in Ob(\mathcal{D})$. We first define a $K$-linear map $\Upsilon_{XY}:{\bf h}_Y(X) \otimes C \longrightarrow C^* \otimes {\bf h}_Y(X)$ given by
\begin{equation}\label{R2.43Rr}
\big(\Upsilon_{XY}(f \otimes c)\big)(d):=f_\psi \circ \theta_{X}( d^\psi \otimes c)
\end{equation}
for any $f \in Hom_\mathcal{D}(X,Y)$ and $c,d \in C$. In other words, we have
\begin{equation}\label{phiform}
\Upsilon_{XY}(f \otimes c)=\sum_{i=1}^k d_i^* \otimes \big(f_\psi \circ \theta_{X}( d_i^\psi \otimes c)\big)
\end{equation}
where $\{d_i\}_{1\leq i\leq k}$ is a basis for $C$ and $\{d_i^*\}_{1\leq i\leq k}$ is its dual basis.

\smallskip
We now define $\alpha':V_1 \longrightarrow V_2$ by setting $\alpha'(\theta)=\Upsilon$ with $\Upsilon:h \otimes C \longrightarrow C^* \otimes h$ defined as follows:
\begin{align*}
\Upsilon_Y: {\bf h}_Y \otimes C \longrightarrow C^* \otimes {\bf h}_Y \qquad \Upsilon_Y(X):=\Upsilon_{XY}
\end{align*}
for any $X,Y \in Ob(\mathcal{D})$.  
We now verify that $\alpha'$ is a well-defined map. For this, we first check that $\Upsilon_Y: {\bf h}_Y \otimes C \longrightarrow C^* \otimes {\bf h}_Y$ is a morphism in ${\mathscr{M}(\psi)}_\mathcal{D}^C$ for every $Y \in Ob(\mathcal{D})$. For any $g \in Hom_\mathcal{D}(X',X)$, we need to show that the following diagram commutes:
$$\begin{CD}
{\bf h}_Y(X)\otimes C  @>\Upsilon_Y(X)>> C^* \otimes {\bf h}_Y(X)\\
@V({\bf h}_Y \otimes C)(g)VV        @VV (C^* \otimes {\bf h}_Y)(g)V\\
{\bf h}_Y(X') \otimes C     @>\Upsilon_Y(X') >> C^* \otimes {\bf h}_Y(X')
\end{CD}$$
For any $f\otimes c \in {\bf h}_Y(X)\otimes C$, we have
\begin{equation*}
\begin{array}{lll}
(C^* \otimes {\bf h}_Y)(g)\Upsilon_Y(X)(f \otimes c)&=\sum\limits_{i=1}^k (C^* \otimes {\bf h}_Y)(g)\big(d_i^* \otimes f_\psi \circ \theta_{X}(d_i^\psi \otimes c)\big)&\\
&=\sum\limits_{i=1}^k d_i^* \otimes \big(f_\psi \circ \theta_{X}(d_i^\psi \otimes c)\big)\circ g &(\text{by}~ \eqref{rightaction2})\\
&=\sum\limits_{i=1}^k d_i^* \otimes f_\psi {g_\psi}_\psi \circ \theta_{X'}({d_i^\psi}^\psi \otimes c^\psi)&(\text{by}~ \eqref{theta1})\\
&=\sum\limits_{i=1}^k d_i^* \otimes (f{g_\psi})_\psi \circ \theta_{X'}(d_i^\psi \otimes c^\psi) &(\text{by}~ \eqref{eq 6.1})\\
&=\Upsilon_Y(X')(fg_\psi \otimes c^\psi)=\Upsilon_Y(X')({\bf h}_Y \otimes C)(g)(f \otimes c)&
\end{array}
\end{equation*}
This shows that $\Upsilon_Y$ is a morphism of right $\mathcal{D}$-modules for every $Y\in Ob(\mathcal{D})$. Next we verify that $\Upsilon_Y(X): {\bf h}_Y(X) \otimes C \longrightarrow C^* \otimes {\bf h}_Y(X)$ is right $C$-colinear for every $X,Y \in Ob(\mathcal{D})$. We have
\begin{equation*}
\begin{array}{lll}
\sigma^r_{C^* \otimes {\bf h}_Y(X)}\big(\Upsilon_Y(X)(f \otimes c)\big)&= \sum\limits_{i=1}^k \sigma^r_{C^* \otimes {\bf h}_Y(X)}\big(d_i^* \otimes f_\psi \circ \theta_{X}(d_i^\psi \otimes c)\big)&\\
&= \sum\limits_{i,j=1}^k d_j^* \mbox{\tiny $\bullet$  } d_i^* \otimes \big(f_\psi \circ \theta_{X}(d_i^\psi \otimes c)\big)_\psi \otimes d_j^\psi& (\text{by}~\eqref{C*comod})\\
&=\sum\limits_{i=1}^k d_i^*\otimes \left(f_\psi \circ \theta_X\left({d_{i_2}}^\psi \otimes c\right)\right)_\psi \otimes {d_{i_1}}^\psi &(\text{by~}\eqref{mamta})\\
&=\sum\limits_{i=1}^k d_i^*\otimes {f_\psi}_\psi \circ \left(\theta_X\left({d_{i_2}}^\psi \otimes c\right)\right)_\psi \otimes {{d_{i_1}}^\psi}^\psi  &(\text{by}~ \eqref{eq 6.1})\\
&=\sum\limits_{i=1}^k d_i^*\otimes f_\psi \circ \left(\theta_X\left({(d_i^\psi)}_2 \otimes c\right)\right)_\psi \otimes {{(d_i^\psi)}_1}^\psi  &(\text{by}~ \eqref{eq 6.3})\\
&= \sum\limits_{i=1}^k d_i^*  \otimes f_\psi\circ \theta_{X}(d_i^\psi \otimes c_1) \otimes c_2& (\text{by}~ \eqref{theta2})\\
&=\Upsilon_Y(X)(f \otimes c_1) \otimes c_2\\
&=(\Upsilon_Y(X)\otimes id_C)\left(\pi^r_{{\bf h}_Y(X) \otimes C}(f \otimes c)\right)
\end{array}
\end{equation*} 
Finally, we verify that $\Upsilon$ is a natural transformation from $h \otimes C$ to $C^* \otimes h$, i.e., 
the following diagram commutes for any $g \in Hom_\mathcal{D}(Y,Y')$:
$$\begin{CD}
{\bf h}_Y \otimes C  @>\Upsilon_Y>> C^* \otimes {\bf h}_Y\\
@V(h \otimes C)(g)VV        @VV (C^* \otimes h)(g)V\\
{\bf h}_{Y'} \otimes C     @>\Upsilon_{Y'} >> C^* \otimes {\bf h}_{Y'}
\end{CD}$$
For any $f \otimes c \in {\bf h}_Y(X) \otimes C$, we have 
\begin{equation*}
\begin{array}{lll}
(C^* \otimes h)(g)(X)\Upsilon_Y(X)(f \otimes c)&=\sum\limits_{i=1}^k (C^* \otimes h)(g)(X)\big(d_i^* \otimes f_\psi \circ \theta_{X}(d_i^\psi \otimes c)\big)& (\text{by}~ \eqref{phiform})\\
&= \sum\limits_{i,j=1}^k d_i^*(d_j^\psi) d_j^* \otimes g_\psi f_\psi  \theta_{X}(d_i^\psi \otimes c)& (\text{by Lemma}~ \ref{Lrep2.15})\\
&= \sum\limits_{j=1}^k d_j^* \otimes g_\psi f_\psi \circ \theta_{X}\left(\sum\limits_{i=1}^n d_i^*({d_j}^\psi){d_i}^\psi \otimes c\right)&\\  
&=\sum\limits_{j=1}^k d_j^* \otimes g_\psi f_\psi \circ \theta_{X}({d_j^\psi}^{\psi} \otimes c)\\
&=\sum\limits_{j=1}^k d_j^* \otimes (gf)_\psi \circ \theta_{X}( d_j^\psi \otimes c)& (\text{by}~ \eqref{eq 6.1})\\
&= \Upsilon_{Y'}(X)(gf \otimes c)=\Upsilon_{Y'}(X)(h \otimes C)(g)(X)(f \otimes c)
\end{array}
\end{equation*}
This proves that $\Upsilon \in V_2$.

\smallskip
For the converse, we first observe that the functors $C^* \otimes h$ and $Hom(C,h)$ are isomorphic which follows from \eqref{rxeq2.37}. We define $\beta':V_2 \longrightarrow V_1$ by setting $\beta'(\Upsilon)=\theta$ with $\theta_{X}: C \otimes C \longrightarrow End_\mathcal{D}(X)$ defined as follows:
$$\theta_{X}(c \otimes d):=\big(\Upsilon_{XX}(id_X \otimes d)\big)(c)$$
for any $X \in Ob(\mathcal{D})$ and $c, d \in C$. We will now verify that $\theta$ satisfies \eqref{theta1} and \eqref{theta2}. For each $X \in Ob(\mathcal{D})$, we know that $\Upsilon_X:{\bf h}_X \otimes C \longrightarrow Hom(C,{\bf h}_X)$ is a morphism of right $\mathcal{D}$-modules. Therefore, for any $f \in Hom_\mathcal{D}(Y,X)$, we have the following commutative diagram:
\begin{equation}\label{drightactionhom}
\begin{CD}
{\bf h}_X(X) \otimes C  @>\Upsilon_{X}(X)>> Hom_K(C,{\bf h}_X(X))\\
@V({\bf h}_X \otimes C)(f)VV        @VV Hom(C,{\bf h}_X)(f)V\\
{\bf h}_{X}(Y) \otimes C     @>\Upsilon_{X}(Y) >> Hom_K(C,{\bf h}_X(Y))
\end{CD}
\end{equation}
Since $\Upsilon:h \otimes C \longrightarrow Hom(C,h)$ is a natural transformation, the following diagram also commutes for any $f \in Hom_D(Y,X)$:
\begin{equation}\label{dleftactionhom}
\begin{CD}
{\bf h}_Y \otimes C  @>\Upsilon_Y>> Hom(C,{\bf h}_Y)\\
@V(h \otimes C)(f)VV        @VV Hom(C,h)(f)V\\
{\bf h}_{X} \otimes C     @>\Upsilon_{X} >> Hom(C,{\bf h}_X)
\end{CD}
\end{equation}
Therefore, we have
\begin{equation*}
\begin{array}{lll}
\theta_{X}(c \otimes d) \circ f&=\big(\left(\Upsilon_{XX}(id_X \otimes d)\right)(c)\big) \circ f&\\
&= \big(\left(\Upsilon_{XX}(id_X \otimes d)\right) \cdot f\big)(c) &(\text{by}~ \eqref{rightactionhom})\\
&= \left(\Upsilon_{X}(Y) (h_X \otimes C)(f)(id_X \otimes d)\right)(c)&(\text{by}~ \eqref{drightactionhom})\\
&=  \left(\Upsilon_{X}(Y)h_X(f_\psi)(id_X) \otimes d^\psi\right) (c)&\\
&=  \Upsilon_{YX}\big(f_\psi \otimes d^\psi \big)(c)&\\
&= \Upsilon_{YX} \circ \big((h \otimes C)(f_\psi)(Y)(id_Y \otimes d^\psi)\big)(c)&\\
&= \big(f_\psi \cdot \Upsilon_{YY}(id_Y \otimes d^\psi)\big)(c)& (\text{by}~ \eqref{dleftactionhom})\\
&= {f_\psi}_\psi \circ \big(\Upsilon_{YY}(id_Y \otimes d^\psi)\big)(c^\psi)& (\text{by}~ \eqref{leftactionhom})\\
&= {f_\psi}_\psi \circ \big(\theta_{Y}(c^\psi \otimes d^\psi)\big)
\end{array}
\end{equation*}
This proves \eqref{theta1}. Further, we have
\begin{equation*}
\begin{array}{lll}
 \big(\theta_X(c_2 \otimes d)\big)_\psi \otimes {c_1}^\psi &=  \psi\big(c_1 \otimes \theta_X(c_2 \otimes d)\big)&\\
&=  \psi\big(c_1 \otimes \left(\Upsilon_{XX}(id_X \otimes d)\right)(c_2)\big)&\\
&=  \big(\Upsilon_{XX}(id_X \otimes d)\big)_0(c) \otimes (\Upsilon_{XX}(id_X \otimes d)\big)_1 & (\text{by}~ \eqref{coactionhom})\\
&=   \big(\Upsilon_{XX}(id_X \otimes d)_0\big)(c) \otimes (id_X \otimes d)_1& (\Upsilon_{XX} \text{~is~} C\text{-colinear})\\
&=  \big(\Upsilon_{XX}(id_X \otimes d_1)\big)(c) \otimes d_2 &(\text{by}~ \eqref{rightcomod1})\\
&=   \theta_{X}(c \otimes d_1) \otimes d_2&
\end{array}
\end{equation*}
This proves \eqref{theta2}. 
It remains to show that $\alpha'$ and $\beta'$ are inverses of each other. For every $\theta \in V_1$ and $c,d \in C$, it follows from 
\eqref{R2.43Rr} that
\begin{equation*}
\begin{array}{ll}
\big((\beta' \circ \alpha')(\theta)\big)_X(c\otimes d)&=(\alpha'(\theta))_{XX}(id_X\otimes d)(c)
=\theta_X(c\otimes d)
\end{array}
\end{equation*}
Finally, for any $\Upsilon \in V_2$, $f\in Hom_\mathcal{D}(X,Y)$ and $c,d \in C$, we have 
\begin{equation*}
\begin{array}{lll}
\big((\alpha' \circ \beta')(\Upsilon)\big)_{XY}(f\otimes c)(d)&= \sum\limits_{i=1}^k d_i^*(d) f_\psi \circ\big((\beta'(\Upsilon))_X(d_i^\psi \otimes c)\big)&\\
&= \sum\limits_{i=1}^k d_i^*(d) f_\psi\circ \big(\Upsilon_{XX}(id_X \otimes c)(d_i^\psi)\big)&\\
&=f_\psi\circ \big(\Upsilon_{XX}(id_X \otimes c)(d^\psi)\big)&\\
&= \big(f \cdot \left(\Upsilon_{XX}(id_X \otimes c)\right)\big)(d)&(\text{by}~\eqref{leftactionhom})\\
&= \left(\Upsilon_{XY}(f \otimes c)\right)(d)& (\text{by}~ \eqref{dleftactionhom})
\end{array}
\end{equation*}
This proves the result.
\end{proof}

\begin{prop}\label{W1isoW2}
Let $C$ be a finite dimensional coalgebra. Then, we have isomorphisms
$$W=Nat(1_{Mod\text{-}\mathcal{D}},\mathscr{F}\mathscr{G}) \cong W_1=Nat(h,h \otimes C) \cong W_2:=Nat(C^* \otimes h, h \otimes C)$$
\end{prop}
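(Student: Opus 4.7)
By Proposition \ref{W=W1}, we already have $W\cong W_1$, so it is enough to exhibit an isomorphism $W_1\cong W_2$. The plan is to follow the same template as the proof of $V_1\cong V_2$ in Proposition \ref{V1isoV2}, using the two ingredients made available by the finite dimensionality of $C$: the canonical isomorphism $h_Y(X)\otimes C\cong Hom_K(C^*,h_Y(X))$, and the entwined-module structure on $C^*\otimes h_Y$ supplied by Proposition \ref{Prep2.15} (with the coaction derived from \eqref{C*comod} and the entwining $\psi$).

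I would construct a map $\gamma':W_1\longrightarrow W_2$ as follows. Given $\eta\in W_1$, write $\eta(X,Y)(f)=\sum\hat f\otimes c_f$ in the notation of Lemma \ref{integral}, and set $\Omega=\gamma'(\eta)$ where each component $\Omega_Y(X):C^*\otimes h_Y(X)\longrightarrow h_Y(X)\otimes C$ is built from the data $(\hat f,c_f)$ and the entwining $\psi$ by a formula of the shape
\begin{equation*}
\Omega_Y(X)(c^*\otimes f)\;=\;\sum c^*({c_f}_2)\,\hat{f}_\psi\otimes {{c_f}_1}^\psi,
\end{equation*}
with the precise $\psi$-placement dictated by the $C$-colinearity requirement. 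Conversely, I would define $\delta':W_2\longrightarrow W_1$ by evaluation at the counit, setting $\delta'(\Omega)(X,Y)(f):=\Omega_Y(X)(\varepsilon_C\otimes f)$, using that $\varepsilon_C$ is the unit of the convolution algebra $C^*$.

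The checks then mirror the corresponding steps in Proposition \ref{V1isoV2}, in the following order: (a) $\Omega_Y$ is right $\mathcal D$-linear, which follows from the $\mathcal D$-linearity of $\eta(-,Y)$ together with the action in Lemma \ref{lem 6.2} and the entwining axiom \eqref{eq 6.1}; (b) each $\Omega_Y(X)$ is right $C$-colinear, where the coaction on $C^*\otimes h_Y(X)$ from Proposition \ref{Prep2.15} must be matched against the simpler coaction $id\otimes\Delta_C$ on $h_Y(X)\otimes C$; (c) naturality of $\Omega$ in $Y$, using Lemma \ref{integral} to compare $\Omega_Y$ with $\Omega_{Y'}$ along a morphism $Y\to Y'$ in $\mathcal D$; (d) $\gamma'$ and $\delta'$ are mutually inverse, one direction being immediate from $\varepsilon_C({c_f}_2){c_f}_1=c_f$ together with \eqref{eq 6.2}, and the other from coassociativity.

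Step (b) will be the main obstacle. The coaction on $C^*\otimes h_Y(X)$ involves both the right $C$-comodule structure on $C^*$ from \eqref{C*comod} and the entwining $\psi$, and matching it to the coaction on $h_Y(X)\otimes C$ forces a reshuffling via \eqref{mamta} together with repeated use of \eqref{eq 6.1} and \eqref{eq 6.3}. This is the same flavour of calculation that established the $C$-colinearity of $\Upsilon_Y(X)$ in Proposition \ref{V1isoV2}, and it is precisely this computation that pins down the correct $\psi$-decoration in the formula for $\Omega_Y(X)$ above.
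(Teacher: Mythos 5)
Your overall route coincides with the paper's: reduce to $W_1\cong W_2$ via Proposition \ref{W=W1}, take $\delta'$ to be evaluation at $\varepsilon_C$, and run the checks (a)--(d). The genuine gap is the formula you propose for $\gamma'$, and it cannot be repaired merely by re-placing the $\psi$ inside that expression. By Lemma \ref{integral} one has $\sum \hat f\otimes c_f=\sum a_Yf_\psi\otimes {c_Y}^\psi$ with $\sum a_Y\otimes c_Y=\eta(Y,Y)(id_Y)$; in your formula the comultiplication is applied to $c_f$, i.e.\ after $c_Y$ has already been entwined past $f$, so $c^*$ is evaluated on an entwined leg, and your $\psi$ then transports ${c_f}_1$ past the whole of $\hat f$ (including the ``integral part'' $a_Y$) instead of past $f$ alone. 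Already at $f=id_Y$ your formula returns $\sum c^*({c_Y}_2)\,(a_Y)_\psi\otimes {{c_Y}_1}^\psi$, whereas right $C$-colinearity for the coaction of Proposition \ref{Prep2.15} (and compatibility with your own $\delta'$) forces $\sum a_Y\otimes c^*({c_Y}_2)\,{c_Y}_1$. Concretely, in the one-object case $\mathcal D=C=kG$ with $G$ finite abelian, the Doi--Hopf entwining $\psi(x\otimes g)=g\otimes xg$, and $\eta$ determined by $\eta(id)=t\otimes\sum_{y\in G}y$ with $t\neq e$, your map sends $c^*\otimes g$ to $\sum_y c^*(y)\,tg\otimes ytg$, which is not $C$-colinear (the colinear map must give $\sum_y c^*(y)\,tg\otimes yg$); since all legs of $\Delta$ agree on group-likes, the same example rules out every placement of $\psi$ and every choice of leg within a formula of the shape you wrote.

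The fix is precisely the paper's two-step construction, which is the one substantive idea in this half of the proposition: define $\Phi_{YY}(c^*\otimes id_Y):=\sum a_Y\otimes c^*({c_Y}_2)\,{c_Y}_1$ from $\eta(Y,Y)(id_Y)$, and then extend along the right $\mathcal D$-module structure of ${\bf h}_Y\otimes C$, so that $\Phi_{XY}(c^*\otimes f)=({\bf h}_Y\otimes C)(f)\bigl(\Phi_{YY}(c^*\otimes id_Y)\bigr)=\sum a_Yf_\psi\otimes c^*({c_Y}_2)\,{{c_Y}_1}^\psi$; here the single $\psi$ records only the transport of ${c_Y}_1$ past $f$, and $c^*$ is paired with the un-entwined leg ${c_Y}_2$. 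With this formula your checklist (a)--(d) goes through as in the paper; note only that the inverse check $\gamma'\circ\delta'=id$ is not a consequence of coassociativity alone: one reduces to the element $c^*\otimes id_Y$ using right $\mathcal D$-linearity together with the observation that $\Phi_Y(X)$, being right $C$-colinear, is left $C^*$-linear for the actions \eqref{leftactionC*1} and \eqref{leftactionC*2}, while the easy direction $\delta'\circ\gamma'=id$ follows from Lemma \ref{integral} as you indicate.
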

\begin{proof}
Given an $\eta:h \longrightarrow h \otimes C$, we want to define $\Phi:C^* \otimes h \longrightarrow h \otimes C$. For each $Y \in Ob(\mathcal{D})$, we first define a $K$-linear map $\Phi_{YY}:C^* \otimes {\bf{ h}}_Y(Y) \longrightarrow {\bf{ h}}_Y(Y) \otimes C$ by the following composition:
$$\mbox{\small $\begin{CD}
C^* \otimes {\bf{ h}}_Y(Y) @>id_{C^*} \otimes \eta(Y,Y)>> C^* \otimes  {\bf{ h}}_Y(Y) \otimes C @>id_{C^* \otimes {\bf{ h}}_Y(Y)} \otimes \Delta_C>> C^* \otimes {\bf{ h}}_Y(Y)   \otimes C \otimes C \\
@. @. @V\tau \otimes id_C VV \\
@. @. {\bf{ h}}_Y(Y) \otimes C \otimes (C^* \otimes C) @>ev>> {\bf{ h}}_Y(Y) \otimes C
\end{CD}$}$$
i.e.,  $\Phi_{YY}(c^* \otimes id_Y)=\sum a_Y \otimes c^*(c_{Y_2})c_{Y_1}$, where $\sum a_Y \otimes c_{Y}=\eta(Y,Y)(id_Y)$ as in the notation of Lemma \ref{integral}.
We observe that an element $c^* \otimes f \in C^* \otimes {\bf{ h}}_Y(X)$ may be written as $c^* \otimes f= (C^* \otimes {\bf{ h}}_Y)(f)(c^* \otimes id_Y)$.  For each $X \in Ob(\mathcal{D})$, we now define $\Phi_{XY}:C^* \otimes {\bf{ h}}_Y(X) \longrightarrow {\bf{ h}}_Y(X) \otimes C$ as follows:
\begin{equation}\label{defPhiW2}
\Phi_{XY}(c^* \otimes f):=({\bf{ h}}_Y \otimes C)(f)\left(\Phi_{YY}(c^* \otimes id_Y)\right)=\sum a_Yf_\psi \otimes c^*(c_{Y_2}){c_{Y_1}}^\psi
\end{equation}
for any  $c^* \otimes f  \in C^* \otimes {{\bf h}}_Y(X)$.

We define $\gamma':W_1 \longrightarrow W_2$ by setting $\gamma'(\eta)=\Phi$ with $\Phi:C^* \otimes h \longrightarrow h \otimes C$ given by
\begin{equation*}
\Phi_Y:C^* \otimes {\bf{ h}}_Y \longrightarrow {\bf{ h}}_Y \otimes C \qquad \Phi_Y(X):= \Phi_{XY}
\end{equation*}
for every $X,Y \in Ob(\mathcal{D})$. We now verify that $\gamma'$ is a well-defined map. For this, we first check that $\Phi_Y: C^* \otimes {\bf{ h}}_Y \longrightarrow  {\bf{ h}}_Y \otimes C $ is a morphism of right $\mathcal{D}$-modules for every $Y \in Ob(\mathcal{D})$, i.e., the following diagram commutes for any $g \in Hom_\mathcal{D}(X',X)$: 
$$\begin{CD}
C^* \otimes {\bf{ h}}_Y(X)  @>\Phi_Y(X)>> {\bf{ h}}_Y(X)\otimes C\\
@V(C^* \otimes {\bf{ h}}_Y)(g)VV        @VV ({\bf{ h}}_Y \otimes C)(g)V\\
C^* \otimes {\bf{ h}}_Y(X')     @>\Phi_Y(X') >> {\bf{ h}}_Y(X') \otimes C
\end{CD}$$
We have
\begin{equation*}
\begin{array}{ll}
\Phi_Y(X')(C^* \otimes {\bf{ h}}_Y)(g)(c^* \otimes f)&=\Phi_Y(X')(c^* \otimes fg)=\sum a_Y(fg)_\psi \otimes c^*(c_{Y_2}){c_{Y_1}}^\psi\\
&=\sum a_Yf_\psi g_\psi \otimes c^*(c_{Y_2}){{c_{Y_1}}^\psi}^\psi= ({\bf{ h}}_Y \otimes C)(g)\left(\Phi_Y(X)(c^* \otimes f)\right)
\end{array}
\end{equation*}
Next we verify that $\Phi_Y(X): C^* \otimes {\bf{ h}}_Y(X) \longrightarrow {\bf{ h}}_Y(X) \otimes C$ is right $C$-colinear for any $X,Y \in Ob(\mathcal{D})$:
\begin{equation*}
\begin{array}{ll}
(\Phi_Y(X) \otimes id_C)\left(\sigma^r_{C^* \otimes {\bf{ h}}_Y(X)}(c^* \otimes f)\right) &=\sum_{i=1}^k \Phi_Y(X)\big(d_i^* \mbox{\tiny $\bullet$  }  c^* \otimes f_\psi\big) \otimes d_i^\psi\\
&=\sum_{i=1}^k \sum a_Y{f_\psi}_\psi \otimes (d_i^* \mbox{\tiny $\bullet$  }  c^*)(c_{Y_2}){c_{Y_1}}^\psi \otimes d_i^\psi\\
&=\sum_{i=1}^k \sum a_Y{f_\psi}_\psi \otimes d_i^*(c_{Y_2}) c^*(c_{Y_3}){c_{Y_1}}^\psi \otimes d_i^\psi\\
&=\sum a_Y{f_\psi}_\psi \otimes c^*(c_{Y_3}){c_{Y_1}}^\psi \otimes {c_{Y_2}}^\psi\\
&=\sum a_Y{f_\psi}_\psi \otimes c^*(c_{Y_2}){{(c_{Y_1})}_1}^\psi \otimes {{(c_{Y_1})}_2}^\psi \\
&= \sum a_Y{f_\psi} \otimes c^*(c_{Y_2}) {({c_{Y_1}}^\psi)}_1 \otimes {({c_{Y_1}}^\psi)}_2 \\
&= \pi^r_{{\bf{ h}}_Y(X) \otimes C}\left(\Phi_Y(X)(c^* \otimes f)\right)
\end{array}
\end{equation*} It follows that $\Phi_Y: C^* \otimes {\bf{ h}}_Y \longrightarrow  {\bf{ h}}_Y \otimes C $ is a morphism in ${\mathscr{M}(\psi)}_\mathcal{D}^C$.
To show that $\Phi\in Nat(C^*\otimes h,h\otimes C)$, it remains to verify that the following diagram commutes:
$$\begin{CD}
C^* \otimes {\bf{ h}}_Y  @>\Phi_Y>> {\bf{ h}}_Y \otimes C\\
@V(C^* \otimes h)(g)VV        @VV (h \otimes C)(g)V\\
 C^* \otimes {\bf{ h}}_{Z}     @>\Phi_{Z} >>{\bf{ h}}_{Z} \otimes C
\end{CD}$$
for any $g \in Hom_\mathcal{D}(Y,Z)$. For any $X \in Ob(\mathcal{D})$ and $c^* \otimes f \in C^* \otimes {\bf{ h}}_Y(X)$, we have 
\begin{equation*}
\begin{array}{ll}
\Phi_{Z}(X)(C^* \otimes h)(g)(X)(c^* \otimes f)&= \sum_{i=1}^k \Phi_{Z}(X)\Big(c^*(d_i^\psi)d_i^* \otimes g_\psi f\Big)\\
&= \sum_{i=1}^k \sum c^*(d_i^\psi)a_{Z}(g_\psi f)_\psi \otimes d_i^*(c_{{Z}_2}){c_{{Z}_1}}^\psi\\
&= \sum_{i=1}^k \sum c^*(d_i^\psi)a_{Z}{g_\psi}_\psi f_\psi \otimes d_i^*(c_{{Z}_2}){c_{{Z}_1}}^{\psi^\psi}\\
&=\sum c^*({c_{{Z}_2}}^\psi)a_{Z}{g_\psi}_\psi f_\psi \otimes {c_{{Z}_1}}^{\psi^\psi}\\
&= \sum c^*\big({({c_Z}^\psi)}_2\big)a_{Z}g_\psi f_\psi \otimes {{({c_Z}^\psi)}_1}^\psi \\
&= \sum c^*\big(c_{Y_2}\big)ga_Y f_\psi \otimes ( c_{Y_1})^\psi \quad~~~~~ (\text{by}~ \text{Lemma}~ \ref{integral})\\
&= (h \otimes C)(g)(X)\Phi_Y(X)(c^* \otimes f)
\end{array}
\end{equation*}

Conversely, we define $\delta':W_2 \longrightarrow W_1$ by setting $\delta'(\Phi)=\eta$ with $\eta:h \longrightarrow h \otimes C$ given by
\begin{equation}\label{W2toW1}
\eta(X,Y)(f):=\Phi_Y(X)\big(\varepsilon_C \otimes f\big)
\end{equation}
for any $(X,Y) \in Ob(\mathcal{D}^{op} \otimes \mathcal{D})$ and $f \in Hom_\mathcal{D}(X,Y)$.
We now verify that $\eta \in W_1$. Let $\phi:(X,Y) \longrightarrow (X',Y')$ be a morphism in $\mathcal{D}^{op} \otimes \mathcal{D}$ given by $\phi':X' \longrightarrow X$ and $\phi'':Y \longrightarrow Y'$ in $\mathcal{D}$. Then, using the fact that $\Phi_Y:C^* \otimes {\bf{ h}}_Y \longrightarrow {\bf{ h}}_Y \otimes C$ is a morphism of right $\mathcal{D}$-modules, we have
\begin{equation*}
\begin{array}{ll}
({\bf{ h}}_Y \otimes C)(\phi')\eta(X,Y)(f)&=({\bf{ h}}_Y \otimes C)(\phi') \Phi_Y(X)(\varepsilon_C \otimes f)=\Phi_Y(X')(C^* \otimes {\bf{ h}}_Y)(\phi')(\varepsilon_C \otimes f)\\
&=\Phi_Y(X')(\varepsilon_C \otimes f\phi')=\eta(X',Y)\left({\bf{ h}}_Y(\phi')(f)\right)
\end{array}
\end{equation*}
for any $f \in Hom_\mathcal{D}(X,Y)$. This shows that the following diagram commutes:
\begin{equation}\label{diagram3}
\begin{CD}
{\bf{ h}}_Y(X)   @>\eta(X,Y)>> {\bf{ h}}_Y(X) \otimes C \\
@V{\bf{ h}}_Y(\phi')VV        @VV({\bf{ h}}_Y \otimes C)(\phi')V\\
{\bf{ h}}_Y(X')    @>\eta(X',Y)>>{\bf{ h}}_Y(X') \otimes C
\end{CD}
\end{equation}
Now using the naturality of $\Phi:C^* \otimes h \longrightarrow h \otimes C$, we also have
\begin{equation*}
\begin{array}{ll}
({_{X'}}{\bf h} \otimes C)(\phi'')\eta(X',Y)(g)&=(h \otimes C)(\phi'')\Phi_Y(X')(\varepsilon_C \otimes g)\\
&= \Phi_{Y'}(X')(C^* \otimes h)(\phi'')(\varepsilon_C \otimes g)\\
&= \Phi_{Y'}(X')\big(\sum_{i=1}^k \varepsilon_C(d_i^\psi)d_i^* \otimes \phi''_\psi g \big)\\
&= \Phi_{Y'}(X')\big(\sum_{i=1}^k \varepsilon_C(d_i)d_i^* \otimes \phi'' g \big)\\
&= \Phi_{Y'}(X')\big( \varepsilon_C \otimes \phi'' g \big)=\eta(X',Y')( \phi'' g)=\eta(X',Y')({_{X'}}{\bf{ h}}(\phi'')(g))
\end{array}
\end{equation*}
for any $g \in {\bf{ h}}_Y(X')$. Thus, we get the following commutative diagram:
\begin{equation}\label{diagram4}
\begin{CD}
{\bf{ h}}_Y(X')   @>\eta(X',Y)>> {\bf{ h}}_Y(X') \otimes C \\
@V{_{X'}}{\bf{ h}}(\phi'')VV        @VV({_{X'}}{\bf{ h}} \otimes C)(\phi'')V\\
{\bf{ h}}_{Y'}(X')    @>\eta(X',Y')>>{\bf{ h}}_{Y'}(X') \otimes C
\end{CD}
\end{equation}
It now follows from \eqref{diagram3} and \eqref{diagram4} that the following diagram commutes:
$$\begin{CD}
h(X,Y)   @>\eta(X,Y)>> h(X,Y) \otimes C \\
@Vh(\phi)VV        @VV(h \otimes C)(\phi)V\\
h(X',Y')    @>\eta(X',Y')>>h(X',Y') \otimes C
\end{CD}$$
This shows that $\eta \in W_2$.
It remains to check that $\gamma'$ and $\delta'$ are inverses of each other. 
First we verify that $\big((\delta' \circ \gamma')(\eta)\big)(X,Y)=\eta(X,Y)$ for all $X,Y \in Ob(\mathcal{D})$. For this, we set $\Phi=\gamma'(\eta)$. Then, for any $f \in Hom_\mathcal{D}(X,Y)$, we have
\begin{equation*}
\begin{array}{lll}
\left((\delta'\circ \gamma')(\eta)\right)(X,Y)(f)&= \Phi_Y(X)(\varepsilon_C \otimes f)&\\
&= ({\bf{ h}}_Y \otimes C)(f){\Phi}_Y(Y)(\varepsilon_C \otimes id_Y)& (\text{by}~ \eqref{defPhiW2})\\
&= \sum ({\bf{ h}}_Y \otimes C)(f)\big(a_Y \otimes \varepsilon(c_{Y_2})c_{Y_1}\big)&\\
&= \sum ({\bf{ h}}_Y \otimes C)(f)(a_Y \otimes c_Y)&\\
&= \sum a_Yf_\psi \otimes {c_Y}^\psi = \eta(X,Y)(f)&(\text{by Lemma}~ \ref{integral})
\end{array}
\end{equation*}
Next, we will show that $\big((\gamma' \circ \delta')(\Phi)\big)_Y(X)=\Phi_Y(X)$ for any $X,Y \in Ob(\mathcal{D})$. Since $C^* \otimes {\bf{ h}}_Y(X)$ and ${\bf{ h}}_Y(X) \otimes C$ are right $C$-comodules for any $X,Y \in Ob(\mathcal{D})$, they are also left $C^*$-modules. The left actions are respectively given by
\begin{gather}
d^*(c^* \otimes f):=\sum_{i=1}^k d^*(d_i^\psi) (d_i^* \mbox{\tiny $\bullet$  } c^*) \otimes f_\psi \label{leftactionC*1}\\
d^*(f \otimes x):= d^*(x_2) (f \otimes x_1)\label{leftactionC*2}
\end{gather}
for any $d^*, c^* \in C^*$, $f \in {\bf{ h}}_Y(X)$ and $x \in C$. Moreover, since $\Phi_Y(X):C^* \otimes {\bf{ h}}_Y(X) \longrightarrow {\bf{ h}}_Y(X) \otimes C$ is right $C$-colinear, it is also left $C^*$-linear.
We  now set $\eta=\delta'(\Phi)$. Then, for any $c^* \otimes f \in C^* \otimes {\bf{ h}}_Y(X)$, we have
\begin{equation*}
\begin{array}{lll}
&\left((\gamma' \circ \delta')(\Phi)\right)_Y(X)(c^* \otimes f)\\
& \quad ={\left(\gamma'(\eta)\right)}_Y(X)(c^* \otimes f)&\\
& \quad =\sum a_Yf_\psi \otimes c^*(c_{Y_2}){c_{Y_1}}^\psi &\\
& \quad =\sum ({\bf{ h}}_Y \otimes C)(f)\left(c^*(c_{Y_2})(a_Y \otimes c_{Y_1})\right) & \\
& \quad =({\bf{ h}}_Y \otimes C)(f)\left(c^*\left(\sum a_Y \otimes c_Y\right)\right)& (\text{by}~ \eqref{leftactionC*2})\\
& \quad =({\bf{ h}}_Y \otimes C)(f)\big(c^*\left(\Phi_Y(Y)(\varepsilon_C \otimes id_Y)\right)\big)&\\
& \quad =({\bf{ h}}_Y \otimes C)(f)\left(\Phi_Y(Y)(c^* (\varepsilon_C \otimes id_Y))\right)&(\text{since}~\Phi_Y(X) \text{~is}~C^*\text{-linear})~ \\
& \quad =({\bf{ h}}_Y \otimes C)(f)\Phi_Y(Y)(c^*    \otimes id_Y)&(\text{by}~ \eqref{leftactionC*1}) \\
& \quad = \Phi_Y(X)(C^* \otimes {\bf{ h}}_Y)(f)(c^* \otimes id_Y)& (\Phi_Y~ \text{is a morphism of right}~ \mathcal{D}\text{-modules})\\
& \quad= \Phi_Y(X)(c^* \otimes f)&\\
\end{array}
\end{equation*}
This proves the result.
\end{proof}

\begin{theorem}\label{ThmIIIx}
Let $(\mathcal{D},C,\psi)$ be an entwining structure and assume that $C$ is a finite dimensional coalgebra. Let $\mathscr{F}: {\mathscr{M}(\psi)}_\mathcal{D}^C \longrightarrow Mod$-$\mathcal{D}$ be the functor forgetting the $C$-coaction and $\mathscr{G}:Mod$-$\mathcal{D} \longrightarrow {\mathscr{M}(\psi)}_\mathcal{D}^C$ given by $\mathcal{N} \mapsto \mathcal{N} \otimes C$ be its right adjoint. Then, the following statements are equivalent:

\smallskip
(i) $(\mathscr{F},\mathscr{G})$ is a Frobenius pair.

(ii) There exist $\eta \in W_1$ and $\theta \in V_1$ such that the corresponding morphisms
$\gamma'(\eta)=\Phi : C^*\otimes h \longrightarrow h\otimes C \text{~and~}\alpha'(\theta)=\Upsilon: h\otimes C\longrightarrow C^*\otimes h \text{~given by ~}$
\begin{equation*}
\begin{array}{ll}
\Phi_{XY}(c^*\otimes f) &= \sum a_Yf_\psi \otimes c^*(c_{Y_2}){c_{Y_1}}^\psi\\
\Upsilon_{XY}(f \otimes d) &= \sum_{i=1}^k d_i^* \otimes f_\psi \circ \theta_{X}( d_i^\psi \otimes d)
\end{array}
\end{equation*}
where  $f\in {\bf{ h}}_Y(X)$, $c^*\in C^*$ and $d\in C$, are inverses of each other.

\smallskip
(iii) $C^*\otimes h$ and $h\otimes C$ are isomorphic as objects of the category $_\mathcal{D}{\mathscr{M}(\psi)}_\mathcal{D}^C$ of  functors from $\mathcal{D}$ to ${\mathscr{M}(\psi)}_\mathcal{D}^C$.
\end{theorem}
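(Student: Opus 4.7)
The plan is to prove the cycle (i) $\Leftrightarrow$ (ii) $\Leftrightarrow$ (iii) by reducing everything to the characterization in Theorem \ref{Frobcondition} and then using the translation dictionaries between $V_1$ and $V_2$, and between $W_1$ and $W_2$, established in Propositions \ref{V1isoV2} and \ref{W1isoW2}.

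First I would dispose of (ii) $\Leftrightarrow$ (iii). If (ii) holds, then by definition $\Phi=\gamma'(\eta)$ and $\Upsilon=\alpha'(\theta)$ are natural transformations in $W_2$ and $V_2$ respectively, and a pair of mutually inverse natural transformations between $C^*\otimes h$ and $h\otimes C$ is precisely an isomorphism in $_\mathcal{D}{\mathscr{M}(\psi)}_\mathcal{D}^C$, so (iii) follows. Conversely, if (iii) holds, any isomorphism $\Phi:C^*\otimes h\longrightarrow h\otimes C$ and its inverse $\Upsilon$ lie in $W_2$ and $V_2$; by the bijectivity of $\gamma'$ and $\alpha'$ from Propositions \ref{W1isoW2} and \ref{V1isoV2}, there exist unique $\eta\in W_1$ and $\theta\in V_1$ with $\gamma'(\eta)=\Phi$ and $\alpha'(\theta)=\Upsilon$, giving (ii).

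The crux is the equivalence (i) $\Leftrightarrow$ (ii). By Theorem \ref{Frobcondition}, (i) is equivalent to the existence of $\theta\in V_1$ and $\eta\in W_1$ satisfying the Frobenius conditions \eqref{fro1} and \eqref{fro2}. I would then show, via direct computation, that given such $\theta$ and $\eta$, the conditions \eqref{fro1} and \eqref{fro2} are equivalent to the two identities $\Phi\circ \Upsilon=\mathrm{id}_{h\otimes C}$ and $\Upsilon\circ\Phi=\mathrm{id}_{C^*\otimes h}$. Concretely, using Lemma \ref{integral} to rewrite $\eta(X,Y)(f)=\sum\hat f\otimes c_f=\sum a_Yf_\psi\otimes c_Y^\psi$, the composition $\Phi_{XY}\Upsilon_{XY}(f\otimes d)$ expands, after applying the entwining identities \eqref{eq 6.1}, \eqref{eq 6.3} and the properties \eqref{theta1}, \eqref{theta2} of $\theta$, into an expression that collapses to $f\otimes d$ exactly when \eqref{fro1} holds. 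A parallel computation in the other direction shows that $\Upsilon_{XY}\Phi_{XY}(c^*\otimes f)=c^*\otimes f$ is equivalent to \eqref{fro2}. By naturality and the compatibility of $\Phi, \Upsilon$ with the right $\mathcal D$-module structure, it suffices to verify the identity compositions at the diagonal pairs $(Y,Y)$ on generators of the form $\varepsilon_C\otimes \mathrm{id}_Y$ and $\mathrm{id}_Y\otimes d$, which simplifies the bookkeeping.

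The main obstacle will be this last bookkeeping calculation: correctly matching the repeated applications of $\psi$ inside both $\Phi$ (which arise from the right $\mathcal{D}$-action defining $\Phi_{XY}$ from $\Phi_{YY}$) and $\Upsilon$ (which arise from the factor $f_\psi\circ\theta_X(d_i^\psi\otimes d)$), and then using \eqref{theta2} together with the Sweedler coassociativity to pair up $c_{Y_1}$ and $c_{Y_2}$ against the counit $\varepsilon_C$ or the dual basis $\{d_i^*\}$. Once this identification is in place, the equivalence (i) $\Leftrightarrow$ (ii) is forced and the theorem follows.
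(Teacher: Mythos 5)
Your proposal is correct and follows essentially the same route as the paper: both reduce (i) to the Frobenius conditions \eqref{fro1}--\eqref{fro2} via Theorem \ref{Frobcondition} and then identify these, through the dictionaries $\alpha'$, $\gamma'$ of Propositions \ref{V1isoV2} and \ref{W1isoW2} together with Lemma \ref{integral} and the reduction to the elements $\varepsilon_C\otimes id_Y$ and $id_X\otimes d$, with the mutual inverseness of $\Phi$ and $\Upsilon$. The only difference is organizational: the paper runs the cycle (i)$\Rightarrow$(ii)$\Rightarrow$(iii)$\Rightarrow$(i), constructing $\eta=\Phi\circ\Lambda$ and $\theta=\beta'(\Phi^{-1})$ in the last step, whereas you get (iii)$\Rightarrow$(ii) directly from the bijectivity of $\alpha'$ and $\gamma'$ and package the computation as a biconditional (i)$\Leftrightarrow$(ii); the underlying calculations are the same.
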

\begin{proof}
(i) $\Rightarrow$ (ii) By assumption, there exist $\eta \in W_1$ and $\theta \in V_1$ satisfying \eqref{fro1} and \eqref{fro2}. Then, $\alpha'(\theta) = \Upsilon$ and $\gamma'(\eta)=\Phi$ are morphisms in $_\mathcal{D}{\mathscr{M}(\psi)}_\mathcal{D}^C$ in the notation of Proposition \ref{V1isoV2} and Proposition \ref{W1isoW2}. Since $\Upsilon_{XY}:{\bf{ h}}_Y(X) \otimes C \longrightarrow C^* \otimes {\bf{ h}}_Y(X)$ and $\Phi_{XY}:C^* \otimes {\bf{ h}}_Y(X) \longrightarrow {\bf{ h}}_Y(X) \otimes C$ are right $C$-colinear, they are also left $C^*$-linear. Using this fact and \eqref{leftactionC*1}, we have
\begin{equation*}
\begin{array}{lll}
\Upsilon_{XY}(\Phi_{XY}(c^* \otimes f))&= \Upsilon_{XY}\left(\Phi_{XY}\left((C^*\otimes {\bf{h}}_Y)(f)(c^*\otimes id_Y)\right)\right)&\\
&=\Upsilon_{XY}\left(({\bf{h}}_Y \otimes C)(f)\left(\Phi_{YY}(c^* \otimes id_Y)\right)\right)&\\
&= (C^*\otimes {\bf{h}}_Y)(f)\left( \Upsilon_{YY}\left(\Phi_{YY}(c^*\otimes id_Y)\right)\right)&\\
&= (C^*\otimes {\bf{h}}_Y)(f)\left( \Upsilon_{YY}\left(\Phi_{YY}(c^* \mbox{\tiny $\bullet$  }  \varepsilon_C \otimes id_Y)\right)\right)&\\
&= (C^*\otimes {\bf{h}}_Y)(f)\left(c^* \cdot \left( \Upsilon_{YY}\left(\Phi_{YY}(\varepsilon_C\otimes id_Y)\right)\right)\right)&\\
&=  (C^*\otimes {\bf{h}}_Y)(f)\left(c^* \cdot \left( \Upsilon_{YY}\left(\eta(Y,Y)(id_Y)\right)\right)\right)&\\
&=  (C^*\otimes {\bf{h}}_Y)(f)\left(c^* \cdot \left( \sum_{i=1}^k\sum d_i^* \otimes {(a_Y)}_\psi \circ \theta_{X}( d_i^\psi \otimes c_Y)              \right)\right)&(\text{by~} \eqref{phiform})\\
&=  (C^*\otimes {\bf{h}}_Y)(f)\left(c^* \cdot \left( \sum_{i=1}^k \varepsilon_C(d_i)d_i^* \otimes id_Y\right)\right)&(\text{by~} \eqref{fro2})\\
&=  (C^*\otimes {\bf{h}}_Y)(f)\left(c^* \mbox{\tiny $\bullet$  }  \varepsilon_C \otimes id_Y\right) = c^* \otimes f &
\end{array}
\end{equation*}
for any $c^* \otimes f\in C^*\otimes {\bf{ h}}_Y(X)$. Thus, $\Upsilon \circ \Phi = id_{C^*\otimes h}$. 

\smallskip
Using the naturality of $\Upsilon$ and $\Phi$, we have
\begin{equation*}
\begin{array}{lll}
\Phi_{XY}(\Upsilon_{XY}(f\otimes c) )&=\Phi_{XY}\left(\Upsilon_{XY}\left((h\otimes C)(f)(X)(id_X\otimes c)\right)\right)&\\
&=(h\otimes C)(f)(X)\left(\Phi_{XX}\left(\Upsilon_{XX}(id_X\otimes c)\right)\right)&\\
&=(h\otimes C)(f)(X)\left(\Phi_{XX}\left(\sum_{i=1}^k d_i^* \otimes \theta_{X}( d_i \otimes c)\right)\right)&\\
&=(h\otimes C)(f)(X)\left(\sum_{i=1}^k  \sum a_X{\left(\theta_{X}( d_i\otimes c)\right)}_\psi \otimes d_i^*(c_{X_2}){c_{X_1}}^\psi\right)&\\
&= (h\otimes C)(f)(X) \sum a_X{(\theta_{X}(c_{X_2}\otimes c))}_\psi \otimes {c_{X_1}}^\psi&\\
&= (h\otimes C)(f)(X) \sum a_X \circ {\theta_{X}( c_{X}\otimes c_1)}\otimes c_2&(\text{by~}\eqref{theta2})\\
&=(h\otimes C)(f)(X)\left( \varepsilon_C(c_1)id_X\otimes c_2 \right)&(\text{by~} \eqref{fro1})\\
&= f\otimes c&
\end{array}
\end{equation*}
for any $f \otimes c \in {{\bf h}}_Y(X) \otimes C$. Thus, $\Phi\circ \Upsilon= id_{h\otimes C}$. This proves $(ii)$.

\smallskip
(ii) $\Rightarrow$ (iii) is obvious since both $\Phi$ and $\Upsilon$ are morphisms in $_\mathcal{D}{\mathscr{M}(\psi)}_\mathcal{D}^C$.

\smallskip
(iii) $\Rightarrow$ (i) Let $\Phi: C^*\otimes h\longrightarrow h\otimes C$ denote the isomorphism in $_\mathcal{D}{\mathscr{M}(\psi)}_\mathcal{D}^C$. We consider the following morphism of $(\mathcal{D}^{op} \otimes \mathcal{D})$-modules
\begin{equation*}
\Lambda: h \longrightarrow C^*\otimes h \qquad \Lambda_Y(X)(f):= \varepsilon_C\otimes f 
\end{equation*}
for any $f\in Hom_\mathcal{D}(X,Y)$. We now set $\eta=\Phi\circ\Lambda\in W_1$ and $\theta= \beta'(\Phi^{-1})\in V_1$ where $\beta'$ is as in Proposition \ref{V1isoV2}. If $\eta(X,Y)(f)=\sum \hat{f} \otimes c_f$, then
\begin{equation}\label{2.53}
\begin{array}{lll}
\varepsilon_C\otimes f=\Phi^{-1}_{XY}(\Phi_{XY}(\varepsilon_C\otimes f))&= \Phi^{-1}_{XY}(\eta(X,Y)(f))&\\
&=\sum\Phi^{-1}_{XY}(\hat{f}\otimes c_f)&\\
&=\sum(\alpha'(\theta))_{XY}(\hat{f}\otimes c_f)&\\
&=\sum\sum_{i=1}^k d_i^* \otimes \hat{f}_\psi \circ \theta_{X}( d_i^\psi \otimes c_f)&(\text{by~}\eqref{phiform})
\end{array}
\end{equation}
Using the isomorphism as in \eqref{rxeq2.37} and evaluating the equality in \eqref{2.53} at $d\in C$, we get \eqref{fro2}. We also have
\begin{equation*}
\begin{array}{lll}
&id_X\otimes d=\Phi_{XX}(\Phi_{XX}^{-1}(id_X\otimes d))=\Phi_{XX}\big((\alpha'(\theta))_{XX}(id_X\otimes d)\big)&\\
&=\sum_{i=1}^k\Phi_{XX}(d_i^* \otimes \theta_{X}( d_i\otimes d))&(\text{by~}\eqref{phiform})\\
&=\sum_{i=1}^k\Phi_{XX}\left((C^* \otimes {{\bf h}}_X)(\theta_{X}( d_i\otimes d))(d_i^* \otimes id_X)\right)&\\
&=\sum_{i=1}^k ({{\bf h}}_X \otimes C)(\theta_{X}( d_i\otimes d))\left(\Phi_{XX}(d_i^* \otimes id_X)\right)&\\
&=\sum_{i=1}^k ({{\bf h}}_X \otimes C)(\theta_{X}( d_i\otimes d))\left(\Phi_{XX}(d_i^*\cdot (\varepsilon_C \otimes id_X))\right)&(\text{by}~ \eqref{leftactionC*1}) \\
&=\sum_{i=1}^k ({{\bf h}}_X \otimes C)(\theta_{X}( d_i\otimes d))\left(d_i^*\cdot \Phi_{XX}(\varepsilon_C \otimes id_X)\right)&(\text{since~}\Phi_{XX} \text{~is~} C^*\text{-linear})\\
&=\sum_{i=1}^k ({{\bf h}}_X \otimes C)(\theta_{X}( d_i\otimes d))\left(d_i^*\cdot (\eta(X,X)(id_X))\right)&\\
&=\sum_{i=1}^k \sum ({{\bf h}}_X \otimes C)(\theta_{X}( d_i\otimes d))\left(d_i^*\cdot (a_X \otimes c_X)\right)&\\
&=\sum_{i=1}^k \sum ({{\bf h}}_X \otimes C)(\theta_{X}( d_i\otimes d))\left(d_i^*(c_{X_2})(a_X \otimes c_{X_1})\right)&\\
&=\sum ({{\bf h}}_X \otimes C)(\theta_{X}(c_{X_2} \otimes d))(a_X \otimes c_{X_1})&\\
&=\sum a_X \circ \left(\theta_{X}( c_{X_2}\otimes d)\right)_\psi\otimes {c_{X_1}}^\psi&\\
&=\sum a_X \circ \theta_{X}( c_{X}\otimes d_1) \otimes d_2&(\text{by~}\eqref{theta2})
\end{array}
\end{equation*}
By applying the map $id_{{\bf h}_X(X)}\otimes \varepsilon_C$, we obtain 
\begin{equation}\label{6.45}
\varepsilon_C(d)\cdot id_X=\sum a_X \big(\theta_{X}( c_{X}\otimes d)\big)
\end{equation}
Now using Lemma \ref{integral} and \eqref{6.45}, we obtain
\begin{equation*}
\begin{array}{lll}
\sum \hat{f}\otimes  \left(\theta_{X}( c_f\otimes d)\right)=\sum (id_{{\bf h}_Y(X)}\otimes \theta_X)(\hat{f} \otimes c_f \otimes d)&= (id_{{\bf h}_Y(X)}\otimes \theta_X)\left(\eta(X,Y)(f) \otimes d \right)&\\
&=\sum (id_{{\bf h}_Y(X)}\otimes \theta_X)\left(fa_X \otimes c_X \otimes d \right)&\\
\end{array}
\end{equation*}
for any $f \in Hom_\mathcal{D}(X,Y)$ and $d \in C$. Applying to both sides the composition $Hom_{\mathcal D}(X,Y)\otimes 
Hom_{\mathcal D}(X,X)\longrightarrow Hom_{\mathcal D}(X,Y)$, we obtain $\sum \hat{f}\circ  \left(\theta_{X}( c_f\otimes d)\right)=\sum f a_X \circ \theta_{X}( c_{X}\otimes d)=\varepsilon_C(d)f$. This proves \eqref{fro1}. Therefore, $(\mathscr{F},\mathscr{G})$ is a Frobenius pair by Theorem \ref{Frobcondition}. This completes the proof.
\end{proof}

\section{Categorical Galois extensions and entwining structures }
Let $\mathcal{D}$ be a small $K$-linear category. Let $(\mathcal D,C,\psi)$ be a right-right entwining structure. We denote by ${_\mathcal{D}}\mathscr{M}_{\mathcal{D}}$ the category of $\mathcal{D}$-$\mathcal{D}$ bimodules, i.e., the category whose objects are functors from $\mathcal{D}^{op} \otimes \mathcal{D}$ to $Vect_K$ and whose morphisms are natural transformations between these functors. We recall the functors $h$ and $h \otimes C$ in ${_\mathcal{D}}\mathcal{M}_{\mathcal{D}}$ from \eqref{n1} and \eqref{n2} respectively:
\begin{align}\label{h_D}
h(X,Y)&=Hom_\mathcal{D}(X,Y) \qquad \big(h(\phi)\big)(f)=\phi''f\phi'\\
(h \otimes C)(X,Y)&=Hom_\mathcal{D}(X,Y) \otimes C \qquad \big((h \otimes C)(\phi)\big)(f \otimes c)=\phi''f\phi'_\psi \otimes c^\psi
\end{align}
for any $(X,Y) \in Ob(\mathcal{D}^{op} \otimes \mathcal{D})$, $\phi:=(\phi',\phi'') \in Hom_{\mathcal{D}^{op} \otimes \mathcal{D}}\big((X,Y),(X',Y')\big)$ and $f \in Hom_\mathcal{D}(X,Y)$, $c\in C$. We refer, for instance, to \cite[$\S$ 2.2]{EV} for the tensor product  which makes ${_\mathcal{D}}\mathscr{M}_{\mathcal{D}}$ a monoidal category with $h \in {_\mathcal{D}}\mathscr{M}_{\mathcal{D}}$ as the unit object.

\begin{definition}
A $\mathcal D$-coring $\mathscr C$  is a coalgebra object  in the monoidal category ${_\mathcal{D}}\mathscr{M}_{\mathcal{D}}$. Explicitly, a $\mathcal{D}$-coring is a functor $\mathscr{C}:\mathcal{D}^{op} \otimes \mathcal{D} \longrightarrow Vect_K$ with two morphisms
\begin{equation*}
\Delta_{\mathscr C}:\mathscr{C} \longrightarrow \mathscr{C} \otimes_\mathcal{D} \mathscr{C}, \qquad \varepsilon_{\mathscr C}:\mathscr{C} \longrightarrow h
\end{equation*}
satisfying the coassociativity and counit axioms in ${_\mathcal{D}}\mathscr{M}_{\mathcal{D}}$. A right $\mathscr{C}$-comodule consists of a right $\mathcal{D}$-module $\mathcal{M}$ equipped with a morphism $\rho_\mathcal{M}:\mathcal{M} \longrightarrow \mathcal{M} \otimes_\mathcal{D} \mathscr{C}$ of right $\mathcal{D}$-modules satisfying
\begin{equation}\label{con1}
(id_\mathcal{M} \otimes_\mathcal{D} \Delta_{\mathscr C})\circ \rho_\mathcal{M}=(\rho_\mathcal{M} \otimes_\mathcal{D} id_\mathscr{C}) \circ \rho_\mathcal{M}\qquad (id_\mathcal{M} \otimes_\mathcal{D} \varepsilon_{\mathscr C}) \circ \rho_\mathcal{M} = id_\mathcal{M}
\end{equation}
A morphism $\eta:(\mathcal{M},\rho_{\mathcal M}) \longrightarrow (\mathcal{N},\rho_{\mathcal N})$ of right $\mathscr{C}$-comodules is a morphism $\eta:\mathcal{M} \longrightarrow \mathcal{N}$ of right $\mathcal{D}$-modules satisfying
\begin{equation*}
\rho_\mathcal{N} \circ \eta = (\eta \otimes_\mathcal{D} id_\mathscr{C}) \circ \rho_\mathcal{M}
\end{equation*}
The category of right $\mathscr{C}$-comodules will be denoted by $Comod$-$\mathscr{C}$. 
\end{definition}

\begin{lemma}\label{entcoring}
Let $(\mathcal{D},C,\psi)$ be a right-right entwining structure. Then, the functor $h \otimes C$ is a $\mathcal{D}$-coring.
\end{lemma}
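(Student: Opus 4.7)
The plan is to equip $h \otimes C$ with a comultiplication and counit that lift the coalgebra structure of $C$ to the bimodule category ${_\mathcal{D}}\mathscr{M}_{\mathcal{D}}$, in direct analogy with the Brzezi\'nski--Majid coring $A\otimes C$ associated to an entwining $(A,C,\psi)$ of algebras. For each $(X,Y)\in Ob(\mathcal{D}^{op}\otimes \mathcal{D})$, I would define
\begin{equation*}
\Delta_{\mathscr{C}}(X,Y)(f \otimes c) := (id_X \otimes c_1)\otimes_\mathcal{D}(f\otimes c_2),\qquad \varepsilon_{\mathscr{C}}(X,Y)(f\otimes c):=\varepsilon_C(c)f,
\end{equation*}
where $\Delta_{\mathscr{C}}$ places the first factor in $(h\otimes C)(X,X)$ and the second in $(h\otimes C)(X,Y)$, representing an element of the $Z=X$ summand of the coend defining $\big((h\otimes C)\otimes_\mathcal{D}(h\otimes C)\big)(X,Y)$.

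The first task is to verify that $\varepsilon_{\mathscr{C}}$ and $\Delta_{\mathscr{C}}$ are morphisms in ${_\mathcal{D}}\mathscr{M}_{\mathcal{D}}$. For $\varepsilon_{\mathscr{C}}$, naturality with respect to a morphism $\phi=(\phi',\phi'')$ reduces to the identity $\varepsilon_C(c^\psi)\phi'_\psi=\varepsilon_C(c)\phi'$, i.e.\ axiom \eqref{eq 6.2}. For $\Delta_{\mathscr{C}}$, compatibility with the left $\mathcal{D}$-action of $\phi'':Y\to Y'$ is immediate, since $\phi''$ interacts only with the second tensor factor and the formula is manifestly natural in $Y$. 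Compatibility with the right $\mathcal{D}$-action of $\phi':X'\to X$ amounts to showing
\begin{equation*}
(id_{X'}\otimes(c^\psi)_1)\otimes_\mathcal{D}(\phi''f\phi'_\psi\otimes(c^\psi)_2)=(\phi'_\psi\otimes c_1^{\psi})\otimes_\mathcal{D}(\phi''f\otimes c_2),
\end{equation*}
which I would establish by applying the coend relation to push $\phi'_\psi$ across $\otimes_\mathcal{D}$ and then invoking axiom \eqref{eq 6.3}, which interchanges iterated entwinings with the coproduct of $C$.

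Coassociativity is then immediate: both $(\Delta_{\mathscr{C}}\otimes_\mathcal{D}id)\circ\Delta_{\mathscr{C}}$ and $(id\otimes_\mathcal{D}\Delta_{\mathscr{C}})\circ\Delta_{\mathscr{C}}$ send $f\otimes c$ to the iterated $(id_X\otimes c_{(1)})\otimes_\mathcal{D}(id_X\otimes c_{(2)})\otimes_\mathcal{D}(f\otimes c_{(3)})$ by coassociativity of $\Delta_C$. The counit axioms reduce to the counit identities of $C$ via the canonical isomorphisms $h\otimes_\mathcal{D}\mathcal{M}\cong\mathcal{M}\cong\mathcal{M}\otimes_\mathcal{D}h$ applied to $\mathcal{M}=h\otimes C$; here axiom \eqref{eq 6.4} is used to simplify the entwining on identity morphisms and to eliminate the spurious $\psi$ that would otherwise appear when translating along the isomorphism.

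I anticipate the main obstacle to be the right $\mathcal{D}$-action compatibility of $\Delta_{\mathscr{C}}$. It is precisely here that \eqref{eq 6.3} is essential, and the summation-index bookkeeping that arises when the entwining interacts simultaneously with iterated coproducts on $C$ and the coend relations of $\otimes_\mathcal{D}$ is where errors are most likely to occur. All remaining ingredients are essentially formal and parallel the well-known one-object case of a coring attached to an entwined algebra.
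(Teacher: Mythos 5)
Your overall strategy (categorify the Brzezi\'nski coring $A\otimes C$, with counit $\varepsilon_C(c)f$ and a comultiplication that inserts an identity morphism in the middle slot) is the right one, and the counit, coassociativity and counitality parts of your outline are fine. However, the comultiplication you wrote down attaches the Sweedler legs to the wrong factors, and this breaks exactly the step you flagged as the crux. In your element $(id_X\otimes c_1)\otimes_{\mathcal D}(f\otimes c_2)$ the factor carrying $c_1$ lies in $(h\otimes C)(X,X)$, i.e.\ it is the component of the coend on which the right $\mathcal D$-action (precomposition, which involves $\psi$) acts, while $f\otimes c_2$ lies in the component acted on by plain postcomposition. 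Checking naturality against $\phi'$ then requires, after pushing $\phi'_\psi$ across $\otimes_{\mathcal D}$ via the balanced relation, the identity
\begin{equation*}
\phi'_\psi\otimes (c^\psi)_1\otimes (c^\psi)_2 \;=\; {(\phi'_\psi)}_\psi\otimes {c_1}^{\psi}\otimes {c_2}^{\psi}
\qquad\text{with the \emph{inner} entwining taken against } c_1 \text{ and the \emph{outer} against } c_2,
\end{equation*}
which is \eqref{eq 6.3} with the order of the two applications of $\psi$ reversed, and this reversed identity is false in general. Concretely, for the Doi--Hopf entwining of Example \ref{2.3fed}, where $\psi_{XY}(c\otimes g)=g_0\otimes cg_1$, your requirement reduces to $\phi'_0\otimes c_1\phi'_1\otimes c_2\phi'_2=\phi'_0\otimes c_1\phi'_2\otimes c_2\phi'_1$, which need not hold unless $H$ is cocommutative. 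Equivalently, under the identification $(h\otimes C)\otimes_{\mathcal D}(h\otimes C)\cong h\otimes C\otimes C$ your map is $id_h\otimes\Delta_C^{cop}$, which is not a morphism of $\mathcal D$-bimodules for the entwined right action on $h\otimes C$.

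The fix is to swap the legs: set $\Delta_{\mathscr C}(X,Y)(f\otimes c)=(f\otimes c_1)\otimes_{\mathcal D}(id_X\otimes c_2)$, with $f\otimes c_1$ in the component carrying the left (postcomposition) action and $id_X\otimes c_2$ in the component carrying the right (precomposition) action --- the exact analogue of the classical formula $\Delta(a\otimes c)=(a\otimes c_1)\otimes_A(1_A\otimes c_2)$. With this assignment your verification does go through: naturality in $\phi''$ is trivial, naturality in $\phi'$ is precisely \eqref{eq 6.3} read with its correct nesting (entwine with $c_2$ first, then with $c_1$, as forced by Example \ref{2.3fed}), counitality uses \eqref{eq 6.2} and \eqref{eq 6.4} as you indicate, and coassociativity follows from that of $\Delta_C$. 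This corrected map is exactly the paper's comultiplication $id_h\otimes\Delta_C$ transported along $(h\otimes C)\otimes_{\mathcal D}(h\otimes C)\cong h\otimes C\otimes C$, and it is the formula the paper later uses explicitly in the proof of Lemma \ref{L4.19cop}.
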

\begin{proof}
It may be verified that $(h \otimes C) \otimes_\mathcal{D} (h \otimes C) \cong h \otimes C \otimes C$. This gives us morphisms
\begin{equation}\label{4.4xc}
\begin{array}{c}
id_h \otimes \Delta_C: h \otimes C \longrightarrow h \otimes C \otimes C \cong (h \otimes C) \otimes_\mathcal{D} (h \otimes C)\\
id_h \otimes \varepsilon_C: h \otimes C \longrightarrow h
\end{array}
\end{equation}
in ${_\mathcal{D}}\mathscr{M}_{\mathcal{D}}$. Using the coassociativity and counitality of the $K$-coalgebra $C$, it may be verified   that  $id_h \otimes \Delta_C$ and $id_h \otimes \varepsilon_C$ satisfy the coassociativity and counit axioms in the category ${_\mathcal{D}}\mathscr{M}_{\mathcal{D}}$. Thus, $h \otimes C$ is a coalgebra object in ${_\mathcal{D}}\mathscr{M}_{\mathcal{D}}$.
\end{proof}

\begin{prop}\label{comodovercoring}
Let $(\mathcal{D},C,\psi)$ be a right-right entwining structure. Then, the category $\mathscr{M}(\psi)_\mathcal{D}^C$ of entwined modules is identical to the category $Comod$-$(h \otimes C)$. 
\end{prop}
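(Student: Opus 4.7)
The plan is to exploit the natural isomorphism $\mathcal{M} \otimes_\mathcal{D}(h \otimes C) \cong \mathcal{M} \otimes C$ in $Mod\text{-}\mathcal{D}$. Since $h$ is the unit object of the monoidal category ${_\mathcal{D}}\mathscr{M}_{\mathcal{D}}$, one has $\mathcal{M} \otimes_\mathcal{D} h \cong \mathcal{M}$, so $(\mathcal{M} \otimes_\mathcal{D}(h \otimes C))(Y) \cong \mathcal{M}(Y) \otimes C$ for every $Y \in Ob(\mathcal{D})$. The first computation I would carry out is that the right $\mathcal{D}$-action transported across this isomorphism coincides precisely with the twisted action on $\mathcal{M} \otimes C$ from Lemma \ref{lem 6.2}; this hinges on the formula \eqref{n2} for the right $\mathcal{D}$-module structure of $h \otimes C$, where the entwining $\psi$ enters.

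Next, given an entwined module $(\mathcal{M}, \{\rho_{\mathcal{M}(Y)}\}_Y) \in \mathscr{M}(\psi)_\mathcal{D}^C$, I would assemble the pointwise coactions into a single morphism $\rho_\mathcal{M}: \mathcal{M} \longrightarrow \mathcal{M} \otimes C$ by $\rho_\mathcal{M}(Y) := \rho_{\mathcal{M}(Y)}$. The compatibility \eqref{comp 2} is exactly the statement that $\rho_\mathcal{M}$ is a morphism of right $\mathcal{D}$-modules into the twisted module of Lemma \ref{lem 6.2}, hence (via the identification above) a morphism $\rho_\mathcal{M}: \mathcal{M} \longrightarrow \mathcal{M} \otimes_\mathcal{D}(h \otimes C)$ in $Mod\text{-}\mathcal{D}$. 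Using the comultiplication $id_h \otimes \Delta_C$ and counit $id_h \otimes \varepsilon_C$ of the coring from \eqref{4.4xc}, the axioms in \eqref{con1} translate pointwise on each $Y$ into the coassociativity and counit axioms for $\rho_{\mathcal{M}(Y)}$ as a right $C$-comodule. Conversely, an $(h \otimes C)$-comodule $(\mathcal{M}, \rho_\mathcal{M})$ yields an entwined module by reading off $\rho_{\mathcal{M}(Y)} := \rho_\mathcal{M}(Y)$; the pointwise axioms now give the $C$-comodule structures on each $\mathcal{M}(Y)$ and the $\mathcal{D}$-linearity of $\rho_\mathcal{M}$ (via the twisted action) yields \eqref{comp 2}.

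For morphisms, $\eta: \mathcal{M} \longrightarrow \mathcal{N}$ in $\mathscr{M}(\psi)_\mathcal{D}^C$ is a right $\mathcal{D}$-module morphism such that each $\eta(X)$ is $C$-colinear, which is exactly the condition $(\eta \otimes_\mathcal{D} id_{h \otimes C}) \circ \rho_\mathcal{M} = \rho_\mathcal{N} \circ \eta$ defining morphisms in $Comod\text{-}(h \otimes C)$. The main technical obstacle is the bookkeeping in the first step: carefully identifying $\mathcal{M} \otimes_\mathcal{D}(h \otimes C)$ with the twisted $\mathcal{D}$-module of Lemma \ref{lem 6.2}, since this is where the entwining $\psi$ appears and where the left $\mathcal{D}$-action on $h \otimes C$ (used to form the tensor product over $\mathcal{D}$) interacts with the right $\mathcal{D}$-action (which dictates the final module structure). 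Once this identification is made transparent, the remaining verifications amount to rewriting coassociativity, counit and colinearity axioms pointwise at each $Y \in Ob(\mathcal{D})$.
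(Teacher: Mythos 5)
Your proposal is correct and follows essentially the same route as the paper: identify $\mathcal{M}\otimes_\mathcal{D}(h\otimes C)\cong\mathcal{M}\otimes C$ with the twisted $\mathcal{D}$-module of Lemma \ref{lem 6.2}, read the compatibility condition \eqref{comp 2} as $\mathcal{D}$-linearity of the assembled coaction $\rho_{\mathcal{M}}$, and translate the coring comodule axioms pointwise into the $C$-comodule axioms via \eqref{4.4xc}, with the converse obtained by reversing these identifications. Your explicit remark on morphisms is a small addition the paper leaves implicit, but it does not change the argument.
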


\begin{proof}
Let $\mathcal{M}\in \mathscr{M}(\psi)_\mathcal{D}^C$. It may be verified that $\mathcal{M}\otimes C \cong \mathcal{M}\otimes_ \mathcal{D}(h\otimes C)$ as right $\mathcal{D}$-modules.  Then, 
by Lemma \ref{lem 6.2}, $\mathcal{M} \otimes C \in  \mathscr{M}(\psi)_\mathcal{D}^C$ and we have 
\begin{equation*}
\rho_{\mathcal{M}(X)}(\mathcal{M}(f)(m))= \mathcal{M}(f_\psi)(m_{0}) \otimes  {m_1}^\psi =(\mathcal{M}\otimes C)(f)(m_0\otimes m_1)
\end{equation*}
for any $f\in Hom_\mathcal{D}(X,Y)$ and $m\in \mathcal{M}(Y)$. We thus obtain a morphism $\rho_\mathcal{M}: \mathcal{M}\longrightarrow \mathcal{M}\otimes C\cong \mathcal{M}\otimes_ \mathcal{D}(h\otimes C)$ of right $\mathcal{D}$-modules given by $\rho_\mathcal{M}(X):= \rho_{\mathcal{M}(X)}$ for each $X\in Ob(\mathcal{D})$. 

Applying \eqref{4.4xc}, we have
\begin{equation} \label{4/5xc}
\begin{CD}
\mathcal M\otimes_{\mathcal D}(h\otimes C) @>id_{\mathcal M}\otimes_{\mathcal D}\Delta_{(h\otimes C)}= id_\mathcal{M}\otimes id_{h}\otimes \Delta_C>> \mathcal M\otimes_{\mathcal D}(h\otimes C) \otimes_{\mathcal D}(h\otimes C)\\
@V\cong VV  @V\cong VV \\
 \mathcal M \otimes C@> id_\mathcal{M}\otimes \Delta_C>> \mathcal M\otimes C\otimes C
\end{CD}
\end{equation} and 
\begin{equation}\label{4/6xc}
\begin{CD}
\mathcal M\otimes_{\mathcal D}(h\otimes C) @>id_\mathcal{M}\otimes \varepsilon_{(h\otimes C)} = id_\mathcal{M}\otimes id_{h}\otimes \varepsilon_C>>\mathcal M\otimes_{\mathcal D}h \\
@V\cong VV @V\cong VV\\
\mathcal M\otimes C@> id_\mathcal{M}\otimes \varepsilon_C>> \mathcal M\\
\end{CD}
\end{equation} The conditions in \eqref{con1} now  follow from the fact that $\rho_{\mathcal{M}(X)}$ is a $C$-coaction for each $X \in Ob(\mathcal{D})$. 
Therefore, $\mathcal{M}$ is a right $(h \otimes C)$-comodule.

\smallskip
Conversely, let $\mathcal{N} \in Comod$-$(h \otimes C)$. Then, $\mathcal{N}$ is a right $\mathcal{D}$-module with a given morphism $\rho_\mathcal{N}:\mathcal{N} \longrightarrow \mathcal{N} \otimes_\mathcal{D} (h\otimes C)\cong \mathcal{N}\otimes C$ of right $\mathcal{D}$-modules satisfying the conditions in \eqref{con1}. Thus, for each $Y\in Ob(\mathcal{D})$, we have a morphism $\rho_\mathcal{N}(Y): \mathcal{N}(Y) \longrightarrow \mathcal{N}(Y)\otimes C$ which satisfies 
\begin{equation}\label{4/7xc}  (id_{\mathcal{N}(Y)} \otimes \Delta_C)\circ \rho_{\mathcal{N}}(Y)=(\rho_{\mathcal{N}}(Y) \otimes id_{C}) \circ \rho_{\mathcal{N}}(Y)\qquad (id_{\mathcal{N}}(Y) \otimes \varepsilon_C)\circ \rho_{\mathcal{N}}(Y)= id_{\mathcal{N}(Y)}
\end{equation}
In \eqref{4/7xc}, we have identified $id_\mathcal{N}\otimes \Delta_C = id_\mathcal{N}\otimes \Delta_{h \otimes C}$ and $id_\mathcal{N}\otimes \varepsilon_C= id_\mathcal{N}\otimes \varepsilon_{(h\otimes C)}$ as in \eqref{4/5xc} and \eqref{4/6xc} respectively. Therefore, $\rho_\mathcal{N}(Y)$ defines a right $C$-comodule structure on $\mathcal{N}(Y)$ for every $Y\in Ob(\mathcal{D})$. Since $\rho_{\mathcal{N}}$ is a morphism of right $\mathcal{D}$-modules, we also have 
\begin{equation} \rho_{\mathcal{N}}(X)(\mathcal{N}(f)(n))=(\mathcal{N}\otimes C)(f)(n_0\otimes n_1)=\mathcal{N}(f_\psi)(n_{0}) \otimes  {n_1}^\psi
\end{equation} for any $f\in Hom_\mathcal{D}(X,Y)$ and $n\in \mathcal{N}(Y)$. Therefore, $\mathcal{N} \in \mathscr{M}(\psi)_\mathcal{D}^C$.
\end{proof}

\begin{lemma}
Let $i: \mathcal{E}\longrightarrow\mathcal{D}$ be an inclusion of small $K$-linear categories. Then, the functor $h \otimes_\mathcal{E} h: \mathcal{E}^{op} \otimes \mathcal{E} \longrightarrow Vect_K$ is a $\mathcal{D}$-coring, where $h$ is the $\mathcal{D}$-$\mathcal{D}$-bimodule as in \eqref{h_D}.
\end{lemma}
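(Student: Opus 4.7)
The plan is to establish that $h\otimes_\mathcal{E} h$ is a $\mathcal{D}$-coring in complete analogy with the classical Sweedler coring $A\otimes_B A$ attached to a ring extension $B\subseteq A$. First I would interpret the functor: since $h=Hom_\mathcal{D}(-,-)$ restricts along $i\colon\mathcal{E}\longrightarrow\mathcal{D}$ to an $\mathcal{E}$-$\mathcal{E}$-bimodule that is simultaneously a $\mathcal{D}$-$\mathcal{E}$-bimodule on the right and an $\mathcal{E}$-$\mathcal{D}$-bimodule on the left, the coend
\begin{equation*}
(h\otimes_\mathcal{E} h)(X,Y)=\int^{E\in \mathcal{E}} Hom_\mathcal{D}(X,iE)\otimes Hom_\mathcal{D}(iE,Y)
\end{equation*}
is indeed a functor $\mathcal{D}^{op}\otimes\mathcal{D}\longrightarrow Vect_K$, i.e., an object of ${_\mathcal{D}}\mathscr{M}_\mathcal{D}$.

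The key preliminary step is a Yoneda-type reduction: for each pair $E,E'\in Ob(\mathcal{E})$, the coend $\int^{Z\in \mathcal{D}} Hom_\mathcal{D}(iE,Z)\otimes Hom_\mathcal{D}(Z,iE')$ collapses by composition to $Hom_\mathcal{D}(iE,iE')=Hom_\mathcal{E}(E,E')$. Carrying this through the definition of the $\mathcal{D}$-relative tensor product yields a natural isomorphism
\begin{equation*}
(h\otimes_\mathcal{E} h)\otimes_\mathcal{D}(h\otimes_\mathcal{E} h)\;\cong\; h\otimes_\mathcal{E} h\otimes_\mathcal{E} h
\end{equation*}
in ${_\mathcal{D}}\mathscr{M}_\mathcal{D}$. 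Using this identification, I would define the comultiplication $\Delta_{h\otimes_\mathcal{E} h}$ by inserting an identity,
\begin{equation*}
\Delta_{h\otimes_\mathcal{E} h}(X,Y)\colon f\otimes_\mathcal{E} g\;\longmapsto\; f\otimes_\mathcal{E} id_{iE}\otimes_\mathcal{E} g,
\end{equation*}
for $f\in Hom_\mathcal{D}(X,iE)$, $g\in Hom_\mathcal{D}(iE,Y)$, and the counit by composition,
\begin{equation*}
\varepsilon_{h\otimes_\mathcal{E} h}(X,Y)\colon f\otimes_\mathcal{E} g\;\longmapsto\; g\circ f \in Hom_\mathcal{D}(X,Y)=h(X,Y).
\end{equation*}

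With these formulas in place, I would verify: (a) that $\Delta_{h\otimes_\mathcal{E} h}$ and $\varepsilon_{h\otimes_\mathcal{E} h}$ are well-defined on the coend (for $\varepsilon_{h\otimes_\mathcal{E} h}$, the balancing $fe\otimes_\mathcal{E} g=f\otimes_\mathcal{E} eg$ for $e\in Hom_\mathcal{E}(E,E')$ goes to $g\circ e\circ f$ in both cases; for $\Delta_{h\otimes_\mathcal{E} h}$, the identity $e\colon iE\longrightarrow iE'$ can be absorbed into either factor); (b) that they are natural in $(X,Y)\in\mathcal{D}^{op}\otimes\mathcal{D}$ and hence morphisms in ${_\mathcal{D}}\mathscr{M}_\mathcal{D}$; and (c) the coassociativity and counitality axioms. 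Coassociativity is immediate because both $(\Delta_{h\otimes_\mathcal{E} h}\otimes_\mathcal{D} id)\circ\Delta_{h\otimes_\mathcal{E} h}$ and $(id\otimes_\mathcal{D}\Delta_{h\otimes_\mathcal{E} h})\circ\Delta_{h\otimes_\mathcal{E} h}$ send $f\otimes_\mathcal{E} g$ to the four-fold expression $f\otimes_\mathcal{E} id\otimes_\mathcal{E} id\otimes_\mathcal{E} g$ after the Yoneda collapse. Counitality reduces to $id_{iE}\circ f=f$ and $g\circ id_{iE}=g$.

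The main obstacle I expect is the bookkeeping in step (a)--(b): confirming that the comultiplication is well-posed as a morphism of coends and is genuinely natural in $(X,Y)$, since the object $iE$ appearing as both source and target of $id_{iE}$ must be handled through the dinaturality of the coend. Once the Yoneda reduction above is set up carefully, however, all these verifications become formal and parallel the proof for the classical Sweedler coring in the one-object case.
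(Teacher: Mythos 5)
Your proposal is correct and follows essentially the same route as the paper: the comultiplication inserts an identity morphism via the identification $(h\otimes_\mathcal{E} h)\otimes_\mathcal{D}(h\otimes_\mathcal{E} h)\cong h\otimes_\mathcal{E} h\otimes_\mathcal{E} h$, the counit is given by composition, and the remaining checks (well-definedness over $\otimes_\mathcal{E}$, naturality in $(X,Y)$, coassociativity, counitality) are the same formal verifications the paper carries out. One inessential slip: the co-Yoneda collapse yields $Hom_\mathcal{D}(iE,iE')$, which need not equal $Hom_\mathcal{E}(E,E')$ since the subcategory $\mathcal{E}$ is not assumed full, but this does not affect the isomorphism you actually use.
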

\begin{proof}
It is immediate that the functor $h \otimes_\mathcal{E} h$ is a $\mathcal{D}$-$\mathcal{D}$-bimodule. We need to show that $h \otimes_\mathcal{E} h$ is a coalgebra
object in ${_\mathcal{D}}\mathscr{M}_{\mathcal{D}}$. We now define $\Delta: h \otimes_\mathcal{E} h \longrightarrow (h \otimes_\mathcal{E} h) \otimes_{\mathcal{D}} (h \otimes_\mathcal{E} h) \cong  (h \otimes_\mathcal{E} h )\otimes_\mathcal{E} h$ as follows: for $(X,Y)\in Ob(\mathcal D^{op}\otimes \mathcal D)$, we set 
\begin{equation}\label{deltaXY}
\begin{array}{c}
\Delta(X,Y): {\bf h}_Y \otimes_\mathcal{E} {_X}{\bf h} \longrightarrow (h\otimes_\mathcal{E} h)({-},Y)\otimes_\mathcal{E} h(X,{-}) \cong {\bf h}_Y \otimes_\mathcal{E} h \otimes_\mathcal{E} {_X}{\bf h}\\
f \otimes f' \mapsto f \otimes id_Z \otimes f'
\end{array}
\end{equation}
for any $f \otimes f' \in {\bf h}_Y(Z) \otimes {_X}{\bf h}(Z)$ and $Z \in Ob(\mathcal{E})$. It is easy to check that $\Delta(X,Y)$ is well-defined. Also, it can be verified that for any morphism $(\phi',\phi''): (X,Y) \longrightarrow (X',Y')$ in $\mathcal{D}^{op} \otimes \mathcal{D}$, the following diagram commutes:
$$\begin{CD}
{\bf h}_Y \otimes_\mathcal{E} {_X}{\bf h}  @>\Delta(X,Y)>> {\bf h}_Y \otimes_\mathcal{E} h \otimes_\mathcal{E} {_X}{\bf h} \\
@Vh_{\phi''} \otimes_\mathcal{E} {_{\phi'}}h VV        @VVh_{\phi''} \otimes_\mathcal{E} id_h \otimes_\mathcal{E} {_{\phi'}}h V\\
{\bf h}_{Y'} \otimes_\mathcal{E} {_{X'}}{\bf h}  @>\Delta(X',Y')>> {\bf h}_{Y'} \otimes_\mathcal{E} h \otimes_\mathcal{E} {_{X'}}{\bf h}
\end{CD}$$
Thus, $\Delta$ is a morphism of $\mathcal{D}$-$\mathcal{D}$-bimodules.
The map $\varepsilon:h \otimes_\mathcal{E} h \longrightarrow h$ is defined by composition. It may be verified that $\Delta$ and $\varepsilon$ satisfy the coassociativity and counit axioms respectively.
\end{proof}
Let $\mathcal{D}$ be a small $K$-linear category and let $C$ be a $K$-coalgebra. We consider the category $_\mathcal{D}\mathscr{M}^C$ of left-right Doi-Hopf modules (compare Example \ref{2.3fed}). Explicitly, an object in $_\mathcal{D}\mathscr{M}^C$ consists of a left $\mathcal{D}$-module $\mathcal{M}$ with a given right $C$-comodule structure on $\mathcal{M}(X)$ for each $X \in Ob(\mathcal{D})$ such that the following compatibility condition holds:
\begin{equation*}
 \big({\mathcal{M}(f)(m)}\big)_0 \otimes \big({\mathcal{M}(f)(m)}\big)_{1}= \mathcal{M}(f)(m_{0}) \otimes  m_1
\end{equation*} for each $f\in Hom_\mathcal{D}(X,Y)$ and $m\in \mathcal{M}(X)$. 
A morphism $\eta:\mathcal M \longrightarrow \mathcal{N}$ in  $_\mathcal{D}\mathscr{M}^C$ is a  left $\mathcal{D}$-module morphism such that each $\eta(X):\mathcal{M}(X) \longrightarrow \mathcal{N}(X)$ is right $C$-colinear.
By definition, $(h \otimes C)(X,-)= {_X}{\bf h} \otimes C$ is a left $\mathcal{D}$-module for each $X \in Ob(\mathcal{D})$. The map  $id \otimes \Delta_C:Hom_\mathcal{D}(X,Y)\otimes C \longrightarrow Hom_\mathcal{D}(X,Y)\otimes C \otimes C$ gives a right $C$-comodule structure on $({_X{\bf h}}\otimes C)(Y)$ for each $Y \in Ob(\mathcal{D})$. Clearly, $_X{\bf h} \otimes C \in {_\mathcal{D}}\mathscr{M}^C$.

\smallskip
From this point onwards, we suppose additionally that each $Hom_{\mathcal D}(X,Y)$ has a given right $C$-comodule structure denoted by
\begin{equation*}\rho_{XY}: Hom_\mathcal{D}(X,Y)\longrightarrow Hom_\mathcal{D}(X,Y)\otimes C 
\end{equation*}

\begin{definition}\label{coinv*}
 Let $\mathcal{E}\subseteq \mathcal D$ be the subcategory  with $Ob(\mathcal{E})=Ob(\mathcal{D})$ and 
 \begin{equation*}
 \begin{array}{ll}
 Hom_\mathcal{E}(X,Y)&=Hom^C_{Mod\text{-}\mathcal{D}}({{\bf h}_X},{{\bf h}_Y})=\{\mbox{ $\eta\in Hom_{Mod\text{-}\mathcal{D}}({\bf h}_X,{\bf h}_Y)$ $\vert$ $\eta$ is objectwise  $C$-colinear}\}\\
&=\{\mbox{$g\in Hom_{\mathcal D}(X,Y)$ $\vert$ $\rho_{ZY}(gf)=({_Z{\bf h}}\otimes C)(g)(\rho_{ZX}(f))$ $\textrm{ }\forall\textrm{ }f\in Hom_{\mathcal D}(Z,X)$ }\} \\ 
\end{array}
 \end{equation*} We will say that $\mathcal E$ is the subcategory of $C$-coinvariants of $\mathcal D$.
 \end{definition}
 
 \begin{example}
Let $H$ be a Hopf algebra over $K$ and let $\mathcal{D}$ be a right co-$H$-category. In this case, the subcategory $\mathcal E$ of $H$-coinvariants of $\mathcal D$ is given by setting $Ob(\mathcal{E})=Ob(\mathcal{D})$ and $Hom_\mathcal{E}(X,Y)=Hom_\mathcal{D}(X,Y)^{coH}$. 
\end{example}
 
 \smallskip It follows that the right $C$-comodule structures $\rho_{XY}: Hom_{\mathcal{D}}(X,Y) \longrightarrow Hom_{\mathcal{D}}(X,Y) \otimes C$   induce a morphism ${_X{\bf h}}\longrightarrow {_X{\bf h}}\otimes C $ of left $\mathcal{E}$-modules for each $X\in Ob(\mathcal{D})$.  Further, for every $Y \in Ob(\mathcal{D})$, this induces a morphism
\begin{equation}\label{er4.9}
\begin{array}{c}
(h\otimes_{\mathcal{E}} {_X{\bf h}})(Y)= {\bf h}_Y \otimes_{\mathcal{E}} {_X{\bf h}} \longrightarrow {\bf h}_Y \otimes_\mathcal{E} {_X{\bf h}} \otimes C = (h\otimes_{\mathcal{E}} {_X{\bf h}})(Y) \otimes C\\
f\otimes f' \mapsto f\otimes \rho_{XZ}(f')\\
\end{array}
\end{equation}
where $f\in Hom_\mathcal{D}(Z,Y),~f'\in Hom_\mathcal{D}(X,Z)$ and $Z\in Ob(\mathcal{E})$.  It may be easily verified that the coaction in \eqref{er4.9} makes $h\otimes_{\mathcal{E}} {_X{\bf h}}$ an object of  ${_\mathcal{D}}\mathscr{M}^C$.

\medskip

We obtain therefore canonical morphisms of $K$-vector spaces given by the following composition
\begin{equation*} \left\{can_{XY}: {{\bf h}_Y}\otimes_\mathcal{E} {_X{\bf h}}\longrightarrow {{\bf h}_Y}\otimes_\mathcal{E}({_X{\bf h}}\otimes C)\longrightarrow {{\bf h}_Y}\otimes_\mathcal{D}({_X{\bf h}}\otimes C)\cong Hom_\mathcal{D}(X,Y) \otimes C\right\}_{(X,Y)\in Ob(\mathcal D)^2}
\end{equation*} For each $X\in Ob(\mathcal D)$, this induces a morphism in ${_\mathcal{D}}\mathscr{M}^C$ as follows
\begin{equation*}
can_X:h\otimes_{\mathcal E}{_X{\bf h}}\longrightarrow {_X{\bf h}}\otimes C \qquad can_X(Y):=can_{XY}
\end{equation*}

\begin{definition}\label{4.5tr}
Let $C$ be a $K$-coalgebra and $\mathcal{D}$ be a small $K$-linear category such that $Hom_\mathcal{D}(X,Y)$ has a right $C$-comodule structure for every $X,Y \in Ob(\mathcal{D})$. Let $\mathcal{E}$ be a $K$-linear subcategory of $\mathcal{D}$. Then, $\mathcal{D}$ is called a $C$-Galois extension of $\mathcal{E}$ if 
\begin{itemize}
\item[(i)] $Ob(\mathcal{E})=Ob(\mathcal{D})$ and $Hom_\mathcal{E}(X,Y)=Hom^C_{Mod\text{-}\mathcal{D}}({{\bf h}_X},{{\bf h}_Y})$. 
\item[(ii)] The induced canonical morphism $can_X:h\otimes_{\mathcal{E}} {_X{\bf h}}\longrightarrow {_X{\bf h}}\otimes C$ is an isomorphism in $_\mathcal{D}\mathscr{M}^C$ for each $X\in Ob(\mathcal{D})$.
\end{itemize}
\end{definition}

Let $\mathcal{D}$ be a $C$-Galois extension of $\mathcal{E}$. For each $X\in Ob(\mathcal{D})$, we define 
\begin{equation}\label{transl}
\tau_X: C\longrightarrow {{\bf h}_X}\otimes_\mathcal{E} {_X{\bf h}}\qquad \tau_X(c):=can_{XX}^{-1}(id_X \otimes c)
\end{equation} We refer to these as the translation maps of the Galois extension. 

\begin{lemma}\label{trans}
Let $\mathcal{D}$ be a $C$-Galois extension of $\mathcal{E}$. Let $\{\tau_X: C\longrightarrow {{\bf h}_X}\otimes_\mathcal{E} {_X{\bf h}}\}_{X \in Ob(\mathcal{D})}$ be the associated translation maps. We use the notation $\tau_X(c)= c^{(1)}\otimes c^{(2)}$ (summation omitted). Then,\\
(i) $\tau_X$ is right $C$-colinear i.e., $c^{(1)}\otimes {c^{(2)}}_0\otimes {c^{(2)}}_1= (c_1)^{(1)}\otimes (c_1)^{(2)}\otimes c_2$.\smallskip
\newline (ii) For any $f\in Hom_\mathcal{D}(X,Y)$, we have $f_0(f_1)^{(1)} \otimes (f_1)^{(2)} = id_Y \otimes f\in {{\bf h}_Y}\otimes_\mathcal{E} {_X{\bf h}}$.\smallskip
\newline (iii) $c^{(1)}c^{(2)}=\varepsilon_C(c) \cdot id_X$.
\end{lemma}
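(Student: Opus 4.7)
The plan hinges on one essential input: since $can_X$ is an isomorphism in $_\mathcal{D}\mathscr{M}^C$, both $can_X$ and its inverse $can_X^{-1}$ are objectwise left $\mathcal{D}$-linear and right $C$-colinear. Combined with the explicit formula $can_{XY}(g \otimes g') = g \circ g'_0 \otimes g'_1$ for $g \in Hom_\mathcal{D}(Z,Y)$, $g' \in Hom_\mathcal{D}(X,Z)$, each of the three identities reduces to a short computation.

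For (i), I would apply right $C$-colinearity of $can_{XX}^{-1}$ to the element $id_X \otimes c$. The coaction on the codomain ${_X{\bf h}}(X) \otimes C$ is $id \otimes \Delta_C$ by the construction of the $C$-comodule structure on ${_X{\bf h}}\otimes C$, sending $id_X \otimes c$ to $id_X \otimes c_1 \otimes c_2$. The coaction on the domain $h_X \otimes_{\mathcal E} {_X{\bf h}}$, described in \eqref{er4.9}, acts only on the second tensor factor. Equating the two images under $(can_{XX}^{-1} \otimes id_C)\circ \rho$ immediately gives the identity $c^{(1)} \otimes (c^{(2)})_0 \otimes (c^{(2)})_1 = (c_1)^{(1)} \otimes (c_1)^{(2)} \otimes c_2$.

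For (ii), I would apply $can_{XY}$ to both sides of the target identity and conclude by injectivity. The left-hand side $id_Y \otimes f$ maps to $id_Y \circ f_0 \otimes f_1 = f_0 \otimes f_1$. For the right-hand side, using the left $\mathcal{D}$-linearity of $can_X$ (applied to the morphism $f_0 : X \to Y$), we obtain $can_{XY}(f_0(f_1)^{(1)} \otimes (f_1)^{(2)}) = f_0 \cdot can_{XX}(\tau_X(f_1))$; by definition of $\tau_X$ this equals $f_0 \cdot (id_X \otimes f_1) = f_0 \otimes f_1$. For (iii), I would apply $id \otimes \varepsilon_C$ to the tautological equality $c^{(1)}(c^{(2)})_0 \otimes (c^{(2)})_1 = id_X \otimes c$, obtaining $c^{(1)}(c^{(2)})_0\,\varepsilon_C((c^{(2)})_1) = \varepsilon_C(c)\,id_X$; counitality of the $C$-coaction $\rho_{XX}$ applied to $c^{(2)}$ then collapses the left side to $c^{(1)}c^{(2)}$.

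The arguments are essentially just unwinding definitions, so there is no deep obstacle; the only thing to be careful about is bookkeeping. In particular, $\tau_X(c) = \sum c^{(1)} \otimes c^{(2)}$ is a finite sum whose summands live in various $h_X(Z) \otimes {_X{\bf h}}(Z)$, so one must keep in mind that $c^{(1)} \in Hom_\mathcal{D}(Z,X)$ and $c^{(2)} \in Hom_\mathcal{D}(X,Z)$ for a varying intermediate object $Z$; all compositions and tensor identifications should be checked for type correctness, and in (ii) one must remember that $f_0(f_1)^{(1)}$ means the composition in $\mathcal{D}$ of $f_0 : X \to Y$ with $(f_1)^{(1)}: Z \to X$, producing an element of $h_Y(Z)$.
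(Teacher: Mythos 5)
Your proposal is correct and follows essentially the same route as the paper: (i) is the objectwise right $C$-colinearity of $can_{XX}^{-1}$ applied to $id_X\otimes c$, (iii) is $(id\otimes\varepsilon_C)$ applied to $can_{XX}(\tau_X(c))=id_X\otimes c$ together with counitality of $\rho_{XZ}$, exactly as in the text. The only cosmetic difference is in (ii), where you apply $can_{XY}$ to both sides and invoke injectivity, whereas the paper applies the left $\mathcal{D}$-linearity of $can_X^{-1}$ directly to $({}_X{\bf h}\otimes C)(f_0)(id_X\otimes f_1)=f_0\otimes f_1$; the two computations are the same argument read in opposite directions.
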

\begin{proof}
The $C$-colinearity of $\tau_X$ follows from the $C$-colinearity of $can_{XX}^{-1}$. Explicitly, for any $c\in C$, we have
\begin{align*}
c^{(1)}\otimes {c^{(2)}}_0\otimes {c^{(2)}}_1 &= (id \otimes \rho)\tau_X(c)= (id \otimes \rho)can_{XX}^{-1}(id_X\otimes c)\\
&= (can_{XX}^{-1} \otimes id_C)(id\otimes \Delta_C)(id_X\otimes c)\\
&= (can_{XX}^{-1} \otimes id_C)(id_X\otimes c_1\otimes c_2)\\
&= can_{XX}^{-1}(id_X\otimes c_1)\otimes c_2 = \tau_X(c_1)\otimes c_2= (c_1)^{(1)}\otimes (c_1)^{(2)}\otimes c_2
\end{align*}
This proves (i). Since $can^{-1}_{X}:{_X{\bf h}}\otimes C \longrightarrow h\otimes_{\mathcal{E}} {_X{\bf h}}$ is a morphism of left $\mathcal{D}$-modules for each $X \in Ob(\mathcal{D})$, we also have
\begin{align*}
f_0(f_1)^{(1)} \otimes (f_1)^{(2)}&=(h \otimes_\mathcal{E} {_X{\bf h}})(f_0)\left(\tau_X(f_1)\right)\\
&=(h \otimes_\mathcal{E} {_X{\bf h}})(f_0)\left(can_{XX}^{-1}(id_X \otimes f_1)\right)\\
&= can^{-1}_{XY}\left(({_X{\bf h}} \otimes C)(f_0)(id_X \otimes f_1)\right)\\
&= can^{-1}_{XY}(f_0 \otimes f_1)=id_Y \otimes f
\end{align*}
This proves (ii). Again using the definition of $can_{XX}$ and $\tau_X$, we have $(can_{XX} \circ \tau_X )(c)=id_X \otimes c$. Thus,
\begin{equation*}
can_{XX}(c^{(1)} \otimes c^{(2)})= c^{(1)}{c^{(2)}}_0 \otimes {c^{(2)}}_1=id_X \otimes c
\end{equation*}
Now,  by applying the map $id \otimes \varepsilon_C$ to both sides, we get (iii).
\end{proof}

\begin{theorem}\label{Gal-ent}
Let $\mathcal{D}$ be a $C$-Galois extension of $\mathcal{E}$. We denote by 
$\rho_{XY}:Hom_\mathcal{D}(X,Y)\longrightarrow Hom_\mathcal{D}(X,Y)\otimes C$ the right $C$-comodule structure maps.
Then, 
there exists a unique right-right entwining structure $(\mathcal{D},C,\psi)$ which makes ${\bf h}_Y$ an object in ${\mathscr{M}(\psi)}_\mathcal{D}^C$ for every $Y\in Ob(\mathcal{D})$ with its canonical $\mathcal{D}$-module structure and right $C$-coactions $\{\rho_{XY}\}_{X\in Ob(\mathcal{D})}$.

\smallskip This entwining structure $(\mathcal D,C,\psi)$ is given by
$$\psi_{XY}:C\otimes Hom_\mathcal{D}(X,Y)\xrightarrow {\tau_Y\otimes id} {{\bf h}_Y}\otimes_\mathcal{E} {_Y{\bf h}}\otimes Hom_\mathcal{D}(X,Y)\longrightarrow {{\bf h}_Y}\otimes_\mathcal{E} {_X{\bf h}}\xrightarrow{can_{XY}}Hom_\mathcal{D}(X,Y)\otimes C$$
\end{theorem}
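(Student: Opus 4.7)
The plan is to define $\psi$ by the composite in the statement, verify that it satisfies the four entwining axioms \eqref{eq 6.1}--\eqref{eq 6.4} together with the compatibility \eqref{comp 2} for each ${\bf h}_Y$, and then establish uniqueness. Throughout, I use the Sweedler-style notation $\tau_X(c) = c^{(1)} \otimes c^{(2)}$ of Lemma \ref{trans}, together with its three properties, and write $\Delta_C(c) = c_1 \otimes c_2$ (and $c_1 \otimes c_2 \otimes c_3$ for the iterated coproduct). Unwinding the composite in the statement, the proposed formula reads
\[\psi_{XY}(c \otimes f) \;=\; c^{(1)}(c^{(2)} f)_0 \otimes (c^{(2)} f)_1 \;=\; can_{XY}\bigl(c^{(1)} \otimes c^{(2)} f\bigr),\]
for $f \in Hom_\mathcal{D}(X, Y)$, where $c^{(2)} f \in {_X{\bf h}}$ is the usual composition in $\mathcal{D}$.

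Axiom \eqref{eq 6.4} is immediate from $can_{XX}(\tau_X(c)) = id_X \otimes c$, and axiom \eqref{eq 6.2} reduces, after applying $id \otimes \varepsilon_C$ and the counit of the $C$-comodule, to $c^{(1)} c^{(2)} = \varepsilon_C(c)\, id_X$, i.e.\ Lemma \ref{trans}(iii). The key input for the remaining axioms and for compatibility \eqref{comp 2} is Lemma \ref{trans}(ii): for $m \in Hom_\mathcal{D}(Y, W)$ it yields $m_0 (m_1)^{(1)} \otimes (m_1)^{(2)} = id_W \otimes m$ in ${\bf h}_W \otimes_\mathcal{E} {_Y{\bf h}}$, and composing the second factor with $f \in Hom_\mathcal{D}(X, Y)$ and applying $can_{XW}$ converts this into $m_0 f_\psi \otimes m_1^\psi = \rho_{XW}(mf)$, which is exactly \eqref{comp 2} for ${\bf h}_W$. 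The same move, now applied to the composite $c^{(2)} g \in Hom_\mathcal{D}(Y, Z)$ for composable $f: X \to Y$ and $g: Y \to Z$, proves the multiplicativity axiom \eqref{eq 6.1}.

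Axiom \eqref{eq 6.3} is the main technical step. I will apply the compatibility \eqref{comp 2} twice (once to $mf$ and once to $\rho(m_0 f_\psi)$), combined with the coassociativity identity $(\rho \otimes id)\rho = (id \otimes \Delta_C)\rho$, to obtain an identity in $Hom_\mathcal{D}(X, W) \otimes C \otimes C$ involving iterated coactions on $m$. Specializing to $m = c^{(2)}$ and pre-multiplying by $c^{(1)}$, iterating Lemma \ref{trans}(i) together with coassociativity of $\Delta_C$ rewrites the fourfold tensor $c^{(1)} \otimes (c^{(2)})_0 \otimes ((c^{(2)})_1)_1 \otimes ((c^{(2)})_1)_2$ as $(c_1)^{(1)} \otimes (c_1)^{(2)} \otimes c_2 \otimes c_3$, after which Lemma \ref{trans}(iii) collapses $(c_1)^{(1)}(c_1)^{(2)}$ to $\varepsilon_C(c_1)\, id_Y$ and the counit axiom assembles the surviving terms into the desired $(f_\psi)_\psi \otimes c_1^{\psi} \otimes c_2^{\psi}$.

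Uniqueness is obtained by running essentially the same computation in reverse. For any other entwining $\psi'$ making every ${\bf h}_W$ into an entwined module, the compatibility \eqref{comp 2} applied with $\mathcal{M} = {\bf h}_W$, $m = c^{(2)}$ and $f \in Hom_\mathcal{D}(X,Y)$ reads $(c^{(2)} f)_0 \otimes (c^{(2)} f)_1 = (c^{(2)})_0 f_{\psi'} \otimes (c^{(2)})_1^{\psi'}$; pre-multiplying by $c^{(1)}$, then invoking Lemma \ref{trans}(i) to replace $c^{(1)} \otimes (c^{(2)})_0 \otimes (c^{(2)})_1$ by $(c_1)^{(1)} \otimes (c_1)^{(2)} \otimes c_2$, Lemma \ref{trans}(iii) to collapse $(c_1)^{(1)}(c_1)^{(2)}$, and the counit of $\Delta_C$, pins down $\psi'_{XY}(c \otimes f)$ to equal our defined $\psi_{XY}(c \otimes f)$. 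The main obstacle in the proof is axiom \eqref{eq 6.3}: it is the only step that genuinely requires the iterated use of both Lemma \ref{trans}(i) and (iii) combined with coassociativity of $\rho$ and $\Delta_C$, while the remaining axioms, the compatibility, and the uniqueness argument each rest on a single application of the appropriate part of Lemma \ref{trans}.
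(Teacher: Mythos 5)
Your proposal is correct and takes essentially the same route as the paper: the paper's own proof just defines $\psi$ by the displayed composite and appeals to Lemma \ref{trans} together with the argument of \cite[Theorem 2.7]{BH}, which is precisely the verification you carry out (axioms \eqref{eq 6.1}--\eqref{eq 6.4}, the compatibility \eqref{comp 2} for each ${\bf h}_Y$, and uniqueness via the translation-map identities). In particular your treatment of \eqref{eq 6.3} --- two applications of \eqref{comp 2} plus coassociativity, then Lemma \ref{trans}(i), (iii) and the counit --- is exactly the intended computation, so nothing is missing.
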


\begin{proof}
Using Lemma \ref{trans}, the proof will follow essentially in the same way as that of \cite[Theorem 2.7]{BH}.
\end{proof}

\begin{lemma}\label{L4.19cop}
Let $\mathcal{D}$ be a $C$-Galois extension of $\mathcal{E}$. Then, $h \otimes_\mathcal{E} h \cong h \otimes C$ as $\mathcal{D}$-corings.
\end{lemma}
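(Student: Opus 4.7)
The $C$-Galois condition from Definition \ref{4.5tr} supplies, for every $X\in Ob(\mathcal{D})$, an isomorphism $can_{X}:h\otimes_{\mathcal{E}}{_X{\bf h}}\xrightarrow{\sim}{_X{\bf h}}\otimes C$ in $_{\mathcal{D}}\mathscr{M}^{C}$, given pointwise by $can_{XY}(f\otimes f')=ff'_{0}\otimes f'_{1}$. Since this formula is patently natural in $Y$ as well (it is built from composition in $\mathcal{D}$ together with the coaction $\rho_{XZ}$), the maps $\{can_{XY}\}_{(X,Y)\in Ob(\mathcal{D})^{2}}$ assemble into a single $\mathcal{D}$-$\mathcal{D}$-bimodule isomorphism $can:h\otimes_{\mathcal{E}}h\xrightarrow{\sim}h\otimes C$. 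It remains to verify that $can$ intertwines the two coring structures: on the target, the coproduct $id_{h}\otimes\Delta_{C}$ and counit $id_{h}\otimes\varepsilon_{C}$ from Lemma \ref{entcoring}; on the source, the comultiplication $\Delta(f\otimes f')=f\otimes id_{Z}\otimes f'$ from \eqref{deltaXY} and the counit given by composition.

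The counit compatibility is a one-line check using the counit axiom for $\rho_{XZ}$:
\begin{equation*}
(id_{h}\otimes\varepsilon_{C})\big(can(f\otimes f')\big)=\varepsilon_{C}(f'_{1})\,ff'_{0}=ff'=\varepsilon_{h\otimes_{\mathcal{E}}h}(f\otimes f').
\end{equation*}
For the comultiplication, I would compute both sides on an element $f\otimes f'\in{\bf h}_{Y}(Z)\otimes_{\mathcal{E}}{_X{\bf h}}(Z)$. On one hand,
\begin{equation*}
\Delta_{h\otimes C}\big(can(f\otimes f')\big)=ff'_{0}\otimes f'_{1,1}\otimes f'_{1,2},
\end{equation*}
and coassociativity of $\rho_{XZ}$ rewrites this as $f f'_{0,0}\otimes f'_{0,1}\otimes f'_{1}$. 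On the other hand, $(can\otimes_{\mathcal{D}}can)\circ\Delta(f\otimes f')$ is obtained by applying $can$ to each of the two factors in the decomposition $(f\otimes id_{Z})\otimes_{\mathcal{D}}(id_{Z}\otimes f')$; the second factor evaluates to $f'_{0}\otimes f'_{1}$, and after translating across the identification $(h\otimes C)\otimes_{\mathcal{D}}(h\otimes C)\cong h\otimes C\otimes C$ of Lemma \ref{entcoring} one recovers the same three-fold tensor. Matching the two expressions is precisely the statement of coassociativity of the coaction $\rho_{XZ}$.

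The main technical obstacle is bookkeeping around the identification $(h\otimes C)\otimes_{\mathcal{D}}(h\otimes C)\cong h\otimes C\otimes C$: one needs the entwining $\psi$ furnished by Theorem \ref{Gal-ent} to push the first copy of $C$ past the middle $h$, and then to verify that the resulting element of $h\otimes C\otimes C$ coincides with the coassociativity side. Once this identification is pinned down explicitly using the formulas $f'_{0}\otimes f'_{1}=\epsilon_{Z}f'_{\psi}\otimes e_{Z}^{\psi}$ arising from the entwined module structure on $_{X}{\bf h}$ (with $\rho_{ZZ}(id_{Z})=\epsilon_{Z}\otimes e_{Z}$), every step reduces to either the defining formula of $can$, coassociativity of $\rho_{XZ}$, the entwining axioms \eqref{eq 6.1}--\eqref{eq 6.4}, or the $\mathcal{E}$-balanced tensor relations in $h\otimes_{\mathcal{E}}h$. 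If this route proves cumbersome, an alternative is to build the inverse $can^{-1}:h\otimes C\to h\otimes_{\mathcal{E}}h$ directly from the translation maps $\tau_{X}$ of \eqref{transl} via $f\otimes c\mapsto f c^{(1)}\otimes c^{(2)}$, and to verify coring compatibility using the identities of Lemma \ref{trans}; this tends to be the cleaner computation in the analogous Hopf-algebraic setting.
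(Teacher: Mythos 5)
Your overall strategy coincides with the paper's: assemble the pointwise Galois isomorphisms into one map $can:h\otimes_{\mathcal E}h\longrightarrow h\otimes C$, check that it is a morphism of $\mathcal D$-$\mathcal D$-bimodules and of corings, and conclude invertibility from the objectwise invertibility of the $can_{XY}$. The gap is in the very first step, which you dismiss as ``patently natural''. Naturality in the covariant variable $Y$ is indeed immediate, but it is also exactly what Definition \ref{4.5tr} already guarantees, since each $can_X$ is by hypothesis a morphism in $_\mathcal{D}\mathscr{M}^C$ and hence of left $\mathcal D$-modules. What actually has to be proved is compatibility with the contravariant variable, i.e.\ with precomposition by $g\in Hom_{\mathcal D}(Z,Z')$, and this is not formal: on the target the right action is twisted by the entwining, $({\bf h}_Y\otimes C)(g)(f\otimes c)=fg_\psi\otimes c^\psi$, so the identity you need is $(fg)_0\otimes (fg)_1=f_0g_\psi\otimes {f_1}^\psi$, which is precisely the compatibility \eqref{comp 2} for the representable modules ${\bf h}_W\in{\mathscr{M}(\psi)}_\mathcal{D}^C$ furnished by Theorem \ref{Gal-ent}. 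It cannot be read off from ``composition plus the coaction'' alone; the paper devotes an explicit diagram and computation to exactly this point. As written, you have justified only the direction that was already assumed and treated as automatic the one place where the canonical entwining genuinely enters.

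On the comultiplication check, your sketch names the right ingredients but understates them: coassociativity of $\rho_{XZ}$ alone does not match the two sides. After rewriting $\Delta_{h\otimes C}\big(can(f\otimes f')\big)$ as $(ff'_{00}\otimes f'_{01})\otimes_{\mathcal D}(id_X\otimes f'_1)$, you must move $f'_0$ across the tensor over $\mathcal D$ in $(f\,{id_Z}_0\otimes {id_Z}_1)\otimes_{\mathcal D}(f'_0\otimes f'_1)$, which produces $\psi$-twisted terms, and then invoke $\rho_{XZ}(f'_0)=({\bf h}_Z\otimes C)(f'_0)\big(\rho_{ZZ}(id_Z)\big)$ --- again the entwined-module property \eqref{comp 2}, together with the $\otimes_{\mathcal D}$-balancing, just as in the paper's computation. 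You do concede this in your ``technical obstacle'' paragraph, so that part is an acceptable, if loose, sketch once the first-step gap is repaired. Finally, the alternative construction of $can^{-1}$ from the translation maps of \eqref{transl} is correct but unnecessary: once $can$ is known to be a bimodule morphism, the pointwise inverses $can^{-1}_{XY}$ automatically assemble into the inverse, which is how the paper concludes.
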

\begin{proof}
We define $can:h \otimes_\mathcal{E} h \longrightarrow h \otimes C$ by setting $can(X,Y):=can_{XY}$ for each $(X,Y) \in Ob(\mathcal{D}^{op} \otimes \mathcal{D})$. We first verify that $can$ is a morphism of $\mathcal{D}$-$\mathcal{D}$-bimodules. Clearly, $can(X,-)=can_X$ which, by definition, is a morphism of left $\mathcal{D}$-modules. Therefore, it suffices to show that $can(-,Y)$ is a morphism of right $\mathcal{D}$-modules, i.e., the following diagram commutes for any $g \in Hom_\mathcal{D}(Z,Z')$:
$$\begin{CD}
({\bf h}_Y \otimes_\mathcal{E} h)(Z')  @>can(Z',Y)>> ( {\bf h}_Y \otimes C)(Z') \\
@V({\bf h}_Y \otimes_\mathcal{E} h)(g)  VV        @VV( {\bf h}_Y \otimes C)(g) V\\
({\bf h}_Y \otimes_\mathcal{E} h)(Z)   @>can(Z,Y)>> ( {\bf h}_Y \otimes C)(Z)
\end{CD}$$
By Theorem \ref{Gal-ent}, we know that ${\bf h}_W$ is an object in  ${\mathscr{M}(\psi)}_\mathcal{D}^C$ for each $W\in Ob(\mathcal D)$. Thus, for any $f \in {\bf h}_W(Z')$, we have
\begin{equation*}
(fg)_0\otimes (fg)_1=\rho_{ZW}(fg)=\rho_{ZW}({\bf h}_W(g)(f))=({\bf h}_W \otimes C)(g)(f_0 \otimes f_1)=f_0g_\psi \otimes {f_1}^\psi
\end{equation*}
Therefore, for any $f' \otimes f \in {\bf h}_Y(W) \otimes_{\mathcal E} {_{Z'}}{\bf h}(W)$, we obtain
\begin{equation*}
\begin{array}{ll}
can(Z,Y)\left(({\bf h}_Y \otimes_\mathcal{E} h)(g)(f' \otimes f)\right)&=can(Z,Y)(f' \otimes fg)=f'\circ (fg)_0 \otimes (fg)_1=f'f_0g_\psi \otimes {f_1}^\psi\\
&=( {\bf h}_Y \otimes C)(g)\left(can(Z',Y)(f' \otimes f)\right)
\end{array}
\end{equation*}
It remains to verify that $can$ is also a coalgebra morphism. First, we show that  the following diagram commutes:
$$\begin{CD}
h \otimes_\mathcal{E} h @>can>> h \otimes C\\
@V\Delta_{h \otimes_\mathcal{E} h}  VV        @VV\Delta_{h \otimes C} V\\
(h \otimes_\mathcal{E} h) \otimes_\mathcal{D} (h \otimes_\mathcal{E} h)  @>can \otimes_\mathcal{D} can>> (h \otimes C) \otimes_\mathcal{D} (h \otimes C)
\end{CD}$$
For any $(X,Y) \in Ob(\mathcal{D}^{op} \otimes \mathcal{D})$ and $w \otimes w' \in  {\bf h}_Y(W) \otimes {_{X}}{\bf h}(W)$, we have
\begin{equation*}
\begin{array}{ll}
\Delta_{h \otimes C}(X,Y)\left(can_{XY}(w \otimes w')\right)&=\Delta_{h \otimes C}(X,Y)(ww'_0 \otimes w'_1)=(ww'_0 \otimes w'_{11}) \otimes_\mathcal{D} (id_X \otimes w'_{12})\\
&=(ww'_{00} \otimes w'_{01}) \otimes_\mathcal{D} (id_X \otimes w'_{1})\\
&=({_X}{\bf h}\otimes C)(w)(\rho_{XW}(w'_0))\otimes_\mathcal{D} (id_X \otimes w'_{1})\\
&=({_X}{\bf h}\otimes C)(w)\left({\bf h}_W \otimes C)(w'_0)(\rho_{WW}(id_W))\right)\otimes_\mathcal{D} (id_X \otimes w'_{1})\\
&=({\bf h}_Y \otimes C)(w'_0)\left(({_W}{\bf h} \otimes C)(w)(\rho_{WW}(id_W))\right) \otimes_\mathcal{D} (id_X \otimes w'_{1})\\
&=\left(({_W}{\bf h} \otimes C)(w)(\rho_{WW}(id_W))\right)\cdot w'_0 \otimes_\mathcal{D} (id_X \otimes w'_{1})\\
&=\left(({_W}{\bf h} \otimes C)(w)(\rho_{WW}(id_W))\right)\otimes_\mathcal{D} (w'_0 \otimes w'_{1})\\
&=(w\circ {id_W}_0 \otimes {id_W}_1)\otimes_\mathcal{D} (w'_0 \otimes w'_{1})\\
&=can_{WY}(w \otimes id_W) \otimes_\mathcal{D} can_{XW}(id_W \otimes w')
\end{array}
\end{equation*} It may be verified easily that $can$ is compatible with counits. 
Since $can$ is a morphism in the category of $\mathcal{D}$-$\mathcal{D}$-bimodules and $can(X,Y)=can_{XY}$ is an isomorphism for each $(X,Y) \in Ob(\mathcal{D}^{op} \otimes \mathcal{D})$, it follows that $can$ is an isomorphism with inverse given by $can^{-1}(X,Y):=can^{-1}_{XY}$. This proves the result.
\end{proof}

\begin{definition}
Let $\mathcal{D}$ be a small $K$-linear category such that $Hom_\mathcal{D}(X,Y)$ is a right $C$-comodule for every $X,Y \in Ob(\mathcal{D})$. Let $\Phi_{XY}:C\longrightarrow Hom_\mathcal{D}(X,Y)$ and $\Phi_{YZ}:C\longrightarrow Hom_\mathcal{D}(Y,Z)$ be two $C$-comodule maps. Then, their convolution product is given by 
$$\Phi_{YZ}*\Phi_{XY}: C\longrightarrow Hom_\mathcal{D}(X,Z),~~~~~~ c\mapsto \Phi_{YZ}(c_1)\circ\Phi_{XY}(c_2)$$  A collection of right $C$-comodule maps $\Phi=\{\Phi_{XY}:C\longrightarrow Hom_\mathcal{D}(X,Y)\}_{X,Y\in Ob(\mathcal{D})}$ is said to be convolution invertible if there exists a collection  $\Phi'=\{\Phi'_{XY}:C\longrightarrow Hom_\mathcal{D}(X,Y)\}_{X,Y\in Ob(\mathcal{D})}$  of $C$-comodule maps such that
$$(\Phi_{XY}*\Phi'_{YX})(c)= \varepsilon_C(c)\cdot id_Y =(\Phi'_{XY}*\Phi_{YX})(c)$$ for every $c\in C$.
\end{definition}

\begin{theorem}\label{th4.11}
Let $C$ be a $K$-coalgebra and $\mathcal{D}$ be a small $K$-linear category such that $Hom_\mathcal{D}(X,Y)$ has a right $C$-comodule structure $\rho_{XY}$ for every $X,Y \in Ob(\mathcal{D})$.  Let $\mathcal{E}$ be the subcategory of $C$-coinvariants of $\mathcal{D}$. If there exists a convolution invertible collection $\Phi=\{\Phi_{XY}:C\longrightarrow Hom_\mathcal{D}(X,Y)\}_{X,Y\in Ob(\mathcal{D})}$ of right
$C$-comodule maps, then the following are equivalent:

(i) $\mathcal{D}$ is a $C$-Galois extension of $\mathcal{E}$.

(ii) There exists a right-right entwining structure $(\mathcal{D},C,\psi)$ such that ${\bf h}_Y$ is an object in ${\mathscr{M}(\psi)}_\mathcal{D}^C$ for every $Y\in Ob(\mathcal{D})$ with its canonical $\mathcal{D}$-module structure and right $C$-coactions $\{\rho_{XY}\}_{X \in Ob(\mathcal{D})}$.

(iii) For any $f\in Hom_\mathcal{D}(X,Y)$, the morphism $f_0\circ\Phi'_{ZX}(f_1) \in Hom_\mathcal{E}(Z, Y)$ for every $Z \in Ob(\mathcal{D})$, where $\Phi'$ is the convolution  inverse of $\Phi$. 
\end{theorem}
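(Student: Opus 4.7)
The strategy is to establish cyclic implications (i) $\Rightarrow$ (ii) $\Rightarrow$ (iii) $\Rightarrow$ (i), from which the final equivalence of module categories then follows. The implication (i) $\Rightarrow$ (ii) is immediate from Theorem \ref{Gal-ent}, so the work lies in the remaining arrows and the last statement.

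For (iii) $\Rightarrow$ (i), I would construct an explicit inverse of the canonical map, guided by the classical cleft-extension formula:
\[
can_{XY}^{-1}:Hom_\mathcal{D}(X,Y)\otimes C\longrightarrow {\bf h}_Y\otimes_\mathcal{E} {_X{\bf h}},\qquad f\otimes c\mapsto f\,\Phi'_{XX}(c_1)\otimes_\mathcal{E}\Phi_{XX}(c_2).
\]
The identity $can_{XY}\circ can_{XY}^{-1}=id$ reduces, after using $C$-colinearity of $\Phi_{XX}$ and coassociativity of $\Delta_C$, to the convolution identity $\Phi'_{XX}*\Phi_{XX}=\varepsilon_C\cdot id_X$ together with the counit axiom. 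For $can_{XY}^{-1}\circ can_{XY}=id$, an element $p\otimes_\mathcal{E} q$ of ${\bf h}_Y(W)\otimes_\mathcal{E} {_X{\bf h}}(W)$ is sent to $p\,q_0\,\Phi'_{XX}((q_1)_1)\otimes_\mathcal{E}\Phi_{XX}((q_1)_2)$; using coassociativity of $\rho$ on $q$ to rewrite $q_0\,\Phi'_{XX}((q_1)_1)$ as $(q_0)_0\,\Phi'_{XX}((q_0)_1)$, hypothesis (iii) (applied to the morphism $q_0$) places this factor in $Hom_\mathcal{E}(X,W)$. Moving it across $\otimes_\mathcal{E}$ and collapsing the remaining convolution $\Phi'_{XX}((q_1)_1)\Phi_{XX}((q_1)_2)$ via convolution inverse and counit then recovers $p\otimes_\mathcal{E} q$.

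For (ii) $\Rightarrow$ (iii), fix $f\in Hom_\mathcal{D}(X,Y)$ and set $g:=f_0\,\Phi'_{ZX}(f_1)\in Hom_\mathcal{D}(Z,Y)$. To verify $g\in Hom_\mathcal{E}(Z,Y)$, I would compute $\rho_{WY}(g\cdot a)$ for arbitrary $a:W\to Z$ by leveraging the entwined module structures on both ${\bf h}_Y$ and ${\bf h}_X$. Specifically, applying \eqref{comp 2} to ${\bf h}_X$ at the composite $\Phi'_{ZX}(f_1)\cdot a$ and combining with the $C$-colinearity of $\Phi'_{ZX}$ gives $\rho(\Phi'_{ZX}(f_1)\,a)=\Phi'_{ZX}((f_1)_1)\,a_\psi\otimes(f_1)_2^{\psi}$; feeding this into \eqref{comp 2} for ${\bf h}_Y$ applied to $f_0\cdot(\Phi'_{ZX}(f_1)\,a)$, followed by repeated use of axioms \eqref{eq 6.1}--\eqref{eq 6.4} and coassociativity of $\rho$ on $f$, allows the resulting expression to be rewritten as $g\,a_0\otimes a_1$, which is precisely the coinvariance condition defining $Hom_\mathcal{E}(Z,Y)$.

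The equivalence ${\mathscr M(\psi)}_\mathcal{D}^C\simeq Mod\text{-}\mathcal{E}$ under these conditions is obtained by chaining Proposition \ref{comodovercoring}, which identifies ${\mathscr M(\psi)}_\mathcal{D}^C\cong Comod\text{-}(h\otimes C)$, with Lemma \ref{L4.19cop}, giving $h\otimes C\cong h\otimes_\mathcal{E} h$ as $\mathcal D$-corings, and then invoking the canonical adjunction between $Mod\text{-}\mathcal{E}$ and $Comod\text{-}(h\otimes_\mathcal{E} h)$ given by induction $\mathcal N\mapsto\mathcal N\otimes_\mathcal{E} h$ (with the coaction coming from $id\otimes\Delta_{h\otimes_\mathcal{E} h}$) and coinvariants $\mathcal M\mapsto\mathcal M^{co(h\otimes_\mathcal{E} h)}$; the cleaving $\Phi$ supplies the descent data needed to promote this adjunction to an equivalence, in analogy with the classical cleft Hopf-Galois case. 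The most delicate step is (ii) $\Rightarrow$ (iii), whose content is the symbolic interplay between $\psi$, repeated Sweedler coactions, and the coproduct on $C$; once the correct sequence of rewrites is identified, the computation is dictated entirely by \eqref{eq 6.1}--\eqref{eq 6.4} and the $C$-colinearity of $\Phi$ and $\Phi'$.
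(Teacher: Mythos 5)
Your implications (i) $\Rightarrow$ (ii) and (iii) $\Rightarrow$ (i) are essentially the paper's argument: the former is Theorem \ref{Gal-ent}, and your formula $f\otimes c\mapsto f\,\Phi'_{XX}(c_1)\otimes_\mathcal{E}\Phi_{XX}(c_2)$ is the same inverse as the paper's \eqref{lsteq} up to the harmless choice of placing the correction at the source object $X$ rather than at $Y$; your use of (iii) to slide the coinvariant factor across $\otimes_\mathcal{E}$ is exactly what the paper does. (Your last paragraph on the equivalence of module categories concerns Theorem \ref{entlast}, not the statement under review, and is only a sketch; the paper needs the flatness results of Lemma \ref{Cor4.15j} there.)

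The genuine gap is in (ii) $\Rightarrow$ (iii). First, the step ``feeding $\rho(\Phi'_{ZX}(f_1)a)$ into \eqref{comp 2} for ${\bf h}_Y$'' does not parse: the compatibility \eqref{comp 2} applied to $f_0\cdot\bigl(\Phi'_{ZX}(f_1)a\bigr)$ consumes $\psi\bigl((f_0)_1\otimes \Phi'_{ZX}(f_1)a\bigr)$, not the coaction of the composite, so your Step 1 output cannot be inserted there. Second, and more seriously, after the legitimate moves (\eqref{comp 2} for ${\bf h}_Y$, then \eqref{eq 6.1}, then coassociativity of $\rho$ on $f$) you are left with
\begin{equation*}
\rho_{WY}(ga)\;=\;f_0\,\bigl(\Phi'_{ZX}((f_1)_2)\bigr)_\psi\,a_\psi\otimes\bigl(((f_1)_1)^{\psi}\bigr)^{\psi},
\end{equation*}
while the target $g\,a_0\otimes a_1$ equals, by \eqref{eq4.6}, $f_0\,\Phi'_{ZX}(f_1)\,({id_Z})_0\,a_\psi\otimes\bigl(({id_Z})_1\bigr)^{\psi}$. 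Matching these requires precisely the identity $\psi_{ZX}\bigl(c_1\otimes\Phi'_{ZX}(c_2)\bigr)=\Phi'_{ZX}(c)\circ({id_Z})_0\otimes({id_Z})_1$, which is the paper's key equality \eqref{4.9jg}. This is not a formal consequence of the entwining axioms \eqref{eq 6.1}--\eqref{eq 6.4}, coassociativity, and $C$-colinearity of $\Phi'$: colinearity of $\Phi'$ combined with \eqref{comp 2} only yields the ``wrong-sided'' relation $\Phi'_{ZX}(c_1)\otimes c_2=({id_X})_0\,(\Phi'_{ZX}(c))_\psi\otimes(({id_X})_1)^{\psi}$, with the identity of the target rather than of the source. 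Passing from one to the other is where the convolution invertibility of $\Phi$ enters (it plays the role of the antipode in the classical cleft case), and the paper's proof of \eqref{4.9jg} uses both identities $\Phi*\Phi'=\varepsilon_C\cdot id$ and $\Phi'*\Phi=\varepsilon_C\cdot id$ together with the colinearity of $\Phi$ and the entwined-module property of the representable modules. Your sketch never invokes $\Phi$ or its convolution inverse property in this implication, so the announced ``repeated use of the axioms'' cannot close it; you need to state and prove an analogue of \eqref{4.9jg} before the rewrite to $g\,a_0\otimes a_1$ is possible.
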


\begin{proof}
By Theorem \ref{Gal-ent}, we have $(i)\Rightarrow (ii)$. To prove $(ii)\Rightarrow (iii)$, we will use the equality 
\begin{equation}\label{4.9jg}
(_X{\bf h}\otimes C)\left(\Phi'_{XY}(c)\right)(\rho_{XX}(id_X))=\psi_{XY}(c_1\otimes \Phi'_{XY}(c_2))
\end{equation}
 for any $c\in C$. We first give a proof of this. Since ${\bf h}_Y\in {\mathscr{M}(\psi)}_\mathcal{D}^C$, we have  
\begin{equation}\label{eq4.6}
\begin{array}{ll}
\rho_{XY}(f)=\rho_{XY}({\bf h}_Y(f)(id_Y))&={\bf h}_Y(f_\psi)({id_Y}_0)\otimes {{id_Y}_1}^\psi\\ & ={id_Y}_0f_\psi \otimes  {{id_Y}_1}^\psi=(_X{\bf{h}}\otimes C)({id_Y}_0)(\psi_{XY}({id_Y}_1\otimes f))\\
\end{array}
\end{equation}
for any $f\in Hom_\mathcal{D}(X,Y)$. Also, for any $c\in C$, we have 
\begin{equation}\label{eq.4.7}
\begin{array}{ll}
(_X{\bf{h}}\otimes C)({id_X}_0)(\psi_{XX}({id_X}_1\otimes \Phi_{YX}(c_1)\Phi'_{XY}(c_2)))&=(_X{\bf{h}}\otimes C)({id_X}_0)(\psi_{XX}({id_X}_1\otimes \varepsilon_C(c)id_X))\\ 
&=\varepsilon_C(c){id_X}_0\otimes {id_X}_1\\
\end{array}
\end{equation}
Now, using \eqref{eq.4.7}, we have
\begin{equation*}
\begin{array}{ll}
&(_X{\bf h}\otimes C)\left(\Phi'_{XY}(c)\right)(\rho_{XX}(id_X))\\
&= (_X{\bf h}\otimes C)\left(\Phi'_{XY}(c_1)\right)(\varepsilon_C(c_2){id_X}_0\otimes {id_X}_1)\\
&=(_X{\bf h}\otimes C)\left(\Phi'_{XY}(c_1)\right)\left((_X{\bf{h}}\otimes C)({id_X}_0)(\psi_{XX}({id_X}_1\otimes \Phi_{YX}(c_2)\Phi'_{XY}(c_3)))\right)~~~~~~~~~~~~~~~~~ (\textnormal{using~} \eqref{eq.4.7})\\
&=(_X{\bf h}\otimes C)\left(\Phi'_{XY}(c_1)\right)\left((_X{\bf{h}}\otimes C)({id_X}_0)\left((\Phi_{YX}(c_2))_\psi(\Phi'_{XY}(c_3))_\psi \otimes {{{id_X}_1}^\psi}^\psi\right)\right)~~~~~~~~~~~~~(\textnormal{using~} \eqref{eq 6.1})\\
&=(_X{\bf h}\otimes C)\left(\Phi'_{XY}(c_1)\right)\left({id_X}_0\circ(\Phi_{YX}(c_2))_\psi(\Phi'_{XY}(c_3))_\psi \otimes {{{id_X}_1}^\psi}^\psi\right)\\
&=(_X{\bf h}\otimes C)\left(\Phi'_{XY}(c_1)\right)\left(({\bf h}_X\otimes C)(\Phi'_{XY}(c_3))({id_X}_0 \circ (\Phi_{YX}(c_2))_\psi\otimes {{id_X}_1}^\psi)\right)\\
&=(_X{\bf h}\otimes C)\left(\Phi'_{XY}(c_1)\right)\left(({\bf h}_X\otimes C)(\Phi'_{XY}(c_3))\Big((_Y{\bf{h}}\otimes C)({id_X}_0)\left(\psi_{YX}\left({id_X}_1\otimes \Phi_{YX}(c_2)\right)\right)\Big)\right)\\
&=(_X{\bf h}\otimes C)\left(\Phi'_{XY}(c_1)\right)\left(({\bf h}_X\otimes C)(\Phi'_{XY}(c_3))\left(\rho_{YX}(\Phi_{YX}(c_2))\right)\right)~~~~~~~~~~~~~~~~~~~~~~~~~~~~~~~~(\textnormal{using~} \eqref{eq4.6})\\
&=(_X{\bf h}\otimes C)\left(\Phi'_{XY}(c_1)\right)\left(({\bf h}_X\otimes C)(\Phi'_{XY}(c_4))(\Phi_{YX}(c_2)\otimes c_3)\right)~~~~~~~~~~~~~~~~~~~~(\textnormal{since~} \Phi_{YX} \textnormal{~is~} C\textnormal{-colinear})\\
&=\Phi'_{XY}(c_1)\Phi_{YX}(c_2){\left(\Phi'_{XY}(c_4)\right)}_\psi\otimes {c_3}^\psi\\
&=\varepsilon_C(c_1)id_Y{\left(\Phi'_{XY}(c_3)\right)}_\psi\otimes {c_2}^\psi\\
&=({\bf h}_Y\otimes C)(\Phi'_{XY}(c_3))(\varepsilon_C(c_1)id_Y\otimes c_2)\\
&=({\bf h}_Y\otimes C)(\Phi'_{XY}(c_2))(id_Y\otimes c_1)\\
&=(\Phi'_{XY}(c_2))_\psi \otimes {c_1}^\psi=\psi_{XY}(c_1\otimes \Phi'_{XY}(c_2))
\end{array}
\end{equation*}
This proves the equality \eqref{4.9jg}. 

\smallskip
For any $f\in Hom_\mathcal{D}(X,Y)$, consider the morphism $f_0\circ\Phi'_{ZX}(f_1):Z \longrightarrow Y$ in $\mathcal{D}$. Then $f_0\circ\Phi'_{ZX}(f_1)$ induces a morphism of right $\mathcal{D}$-modules ${\bf{h}}_Z \longrightarrow {\bf{h}}_Y$ which we denote by $\tilde{f}$.
We now verify that the map $\tilde{f}(X'):{\bf{h}}_Z(X') \longrightarrow {\bf{h}}_Y(X')$ is right $C$-colinear for each $X' \in Ob(\mathcal{D})$. Since 
${\bf{h}}_Y$ is an object in ${\mathscr{M}(\psi)}_\mathcal{D}^C$ for every $Y\in Ob(\mathcal{D})$, the following diagram commutes for any $g \in Hom_\mathcal{D}(X',Z)$:
\begin{equation}\label{diag@}
\begin{CD}
{\bf h}_Y(Z) @>\rho_{XY}>> {\bf h}_Y(Z) \otimes C \\
@ V{\bf h}_Y(g)VV        @VV ({\bf h}_Y \otimes C)(g) V\\
{\bf h}_Y(X') @>\rho_{X'Y}>> {\bf h}_Y(X') \otimes C
\end{CD}
\end{equation}
Thus, we have
\begin{equation*}
\begin{array}{lll}
\rho_{X'Y}(\tilde{f}(X')(g))&=\rho_{X'Y}\left(f_0\circ\Phi'_{ZX}(f_1)\circ g\right)=\rho_{X'Y}\left({\bf{h}}_Y(\Phi'_{ZX}(f_1)\circ g)(f_0)\right)\\
&=({\bf{h}}_Y \otimes C)(\Phi'_{ZX}(f_1) \circ g)(\rho_{XY}(f_0))& (\text{using}~ \eqref{diag@})\\
&=({\bf{h}}_Y \otimes C)(g)(({\bf{h}}_Y \otimes C)(\Phi'_{ZX}(f_1))(f_{00} \otimes f_{01}))\\
&=({\bf{h}}_Y \otimes C)(g)\left(f_{0}{\left(\Phi'_{ZX}(f_{12})\right)}_\psi \otimes {f_{11}}^\psi\right)\\
&=({\bf{h}}_Y \otimes C)(g)\big((_Z{\bf h}\otimes C)(f_0)\left(\psi_{ZX}(f_{11} \otimes \Phi'_{ZX}(f_{12}))\right)\big)\\
&=({\bf{h}}_Y \otimes C)(g)\big((_Z{\bf h}\otimes C)(f_0)(_Z{\bf h}\otimes C)\left((\Phi'_{ZX})(f_1)\right)(\rho_{ZZ}(id_Z))\big)& (\text{using } \eqref{4.9jg})\\
&=({\bf{h}}_Y \otimes C)(g)\left((_Z{\bf h}\otimes C)(f_0 \circ \Phi'_{ZX}(f_1))(\rho_{ZZ}(id_Z))\right)\\
&=(h \otimes C)\left(g,f_0 \circ \Phi'_{ZX}(f_1)\right)(\rho_{ZZ}(id_Z))\\
&=(_{X'}{\bf h}\otimes C)\left(f_0 \circ \Phi'_{ZX}(f_1)\right)\left(({\bf{h}}_Z \otimes C)(g)(\rho_{ZZ}(id_Z))\right)\\
&=(\tilde{f}(X') \otimes id_C)\left(({\bf{h}}_Z \otimes C)(g)(\rho_{ZZ}(id_Z))\right)\\
&=(\tilde{f}(X') \otimes id_C)(\rho_{X'Z}(g))
\end{array}
\end{equation*}
Therefore, $\tilde{f} \in Hom^C_{Mod\text{-}\mathcal{D}}({{\bf h}_Z},{{\bf h}_Y})=Hom_\mathcal{E}(Z,Y)$.

\smallskip
For $(iii) \Rightarrow (i)$, we start by showing that $can_{XY}:{\bf h}_Y \otimes_{\mathcal{E}} {_X{\bf h}}\longrightarrow Hom_{\mathcal{D}}(X,Y) \otimes C$ is an isomorphism for each $X,Y \in Ob(\mathcal{D})$. We define $can^{-1}_{XY}:Hom_{\mathcal{D}}(X,Y) \otimes C \longrightarrow {\bf h}_Y \otimes_{\mathcal{E}} {_X{\bf h}}$ by 
\begin{equation}\label{lsteq}
can^{-1}_{XY}(f \otimes c):= f \circ \Phi'_{YX}(c_1) \otimes_\mathcal{E} \Phi_{XY}(c_2)\in {\bf h}_Y \otimes_{\mathcal{E}} {_X{\bf h}}
\end{equation}
for any $f \in Hom_{\mathcal{D}}(X,Y)$ and $c \in C$. Then, using the $C$-colinearity of $\Phi_{XY}$, we have
\begin{equation*}
(can_{XY} \circ can^{-1}_{XY})(f \otimes c)=f \circ \Phi'_{YX}(c_1) \circ \left( \Phi_{XY}(c_2)\right)_0 \otimes  \left(\Phi_{XY}(c_2)\right)_1=f \circ \Phi'_{YX}(c_1) \circ \Phi_{XY}(c_2) \otimes c_3=f \otimes c
\end{equation*}
On the other hand,  by assumption, we obtain
\begin{equation*}
(can^{-1}_{XY} \circ can_{XY})(g \otimes_\mathcal{E} g')=gg'_0\Phi'_{YX}(g'_{11}) \otimes_\mathcal{E} \Phi_{XY}(g'_{12})=g \otimes_\mathcal{E} g'_0\Phi'_{YX}(g'_{11}) \Phi_{XY}(g'_{12})=g \otimes_\mathcal{E} g'
\end{equation*}
for any $g \otimes_\mathcal{E} g' \in {\bf h}_Y \otimes_{\mathcal{E}} {_X{\bf h}}$. From the definition in \eqref{lsteq}, it is clear that setting $can_X^{-1}(Y):=can_{XY}^{-1}$
for each $Y\in Ob(\mathcal D)$ determines a morphism in $_\mathcal{D}\mathscr{M}^C$ which is inverse to $can_X$. This completes the proof.
\end{proof}

\begin{example}
Let $H$ be a Hopf algebra over $K$. If $\mathcal{C}$ is a left $H$-module category, then  the smash product category $\mathcal{C} \# H$ (see \cite{CiSo}) is a right co-$H$-category with the right $H$-coaction determined by $f \# h \mapsto f \# h_1 \otimes h_2$ on each $Hom_{\mathcal{C} \# H}(X,Y)=Hom_{\mathcal C}(X,Y)\otimes H$. By definition, we know that  $Ob(\mathcal{C})=Ob(\mathcal{C} \# H)$. It is easy to see that $Hom_\mathcal{C}(X,Y)=Hom_{\mathcal{C} \#H}(X,Y)^{coH}$. 

\smallskip We claim that 
$\mathcal C\# H$ is an $H$-Galois extension of $\mathcal C$.  We first observe that for any $f\#h \in Hom_{\mathcal{C}\#H}(Z,Y)$ and $f'\#h' \in Hom_{\mathcal{C}\#H}(X,Z)$, we have 
\begin{equation*}
(f\#h) \otimes_\mathcal{C} (f'\#h')=(f\#h)(f'\#1_H) \otimes_\mathcal{C} (id_X \#h')
\end{equation*}
Thus, $can_{XY}:{\bf h}_Y \otimes_{\mathcal{C}} {_X{\bf h}} \longrightarrow Hom_{\mathcal{C}\#H}(X,Y) \otimes H$ has the following form
\begin{equation*}
can_{XY}\left((f\#h) \otimes_\mathcal{C} (f'\#h')\right)=(f\#h)(f'\#1_H)(id_X \#h_1') \otimes h_2'
\end{equation*}
 for each $X,Y \in Ob(\mathcal{C}\#H)$,
Then, it may be verified that for each $X,Y \in Ob(\mathcal{C}\#H)$, $can_{XY}$ is an isomorphism with inverse  $can_{XY}^{-1}:Hom_{\mathcal{C}\#H}(X,Y) \otimes H\longrightarrow {\bf h}_Y \otimes_{\mathcal{C}} {_X{\bf h}} $ determined by
\begin{equation*}
can^{-1}_{XY}\left((g\#k) \otimes k'\right):=(g\#k)(id_X \#S(k_1')) \otimes_\mathcal{C} (id_X \#k_2')
\end{equation*}
\end{example}

\begin{prop}\label{DEC}
Let $\mathcal{D}$ be a $C$-Galois extension of $\mathcal{E}$. If there exists a convolution invertible collection $\Phi=\{\Phi_{XY}:C\longrightarrow Hom_\mathcal{D}(X,Y)\}_{X,Y\in Ob(\mathcal{D})}$ of right
$C$-comodule maps, then 
\begin{equation*}
Hom_\mathcal{D}(X,-)\cong Hom_\mathcal{E}(X,-) \otimes C \in {_\mathcal{E}}\mathscr{M}^C 
\end{equation*}
for each $X \in Ob(\mathcal{E})=Ob(\mathcal{D})$.
\end{prop}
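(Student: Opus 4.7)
The plan is to construct explicit mutually inverse morphisms in ${_\mathcal{E}}\mathscr{M}^C$ between $Hom_\mathcal{D}(X,-)$ and $Hom_\mathcal{E}(X,-)\otimes C$, modeled on the classical isomorphism $A\cong B\otimes H$ for a cleft $H$-Galois extension $B\subseteq A$. For each $Y\in Ob(\mathcal{D})$ I would set
\begin{equation*}
\alpha_Y:Hom_\mathcal{D}(X,Y)\longrightarrow Hom_\mathcal{E}(X,Y)\otimes C,\qquad f\mapsto f_0\circ\Phi'_{XX}(f_{11})\otimes f_{12},
\end{equation*}
\begin{equation*}
\beta_Y:Hom_\mathcal{E}(X,Y)\otimes C\longrightarrow Hom_\mathcal{D}(X,Y),\qquad g\otimes c\mapsto g\circ\Phi_{XX}(c),
\end{equation*}
where $\rho_{XY}(f)=f_0\otimes f_1$ and $\Delta_C(f_1)=f_{11}\otimes f_{12}$.

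The first task would be to verify that $\alpha_Y$ actually lands in $Hom_\mathcal{E}(X,Y)\otimes C$. By coassociativity of $\rho_{XY}$, the first tensor factor of $\alpha_Y(f)$ coincides with $f_{00}\circ\Phi'_{XX}(f_{01})$, where $\rho_{XY}(f_0)=f_{00}\otimes f_{01}$. Since $\mathcal{D}$ is a $C$-Galois extension of $\mathcal{E}$ and the convolution invertible $\Phi$ is given by hypothesis, the implication (i) $\Rightarrow$ (iii) of Theorem \ref{th4.11} applies to $f_0$ and yields $f_{00}\circ\Phi'_{XX}(f_{01})\in Hom_\mathcal{E}(X,Y)$ as required.

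Next I would confirm that $\alpha_Y$ and $\beta_Y$ are mutually inverse. For $\alpha_Y\circ\beta_Y$, the key input is that each $g\in Hom_\mathcal{E}(X,Y)$ gives a right $C$-colinear left-composition map; combined with the $C$-colinearity of $\Phi_{XX}$, this produces $\rho_{XY}(g\circ\Phi_{XX}(c))=g\circ\Phi_{XX}(c_1)\otimes c_2$, and the convolution identity $\Phi'_{XX}\ast\Phi_{XX}=\varepsilon_C\cdot id_X$ then telescopes the expression back to $g\otimes c$. The opposite composition $\beta_Y\circ\alpha_Y$ reduces directly to $\Phi_{XX}\ast\Phi'_{XX}=\varepsilon_C\cdot id_X$ followed by counitality of $\rho_{XY}$. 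It then remains to check that $\alpha$ and $\beta$ are morphisms of functors $\mathcal{E}\longrightarrow Vect_K$ and are right $C$-colinear at each object: left $\mathcal{E}$-linearity in $Y$ follows because left composition by $h\in Hom_\mathcal{E}(Y,Y')$ commutes with the coaction (i.e.\ $\rho_{XY'}(h\circ f)=h\circ f_0\otimes f_1$), while right $C$-colinearity with respect to the coaction $id\otimes\Delta_C$ on $Hom_\mathcal{E}(X,-)\otimes C$ is a direct consequence of coassociativity of $\Delta_C$.

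I expect the main obstacle to be not conceptual but notational: one has to keep precise track of the iterated applications of $\rho$ and $\Delta_C$, and in several places tacitly re-bracket Sweedler indices via coassociativity in order to match the form of the hypothesis of Theorem \ref{th4.11}(iii). Beyond these bookkeeping manipulations, the argument needs only the equivalence in Theorem \ref{th4.11} and the convolution identities $\Phi_{XX}\ast\Phi'_{XX}=\Phi'_{XX}\ast\Phi_{XX}=\varepsilon_C\cdot id_X$.
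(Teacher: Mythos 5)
Your proposal is correct and follows essentially the same route as the paper: your $\alpha_Y$ and $\beta_Y$ are exactly the paper's $\eta(Y)$ and $\zeta(Y)$ (your $f_0\circ\Phi'_{XX}(f_{11})\otimes f_{12}$ is the paper's $f_0\circ\Phi'_{XX}(f_1)\otimes f_2$ after re-bracketing Sweedler indices), with well-definedness via Theorem \ref{th4.11}(i)$\Rightarrow$(iii), $\mathcal{E}$-linearity via the defining property of $Hom_\mathcal{E}$, and colinearity via coassociativity. You even verify the mutual-inverse computations explicitly, which the paper leaves to the reader; the only cosmetic slip is citing the two convolution identities in swapped order, which is immaterial since both hold by hypothesis.
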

\begin{proof}
Let $\Phi'$ be the convolution inverse of $\Phi$. Given $f\in Hom_{\mathcal D}(X,Y)$, it follows from Theorem \ref{th4.11} that $f_0 \circ \Phi'_{ZX}(f_1) \in Hom_\mathcal{E}(Z,Y)$ for every $Z \in Ob(\mathcal{D})$. We define 
\begin{equation*}
\eta:Hom_\mathcal{D}(X,-)\longrightarrow Hom_\mathcal{E}(X,-) \otimes C \qquad \eta(Y)(f):=f_0 \circ \Phi'_{XX}(f_1) \otimes f_2
\end{equation*}
Using Definition \ref{coinv*}, we see that $\rho_{XY'}(gf)=gf_0 \otimes f_1$ for any $g \in Hom_\mathcal{E}(Y,Y')$. Hence, we have
\begin{equation}\label{eq4.16l}
(gf)_0 \otimes (gf)_1 \otimes (gf)_2 =(id \otimes \Delta_C)((gf)_0 \otimes (gf)_1)=(\rho_{XY'} \otimes id_C)(gf_0 \otimes f_1)=gf_0 \otimes f_1 \otimes f_2
\end{equation}
Using \eqref{eq4.16l}, it may be easily seen that $\eta$ is a morphism of left $\mathcal{E}$-modules. Using the coassociativity of the $C$-coactions $\{\rho_{XY}\}_{X,Y \in Ob(\mathcal{D})}$, it is also clear that $\eta$ is objectwise $C$-colinear. Therefore, $\eta$ is a morphism in ${_\mathcal{E}}\mathscr{M}^C$. 

Conversely, we define $\zeta:Hom_\mathcal{E}(X,-) \otimes C \longrightarrow Hom_\mathcal{D}(X,-)$ given by $\zeta(Y)(f' \otimes c):=f' \circ \Phi_{XX}(c)$ for $Y \in Ob(\mathcal{E})$. It is immediate that $\zeta$ is a morphism of left $\mathcal{E}$-modules. Moreover,
\begin{equation*}
\rho_{XY}(f' \circ \Phi_{XX}(c))=(f' \circ \Phi_{XX}(c))_0 \otimes (f' \circ \Phi_{XX}(c))_1=f'\circ  (\Phi_{XX}(c))_0 \otimes  (\Phi_{XX}(c))_1=f' \circ  \Phi_{XX}(c_1) \otimes c_2
\end{equation*} where the last equality follows from the fact that $\Phi_{XX}$ is $C$-colinear.
It follows that $\zeta(Y)$ is $C$-colinear for each $Y \in Ob(\mathcal{E})$ and hence $\zeta$ is a morphism in ${_\mathcal{E}}\mathscr{M}^C$. It may be verified that $\zeta$ is the inverse of $\eta$.
\end{proof}

\begin{definition}\label{grouplike}
Let $\mathcal{D}$ be a small $K$-linear category and $\mathcal{E}$ be a $K$-subcategory. Let $(\mathscr{C},\Delta_\mathscr{C},\varepsilon_\mathscr{C})$ be a $\mathcal{D}$-coring. Then, a collection 
\begin{equation*}
G(\mathscr{C},\mathcal{E})=\{s_{X} \in \mathscr{C}(X,X)\}_{X \in Ob(\mathcal{E})}
\end{equation*}
is said to be group-like for  $\mathscr C$ with respect to $\mathcal E$ if 
\begin{itemize}
\item[(i)] $\Delta_\mathscr{C}(X,X)(s_{X})=s_{X} \otimes s_{X}$  and $\varepsilon_\mathscr{C}(s_X)=id_X$ for any $X \in Ob(\mathcal{E})$,
\item[(ii)] For any $f\in Hom_{\mathcal E}(X,Y)$, we have
\begin{equation}\label{glk}
f \cdot s_X=\mathscr{C}(-,f)(X)(s_X)= \mathscr{C}(f,-)(Y)(s_Y)=s_Y \cdot f
\end{equation} 
\end{itemize}
\end{definition}

\begin{example}
(i) If $\mathcal E$ is a subcategory of $\mathcal{D}$, then the collection $\{id_X \otimes id_X \in {\bf h}_X \otimes_\mathcal{E} {_X}{\bf h}\}_{X \in Ob(\mathcal{E})}$ is group-like for $h \otimes_\mathcal{E} h$ with respect to $\mathcal{E}$.

\smallskip
(ii) Let $\mathcal{D}$ be a $C$-Galois extension of $\mathcal{E}$. Then $h \otimes C$ is a $\mathcal{D}$-coring (by Theorem \ref{Gal-ent} and Lemma \ref{entcoring}) and the collection $\{{id_X}_0 \otimes {id_X}_1 \in Hom_\mathcal{D}(X,X) \otimes C\}_{X \in Ob(\mathcal{E})}$ is group-like for $h \otimes C$ with respect to $\mathcal{E}$. Since ${\bf h}_Y \in  {\mathscr{M}(\psi)}_\mathcal{D}^C$ for each $Y \in Ob(\mathcal{D})$, we have  
\begin{equation*}
\rho_{XY}(f)=\rho_{XY}({\bf h}_Y(f)(id_Y))={\bf h}_Y(f_\psi)({id_Y}_0)\otimes {{id_Y}_1}^\psi ={id_Y}_0f_\psi \otimes  {{id_Y}_1}^\psi=({id_Y}_0\otimes {id_Y}_1)\cdot f
\end{equation*}
for any $f \in Hom_\mathcal{D}(X,Y)$. But, if $f \in Hom_\mathcal{E}(X,Y)$, then we also have
\begin{equation*}
\rho_{XY}(f)=\rho_{XY}(f\circ id_X)=f \cdot \rho_{XX}(id_X)=f\circ {id_X}_0 \otimes {id_X}_1=f\cdot ( {id_X}_0 \otimes {id_X}_1)
\end{equation*}
\end{example}
 
 \begin{prop}\label{lem4.16v}
Let $\mathcal{E} \subseteq \mathcal{D}$ be a subcategory and $\mathscr{C}$ be a $\mathcal{D}$-coring. Let $\{s_X\}_{X \in Ob(\mathcal{E})}$ be a group-like collection for $\mathscr{C}$ with respect to $\mathcal{E}$. For a right $\mathscr{C}$-comodule $(\mathcal{N},\rho_{\mathcal N})$, the 
$\mathcal{E}$-submodule $\mathcal{N}^{co\mathscr{C}}:\mathcal{E}^{op} \longrightarrow Vect_K$ of coinvariants of $\mathcal{N}$ is given by:
\begin{equation*}
\begin{array}{l}
\mathcal{N}^{co\mathscr{C}}(X):=\{n \in \mathcal{N}(X) ~|~ \rho_{\mathcal N}(X)(n)=n \otimes s_{X}\}\\
\mathcal{N}^{co\mathscr{C}}(f)(n'):=\mathcal{N}(f)(n')\\
\end{array}
\end{equation*}
for any $X \in Ob(\mathcal{E})$, $f \in Hom_\mathcal{E}(X,Y)$ and $n' \in \mathcal{N}^{co\mathscr{C}}(Y)$. 
\end{prop}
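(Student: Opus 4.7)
The plan is to show that $\mathcal{N}^{co\mathscr{C}}$ is a well-defined $\mathcal{E}$-submodule of the restriction of $\mathcal{N}$ to $\mathcal{E}$. Since each $\mathcal{N}^{co\mathscr{C}}(X)$ is defined as the equalizer of two $K$-linear maps $\rho_{\mathcal N}(X)$ and $({-}\otimes s_X)$, it is automatically a $K$-subspace of $\mathcal{N}(X)$. The nontrivial part is to verify that for every morphism $f \in Hom_\mathcal{E}(X,Y)$, the image of $\mathcal{N}(f):\mathcal{N}(Y)\longrightarrow \mathcal{N}(X)$ restricted to $\mathcal{N}^{co\mathscr{C}}(Y)$ lies in $\mathcal{N}^{co\mathscr{C}}(X)$. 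Once this is established, functoriality of $\mathcal{N}^{co\mathscr{C}}$ on $\mathcal{E}^{op}$ is inherited directly from $\mathcal{N}$.

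Fix $f \in Hom_\mathcal{E}(X,Y)$ and $n\in \mathcal{N}^{co\mathscr{C}}(Y)$, so that $\rho_\mathcal{N}(Y)(n)=n\otimes s_Y$. The key calculation is to compute $\rho_\mathcal{N}(X)(\mathcal{N}(f)(n))$. Since $\rho_\mathcal{N}:\mathcal{N}\longrightarrow \mathcal{N}\otimes_\mathcal{D}\mathscr{C}$ is a morphism of right $\mathcal{D}$-modules, the naturality square gives
\begin{equation*}
\rho_\mathcal{N}(X)\big(\mathcal{N}(f)(n)\big)=(\mathcal{N}\otimes_\mathcal{D}\mathscr{C})(f)\big(\rho_\mathcal{N}(Y)(n)\big)=(\mathcal{N}\otimes_\mathcal{D}\mathscr{C})(f)(n\otimes s_Y).
\end{equation*}
By the definition of the right $\mathcal{D}$-action on the tensor product $\mathcal{N}\otimes_\mathcal{D}\mathscr{C}$ (acting on the $\mathscr{C}$ slot through the contravariant variable of $\mathscr{C}$), this equals $n\otimes \mathscr{C}(f,-)(Y)(s_Y)$. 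Applying the group-like condition \eqref{glk} from Definition \ref{grouplike}, namely $\mathscr{C}(f,-)(Y)(s_Y)=\mathscr{C}(-,f)(X)(s_X)=f\cdot s_X$, and then sliding the action of $f$ across the $\otimes_\mathcal{D}$ from the $\mathscr{C}$ side to the $\mathcal{N}$ side, we obtain
\begin{equation*}
n\otimes \mathscr{C}(-,f)(X)(s_X)=\mathcal{N}(f)(n)\otimes s_X.
\end{equation*}
Therefore $\rho_\mathcal{N}(X)(\mathcal{N}(f)(n))=\mathcal{N}(f)(n)\otimes s_X$, which is precisely the condition defining $\mathcal{N}^{co\mathscr{C}}(X)$.

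The only mildly delicate step is the ``sliding'' identification $n\otimes (f\cdot s_X)=\mathcal{N}(f)(n)\otimes s_X$ inside the coequalizer defining $\mathcal{N}\otimes_\mathcal{D}\mathscr{C}$ at the object $X$; this is simply a relation of the $\mathcal{D}$-balanced tensor product applied to the morphism $f: X\to Y$, but it needs the group-like identity \eqref{glk} to rewrite $\mathscr{C}(f,-)(Y)(s_Y)$ as $\mathscr{C}(-,f)(X)(s_X)$ so that the action appears on the right variable. No further verification is needed: functoriality $\mathcal{N}^{co\mathscr{C}}(gf)=\mathcal{N}^{co\mathscr{C}}(f)\mathcal{N}^{co\mathscr{C}}(g)$ and $\mathcal{N}^{co\mathscr{C}}(id_X)=id_{\mathcal{N}^{co\mathscr{C}}(X)}$ follow immediately from the corresponding identities for $\mathcal{N}$, completing the proof.
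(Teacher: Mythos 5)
Your proposal is correct and follows essentially the same argument as the paper: use the fact that $\rho_{\mathcal N}$ is a morphism of right $\mathcal D$-modules to compute $\rho_{\mathcal N}(X)(\mathcal N(f)(n))=(id_{\mathcal N}\otimes\mathscr{C}(f,-))(n\otimes s_Y)$, invoke the group-like relation \eqref{glk} to rewrite $s_Y\cdot f$ as $f\cdot s_X$, and slide $f$ across the $\mathcal D$-balanced tensor product to obtain $\mathcal N(f)(n)\otimes s_X$. The paper's proof is exactly this chain of equalities, so no further comment is needed.
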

\begin{proof}
We will show that for any $f \in Hom_\mathcal{E}(X,Y)$, the morphism $\mathcal{N}^{co\mathscr{C}}(f): \mathcal{N}^{co\mathscr{C}}(Y) \longrightarrow \mathcal{N}^{co\mathscr{C}}(X)$ is well-defined. Since $\rho_\mathcal{N}: \mathcal{N} \longrightarrow \mathcal{N} \otimes_\mathcal{D} \mathscr{C}$ is a morphism of right $\mathcal{D}$-modules, we have the following commutative diagram:
\begin{equation*}
\begin{CD}
\mathcal{N}(Y) @>\rho_\mathcal{N}(Y)>> \mathcal{N} \otimes_\mathcal{D} \mathscr{C}(Y,-) \\
@ V\mathcal{N}(f)VV        @VV (\mathcal{N} \otimes_\mathcal{D} \mathscr{C})(f)=id_\mathcal{N} \otimes \mathscr{C}(f,-) V\\
\mathcal{N}(X) @>\rho_\mathcal{N}(X)>>\mathcal{N} \otimes_\mathcal{D} \mathscr{C}(X,-) 
\end{CD}
\end{equation*}
Let $n' \in \mathcal{N}^{co\mathscr{C}}(Y)$ so that $\rho_\mathcal{N}(Y)(n')=n' \otimes s_Y$. Since $f \in Hom_\mathcal{E}(X,Y)$, using \eqref{glk} we have
\begin{equation*}
\rho_\mathcal{N}(X)\left(\mathcal{N}(f)(n')\right)= \left(id_\mathcal{N} \otimes \mathscr{C}(f,-)\right) (n' \otimes_\mathcal{D} s_Y)=n' \otimes_\mathcal{D} s_Y \cdot f=n' \otimes_\mathcal{D} f \cdot s_X=\mathcal{N}(f)(n') \otimes_\mathcal{D} s_X.
\end{equation*}
This shows that $\mathcal{N}(f)(n')=\mathcal{N}^{co\mathscr{C}}(f)(n') \in \mathcal{N}^{co\mathscr{C}}(X)$. The result follows.
\end{proof}

The next result shows that in the case of a $C$-Galois extension $\mathcal{E} \subseteq \mathcal{D}$, we recover the notion of coinvariants as in Definition \eqref{coinv*}.
 
\begin{lemma}
Let $\mathcal{D}$ be a $C$-Galois extension of $\mathcal{E}$. Consider the collection $\{{id_X}_0 \otimes {id_X}_1 \in Hom_\mathcal{D}(X,X) \otimes C\}_{X \in Ob(\mathcal{D})}$ which is group-like for $h\otimes C$ with respect to $\mathcal E$. Then, $\left(Hom_\mathcal{D}(-,Y)\right)^{co(h \otimes C)}(X) = Hom_\mathcal{E}(X,Y)$ for any  $X$, $Y\in 
Ob(\mathcal D)=Ob(\mathcal E)$.
\end{lemma}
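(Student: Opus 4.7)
The plan is to unfold the definition of the coinvariants from Proposition~\ref{lem4.16v}, identify the relevant tensor product explicitly using Proposition~\ref{comodovercoring}, and then match the resulting condition against Definition~\ref{coinv*}. The key observation is that the coinvariance condition for the coring $h\otimes C$ is exactly the entwined-module compatibility specialised at the identity, and then the general case is recovered via the compatibility \eqref{comp 2} applied to the representable entwined modules ${\bf h}_X$ and ${\bf h}_Y$ (which are objects of $\mathscr{M}(\psi)_{\mathcal{D}}^C$ by Theorem~\ref{Gal-ent}).

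I would begin by noting that the isomorphism $\mathcal{M}\otimes_{\mathcal{D}}(h\otimes C)\cong \mathcal{M}\otimes C$ used in the proof of Proposition~\ref{comodovercoring} specialises at $\mathcal{M}={\bf h}_Y$ to an isomorphism $({\bf h}_Y\otimes_{\mathcal{D}}(h\otimes C))(X)\cong Hom_{\mathcal{D}}(X,Y)\otimes C$ sending $g\otimes(f\otimes c)\mapsto gf\otimes c$ (here the coend identification works out cleanly because $\psi_{XX}(c\otimes id_X)=id_X\otimes c$ by \eqref{eq 6.4}). Under this identification, the coring coaction $\rho_{{\bf h}_Y}(X)$ agrees with the $C$-coaction $\rho_{XY}$, while the element $f\otimes_{\mathcal{D}} s_X=f\otimes({id_X}_0\otimes{id_X}_1)$ corresponds to $f\circ{id_X}_0\otimes{id_X}_1$. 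Thus the coinvariance condition $\rho_{{\bf h}_Y}(X)(f)=f\otimes_{\mathcal{D}} s_X$ becomes
\begin{equation*}
f_0\otimes f_1 \;=\; f\circ {id_X}_0\otimes {id_X}_1\quad\text{in } Hom_{\mathcal{D}}(X,Y)\otimes C.
\end{equation*}

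It remains to show this equality is equivalent to $f\in Hom_{\mathcal{E}}(X,Y)$. For the forward direction, if $f\in Hom_{\mathcal{E}}(X,Y)$, Definition~\ref{coinv*} applied with $Z=X$ and $g=id_X$ gives $\rho_{XY}(f)=({_X{\bf h}}\otimes C)(f)\bigl(\rho_{XX}(id_X)\bigr)=f\circ{id_X}_0\otimes{id_X}_1$. For the converse, given the displayed equality and any $g\in Hom_{\mathcal{D}}(Z,X)$, the compatibility \eqref{comp 2} applied to ${\bf h}_Y$ yields $\rho_{ZY}(fg)=f_0g_\psi\otimes f_1^\psi$, and applied to ${\bf h}_X$ yields $\rho_{ZX}(g)={id_X}_0g_\psi\otimes{id_X}_1^\psi$. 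Substituting the hypothesis into the first and recognising the second, we obtain $\rho_{ZY}(fg)=f\,{id_X}_0g_\psi\otimes{id_X}_1^\psi=fg_0\otimes g_1=({_Z{\bf h}}\otimes C)(f)\bigl(\rho_{ZX}(g)\bigr)$, which is precisely the defining condition for $f\in Hom_{\mathcal{E}}(X,Y)$.

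The only delicate point is making the coend identification $({\bf h}_Y\otimes_{\mathcal{D}}(h\otimes C))(X)\cong Hom_{\mathcal{D}}(X,Y)\otimes C$ explicit enough to compute the image of $f\otimes_{\mathcal{D}} s_X$ correctly; once this is in place, both directions are immediate applications of the entwined-module axiom \eqref{comp 2} for the canonical representables.
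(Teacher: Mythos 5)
Your proof is correct and follows essentially the same route as the paper: unwind the coinvariance condition via the identification $\mathcal{M}\otimes_{\mathcal D}(h\otimes C)\cong\mathcal{M}\otimes C$ from Proposition \ref{comodovercoring} to get $\rho_{XY}(f)=f\circ{id_X}_0\otimes{id_X}_1$, then use the entwined-module compatibility \eqref{comp 2} for the representables ${\bf h}_Y$ and ${\bf h}_X$ (available by Theorem \ref{Gal-ent}) to pass between this equality and the condition of Definition \ref{coinv*}, with the easy direction obtained by specialising to $Z=X$, $g=id_X$. You are in fact a bit more explicit than the paper about the coend identification and the role of \eqref{eq 6.4}, but the argument is the same.
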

\begin{proof} Since $\mathcal D$ is a $C$-Galois extension of $\mathcal E$, we know that there is a canonical entwining $(\mathcal D,C,\psi)$ such that  ${\bf h}_Y \in  {\mathscr{M}(\psi)}_\mathcal{D}^C$. Using Proposition \ref{comodovercoring}, ${\bf h}_Y$ may be treated as an object of $Comod\text{-}(h\otimes C)$. 
Let $g \in \left(Hom_\mathcal{D}(-,Y)\right)^{co(h \otimes C)}(X)$. Then, $\rho_{XY}(g)=g \circ {id_X}_0 \otimes {id_X}_1$. Using the fact that ${\bf h}_Y \in {\mathscr{M}(\psi)}_\mathcal{D}^C $ we have 
\begin{equation*}
\begin{array}{ll}
\rho_{ZY}(gf)&=({\bf h}_Y \otimes C)(f)(\rho_{XY}(g))=({\bf h}_Y \otimes C)(f)\left(g \circ {id_X}_0 \otimes {id_X}_1\right)\\
&=g \circ {id_X}_0 \circ f_\psi \otimes{ {id_X}_1}^\psi=({_Z}{\bf h} \otimes C)(g)({id_X}_0 \circ f_\psi \otimes{ {id_X}_1}^\psi)=({_Z}{\bf h} \otimes C)(g)\rho_{ZX}(f)
\end{array}
\end{equation*}
for any $f \in Hom_\mathcal{D}(Z,X)$. Therefore, $g \in Hom_\mathcal{E}(X,Y)$. The converse follows directly using the Definition \eqref{coinv*}.
\end{proof}

\begin{lemma}\label{4/18i}
Let $\mathcal{D}$ be a $C$-Galois extension of $\mathcal{E}$ and let $(\mathcal{D},C,\psi)$ be the canonical entwining structure associated to it. We denote by 
$\rho_{XY}:Hom_\mathcal{D}(X,Y)\longrightarrow Hom_\mathcal{D}(X,Y)\otimes C$ the right $C$-comodule structure maps. Then, for any $\mathcal{M}\in $  $Mod\text{-}\mathcal{E}$, we may obtain an object $\mathcal{M}\otimes_\mathcal{E} h\in {\mathscr{M}(\psi)}_\mathcal{D}^C $ by setting
\begin{equation*}
\begin{array}{l}
(\mathcal{M}\otimes_\mathcal{E} h)(Y):=\mathcal{M}\otimes_\mathcal{E} {_Y{\bf h}}\qquad 
(\mathcal{M}\otimes_\mathcal{E} h)(f)(m \otimes g):=m \otimes gf
\end{array}
\end{equation*}
for   $f \in Hom_\mathcal{D}(X,Y)$ and $m\otimes g\in \mathcal M(Z)\otimes   {_Y{\bf h}}(Z)$. In fact, this determines a functor
from $Mod\text{-}\mathcal E$ to $ {\mathscr{M}(\psi)}_\mathcal{D}^C$.

\end{lemma}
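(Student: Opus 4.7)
The plan is to first set up the right $\mathcal{D}$-module structure and the $C$-coactions on the pieces, then check well-definedness over $\mathcal{E}$, and finally verify the entwining compatibility \eqref{comp 2} by reducing it to the known fact that each ${\bf h}_Z$ lies in ${\mathscr{M}(\psi)}_\mathcal{D}^C$ via Theorem \ref{Gal-ent}.

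First I would observe that $\mathcal{M}\otimes_\mathcal{E} h$ is a right $\mathcal{D}$-module in the standard way: since $h$ is an $\mathcal{E}$-$\mathcal{D}$-bimodule (by restriction along $\mathcal{E}\subseteq\mathcal{D}$), tensoring with the right $\mathcal{E}$-module $\mathcal{M}$ yields a right $\mathcal{D}$-module whose value at $Y$ is $\mathcal{M}\otimes_\mathcal{E}{_Y{\bf h}}$, with the $\mathcal{D}$-action given by precomposition $m\otimes g\mapsto m\otimes gf$. Well-definedness over $\mathcal{E}$ is immediate from associativity of composition. For the $C$-coaction, for each $Y\in Ob(\mathcal{D})$ I would define
\begin{equation*}
\rho_{(\mathcal{M}\otimes_\mathcal{E} h)(Y)}:\mathcal{M}\otimes_\mathcal{E}{_Y{\bf h}}\longrightarrow \mathcal{M}\otimes_\mathcal{E}{_Y{\bf h}}\otimes C,\qquad m\otimes g\mapsto m\otimes g_0\otimes g_1
\end{equation*}
using the given $C$-comodule structure $\rho_{YZ}$ on $Hom_\mathcal{D}(Y,Z)={_Y{\bf h}}(Z)$.

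The key step is to check this coaction is well-defined on the tensor product over $\mathcal{E}$. The needed fact is that for $e\in Hom_\mathcal{E}(Z,Z')$ and $g\in Hom_\mathcal{D}(Y,Z)$ one has $\rho_{YZ'}(eg)=e\cdot\rho_{YZ}(g)=eg_0\otimes g_1$. This follows from the very definition of $C$-coinvariants in Definition \ref{coinv*}: applying the defining condition on $\mathcal{E}$ with the pair $(e,g)$ playing the role of $(g,f)$ gives exactly this identity. Coassociativity and counitality then transfer from $\rho_{YZ}$ to $\rho_{(\mathcal{M}\otimes_\mathcal{E} h)(Y)}$, making each $(\mathcal{M}\otimes_\mathcal{E} h)(Y)$ a right $C$-comodule.

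It remains to verify the compatibility condition \eqref{comp 2}. Fix $f\in Hom_\mathcal{D}(X,Y)$ and $m\otimes g\in \mathcal{M}(Z)\otimes {_Y{\bf h}}(Z)$. By Theorem \ref{Gal-ent}, ${\bf h}_Z\in{\mathscr{M}(\psi)}_\mathcal{D}^C$, and thus for $g\in{\bf h}_Z(Y)=Hom_\mathcal{D}(Y,Z)$ and $f:X\to Y$ we have
\begin{equation*}
\rho_{XZ}(gf)=\rho_{XZ}({\bf h}_Z(f)(g))={\bf h}_Z(f_\psi)(g_0)\otimes g_1^\psi=g_0f_\psi\otimes g_1^\psi.
\end{equation*}
Applying the coaction on $(\mathcal{M}\otimes_\mathcal{E} h)(X)$ to $(\mathcal{M}\otimes_\mathcal{E} h)(f)(m\otimes g)=m\otimes gf$ then gives
\begin{equation*}
m\otimes (gf)_0\otimes (gf)_1=m\otimes g_0f_\psi\otimes g_1^\psi=(\mathcal{M}\otimes_\mathcal{E} h)(f_\psi)(m\otimes g_0)\otimes g_1^\psi,
\end{equation*}
which is exactly \eqref{comp 2} with $(m\otimes g)_0\otimes (m\otimes g)_1=m\otimes g_0\otimes g_1$. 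Hence $\mathcal{M}\otimes_\mathcal{E} h\in {\mathscr{M}(\psi)}_\mathcal{D}^C$. Finally, a morphism $\mathcal{M}\to\mathcal{M}'$ in $Mod\text{-}\mathcal{E}$ induces a map $\mathcal{M}\otimes_\mathcal{E} h\to\mathcal{M}'\otimes_\mathcal{E} h$ that is objectwise a right $\mathcal{D}$-module and $C$-comodule morphism, so this construction is functorial. The main subtlety to guard against in the write-up is precisely the well-definedness of $\rho_{(\mathcal{M}\otimes_\mathcal{E} h)(Y)}$ on the coend, which is where the $C$-colinearity of $\mathcal{E}$-morphisms is essential.
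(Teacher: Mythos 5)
Your proposal is correct and follows essentially the same route as the paper: define the coaction $m\otimes g\mapsto m\otimes\rho_{YZ}(g)$ componentwise and deduce the compatibility \eqref{comp 2} from Theorem \ref{Gal-ent}, which gives $\rho_{XZ}(gf)=g_0f_\psi\otimes g_1^\psi$ since each ${\bf h}_Z\in{\mathscr{M}(\psi)}_\mathcal{D}^C$. Your explicit check that the coaction descends to the tensor product over $\mathcal{E}$, via the defining $C$-colinearity of $\mathcal{E}$-morphisms in Definition \ref{coinv*}, is exactly the detail the paper leaves as ``it may be verified,'' and it is the right justification.
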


\begin{proof}
Clearly, $\mathcal{M}\otimes_\mathcal{E} h \in$ $Mod\text{-}\mathcal{D}$. For each $Y \in Ob(\mathcal{D})$, it may be verified that $\mathcal{M} \otimes_\mathcal{E} {_Y{\bf h}}$ has a right $C$-comodule structure given by
$$\mathcal{M} \otimes_\mathcal{E} {_Y{\bf h}}\xrightarrow{id\otimes \rho} \mathcal{M} \otimes_\mathcal{E} {_Y{\bf h}}\otimes C~~~~~~~~~~~~m\otimes g\mapsto m\otimes \rho_{YZ}(g)$$
for any $g\in Hom_\mathcal{D}(Y,Z)$ and $m\in \mathcal{M}(Z)$.  By Theorem \ref{Gal-ent}, ${\bf h}_Z$ is an object in ${\mathscr{M}(\psi)}_\mathcal{D}^C$ for every $Z\in Ob(\mathcal{D})$ with its canonical $\mathcal{D}$-module structure and right $C$-coactions $\{\rho_{XZ}\}_{X\in Ob(\mathcal{D})}$. Therefore, we have 
$ \rho_{XZ}({\bf h}_Z(f)(g)))= (gf)_0\otimes (gf)_1= g_0f_\psi \otimes g_1^\psi
$ for any $f \in Hom_\mathcal{D}(X,Y)$. Consequently, we have
\begin{equation} (id\otimes \rho_{XZ})\left((\mathcal{M}\otimes_\mathcal{E} h)(f)(m \otimes g)\right)= m\otimes (gf)_0\otimes (gf)_1=m \otimes g_0f_\psi \otimes g_1^\psi
\end{equation}  This shows that $\mathcal{M}\otimes_\mathcal{E} h\in {\mathscr{M}(\psi)}_\mathcal{D}^C $.
\end{proof}

\begin{lemma}\label{Cor4.15j}
Let $\mathcal{D}$ be a $C$-Galois extension of $\mathcal{E}$. If there exists a convolution invertible collection $\Phi=\{\Phi_{XY}:C\longrightarrow Hom_\mathcal{D}(X,Y)\}_{X,Y\in Ob(\mathcal{D})}$ of right
$C$-comodule maps, then 

\smallskip
(i) $Hom_\mathcal{D}(X,-)$ is flat as a left $\mathcal{E}$-module.

\smallskip
(ii) $\underset{X\in Ob(\mathcal{D})}{\bigoplus}Hom_\mathcal{D}(X,-)$  is faithfully flat as a left $\mathcal{E}$-module.

\smallskip
(iii) For any $\mathcal{M}\in $ $Mod\text{-}\mathcal{E}$, there is a monomorphism $\mathcal{M}\hookrightarrow \mathcal{M}\otimes_\mathcal{E} h$ in  $Mod\text{-}\mathcal{E}$ given by $m \mapsto m\otimes id_X$ for any $m \in \mathcal{M}(X)$.
\end{lemma}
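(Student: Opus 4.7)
The central tool is Proposition \ref{DEC}, which supplies an isomorphism
$\zeta:Hom_\mathcal{E}(X,-)\otimes C\xrightarrow{\cong}Hom_\mathcal{D}(X,-)$ in ${_\mathcal{E}}\mathscr{M}^C$ with inverse $\eta$ given on $f\in Hom_\mathcal{D}(X,Y)$ by $\eta(Y)(f)=f_0\circ\Phi'_{XX}(f_1)\otimes f_2$, while $\zeta(Y)(f'\otimes c)=f'\circ\Phi_{XX}(c)$. Forgetting the $C$-comodule structure, this is in particular an isomorphism of left $\mathcal{E}$-modules, which reduces everything to understanding $Hom_\mathcal{E}(X,-)\otimes C$.

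For (i), I will combine the above isomorphism with the co-Yoneda lemma. For any right $\mathcal{E}$-module $\mathcal{M}$, the coend description of $\otimes_\mathcal{E}$ yields $\mathcal{M}\otimes_\mathcal{E} Hom_\mathcal{E}(X,-)\cong\mathcal{M}(X)$, hence
\begin{equation*}
\mathcal{M}\otimes_\mathcal{E} Hom_\mathcal{D}(X,-)\;\cong\;\mathcal{M}\otimes_\mathcal{E}\big(Hom_\mathcal{E}(X,-)\otimes C\big)\;\cong\;\mathcal{M}(X)\otimes C.
\end{equation*}
Since the evaluation functor $\mathcal{M}\mapsto\mathcal{M}(X)$ is exact on $Mod\text{-}\mathcal{E}$ and $-\otimes_K C$ is exact over a field, the composite is exact in $\mathcal{M}$, proving (i). For (ii), direct sums of flat modules are flat, and if $\mathcal{M}\otimes_\mathcal{E}\bigoplus_X Hom_\mathcal{D}(X,-)=0$, then $\mathcal{M}(X)\otimes C=0$ for each $X$; since $C\neq 0$ (it has a counit), this forces $\mathcal{M}(X)=0$ for all $X$, hence $\mathcal{M}=0$.

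For (iii), I will transport the map $m\mapsto m\otimes id_X$ along the isomorphism $(\mathcal{M}\otimes_\mathcal{E} h)(X)\cong\mathcal{M}(X)\otimes C$ and exhibit an explicit left inverse. Tracing through $\eta$ and co-Yoneda, the morphism
$\iota_X:\mathcal{M}(X)\longrightarrow\mathcal{M}(X)\otimes C$ induced by $m\mapsto m\otimes id_X$ is given by
\begin{equation*}
\iota_X(m)\;=\;\mathcal{M}\big({id_X}_0\circ\Phi'_{XX}({id_X}_1)\big)(m)\otimes {id_X}_2,
\end{equation*}
where ${id_X}_0\otimes{id_X}_1\otimes{id_X}_2$ is the iterated coaction $(id\otimes\Delta_C)\rho_{XX}(id_X)$. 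Define $\pi_X:\mathcal{M}(X)\otimes C\to\mathcal{M}(X)$ by $\pi_X(n\otimes c):=\mathcal{M}(\Phi_{XX}(c))(n)$. Then
\begin{equation*}
(\pi_X\circ\iota_X)(m)\;=\;\mathcal{M}\big({id_X}_0\,\Phi'_{XX}({id_X}_1)\,\Phi_{XX}({id_X}_2)\big)(m).
\end{equation*}
Using the convolution identity $\Phi'_{XX}\ast\Phi_{XX}=\varepsilon_C\cdot id_X$ applied to ${id_X}_1$ (after expanding $\Delta_C({id_X}_1)={id_X}_1\otimes{id_X}_2$) yields $\Phi'_{XX}({id_X}_1)\Phi_{XX}({id_X}_2)=\varepsilon_C({id_X}_1)\,id_X$, and then the counit axiom for the $C$-coaction on $Hom_\mathcal{D}(X,X)$ gives $({id_X}_0)\varepsilon_C({id_X}_1)=id_X$. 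Hence $\pi_X\circ\iota_X=id_{\mathcal{M}(X)}$, proving injectivity. Naturality of $\iota$ in $X$ with respect to morphisms of $\mathcal{E}$ is straightforward from the construction. The only mildly delicate step is the Sweedler-index bookkeeping in the computation above; the rest is a routine assembly of Proposition \ref{DEC}, the co-Yoneda lemma, and exactness of $-\otimes_K C$.
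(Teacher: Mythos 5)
Your parts (i) and (ii) are correct and essentially the paper's own argument: Proposition \ref{DEC} plus the co-Yoneda identification $\mathcal{M}\otimes_\mathcal{E}Hom_\mathcal{E}(X,-)\cong\mathcal{M}(X)$ turn $\mathcal{M}\otimes_\mathcal{E}Hom_\mathcal{D}(X,-)$ into $\mathcal{M}(X)\otimes C$, functorially in $\mathcal{M}$, and flatness/faithful flatness follow at once. The problem is part (iii). Your retraction $\pi_X:\mathcal{M}(X)\otimes C\to\mathcal{M}(X)$, $n\otimes c\mapsto\mathcal{M}\big(\Phi_{XX}(c)\big)(n)$, is not defined: $\Phi_{XX}(c)$ is a morphism of $\mathcal{D}$, and there is no reason for it to lie in $Hom_\mathcal{E}(X,X)$ (already in the classical cleft Hopf--Galois case the cleaving map does not take values in the coinvariants), while $\mathcal{M}$ is merely a functor on $\mathcal{E}^{op}$ and cannot be evaluated on $\mathcal{D}$-morphisms. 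Note the contrast with your $\iota_X$, which is fine precisely because the composite ${id_X}_0\circ\Phi'_{XX}({id_X}_1)$ does lie in $\mathcal{E}$ by Theorem \ref{th4.11}(iii); but the ensuing computation, which applies $\mathcal{M}$ to ${id_X}_0\,\Phi'_{XX}({id_X}_1)\,\Phi_{XX}({id_X}_2)$ by splitting off the factor $\Phi_{XX}({id_X}_2)$, requires exactly the ill-defined evaluation. A pointwise retraction of this kind would essentially presuppose that $\mathcal{M}$ extends to a right $\mathcal{D}$-module (so that $m\otimes f\mapsto\mathcal{M}(f)(m)$ makes sense), which is not given.

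The paper avoids this by making the splitting happen inside the tensor factors rather than inside $\mathcal{M}$: it uses your parts (i) and (ii) and tensors the map $\mathcal{M}\to\mathcal{M}\otimes_\mathcal{E}h$ with the faithfully flat left $\mathcal{E}$-module $\bigoplus_Y Hom_\mathcal{D}(Y,-)$. The resulting map $\mathcal{M}\otimes_\mathcal{E}{_Y{\bf h}}\to\mathcal{M}\otimes_\mathcal{E}h\otimes_\mathcal{E}{_Y{\bf h}}$, $m\otimes f\mapsto m\otimes id\otimes f$, has an evident section $m'\otimes g'\otimes f'\mapsto m'\otimes g'f'$ given by composition in $\mathcal{D}$ within the $h$-factors — no evaluation of $\mathcal{M}$ on $\mathcal{D}$-morphisms occurs — hence is injective, and faithful flatness (flatness identifies the kernel after tensoring with $K\otimes_\mathcal{E}\bigoplus_Y{_Y{\bf h}}$, which must then vanish) forces the original map to be a monomorphism. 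You can repair your (iii) by substituting this argument; your reduction via Proposition \ref{DEC} is not needed there, and no explicit retraction on $\mathcal{M}(X)\otimes C$ should be expected to exist for arbitrary $\mathcal{M}\in Mod\text{-}\mathcal{E}$.
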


\begin{proof}
(i) Let $i: \mathcal{M}_1\hookrightarrow \mathcal{M}_2$ be a monomorphism of right $\mathcal{E}$-modules. By Proposition \ref{DEC}, it follows that the induced map $\mathcal M_1\otimes_{\mathcal E} Hom_\mathcal{D}(X,-)\longrightarrow \mathcal M_2\otimes_{\mathcal E} Hom_\mathcal{D}(X,-)$ coincides with the map $\mathcal{M}_1(X)\otimes C\xrightarrow{i(X)\otimes id_C} \mathcal{M}_2(X)\otimes C$ for each $X\in Ob(\mathcal{E})= Ob(\mathcal{D})$. Since $i(X)\otimes id_C$ is clearly a monomorphism, it follows that $Hom_\mathcal{D}(X,-)$ is flat as a left $\mathcal{E}$-module. 

\smallskip
(ii) This is clear from the fact that $\mathcal{M}(X) \otimes C=\mathcal{M} \otimes_{\mathcal{E}} Hom_\mathcal{D}(X,-)=0$ $\Rightarrow$ $\mathcal{M}(X)=0$.

\smallskip
(iii) Since $\underset{Y\in Ob(\mathcal{D})}{\bigoplus}Hom_\mathcal{D}(Y,-)$   is faithfully flat as a left $\mathcal{E}$-module, it is enough to prove that for each $Y\in Ob(\mathcal D)$, we have a monomorphism 
\begin{equation}\label{monol}\mathcal{M}\otimes_\mathcal{E} Hom_\mathcal{D}(Y,-)\longrightarrow \mathcal{M}\otimes_\mathcal{E} h \otimes_\mathcal{E} Hom_\mathcal{D}(Y,-)
\quad \mathcal M(X)\otimes {_Y{\bf h}}(X)\ni m\otimes f \mapsto m \otimes id_X \otimes f
\end{equation} This is true because the morphism in \eqref{monol} has a  section 
\begin{equation}\mathcal{M}\otimes_\mathcal{E} h \otimes_\mathcal{E} Hom_\mathcal{D}(Y,-)\longrightarrow \mathcal{M}\otimes_\mathcal{E} Hom_\mathcal{D}(Y,-) \qquad m' \otimes g' \otimes f'\mapsto m \otimes g'f'
\end{equation} for any $m'\in \mathcal M(Z)$ and $g'\otimes f'\in {_X{\bf h}}(Z)\otimes {_Y{\bf h}}(X)$.
\end{proof}

\begin{theorem}\label{entlast}
Let $\mathcal{D}$ be a $C$-Galois extension of $\mathcal{E}$ and let $(\mathcal{D},C,\psi)$ be the canonical entwining structure associated to it. 
Suppose there exists a convolution invertible collection $\Phi=\{\Phi_{XY}:C\longrightarrow Hom_\mathcal{D}(X,Y)\}_{X,Y\in Ob(\mathcal{D})}$ of right
$C$-comodule maps. Then, the categories ${\mathscr{M}(\psi)}_\mathcal{D}^C $ and Mod-$\mathcal{E}$ are equivalent.
\end{theorem}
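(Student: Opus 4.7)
The plan is to exhibit an adjoint equivalence $(F,G)$ between the two categories. Define
\[
F \colon \mathrm{Mod}\text{-}\mathcal{E} \longrightarrow {\mathscr{M}(\psi)}_\mathcal{D}^C,
\qquad F(\mathcal{M}) := \mathcal{M} \otimes_\mathcal{E} h
\]
(which is a functor by Lemma \ref{4/18i}) and
\[
G \colon {\mathscr{M}(\psi)}_\mathcal{D}^C \longrightarrow \mathrm{Mod}\text{-}\mathcal{E},
\qquad G(\mathcal{N}) := \mathcal{N}^{co(h\otimes C)},
\]
where I view an entwined module $\mathcal{N}$ as a right $(h\otimes C)$-comodule via Proposition \ref{comodovercoring} and take coinvariants as in Proposition \ref{lem4.16v} with respect to the group-like collection $\{{id_X}_0 \otimes {id_X}_1\}_{X \in Ob(\mathcal{E})}$ from the example following Definition \ref{grouplike}. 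The unit and counit are
\[
\eta_\mathcal{M}(X)(m) := m \otimes id_X \qquad \text{and} \qquad \epsilon_\mathcal{N}(X)(n \otimes g) := \mathcal{N}(g)(n).
\]
The first (routine) step is to verify that $m \otimes id_X$ does lie in the coinvariants, that these formulas define natural transformations in the correct categories, and that the triangle identities hold, so that $F \dashv G$.

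For $\eta_\mathcal{M}$ being an isomorphism, injectivity is immediate from Lemma \ref{Cor4.15j}(iii). For surjectivity, take a coinvariant $\xi = \sum_i m_i \otimes g_i \in (\mathcal{M} \otimes_\mathcal{E} {_X}{\bf h})^{co(h\otimes C)}$ with $g_i \in Hom_\mathcal{D}(X,Z_i)$. The identity $(\Phi'_{XX} * \Phi_{XX})(c) = \varepsilon_C(c)\cdot id_X$ combined with coassociativity of the $C$-coactions rewrites each $g_i$ as $\big[g_{i(0)(0)} \circ \Phi'_{XX}(g_{i(0)(1)})\big] \circ \Phi_{XX}(g_{i(1)})$, and Theorem \ref{th4.11}(iii) identifies the bracketed factor as a morphism in $Hom_\mathcal{E}(X,Z_i)$, which can be transported across the tensor. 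A short computation exploiting $C$-colinearity of $\Phi_{XX}$ and the coinvariance of $\xi$ then collapses the resulting expression to $m \otimes id_X$ for some $m \in \mathcal{M}(X)$.

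For $\epsilon_\mathcal{N}$ being an isomorphism, I would first check it on representables $\mathcal{N} = {\bf h}_Y$: by the identification $\left(Hom_\mathcal{D}(-,Y)\right)^{co(h\otimes C)}(X) = Hom_\mathcal{E}(X,Y)$ proved earlier and the Yoneda-type evaluation $Hom_\mathcal{E}(-,Y) \otimes_\mathcal{E} {_X}{\bf h} \cong Hom_\mathcal{D}(X,Y)$, the map $\epsilon_{{\bf h}_Y}$ becomes the identity. To extend to arbitrary $\mathcal{N}$, I would construct an explicit candidate inverse
\[
\tilde\epsilon_\mathcal{N}(X) \colon \mathcal{N}(X) \longrightarrow \left(\mathcal{N}^{co(h\otimes C)} \otimes_\mathcal{E} {_X}{\bf h}\right),
\qquad \nu \longmapsto \mathcal{N}\bigl(\Phi'_{XX}(\nu_{(1)})\bigr)(\nu_{(0)}) \otimes \Phi_{XX}(\nu_{(2)}),
\]
whose two compositions with $\epsilon_\mathcal{N}$ reduce to the convolution identities $\Phi'_{XX} * \Phi_{XX} = \varepsilon_C \cdot id_X = \Phi_{XX} * \Phi'_{XX}$, leaving a single key check: that $\mathcal{N}\bigl(\Phi'_{XX}(\nu_{(1)})\bigr)(\nu_{(0)})$ lies in $\mathcal{N}^{co(h\otimes C)}(X)$. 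This uses the entwining compatibility \eqref{comp 2}, the $C$-colinearity of $\Phi'_{XX}$, and an unpacking of the canonical $\psi$ of Theorem \ref{Gal-ent} through the translation maps $\tau_X$ of \eqref{transl} together with Lemma \ref{trans}.

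The main obstacle I anticipate is precisely this last verification, since $\psi$ is defined only implicitly via the inverse of the canonical map $can_{XY}$, so checking that $\mathcal{N}\bigl(\Phi'_{XX}(\nu_{(1)})\bigr)(\nu_{(0)})$ is coinvariant requires careful tracking of the interplay between $\rho_\mathcal{N}$, the Galois isomorphism, and the convolution inverse $\Phi'$. A more conceptual alternative, once $\eta$ is known to be an isomorphism (so $F$ is fully faithful), is to invoke the faithful flatness of $\bigoplus_X {_X}{\bf h}$ over $\mathcal{E}$ from Lemma \ref{Cor4.15j}(ii): after base change along this faithfully flat left $\mathcal{E}$-module, the counit reduces—via Proposition \ref{DEC} and the Galois isomorphism $can$—to an isomorphism of $(h\otimes C)$-comodules, and faithful flatness then lifts this to the statement that $\epsilon_\mathcal{N}$ itself is an isomorphism.
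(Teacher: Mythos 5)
Your architecture coincides with the paper's: the same pair of functors, with your unit $m\mapsto m\otimes id_X$ being the paper's inclusion $i$ and your counit $n\otimes g\mapsto\mathcal{N}(g)(n)$ being the paper's evaluation map $\zeta$. The problem is that both decisive verifications are left as sketches that do not go through as written. For surjectivity of the unit, rewriting $g_i$ as $\bigl[g_{i0}\circ\Phi'_{XX}(g_{i1})\bigr]\circ\Phi_{XX}(g_{i2})$ and transporting the $\mathcal{E}$-factor across the tensor only re-expresses $\xi$; to exploit coinvariance you must apply a balanced map to the relation $\sum_i m_i\otimes g_{i0}\otimes g_{i1}=\sum_i m_i\otimes g_i\circ{id_X}_0\otimes {id_X}_1$, and the natural choices (e.g.\ $m\otimes g\otimes c\mapsto m\otimes g\circ\Phi'_{XX}(c)$, or $m\otimes g\otimes c\mapsto \mathcal{M}\bigl(g_0\circ\Phi'_{XX}(g_1)\bigr)(m)\otimes\Phi_{XX}(c)$) leave an uncancelled factor coming from the group-like: one obtains, say, $\xi\cdot v=m\otimes id_X$ with $v={id_X}_0\circ\Phi'_{XX}({id_X}_1)\in Hom_\mathcal{E}(X,X)$, not $\xi=m\otimes id_X$. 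In the classical cleft case this factor is $\gamma^{-1}(1_H)$ and is removed by normalizing the cleaving map at the group-like $1_H$; here $C$ is a bare coalgebra, the group-like lives in the coring $h\otimes C$ as $\rho_{XX}(id_X)$, and no normalization (nor an evident invertibility of $v$) is available. So the phrase ``collapses to $m\otimes id_X$'' is exactly the missing content. The paper's proof avoids this entirely: it transfers to $h\otimes_\mathcal{E}h$ via Lemma \ref{L4.19cop}, forms the exact sequence $0\to\mathcal{M}\to\mathcal{M}\otimes_\mathcal{E}h\to\mathcal{P}\to 0$, applies $\eta\otimes id_h$ to the coinvariance relation (killing the left-hand side), and concludes from the monomorphism $\mathcal{P}\to\mathcal{P}\otimes_\mathcal{E}h$ of Lemma \ref{Cor4.15j}(iii); the hypothesis on $\Phi$ enters only through that lemma.

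For the counit, ``check on representables and extend'' is not legitimate as stated: the colimit presentation of an entwined module $\mathcal{N}$ uses arbitrary $\mathcal{D}$-module maps, not morphisms of ${\mathscr{M}(\psi)}_\mathcal{D}^C$, and $(-)^{co(h\otimes C)}$ does not commute with such colimits. Your explicit candidate inverse then rests entirely on the unproven claim that $\mathcal{N}\bigl(\Phi'_{XX}(\nu_1)\bigr)(\nu_0)$ is coinvariant; this is the analogue of the anti-colinearity of $\gamma^{-1}$, which classically uses an antipode and here must be extracted from the entwining via an identity of the type \eqref{4.9jg} — you flag it as the main obstacle but do not supply it. The paper needs neither ingredient: it characterizes $\mathcal{N}^{co(h\otimes C)}$ as the equalizer of $j$ and $\rho_\mathcal{N}$, uses flatness of ${_X}{\bf h}$ over $\mathcal{E}$ (Lemma \ref{Cor4.15j}(i)) to identify $\mathcal{N}^{co(h\otimes C)}\otimes_\mathcal{E}{_X}{\bf h}$ as an equalizer, factors $\rho_\mathcal{N}$ through it by coassociativity, and checks that your $\epsilon_\mathcal{N}$ is a right inverse of this (already monic) factorization. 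Your closing faithful-flatness remark points toward the paper's mechanism but is not carried out, so as it stands the proposal has genuine gaps at both isomorphism checks.
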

\begin{proof} We consider the collection $\{{id_X}_0 \otimes {id_X}_1 \in Hom_\mathcal{D}(X,X) \otimes C\}_{X \in Ob(\mathcal{E})}$ which is group-like for the coring $h\otimes C$ with respect to $\mathcal E$.
We define 
\begin{equation*}
\begin{array}{ll}
\mathscr{F}: {Mod\text{-}}\mathcal{E}&\longrightarrow {\mathscr{M}(\psi)}_\mathcal{D}^C \qquad \mathcal{M}\mapsto \mathcal{M}\otimes_\mathcal{E} h\\
\mathscr{G}: {\mathscr{M}(\psi)}_\mathcal{D}^C &\longrightarrow  {Mod\text{-}}\mathcal{E}\qquad \mathcal{N}\mapsto \mathcal{N}^{co(h\otimes C)}
\end{array}
\end{equation*}
Using Lemma \ref{4/18i} and Proposition \ref{lem4.16v}, we see that the functors $\mathscr{F}$ and $\mathscr{G}$ are well-defined. We now verify that $\mathscr{G}\circ \mathscr{F}\cong id_{{Mod\text{-}}\mathcal{E}}$ i.e., $( \mathcal{M}\otimes_\mathcal{E} h)^{co(h\otimes C)}\cong \mathcal{M}$ for any $\mathcal{M}\in  Mod\text{-}\mathcal{E}$.

\smallskip
From Lemma \ref{L4.19cop}, we know that $h \otimes C \cong h \otimes_\mathcal{E} h$ as $\mathcal{D}$-corings. Under this isomorphism, the collection $\{{id_X}_0 \otimes {id_X}_1 \in Hom_\mathcal{D}(X,X) \otimes C\}_{X \in Ob(\mathcal{E})}$ maps to the collection $\{{id_X} \otimes {id_X} \in {\bf h}_X  \otimes {_X}{\bf h} \}_{X \in Ob(\mathcal{E})}$ which is group-like for $h\otimes_\mathcal{E} h$ with respect to $\mathcal E$. Therefore, it suffices to show that $\mathcal{M} \cong ( \mathcal{M}\otimes_\mathcal{E} h)^{co(h\otimes_\mathcal{E} h)}$.

\smallskip
By Lemma \eqref{Cor4.15j}(iii), we have an inclusion $i:\mathcal{M} \longrightarrow \mathcal{M} \otimes_\mathcal{E} h$ of right $\mathcal{E}$-modules. It is clear that $i(M) \subseteq  ( \mathcal{M}\otimes_\mathcal{E} h)^{co(h\otimes_\mathcal{E} h)}$.
 By definition, $\tilde{\rho}=\rho_{\mathcal M \otimes_\mathcal{E} h}: \mathcal{M}\otimes_\mathcal{E} h\longrightarrow (\mathcal{M}\otimes_\mathcal{E} h)\otimes_\mathcal{D} (h\otimes_\mathcal{E} h )$  is determined by 
$$\tilde{\rho}(X)(m \otimes f) = m  \otimes_\mathcal{E} id_Y\otimes_\mathcal{E} f \qquad \forall~ m \otimes f \in  \mathcal{M}(Y)\otimes {_X{\bf h}}(Y)$$
for each $X\in Ob(\mathcal{D})$.
The coinvariants $( \mathcal{M}\otimes_\mathcal{E} h)^{co(h\otimes_\mathcal{E} h)}: \mathcal{E}^{op}  \longrightarrow Vect_K$ are given by
\begin{equation*}
\begin{array}{ll}
(\mathcal{M} \otimes_\mathcal{E} h)^{co(h \otimes_\mathcal{E} h)}(X)=\{\sum_{Y \in Ob(\mathcal E)}  m_Y \otimes f_Y \in \mathcal{M} \otimes {_X}{\bf h} ~|~ \tilde{\rho}(X)(\sum m_Y \otimes f_Y)=\sum m_Y  \otimes_\mathcal{E} f_Y \otimes_\mathcal{E} id_X\}\\
\end{array}
\end{equation*}

 For ${\sum} m_Y \otimes f_Y \in (\mathcal{M} \otimes_\mathcal{E} h)^{co(h \otimes_\mathcal{E} h)}(X)$, we now have
\begin{equation}\label{4.16.1}
\tilde{\rho}(X)(\sum m_Y \otimes f_Y )=\sum m_Y\otimes_\mathcal{E} f_Y \otimes_\mathcal{E} id_X=\sum m_Y  \otimes_\mathcal{E} id_Y\otimes_\mathcal{E} f_Y \in  (\mathcal{M} \otimes_\mathcal{E} h)\otimes_\mathcal{E} {_X {\bf h}}
\end{equation}
We set $ \mathcal{P}:= (\mathcal{M}\otimes_\mathcal{E} h)/\mathcal{M} \in Mod\text{-}\mathcal{E}$ and consider the following short exact sequence:
$$0\longrightarrow \mathcal{M}\overset{i}{\longrightarrow} \mathcal{M}\otimes_\mathcal{E} h \overset{\eta}{\longrightarrow} \mathcal{P}\longrightarrow 0$$
Then $\eta$ induces the morphism $\eta\otimes id_h: (\mathcal{M} \otimes_\mathcal{E} h)\otimes_\mathcal{E} h\longrightarrow \mathcal{P}\otimes_\mathcal{E} h$ of right $\mathcal{E}$-modules which for each $X\in Ob(\mathcal{D})$ is given by

\begin{equation*}
(\eta\otimes id_h)(X): (\mathcal{M} \otimes_\mathcal{E} h)\otimes_\mathcal{E} {_X {\bf h}}\longrightarrow \mathcal{P}\otimes_\mathcal{E} {_X {\bf h}} \qquad m' \otimes f'\otimes g' \mapsto \eta(Y)(m' \otimes f')\otimes g'
\end{equation*}
where $m' \in \mathcal{M}(Z)$, $f'\in Hom_\mathcal{D}(Y,Z)$, $g'\in Hom_\mathcal{D}(X,Y)$ and $Y,Z\in Ob(\mathcal{E})$.
Applying $(\eta\otimes id_h)(X)$ to \eqref{4.16.1}, we obtain 
\begin{equation}\label{eq4.19}
\sum \eta(X)(m_Y \otimes_\mathcal{E} f_Y)\otimes_\mathcal{E} id_X=\sum \eta(Y)(m_Y \otimes_\mathcal{E}id_Y)\otimes_\mathcal{E} f_Y= \sum \eta(Y)(i(Y)(m_Y))\otimes_\mathcal{E} f_Y=0
\end{equation}
Applying Lemma \eqref{Cor4.15j}(iii) to the inclusion $\mathcal P \hookrightarrow \mathcal P \otimes_\mathcal E h$, it follows from \eqref{eq4.19} that $\sum\eta(X)(m_Y \otimes_\mathcal{E} f_Y)=0$ for every $X\in Ob(\mathcal{E})$. Therefore, ${\sum} m_Y \otimes f_Y \in i(\mathcal{M})(X)$. This proves that $\mathcal{M} \cong ( \mathcal{M}\otimes_\mathcal{E} h)^{co(h\otimes_\mathcal{E} h)}$. 

\smallskip
It remains to show that $\mathscr{F}\circ \mathscr{G}\cong id_{{\mathscr{M}(\psi)}_\mathcal{D}^C}$. Let $\mathcal{N}\in {\mathscr{M}(\psi)}_\mathcal{D}^C \cong$ $Comod\text{-}(h\otimes C)$. Then, $\mathcal{N}$ is a right $\mathcal{D}$-module with a given morphism 
$$\rho_\mathcal{N}:\mathcal{N} \longrightarrow \mathcal{N} \otimes_\mathcal{D} (h\otimes C)\cong \mathcal{N}\otimes_\mathcal{D} (h\otimes_\mathcal{E}h)\cong \mathcal{N}\otimes_\mathcal{E}h$$
in ${\mathscr{M}(\psi)}_\mathcal{D}^C$. By definition, $\mathcal{N}^{co(h\otimes C)}$ is the equalizer of the following morphisms

\begin{equation}
0\longrightarrow \mathcal{N}^{co(h \otimes C)} \longrightarrow \xymatrix{\mathcal N\ar@<-.5ex>[r]_(0.4){j} \ar@<.5ex>[r]^(0.4){\rho_\mathcal N}&\mathcal N\otimes_{\mathcal E}h}
\end{equation} where $j$ is given by 
\begin{equation*} j(X):\mathcal N(X)\longrightarrow \mathcal N\otimes_{\mathcal E}{_X{\bf h}} \qquad n\mapsto n\otimes id_X
\end{equation*} for every $X\in Ob(\mathcal D)$. By Lemma \ref{Cor4.15j}(i), it follows that $\mathcal{N}^{co(h\otimes C)}\otimes_\mathcal{E}{_X}{\bf h}$ is the equalizer of the following morphisms
\begin{equation}\label{eq4.21}
0\longrightarrow \mathcal{N}^{co(h \otimes C)}\otimes_{\mathcal E}{_X{\bf h}} \longrightarrow \xymatrix{\mathcal N\otimes_{\mathcal E}{_X{\bf h}}\ar@<-.5ex>[r]_(0.4){j\otimes id} \ar@<.5ex>[r]^(0.4){\rho_\mathcal N\otimes id}&\mathcal N\otimes_{\mathcal E}h\otimes_{\mathcal E}{_X{\bf h}}}
\end{equation}
Comparing with \eqref{deltaXY}, we observe that $j \otimes id=id_{\mathcal N} \otimes_{\mathcal E} \Delta_{h \otimes_{\mathcal E} h}(X,-)$.
Using the coassociativity of $\rho_\mathcal{N}: \mathcal{N}\longrightarrow \mathcal{N}\otimes_\mathcal{E}{h}$, it follows from \eqref{eq4.21} that $\rho_\mathcal{N}(X)$ factorises through $\mathcal{N}^{co(h\otimes C)}\otimes_\mathcal{E}{_X {\bf h}}$, which is denoted by $\rho'_\mathcal{N}(X):\mathcal{N}(X)\longrightarrow \mathcal{N}^{co(h\otimes C)}\otimes_\mathcal{E}{_X {\bf h}}\subseteq \mathcal{N}\otimes_\mathcal{E}{_X {\bf h}}$.

\smallskip
 We claim that $\rho'_\mathcal{N}:\mathcal{N}\longrightarrow \mathcal{N}^{co(h\otimes C)}\otimes_\mathcal{E}{h}$ is an isomorphism in ${\mathscr{M}(\psi)}_\mathcal{D}^C$. From the counit property, we know that $(id_\mathcal N \otimes_{\mathcal D} \varepsilon_{h \otimes_\mathcal E h})\circ \rho_{\mathcal N}=id_{\mathcal N}$. Hence, $\rho_\mathcal{N}$ is a monomorphism and so is $\rho'_{\mathcal N}$. It remains  to show that $\rho'_\mathcal{N}(X)$ is an epimorphism for each $X\in Ob(\mathcal{D})$. For each $X\in Ob(\mathcal{D})$, we define 
\begin{equation*}\zeta(X):\mathcal{N}^{co(h\otimes C)}\otimes_\mathcal{E}{{_X{\bf h}}} \longrightarrow \mathcal{N}(X)\qquad \sum_{Y\in Ob(\mathcal D)}n_Y\otimes f_Y\mapsto \sum_{Y\in Ob(\mathcal D)}\mathcal{N}(f_Y)(n_Y)
\end{equation*}
Since $\rho'_\mathcal{N}$ is a morphism of right $\mathcal{D}$-modules, we now have
\begin{align*}
\rho'_\mathcal{N}(X)\left(\zeta(X)\left(n_Y\otimes f_Y\right)\right)= \rho'_\mathcal{N}(X)(\mathcal{N}(f_Y)(n_Y))&=(\mathcal{N}^{co(h\otimes C)}\otimes_\mathcal{E}{{h}})(f_Y)(\rho'_\mathcal{N}(Y)(n_Y))\\
&= (\mathcal{N}^{co(h\otimes C)}\otimes_\mathcal{E}{{h}})(f_Y)(n_Y\otimes id_Y)= n_Y\otimes f_Y
\end{align*}
This shows that $\mathscr{F}\circ \mathscr{G}\cong id_{{\mathscr{M}(\psi)}_\mathcal{D}^C}$.

\end{proof}

\begin{bibdiv}
	\begin{biblist}
	\bib{Abu}{article}{
   author={Abuhlail, J. Y.},
   title={Dual entwining structures and dual entwined modules},
   journal={Algebr. Represent. Theory},
   volume={8},
   date={2005},
   number={2},
   pages={275--295},
}

	\bib{BBR}{article}{
   author={M.~Balodi},
   author={A.~Banerjee,},
    author={S.~Ray,},
   title={Cohomology of modules over $H$-categories and co-$H$-categories},
   journal={Preprint},
   date={2018},
}

\bib{Brz00}{article}{
   author={Brzezi\'{n}ski, T.},
   title={On modules associated to coalgebra Galois extensions},
   journal={J. Algebra},
   volume={215},
   date={1999},
   number={1},
   pages={290--317},
 
}

	\bib{Brz}{article}{
   author={Brzezi\'{n}ski, T.},
   title={Frobenius properties and Maschke-type theorems for entwined
   modules},
   journal={Proc. Amer. Math. Soc.},
   volume={128},
   date={2000},
   number={8},
   pages={2261--2270},
}

\bib{Br02}{article}{
   author={Brzezi\'{n}ski, T.},
   title={The structure of corings: induction functors, Maschke-type
   theorem, and Frobenius and Galois-type properties},
   journal={Algebr. Represent. Theory},
   volume={5},
   date={2002},
   number={4},
   pages={389--410},

}

\bib{BH}{article} {
    AUTHOR = {Brzezi\'{n}ski, T.},
    author = {Hajac, Piotr M.},
     TITLE = {Coalgebra extensions and algebra coextensions of Galois
              type},
   JOURNAL = {Comm. Algebra},
     VOLUME = {27},
      YEAR = {1999},
    NUMBER = {3},
     PAGES = {1347--1367},
      }

\bib{BrMj}{article}{
   author={Brzezi\'{n}ski, T.},
   author={Majid, S.},
   title={Coalgebra bundles},
   journal={Comm. Math. Phys.},
   volume={191},
   date={1998},
   number={2},
   pages={467--492},
 
}

\bib{uni}{article}{
   author={Brzezi\'{n}ski, T.},
   author={Caenepeel, S.},
   author={Militaru, G.},
   author={Zhu, S.},
   title={Frobenius and Maschke type theorems for Doi-Hopf modules and
   entwined modules revisited: a unified approach},
   conference={
      title={Ring theory and algebraic geometry},
      address={Le\'{o}n},
      date={1999},
   },
   book={
      series={Lecture Notes in Pure and Appl. Math.},
      volume={221},
      publisher={Dekker, New York},
   },
   date={2001},
   pages={1--31},
}

\bib{BCT1}{article}{
   author={Bulacu, D.},
   author={Caenepeel, S.},
   author={Torrecillas, B.},
   title={Frobenius and separable functors for the category of entwined
   modules over cowreaths, I: General theory},
   journal={arXiv:1612.09540},
}

\bib{BCT}{article}{
   author={Bulacu, D.},
   author={Caenepeel, S.},
   author={Torrecillas, B.},
   title={Frobenius and separable functors for the category of entwined
   modules over cowreaths, II: applications},
   journal={J. Algebra},
   volume={515},
   date={2018},
   pages={236--277},
}

\bib{Caeni}{article}{
   author={Caenepeel, S.},
   title={Galois corings from the descent theory point of view},
   conference={
      title={Galois theory, Hopf algebras, and semiabelian categories},
   },
   book={
      series={Fields Inst. Commun.},
      volume={43},
      publisher={Amer. Math. Soc., Providence, RI},
   },
   date={2004},
   pages={163--186},
}

\bib{CG}{article}{
   author={Caenepeel, S.},
   author={De Groot, E.},
   title={Modules over weak entwining structures},
   book={
      series={Contemp. Math.},
      volume={267},
      publisher={Amer. Math. Soc., Providence, RI},
   },
   date={2000},
   pages={31--54},
}

\bib{CMIZ}{article}{
   author={Caenepeel, S.},
   author={Militaru, G.},
   author={Ion, B.},
   author={Zhu, S.},
   title={Separable functors for the category of Doi-Hopf modules,
   applications},
   journal={Adv. Math.},
   volume={145},
   date={1999},
   number={2},
   pages={239--290},
}

\bib{CMZ1}{article}{
   author={Caenepeel, S.},
   author={Militaru, G.},
   author={Zhu, Shenglin},
   title={A Maschke type theorem for Doi-Hopf modules and applications},
   journal={J. Algebra},
   volume={187},
   date={1997},
   number={2},
   pages={388--412},
}

\bib{CMZ}{article}{
   author={Caenepeel, S.},
   author={Militaru, G.},
   author={Zhu, S.},
   title={Doi-Hopf modules, Yetter-Drinfel\cprime d modules and Frobenius type
   properties},
   journal={Trans. Amer. Math. Soc.},
   volume={349},
   date={1997},
   number={11},
   pages={4311--4342},
}

\bib{CiSo}{article}{
   author={C.~Cibils,},
   author={A.~Solotar,},
   title={Galois coverings, Morita equivalence and smash extensions of
   categories over a field},
   journal={Doc. Math.},
   volume={11},
   date={2006},
   pages={143--159},
}

\bib{SCS}{book}{
   author={S.~D\u{a}sc\u{a}lescu, },
   author={C.~N\u{a}st\u{a}sescu,},
   author={\c{S}.~Raianu, },
   title={Hopf algebras},
   series={Monographs and Textbooks in Pure and Applied Mathematics},
   volume={235},
   note={An introduction},
   publisher={Marcel Dekker, Inc., New York},
   date={2001},
   pages={x+401},
  }

\bib{Doi}{article}{
   author={Doi, Y.},
   title={Unifying Hopf modules},
   journal={J. Algebra},
   volume={153},
   date={1992},
   number={2},
   pages={373--385},
   issn={0021-8693},
}

\bib{EV}{article}{
   author={Estrada, S.},
   author={Virili, S.},
   title={Cartesian modules over representations of small categories},
   journal={Adv. Math.},
   volume={310},
   date={2017},
   pages={557--609},
}

\bib{Grothen}{article}{
   author={Grothendieck, A.},
   title={Sur quelques points d'alg\`ebre homologique},
   journal={T\^ohoku Math. J. (2)},
   volume={9},
   date={1957},
   pages={119--221},
}

\bib{Jia}{article}{
   author={Jia, L.},
   title={The sovereign structure on categories of entwined modules},
   journal={J. Pure Appl. Algebra},
   volume={221},
   date={2017},
   number={4},
   pages={867--874},
}

\bib{Mit1}{article}{
   author={Mitchell, B.},
   title={Rings with several objects},
   journal={Adv. Math.},
   volume={8},
   date={1972},
   pages={1--161},
}

\bib{Mit2}{article}{
   author={Mitchell, B.},
   title={Some applications of module theory to functor categories},
   journal={Bull. Amer. Math. Soc.},
   volume={84},
   date={1978},
   number={5},
   pages={867--885},
}

\bib{Mum}{book}{
   author={Mumford, D.},
   author={Fogarty, J.},
   title={Geometric invariant theory},
   series={Ergebnisse der Mathematik und ihrer Grenzgebiete [Results in
   Mathematics and Related Areas]},
   volume={34},
   edition={2},
   publisher={Springer-Verlag, Berlin},
   date={1982},
}

\bib{NVV}{article}{
   author={N\u ast\u asescu, C.},
   author={Van den Bergh, M.},
   author={Van Oystaeyen, F.},
   title={Separable functors applied to graded rings},
   journal={J. Algebra},
   volume={123},
   date={1989},
   number={2},
   pages={397--413},
}

\bib{Rf}{article}{
   author={Rafael, M. D.},
   title={Separable functors revisited},
   journal={Comm. Algebra},
   volume={18},
   date={1990},
   number={5},
   pages={1445--1459},
}

\bib{Sch}{article}{
   author={Schauenburg, P.},
   title={Doi-Koppinen Hopf modules versus entwined modules},
   journal={New York J. Math.},
   volume={6},
   date={2000},
   pages={325--329},
}

\bib{HJS}{article}{
   author={Schneider, H.-J.},
   title={Principal homogeneous spaces for arbitrary Hopf algebras},
   note={Hopf algebras},
   journal={Israel J. Math.},
   volume={72},
   date={1990},
   number={1-2},
   pages={167--195},
}

\bib{Sten}{book}{
   author={Stenstr\"om, B.},
   title={Rings of quotients},
   note={An introduction to methods of ring theory},
   publisher={Springer-Verlag, New York-Heidelberg},
   date={1975},
   pages={viii+309},
}

\bib{Take}{article}{
   author={Takeuchi, M.},
   title={A correspondence between Hopf ideals and sub-Hopf algebras},
   journal={Manuscripta Math.},
   volume={7},
   date={1972},
   pages={251--270},
}

	\end{biblist}
\end{bibdiv}
\end{document}